\newtheorem{theorem}{Theorem}[section]
\newtheorem{prop}[theorem]{Proposition}
\newtheorem{lemma}[theorem]{Lemma}
\newtheorem{corollary}[theorem]{Corollary}
\theoremstyle{definition}
\newtheorem{hypothesis}[theorem]{Inductive Hypothesis}
\newtheorem{conjecture}[theorem]{Conjecture}
\newtheorem{coupling}[theorem]{Coupling}
\newtheorem{assumption}[theorem]{Assumption}
\theoremstyle{remark}
\newtheorem{remark}[theorem]{Note}
\newtheorem{example}[theorem]{Example}
\newcommand{\leqnomode}{\tagsleft@true}
\newcommand{\reqnomode}{\tagsleft@false}
\newcommand{\C}{\mathcal{C}}
\newcommand{\N}{\mathbb{N}}
\newcommand{\cL}{\mathcal{L}}
\newcommand{\cU}{\mathcal{U}}
\newcommand{\nuU}{\nu_\cU}
\newcommand{\cP}{\mathcal{P}}
\newcommand{\Xbar}{\overline{X}}
\newcommand{\Vbar}{\overline{V}}
\newcommand{\Qbar}{\overline{Q}}
\newcommand{\Pbar}{\overline{P}}
\newcommand{\Zbar}{\overline{Z}}
\newcommand{\R}{\mathbb{R}}
\newcommand{\vf}{\Phi}
\newcommand{\vfd}{\mathbf{\Phi}}
\newcommand{\ic}{\varphi}
\newcommand{\gE}{E}
\newcommand{\lf}{\overline{G}}
\newcommand{\pot}{\psi}
\newcommand{\ptau}[1]{p_{\overline{\tau}}^{z,#1}}
\newcommand{\taub}{{\overline{\tau}_{n+1}}}
\newcommand{\phibar}{\overline{\varphi}}
\newcommand{\Fbar}{\overline{F}}
\newcommand{\lambdabar}{\overline{\lambda}}
\newcommand{\multi}{\mathrm{Discrete}}
\newcommand{\Eeq}{E_=}
\newcommand{\Unif}{\mathrm{Unif}}
\newcommand{\dd}{\mathrm{d}}
\newcommand{\review}[1]{#1}
\DeclareMathOperator*{\argmin}{arg\,min}
\begin{document}

\begin{frontmatter}
\title{Approximations of Piecewise Deterministic Markov Processes and their convergence properties}
\runtitle{Approximations of PDMPs}

\begin{aug}
\author[A]{\fnms{Andrea} \snm{Bertazzi}\ead[label=e1,mark]{a.bertazzi@tudelft.nl}},
\author[A]{\fnms{Joris} \snm{Bierkens}\ead[label=e2,mark]{joris.bierkens@tudelft.nl}}
\and
\author[B]{\fnms{Paul} \snm{Dobson}\ead[label=e3,mark]{pdobson@ed.ac.uk}}
\address[A]{Delft Institute of Applied Mathematics, Delft University of Technology, \printead{e1,e2}}

\address[B]{University of Edinburgh, \printead{e3}}
\end{aug}

\begin{abstract}
Piecewise deterministic Markov processes (PDMPs) are a class of stochastic processes with applications in several fields of applied mathematics spanning from mathematical modeling of physical phenomena to computational methods. A PDMP is specified by three characteristic quantities: the deterministic motion, the law of the random event times, and the jump kernels. The applicability of PDMPs to real world scenarios is currently limited by the fact that these processes can be simulated only when these three characteristics of the process can be simulated exactly. In order to overcome this problem, we introduce discretisation schemes for PDMPs which make their approximate simulation possible. In particular, we design both first order and higher order schemes that rely on approximations of one or more of the three characteristics. For the proposed approximation schemes we study both pathwise convergence to the continuous PDMP as the step size converges to zero and convergence in law to the invariant measure of the PDMP in the long time limit. Moreover, we apply our theoretical results to several PDMPs that arise from the computational statistics and mathematical biology literature.
\end{abstract}

\begin{keyword}[class=MSC]
\kwd[Primary ]{60J25}
\kwd{65C99}
\kwd[; secondary ]{65C05}
\end{keyword}

\begin{keyword}
\kwd{piecewise deterministic Markov processes}
\kwd{numerical approximation}
\kwd{weak error}
\kwd{coupling}
\end{keyword}

\end{frontmatter}

\tableofcontents

\section{Introduction}

Piecewise Deterministic Markov Processes (PDMPs) \cite{Davis1984, Davis1993} are nowadays widely used in  mathematical modelling in fields such as mathematical biology \cite{berg1972chemotaxis,Cloez_etal,rudnicki2017piecewise}, biochemistry \cite{singh2010stochastic}, insurance risk theory \cite{dassios_insurance,embrechts_schmidli_1994}, materials science \cite{pdmp_crack_propagation}, neuroscience \cite{pakdaman_neuron}, and neutron transport \cite{Horton}. Mathematical properties of PDMPs such as stability and stationarity have been extensively investigated in the mathematics community, see e.g.  \cite{Qualitative_prop_pdmp,costadufour2008,pdmp_inv_meas}.
Moreover, in recent years these processes have also quickly gained in popularity for purposes of Monte Carlo computation in statistical physics \cite{MichelKapferKrauth2014,PetersDeWith2012,Turitsyn_2011} and in Bayesian statistics \cite{fearnhead2018,vanetti2017piecewisedeterministic}, for example in the form of the Bouncy Particle Sampler (BPS) and the Zig-Zag Sampler (ZZS) \cite{BPS, ZZ}. Several papers have further investigated the use of PDMPs in this area, e.g. \cite{andrieu2019hypocoercivity,AndrieuLivingstone,bertazzi2020adaptive,Bierkensergodicity,bierkens2018highdimensional,BPS_Durmus,guillin_lowtemp,Lu2020}.

PDMPs are continuous time Markov processes which move along deterministic trajectories (typically in Euclidean space) on a time interval of random length, after which a (possibly random) transition occurs to a new state, followed by another deterministic motion, etc. The deterministic motion is prescribed by the integral curves, $\varphi_t$, of a vector field $\vf : \R^d \rightarrow \R^d$, the length of the random time intervals between transitions is governed by a transition rate $\lambda :\R^d \rightarrow [0,\infty)$, and the transitions are described by a Markov kernel $Q : \R^d \times \mathcal B(\R^d) \rightarrow [0,1]$. Together the vector field $\Phi$, transition rate $\lambda$ and transition kernel $Q$ comprise the \emph{characteristics} of the PDMP.


These processes are relatively easy to understand from a conceptual point of view and in some special cases their simulation can be performed exactly. In particular, if (i) the vector field $\Phi$ is explicitly integrable, (ii) it is possible to generate random times exactly as prescribed by $\lambda$, and (iii) it is possible to simulate from the transition kernel $Q$, then the iterative computation of trajectories of the associated PDMP is relatively straightforward. 

Simulation of trajectories becomes problematic if one or more of these conditions are not met.  Let us discuss possible problems that may arise. Concerning (i), the vector field $\Phi$, as is well known in the field of differential equations, explicit solutions to the ODE $\dot \varphi_t = \Phi(\varphi_t)$ are only available in special cases, for example when $\Phi$ is affine, or when it is has some other special structure or symmetry. Concerning (ii), the transition rate $\lambda$, it is easy to simulate when the rate $\lambda$ is constant or globally bounded. If $\lambda$ is constant, then the random times between transitions are simply Exponential($\lambda$)-distributed and thus easily simulated. If $\lambda$ is globally bounded, say by a constant $M$, we may use a technique called \emph{Poisson thinning}  \cite{lewis_shedler_thinning}, which allows us to first simulate the random times according to an Exponential($M$)-distribution and then accept a proposed transition time  as a true transition with a probability governed by the ratio between $\lambda(\cdot)$ and $M$. The use of Poisson thinning may be extended to cases with non-constant bounds $M(s)$ along trajectories under the condition that it is simple to simulate from an inhomogeneous Poisson process with rate $M(s)$. However, finding a sharp bound $M(s)$ can be an extremely challenging problem in most practical settings. Moreover, the looser the bound the greater the computational cost of the simulation of the PDMP. For more extensive descriptions of Poisson thinning we refer to, e.g., \cite{BPS, ZZ}. Finally problems with (iii), the simulation of transitions according to $Q$, may arise in various ways. For instance it may be interesting to approximate the transition kernel of the BPS (see Section 2.4 of \cite{sherlock2021discrete}).

In this paper we propose several schemes to approximate a PDMP in cases that are otherwise not straightforward to simulate, and we accompany these schemes by a detailed analysis of the convergence of the approximate process towards its exact, theoretical counterpart as the parameter governing the numerical precision, $\delta$, converges to zero. Moreover, in the setting in which the PDMP is geometrically ergodic with a specified invariant measure, we investigate  the theoretical convergence of the law of the approximate scheme to the invariant measure of the PDMP.

We introduce the \emph{Fully Discrete PDMP} (FD-PDMP) Algorithm, the \emph{Partially Discrete PDMP} (PD-PDMP) Algorithm and the Higher Order Partially Discrete PDMP Algorithm (Algorithms~\ref{alg:basic_EPDMP}, \ref{alg:advanced_EPDMP} and \ref{alg:secondorder_EPDMP}, respectively). 
The FD-PDMP algorithm (Algorithm~\ref{alg:basic_EPDMP}) defines a Markov chain $\{\Zbar_{t_n}\}_{n\in\N}$ on a mesh $0=t_0< t_1 < t_2 <\dots$ that moves deterministically between time steps, and a random event may occur at each of the mesh points with suitable probability. The PD-PDMP algorithm (Algorithm~\ref{alg:advanced_EPDMP}) defines a Markov chain that moves deterministically with exception of at most one random event in each interval of the form $[t_{n-1},t_n]$. In contrast to the FD-PDMP the random event does not need to occur at mesh points. This difference motivates the choice of name of the two algorithms. By allowing at most $p$ random events per time step, the higher order algorithm (Algorithm~\ref{alg:secondorder_EPDMP}) constructs an approximation of the PDMP of order $p$. 

Naturally these algorithms are designed to be straightforward to simulate. Both the FD-PDMP and the PD-PDMP algorithms rely on first order approximations of the characteristics of the PDMP. A wide range of approximations for $\varphi_t,\lambda,Q$ is allowed, see Assumptions \ref{ass:integrator},  \ref{ass:approx_jump_kernel}, \ref{ass:lambda_wasserstein} for the formal requirements. As a simple yet important example, consider the case in which we are interested in simulating a PDMP for which the event times are hard to obtain. With an Euler-type approach, we can use an approximation of $\lambda$ that is constant between mesh points, based on the state of the process at the initial point of each time step. For such approximation, the next event time in the case of PD-PDMP is simply exponentially distributed with constant rate, which is straightforward to simulate. Similarly, in the case of the FD-PDMP a random event takes place at the end of the time interval according to a Bernoulli distributed random variable. In comparison to the simulation of the continuous time PDMP, both algorithms do not require an upper bound to the switching rates, which is required to apply Poisson thinning. In a similar fashion, simple approximations of $\varphi_t$ and $Q$ can be employed. We refer to Section \ref{sec:algorithms} for a detailed description of the algorithms.
 
We study convergence of these algorithms as a function of the step size and of the time horizon. Under very broad assumptions on the approximation, in particular allowing for approximations of all three $\lambda$, $\varphi_t$, and $Q$, in Theorem~\ref{thm:strong_error_pdmp} we are able to show convergence in a Wasserstein distance to the PDMP as the step size tends to zero. In the case in which it is possible to simulate $\varphi_t$ and $Q$ exactly, we obtain convergence of the PD-PDMP algorithm in the stronger metric of total variation (see Theorem~\ref{thm:tv_distance}). In this setting weaker assumptions on the continuous time PDMP are required. For instance we show in Examples \ref{ex:BPS_wass} and \ref{ex:bps_tv} that BPS satisfies the assumptions of Theorem~\ref{thm:tv_distance} but not those of Theorem~\ref{thm:strong_error_pdmp}. Moreover, both Theorems establish convergence of order $p$ as long as the approximations of $\varphi_t$, $\lambda$, and $Q$ are of order $p$. The proofs of both these theorems rely on couplings of the continuous time PDMP with its approximation and are described respectively in Couplings \ref{coup:wass_gen_pdmp} and \ref{coup:error_tv_distance}. 

In many areas it is important to understand the long time behaviour of the approximation schemes. 
In the field of Markov chain Monte Carlo (MCMC) algorithms the goal is to simulate a process that converges in law to the correct probability measure, which is the posterior distribution in Bayesian statistics and the Boltzmann-Gibbs distribution in statistical physics. In this context, such a probability measure is the invariant distribution of the PDMP.
In Theorem~\ref{thm:weakerror} we prove uniform in time convergence of the weak error between the PDMP and the approximations given by the FD-PDMP or the PD-PDMP algorithms. In particular, we obtain convergence in law of the approximation and its time average to the invariant measure of the PDMP in the joint limit as time tends to infinity and step size tends to zero (see Corollary~\ref{cor:convergence-to-stationary-measure}).

We confirm the applicability of our theorems on a variety of examples. ZZS and BPS are instances of PDMPs for which exact simulation of the random event times is not always possible. In Example \ref{ex:ZZ} we discuss how to approximate the ZZS, and in Examples \ref{ex:zzs_wass}, \ref{ex:zzs_weakerror} we show that our Theorems apply to the proposed approximation. An attractive feature of ZZS is that it allows for exact \emph{subsampling} (see \cite{ZZ}), which means that in a Bayesian statistics setting for each ``iteration" of the algorithm only a subset of the data has to be accessed. In Example \ref{ex:zzs_subsampling} we propose an approximation of ZZS with subsampling which also has the property of accessing a batch of the data over each time step and prove convergence in total variation as the step size tends to zero. For BPS we construct an approximation and prove convergence as step size tends to zero and time tends infinity in Examples \ref{ex:BPSintro}, \ref{ex:bps_tv} and \ref{ex:bps_weakerror}. In contrast to ZZS and BPS, randomised Hamiltonian Monte Carlo (RHMC) (see \cite{bou2017randomized}) is an example of a PDMP in which it is typically not possible to simulate the flow $\varphi_t$ exactly. In Examples \ref{ex:randHMC}, \ref{ex:rhmc_wass}, and \ref{ex:rhmc_weakerror} we discuss approximations of RHMC and show convergence as the step size tends to zero and time tends to infinity. We also discuss continuous time approximation schemes of a PDMP in Examples \ref{ex:continuous_approx_pdmp} and \ref{ex:continuous_approx_weakerror}.

\subsection*{Related works}
Whereas discretisations of stochastic differential equations such as the Langevin equation have been studied extensively in the literature, see e.g. the book \cite{kloeden1992stochastic} or recent papers \cite{Durmus_Moulines_NAunadj,Durmus_Moulines_highdim,sanzserna2021wasserstein}, the same has not been done for PMDPs.
Here we give a brief overview of works that are to some extent related to the present manuscript. 

An approximation scheme for PDMPs suitable for a specific setting was proposed in \cite{lemaire_thieullen_thomas_2020}. The authors consider the case in which the ODE describing the deterministic motion can only be solved numerically, a global upper bound for the switching rates is available, and the kernels $Q$ can be simulated exactly. Their proposal is to move deterministically according to a numerical integrator and to draw a proposal for the following event time according to the upper bound of the switching rates and then accept or reject it by Poisson thinning. The framework we propose in Algorithms \ref{alg:advanced_EPDMP} and \ref{alg:secondorder_EPDMP} is more general as approximations of all characteristics are possible. Moreover, the approximations in this manuscript do not require existence or knowledge of an upper bound for the switching rates. As discussed in Example \ref{ex:Morris-Lecar} it is possible to closely resemble the proposal of \cite{lemaire_thieullen_thomas_2020} using our framework. Moreover, we obtain similar finite time strong and weak error results as in \cite{lemaire_thieullen_thomas_2020} by applying Theorem~\ref{thm:strong_error_pdmp}. 

In \cite{vanetti2017piecewisedeterministic} the authors focus on how to design a discrete time PDMP with a specific invariant measure. This is a fundamentally different approach to the focus of this paper. A related work is \cite{sherlock2021discrete}, which defines a discrete time chain that resembles a BPS.

The book \cite{cocozzamarkov} discusses approximations of PDMP based on finite volume schemes for the Chapman-Kolmogorov type equations. Such schemes approximate the law of the process and are thus different in nature compared to this manuscript.

Finally, we discuss  papers that deal with continuous time approximations of the ZZS. In \cite{cotter2020nuzz} the authors propose an approximation that relies on an integrator and a root finding method to generate the random event times. The paper \cite{huggins2017quantifying} discusses the effect of approximate switching rates $\{\Tilde{\lambda}\}_{i=1}^d$ on the stationary measure of the ZZS. This approximation relies on the availability of suitable $\{\Tilde{\lambda}\}_{i=1}^d$ for which it is possible to (efficiently) simulate the corresponding ZZS. In Examples \ref{ex:continuous_approx_pdmp} and \ref{ex:continuous_approx_weakerror} we discuss applications of our theory to these approximation schemes. \review{A similar setting is considered in \cite[Theorems 11 and 25]{pdmp_inv_meas}, where the authors establish bounds in total variation distance between (the invariant measures of) two PDMPs with same deterministic dynamics, but different switching rates and jump kernels. The authors prove such bounds by a coupling of the two continuous time PDMPs that is similar in spirit to our Coupling \ref{coup:error_tv_distance}. In this paper, in particular in Section \ref{sec:tv_distance}, we bound the TV distance between a PDMP and a discrete time approximation. Thus the statements and proofs differ from \cite{pdmp_inv_meas} in this sense.}

\subsection*{Organisation of the paper}
The paper is organised as follows. In Section \ref{sec:notation} we define notation that we use throughout the paper. In Section \ref{sec:algorithms} we describe the setting and the proposed algorithms. In particular in Section \ref{sec:algorithms_firstorder} we discuss first order schemes and in Section \ref{sec:higherorder} we consider higher order schemes. Section \ref{sec:main_results} contains the main results together with the required assumptions. This section is divided into three parts. Section \ref{sec:Wass_bounds} is devoted to convergence in Wasserstein distance, which is established in Theorem~\ref{thm:strong_error_pdmp}. Section \ref{sec:tv_distance} concerns convergence in total variation as stated in Theorem~\ref{thm:tv_distance}. Section \ref{sec:weakerror} gives conditions for uniform in time convergence of the weak error, as expressed by Theorem~\ref{thm:weakerror}. \review{In Section \ref{sec:examples} we gather examples to demonstrate the when the assumptions of the main theorems are satisfied.} 
The proofs of the three main theorems can be found respectively in Section \ref{sec:proof_wass_theorem}, Section \ref{sec:proofs_tv} and Section \ref{sec:proof_weakerror_thm}. All other results as well as all auxiliary lemmas from Sections \ref{sec:Wass_bounds}, \ref{sec:tv_distance}, and \ref{sec:weakerror} can be found respectively in Appendix \ref{sec:proofs_wasserstein_appendix}, Appendix \ref{sec:proofs_tv_appendix}, and Appendix \ref{sec:proofs_weakerror_appendix}.

\section{Notation}\label{sec:notation}
We denote the semigroup of the continuous time PDMP, $\{Z_t\}_{t\geq 0}$, as $\cP_t$ which acts on suitable functions by
\begin{equation*}
    \cP_tf(z) = \mathbb{E}_z[f(Z_t)].
\end{equation*}
Here the subscript $z$ denotes that the process $Z_t$ has initial position $Z_0=z$. Note that the semigroup is related to the transition probability of $Z_t$, which is denoted by $\cP_t(z,A)$. These concepts are related for any function $f$ and measurable set $A\subseteq E$ by 
\begin{equation*}
    \cP_tf(z)=\int f(y) \cP_t(z,\dd y), \quad \cP_t(z,A)=(\cP_t\mathbbm{1}_A)(z).
\end{equation*}
Similarly we denote the transition probability of the approximation processes $\{\Zbar_{t_n}\}_{n\in\N}$ as $\overline{\cP}_{t_n}$. 

Consider a metric $d:\mathbb{R}^d\times\mathbb{R}^d \to \mathbb{R}_+$ and let $P,Q$ be probability measures on $\mathbb{R}^d$. Then we define the Wasserstein distance of order $1$ with respect to the metric $d$ as
\begin{equation}\label{eq:wasserstein}
    \mathcal{W}_1(P,Q) = \inf_{R\in \Pi(P,Q)} \left\{ \int_{\mathbb{R}^d\times\mathbb{R}^d} d(x,y)  R(\dd x,\dd y) \right\},
\end{equation}
where $\Pi(P,Q)$ is the set of couplings of the two probability measures $P,Q$, that is the set of probability measures $R$ on $\mathbb{R}^d\times\mathbb{R}^d$ such that $R(A,\mathbb{R}^d)=P(A)$ and $R(\mathbb{R}^d,B)=Q(B)$.

We will denote a norm by $\lVert\cdot \rVert$. The maximum between $a\in\mathbb{R}$ and $0$ is denoted by $(a)_+=\max\{a,0\}$.

Let us define the space $\C^k$ to be the set of functions $f:\R^d \to \R$ which are $k$ times continuously differentiable. $\C^k_b$ ($\C_c^k$ respectively) denotes the subset of $\C^k$ to functions which are bounded (resp. have compact support) with bounded and continuous derivatives up to order $k$. We endow the space $\C_b$ with the supremum norm $\lVert\cdot\rVert_\infty$ and the space $\C_b^1$ is endowed with the norm
\begin{equation*}
    \lVert f\rVert_{\C_b^1} = \lVert f\rVert_\infty + \sum_{i=1}^d\lVert \partial_i f\rVert_\infty.
\end{equation*}

Consider a random variable $I$ with values in $\{1,\dots,m\}$ such that $\mathbb{P}(I=i)=w_i$ for $i=1,\dots,m$. Then we say $I$ has a discrete distribution with probabilities $w_i$ and we denote this as $I\sim \multi(\{w_i\}_{i=1}^m)$.

Given a measure $\pi$ we define for any $f\in L^1_\pi$
\begin{equation*}
    \pi(f)=\int f(z) \pi(\dd z).
\end{equation*}
Similarly for a probability kernel $Q(x,\dd y)$ we  write
\begin{equation}\label{eq:kernelnotation}
    Qf(x) = \int f(y) Q(x,\dd y)
\end{equation}
for $f$ measurable and integrable with respect to $Q(x,\dd y)$ for all $x$.
Note that \eqref{eq:kernelnotation} allows us to consider a probability kernel as a map from the space of bounded and measurable functions, $B_b$, to $B_b$.

Let us define the \emph{total variation distance} between two probability measures $\mu$, and $\nu$ as
\begin{equation*}
    \lVert \mu-\nu\rVert_{TV} = \sup_{f\in \C_b:\lVert f\rVert_\infty\leq 1}\lvert \mu(f)-\nu(f)\rvert.
\end{equation*}

Given a vector field $\vf$ we can view this as a map, $\vfd$, which acts on $\C^1$ functions as
\begin{equation*}
    \vfd(f)(x) = \vf(x)^T\nabla f(x), \quad \text{ for } f\in \C^1.
\end{equation*}

Given two maps $X,Y: \C^\infty \to \C^\infty$ we shall define the commutator of $X$ and $Y$, $[X,Y]$ to be the map $[X,Y]:\C^\infty\to \C^\infty$ by
\begin{equation*}
    [X,Y]f=XYf-YXf, \quad \quad \text{ for } f\in \C^\infty.
\end{equation*}
We will use this with the maps $\vfd$ and $Q$. If we assume that $Q$ preserves $\C^1$ and $\vfd$ is bounded then we can view $[\vfd,Q]:\C^1_b\to B_b$ defined by 
\begin{equation*}
    [\vfd,Q]f=\vfd (Q f)-Q\vfd (f), \quad \quad \text{ for } f\in \C^1_b.
\end{equation*}
\review{Note that although the commutator was defined for smooth vector fields the above definition makes sense for all $C_b^1$-functions since $\vfd$ is a map from $\C_b^1$ to $\C_b$ and we have $Q(\C_b^1)\subseteq \C_b^1$, $Q(\C_b)\subseteq \C_b$ so both the operations $Q(\vfd f)$ and $\vfd (Qf)$ are well defined for $f\in \C_b^1$.}

\section{Algorithms}\label{sec:algorithms}
Consider a PDMP $(Z_t)_{t\geq 0}$ taking values on a state space $E$, which is a subset of a finite dimensional vector space. Examples are $E=\mathbb{R}^d\times \mathbb{R}^d$ or $E=\mathbb{R}^d\times \{-1,+1 \}^d$. The dynamics of the process are described by the generator $\cL$, which applied on a function \review{in the domain of the extended generator} gives
\begin{equation}\label{eq:genPDMPmanykernels}
\cL f(z) = \langle \vf(z), \nabla_zf(z)\rangle + \sum_{i=1}^m\lambda_i(z)\int_E (f(y)-f(z)) Q_i(z,\dd y).
\end{equation}
\review{The generator here is understood to be the extended generator, see \cite[Theorem 26.14]{Davis1993} for the exact description of the domain of the extended generator. Note, in particular that functions that are differentiable in the direction $\Phi$ and bounded are included in the domain.} Here $\Phi$ is a smooth and globally Lipschitz vector field, $\lambda_i:E\to [0,\infty)$ are continuous functions and $Q_i$ are probability kernels. 
Let $\varphi_t$ denote the integral curve of $\Phi$, i.e. the solution to the following ordinary differential equation (ODE)
\begin{equation*}
\frac{\dd }{\dd t}\varphi_t(z)=\vf(\varphi_t(z)), \quad \varphi_0(z)=z, \quad \text{ for all } t\geq 0, z\in E.
\end{equation*}
Note that $\varphi_t$ exists since $\vf$ is globally Lipschitz. We assume that $\varphi_t$ leaves $E$ invariant. Define the total switching rate
\begin{equation*}
    \lambda(z) = \sum_{i=1}^m \lambda_i(z).
\end{equation*} 
\review{As shown in \cite[Section 26]{Davis1993} \eqref{eq:genPDMPmanykernels} corresponds to a PDMP where} the next event time is distributed as
\begin{equation}\label{eq:switching_time_pdmp}
    \mathbb{P}_z(\tau \leq t) = 1-\exp\left(-\int_0^t \lambda(\varphi_s(z))\dd s\right),
\end{equation}
and that between two random events the process follows the flow-map $\varphi_t$, i.e. $Z_t=\varphi_t(z)$. At event time, $\tau$, the process jumps according to probability kernel $Q_I$, where $I$ is distributed according to the following discrete distribution $$I\sim\multi\left(\left\{\frac{\lambda_i(\varphi_\tau(z))}{\lambda(\varphi_\tau(z))}\right\}_{i=1}^m\right).$$ 
Algorithm \ref{alg:pdmp} describes the simulation procedure for a PDMP with generator \eqref{eq:genPDMPmanykernels}.

\begin{algorithm}[t]
\SetAlgoLined
\SetKwInOut{Input}{Input}\SetKwInOut{Output}{Output}
\Input{Time horizon $T$, initial condition $z$.}
 Set $t = 0$, $Z_0 = z$\;
 \While{$t < T$}{
  simulate next event time as 
  $$\tau = \inf\left\{ r>0: 1-\exp\left(-\int_0^r \lambda(\varphi_s(Z_t))\dd s\right) \geq U \right\} $$ where $U \sim \text{Unif}[0,1]$ \;
  simulate $Z_{t+s}=\varphi_s(Z_t)$ for $s\in(0,\tau)$\;
  draw $I\sim \multi\big(\big\{\frac{\lambda_i(Z_{t+\tau-})}{ \lambda(Z_{t+\tau-})}\big\}_{i=1}^m\big)$\; 
  simulate $Z_{t+ \tau} \sim Q_I(Z_{t+\tau-},\cdot)$\;
  set $t=t+\tau$\;
 }
 \caption{Pseudo-code for the simulation of a PDMP}
 \label{alg:pdmp}
\end{algorithm}

\begin{remark}\label{note:manykernels}
It is possible to rewrite \eqref{eq:genPDMPmanykernels} to the form 
\begin{equation}\label{eq:genPDMPonekernel}
\cL f(z) = \langle \vf(z), \nabla_zf(z)\rangle + \lambda(z)\int_E (f(y)-f(z)) Q(z,\dd y)
\end{equation}
for some continuous function $\lambda: E\to [0,\infty)$ and probability kernel $Q$. Indeed this can be achieved by setting 
\begin{align}\label{eq:defoofQandlambda}
    \lambda(z) = \sum_{i=1}^m \lambda_i(z), \quad 
     Q(z,\dd y)=\sum_{i=1}^m \frac{\lambda_i(z)}{\lambda(z)} Q_i(z,\dd y).
\end{align}
Therefore there is no loss of generality for the PDMP to take $m=1$. However we will see in Section \ref{sec:Wass_bounds} that allowing $m\geq 1$ leads to weaker assumptions for our convergence results, in particular we will see in Example \ref{ex:zzs_wass} a case where the assumptions are satisfied with $m>1$ but would not be satisfied when written in the form \eqref{eq:genPDMPonekernel}. 
\end{remark}

The focus of this paper is to define and analyse approximations of PDMPs that can be employed in settings where their simulation cannot be performed exactly. As explained in the introduction, there are three quantities which characterise a PDMP and may be difficult to simulate. These are the flow map $\varphi_t$, the random event times with rates $\lambda_i$, and the Markov kernels $Q_i$. The idea is then to introduce $p$-th order approximations of the three characteristics for some $p\geq 1$. Precise conditions on the approximations are given in Assumptions \ref{ass:integrator}, \ref{ass:approx_jump_kernel}, \ref{ass:lambda_wasserstein}, but here we provide a heuristic description.
The flow map $\varphi_t(z)$ can be approximated with a numerical integrator, which is denoted as $\phibar_t(z;\delta,p)$. The parameters $\delta,p$ have the meaning that for $s\in[0,\delta]$ we have that $\phibar_s(z;\delta,p)$ is an approximation of order $\delta^p$ of $\varphi_s(z)$. Classical examples of numerical integrators from the ODE literature include the Euler discretisation, the leap frog scheme, and higher order numerical schemes. Then we want to approximate the switching rates in such a way that the random times \eqref{eq:switching_time_pdmp} can be simulated easily at the cost of a small error. This can be done by using order $\delta^p$ approximations of $\lambda(\varphi_s(z))$, i.e. the switching rate along the deterministic flow. We denote the corresponding approximation as $\lambdabar(z,s;\delta,p):E\times[0,\infty)\to [0,\infty)$.
The motivation is to ensure the following as an approximation of order $\delta^p$ for $t\leq \delta$:
\begin{equation}\notag
    \mathbb{P}_z(\tau \leq t) \approx 1-\exp\left(-\int_0^t \lambdabar(z,s;\delta,p)\dd s\right).
\end{equation}
Here
\begin{equation*}
    \lambdabar(z,s;\delta,p)=\sum_{i=1}^m \lambdabar_i(z,s;\delta,p).
\end{equation*}
Let us give some examples with $p=1$.
A possible choice is to ``freeze'' the switching rate, thus taking $\overline{\lambda}_{i}(z,s;\delta,1) = \lambda_i(z)$. This is supported by the intuition that $\lambda(\varphi_s(z))\approx \lambda(z)$ for small $s$. In this case  $\mathbb{P}_z(\tau \leq t)$ is approximately equal to $1-\exp\left(-t \lambda(z)\right)$, which is the cumulative distribution function of the exponential distribution with constant rate $\lambda(z)$. We refer to the $\lambdabar_i$ as \textit{frozen switching rates} and to the corresponding approximation process as \emph{Euler approximation}. Alternatively one could take $\lambdabar_i(z,s;\delta,1) = \lambda_i(\varphi_\delta(z))$, or the switching rates along the trajectory given by the numerical integrator $\lambdabar_i(z,s;\delta,1) = \lambda_i(\phibar_s(z;\delta,1))$, or more generally $\lambdabar_i(z,s;\delta,p) = \lambda_i(\phibar_s(z;\delta,p))$. 
Finally, consider the Markov kernels $Q_i$. We define a function $F_i$ which describes a choice of implementation of $Q_i$. Let $F_i:E\times\mathcal{U}\to E$ be a deterministic map such that $F_i(z,U)$ is distributed according to $Q_i(z,\cdot)$ when $U$ is distributed according to a probability distribution $\nuU$. We can then approximate each map $F_i$ by a map $\Fbar_{i}(\cdot;\delta,p):E\times\mathcal{U}\to E$, where once again $\delta,p$ denotes the order of accuracy of our estimate. To simplify the notation, when we consider first order schemes, i.e. $p=1$, we shall suppress the $p$-dependence and write $\phibar_s(z;\delta)$, $\lambdabar_i(z,s;\delta)$, $\Fbar_i(z,U;\delta)$.

Now that we have introduced the problem and the various approximations we wish to exploit, we illustrate how to design first order and higher order approximation schemes for PDMPs. 
By an \emph{order $p$ scheme} we mean an approximation process for which the \emph{local error}, i.e. the error between the PDMP and the approximation over a step of size $\delta$ with identical initial conditions, is proportional to $\delta^{p+1}$.
Therefore after $n$ steps of size $\delta$ the \emph{global error} is proportional to $t_n\delta^p$ where $t_n=n\delta$, which motivates the term order $p$ scheme.

\subsection{First order schemes}\label{sec:algorithms_firstorder}
Let us introduce a mesh $\{t_n\}_{n\in\N}$ for the time variable where $t_n=\sum_{\ell=1}^n\delta_\ell$, and $\delta_\ell$ are step sizes. For example if the step size is constant $\delta_\ell=\delta$ then $t_n=n\delta$ for all $n\in \N$. In this section we introduce two alternative first order schemes: the FD-PDMP algorithm and the PD-PDMP algorithm. We define the FD-PDMP approximation $\{\Zbar_{t_n}\}$ on the mesh $\{t_n\}_{n\in\N}$ by setting $\Zbar_{0}=z$ and then following the procedure
\begin{align*}
\tilde{Z}_{t_{n+1}}&=\phibar_{\delta_{n+1}}(\Zbar_{t_n};\delta_{n+1}),\\
\Zbar_{t_{n+1}}&=\alpha_{n+1}\Fbar_{\bar{I}_{n+1}}(\tilde{Z}_{t_{n+1}},U_{n+1};\delta_{n+1})+(1-\alpha_{n+1})\tilde{Z}_{t_{n+1}} .
\end{align*}
Here we have $U_{n+1}\sim\nuU$. The value of $\alpha_{n+1}$ is determined as follows. 
We simulate $\bar{\tau}$ which has distribution conditional on $\Zbar_{t_n}$ given by
\begin{equation*}
    \mathbb{P}_z(\bar{\tau} \leq t\rvert \Zbar_{t_n}) = 1-\exp\left(-\int_0^t \lambdabar(\Zbar_{t_n},s;\delta_{n+1})\dd s\right). 
\end{equation*}
Then $\alpha_{n+1}=1$ if and only if $\bar{\tau}\leq \delta_{n+1}$, otherwise $\alpha_{n+1}=0$. 
We then draw
\begin{equation}\label{eq:multinomial_draw}
    \bar{I}_{n+1} \sim \multi\left(\left\{\frac{\lambdabar_i(\Zbar_{t_n},\bar{\tau};\delta_{n+1})}{\lambdabar(\Zbar_{t_n},\bar{\tau};\delta_{n+1})}\right\}_{i=1}^m\right).
\end{equation}
The resulting Markov chain $\Zbar_{t_n}$ is thus updated by first following the approximate flow map  and then establishing whether a random event takes place at the end of the current time interval. This procedure is written in pseudo-code form in Algorithm \ref{alg:basic_EPDMP}.
Note that if $\lambdabar(z,s;\delta_{n+1})$ is independent of $s$, i.e. $\lambdabar(z,s;\delta_{n+1})= \lambdabar(z;\delta_{n+1})$, then we do not need to simulate $\taub$ and we have that $\alpha_{n+1}$ is a Bernoulli random variable with success rate $1-\exp(-\delta_{n+1}\lambdabar(z;\delta_{n+1}))$ and $\bar{I}_{n+1}$ is distributed as
\begin{equation*}
    \bar{I}_{n+1} \sim \multi\left(\left\{\frac{\lambdabar_i(\Zbar_{t_n};\delta_{n+1})}{\lambdabar(\Zbar_{t_n};\delta_{n+1})}\right\}_{i=1}^m\right).
\end{equation*}
This is for instance the case of frozen switching rates.
\begin{algorithm}[t]
\SetAlgoLined
\SetKwInOut{Input}{Input}\SetKwInOut{Output}{Output}
\Input{Number of iterations $N$, initial condition $z$, step sizes $(\delta_n)_{n=0}^N$.}
\Output{Chain $(\Zbar_{t_{n}})_{n=0}^N$.}
 Set $n=0$, $\Zbar_0 = z$\;
 \While{$n < N$}{
  simulate $\tilde{Z} = \phibar_{\delta_{n+1}}(\Zbar_{t_n};\delta_{n+1})$\;
  simulate 
  $$\bar{\tau} = \inf\left\{ r>0: 1-\exp\left(-\int_0^r \lambdabar(\Zbar_{t_n},s;\delta_{n+1})\dd s\right) \geq U \right\} $$ where $U \sim \text{Unif}[0,1]$ \;
   \If{$\bar{\tau}\leq \delta_{n+1}$}{
   draw $U_{n+1} \sim \nuU$ and $\bar{I}_{n+1}\sim \multi\Big(\Big\{\frac{\lambdabar_i(\Zbar_{t_n},\Bar{\tau};\delta_{n+1})}{\lambdabar(\Zbar_{t_n},\Bar{\tau};\delta_{n+1})}\Big\}_{i=1}^m\Big)$\;
   set $\tilde{Z} = \Fbar_{\bar{I}_{n+1}} (\tilde{Z},U_{n+1}; \delta_{n+1})$\;
  }
  set $\Zbar_{t_{n+1}}=\tilde{Z}$\;
  set $n=n+1$\;
 }
 \caption{Fully Discrete Approximation of a PDMP}
 \label{alg:basic_EPDMP}
\end{algorithm}

A different approach is shown in Algorithm \ref{alg:advanced_EPDMP}, which describes the PD-PDMP approximation. Here the idea is to simulate the switching time $\bar{\tau}$ with rate $\lambdabar(\Zbar_{t_n},s;\delta_{n+1})$, then if $\bar{\tau}$ is before the end of the current time step set $t=t_n+\bar{\tau}$, draw $\overline{I}_{n+1}$ as in \eqref{eq:multinomial_draw}, and follow the procedure below:
\begin{alignat*}{2}
& \Zbar_{t} &&=\overline{F}_{\bar{I}_{n+1}}(\tilde{Z}_{t},U_{n+1};\delta_{n+1}) , \quad \text{where }  \tilde{Z}_{t} =\phibar_{\bar{\tau}}(\Zbar_{t_n};\delta_{n+1}),\\
& \Zbar_{t_{n+1}} &&= \phibar_{t_{n+1}-t}(\Zbar_{t};\delta_{n+1}).
\end{alignat*}
On the other hand, when $\bar{\tau}>\delta_{n+1}$ the process is simply moving deterministically according to the approximate flow map, i.e. $\Zbar_{t_{n+1}}= \phibar_{\delta_{n+1}}(\Zbar_{t_n};\delta_{n+1})$.
\begin{algorithm}[t]
\SetAlgoLined
\SetKwInOut{Input}{Input}\SetKwInOut{Output}{Output}
\Input{Number of iterations $N$, initial condition $z$, step sizes $(\delta_n)_{n=0}^N$.}
\Output{Chain $(\Zbar_{t_{n}})_{n=0}^N$.}
 Set $n=0$, $\Zbar_0 = z$\;
 \While{$n < N$}{
  simulate 
  $$\bar{\tau} = \inf\left\{ r>0: 1-\exp\left(-\int_0^r \lambdabar(Z_t,s;\delta_{n+1})\dd s\right) \geq U \right\} $$ where $U \sim \text{Unif}[0,1]$ \;
  \eIf{$\bar{\tau}< \delta_{n+1}$}{
   set $t=t_n+\bar{\tau}$\;
   simulate $\tilde{Z}_{t} = \phibar_{\bar{\tau}}(\Zbar_{t_n};\delta_{n+1})$\;
   draw $U_{n+1} \sim \nuU$ and $\bar{I}_{n+1} \sim \multi\Big(\Big\{\frac{\lambdabar_i(\Zbar_{t_n},\Bar{\tau};\,\delta_{n+1})}{\lambdabar(\Zbar_{t_n},\Bar{\tau};\,\delta_{n+1})}\Big\}_{i=1}^m\Big)$\;
   set $\Zbar_t = \Fbar_{\bar{I}_{n+1}} (\tilde{Z}_t,U_{n+1}; \delta_{n+1})$\;
   simulate $\Zbar_{t_{n+1}} = \phibar_{t_{n+1}-t}(\Zbar_{t};\delta_{n+1})$\;
   }{
   simulate $\Zbar_{t_{n+1}} = \phibar_{\delta_{n+1}}(\Zbar_{t_n};\delta_{n+1})$\;
  }
  set $n=n+1$\;
 }
 \caption{Partially Discrete Approximation of a PDMP}
 \label{alg:advanced_EPDMP}
\end{algorithm}
Only one random jump per time step is allowed, and in this case it happens at time $\bar{\tau}$ instead of at the end of the time step. This choice comes with advantages and disadvantages if compared to Algorithm \ref{alg:basic_EPDMP}. As we shall see in Sections \ref{sec:tv_distance} and \ref{sec:weakerror} it is possible to obtain stronger results under weaker assumptions on the PDMP in the setting of Algorithm \ref{alg:advanced_EPDMP} compared to Algorithm \ref{alg:basic_EPDMP}. However this may come at a larger computational cost (see e.g. Example \ref{ex:BPSintro}).

\subsection{Examples}\label{sec:algorithms_examples}
In this section we introduce several examples, which will be revisited as illustrative applications of our results. In the first three examples, i.e. Examples \ref{ex:ZZ}, \ref{ex:BPSintro}, \ref{ex:randHMC}, we discuss MCMC samplers which target a probability measure with density $\pi(x)\propto \exp(-\pot(x))$ for $x\in \mathbb{R}^d$.
\begin{example}[Zig-Zag sampler \cite{ZZ}]\label{ex:ZZ}
	Let $E=\R^d\times\{+1,-1\}^d$, and for any $z\in E$ we write $z=(x,v)$ for $x\in \R^d$, $v\in\{+1,-1\}^d$. Set  $\Phi(x,v)=(v,0)^T$, $m=d$, $\lambda_i(x,v)=(v_i\partial_i\pot(x))_++\gamma_i(x,v)$, and $Q_i((x,v),(\dd y,\dd w))=\delta_{(x,R_iv)}(\dd y,\dd w)$, where $\delta_z$ denotes the Dirac Delta measure and $R_iv=(v_1\ldots,v_{i-1},-v_i,v_{i+1},\ldots,v_d)$.  The ZZS is described by its generator
	\begin{equation}\label{eq:ZZgen}
	\cL f(x,v) = \langle v, \nabla_x f(x)\rangle + \sum_{i=1}^d\lambda_i(x,v)[f(x,R_iv)-f(x,v)]. 
	\end{equation}
	Simulating the event times with rates of this form is in general a very challenging problem as the integral in \eqref{eq:switching_time_pdmp} cannot be computed for general potentials $\pot$. 
	
	We can apply Algorithm \ref{alg:basic_EPDMP} to the ZZS to obtain $(\overline{X}_{t_{n+1}},\overline{V}_{t_{n+1}})$ given the previous state by first simulating the next switching time $\bar{\tau}$ with rate $\overline{\lambda}((\Xbar_{t_n},\Vbar_{t_n}),s;\delta_{n+1})$ and then
	\begin{align*}
	\overline{X}_{t_{n+1}}&\coloneqq\overline{X}_{t_n}+\overline{V}_{t_n}\delta_{n+1} \\
	\overline{V}_{t_{n+1}}&\coloneqq\begin{cases} R_{\bar{I}_{n+1}}\overline{V}_{t_n} & \text{ if } \bar{\tau}\leq \delta_{n+1},\\
	\overline{V}_{t_n} & \text{ if } \bar{\tau}> \delta_{n+1},
	\end{cases}
	\end{align*} 
	where $\overline{\lambda}(z,s;\delta)=\sum_{i=1}^d \overline{\lambda}_{i}(z,s;\delta)$, and $$\bar{I}_{n+1}\sim \multi\left(\left\{\frac{\lambdabar_{i}((\Xbar_{t_n},\Vbar_{t_n}),\bar{\tau};\delta_{n+1})}{\lambdabar((\Xbar_{t_n},\Vbar_{t_n}),\bar{\tau};\delta_{n+1})}\right\}_{i=1}^d\right).$$ 
	 The only approximation concerns the switching rates, whereas it is straightforward to simulate the linear dynamics and the jumps at event times. As mentioned above, a simple choice is to take $\overline{\lambda}_{i}((x,v),s;\delta) = \lambda_i(x,v)$, which results in an Euler approximation of the ZZS.  An alternative choice is
    \begin{equation}\label{eq:lambda_gradfree_zzs}
        \overline{\lambda}_{i} ((x,v),s;\delta) = \frac{1}{\delta}\left( \pot(x+ v_i e_i \delta)-\pot(x) \right)_++\gamma_i(x,v),
    \end{equation}
    which is obtained by a finite difference scheme approximation for $\partial_i\pot$. Here $e_i$ is the $i$-th vector of the canonical basis. Observe that with this choice of $\overline{\lambda}_{i}$ the approximation is gradient free, as it does not require computing $\nabla \pot$. An approximation given by Algorithm \ref{alg:advanced_EPDMP} may be introduced analogously.
\end{example}

\begin{example}[Bouncy Particle Sampler \cite{BPS,PetersDeWith2012}]\label{ex:BPSintro}
	Let $E=\R^d\times\R^d$, and for any $z\in E$ we write $z=(x,v)$ for $x\in \R^d$, $v\in\R^d$. Set $\Phi(x,v)=(v,0)^T$, $m=2$, 
	$$Q_1((x,v),(\dd y,\dd w)) = \delta_{(x,R(x)v)}(y,w), \quad Q_2((x,v),(\dd y,\dd w)) =  \delta_x(\dd y)\nu(\dd w),$$ 
	with $\lambda_1(x,v) =(v^T\nabla_x\pot(x))_+$, $\lambda_2= \lambda_{r}$ for $\lambda_{r}>0$,  $\nu$ is the Gaussian measure, and finally 
	$$R(x)v=v-2\frac{\langle v,\nabla_x\pot(x)\rangle}{\lVert\nabla_x\pot(x)\rVert^2} \nabla_x\pot(x).$$
	In this example $\lVert\cdot\rVert$ denotes the Euclidean norm.
	The BPS has generator 
	\begin{equation}\notag
	\cL f(x,v)\! =\! \langle v, \nabla_x f(x)\rangle + \lambda_1(x,v)[f(x,R(x)v)-f(x,v)]+\lambda_2 \!\int\!\! \big(f(x,w) - f(x,v)\big) \nu(\dd w).
	\end{equation}
	For the same reasons as for the ZZS, simulating the event times can be very challenging for the BPS.
	
	For this process we introduce an approximation based on Algorithm \ref{alg:advanced_EPDMP}. Let $U_{n+1}=(\mathscr{Z}_{n+1},\mathscr{U}_{n+1})$ for $\mathscr{Z}_{n+1}$ distributed according to the standard Gaussian distribution $\nu$ and $\mathscr{U}_{n+1}\sim\textnormal{Unif}([0,1])$ is an independent uniform random variable. For $n\geq 0$ we define the next state of the approximation $(\overline{X}_{t_{n+1}},\overline{V}_{t_{n+1}})$ given the previous state by first simulating $\bar{\tau}$ with distribution $\mathbb{P}_z(\bar{\tau}> t) =\exp(-\int_0^t \overline{\lambda}((\Xbar_{t_n},\Vbar_{t_n}),s;\delta_{n+1})\dd s)$ and then 
	\begin{align*}
	\overline{X}_{t_{n+1}}&\coloneqq\overline{X}_{t_n}+\bar{\tau}\overline{V}_{t_n} + (\delta_{n+1}-\bar{\tau}) \overline{V}_{t_{n+1}}, \\
	\overline{V}_{t_{n+1}}&\coloneqq\begin{cases} F((\overline{X}_{t_{n} + \bar{\tau} },\overline{V}_{t_n}),U_{n+1}) & \text{ if } \bar{\tau} \leq\delta_{n+1},\\
	\overline{V}_{t_n} & \text{ if } \bar{\tau} >\delta_{n+1}.
	\end{cases}
	\end{align*}
	Here $\overline{\lambda}((x,v),t;\delta_{n+1}) = \overline{\lambda}_{1}((x,v),t;\delta_{n+1})+\lambda_r$ where $\overline{\lambda}_{1}((x,v),t;\delta_{n+1})$ approximates $\lambda_1(x+vt,v)$ and
	\begin{equation*}
	    F((\overline{X}_{t_{n} + \overline{\tau} },\overline{V}_{t_n}),U_{n+1}) = \begin{cases}
	        R(\overline{X}_{t_{n}+\overline{\tau}})\overline{V}_{t_n}   & \text{ if } \mathscr{U}_{n+1} > \frac{\lambda_r}{\overline{\lambda}((\overline{X}_{t_n},\overline{V}_{t_n}),\bar{\tau};\delta_{n+1})}, \\
	        \mathscr{Z}_{n+1}  & \text{ if } \mathscr{U}_{n+1} \leq \frac{\lambda_r}{\overline{\lambda}((\overline{X}_{t_n},\overline{V}_{t_n}),\bar{\tau};\delta_{n+1})}.
	    \end{cases}
	\end{equation*}
	It is thus clear that applying Algorithm \ref{alg:advanced_EPDMP} rather than Algorithm \ref{alg:basic_EPDMP} can be more computationally expensive in the case of BPS, as when an event takes place $\nabla\pot$ has to be evaluated at some midpoint $\overline{X}_{t_{n}+\bar{\tau}}$ in order to compute the reflection operator. In contrast, $\nabla\pot$ has to be computed only at gridpoints in Algorithm \ref{alg:basic_EPDMP}. We shall see in Section \ref{sec:main_results} that our theoretical results can only be applied to approximations of the BPS based on Algorithm \ref{alg:advanced_EPDMP}, motivating the need for that algorithm.

	Similarly to the case of the ZZS described in Example \ref{ex:ZZ}, possible approximations of $\lambda_1(x+vt,v)$ are $\overline{\lambda}_{1}((x,v),t;\delta) = \lambda(x,v)$ or 
	\begin{equation}\notag
        \overline{\lambda}_1 ((x,v),t;\delta) = \frac{1}{\delta}\left( \pot(x+ v \delta)-\pot(x) \right)_+.
    \end{equation}
    The latter choice is not enough to not make the simulation of $(\Xbar_t,\Vbar_t)$ gradient free because $\nabla \pot$ is needed in the computation of the reflection operator.
\end{example}

\begin{example}[Randomized Hamiltonian Monte Carlo algorithm]\label{ex:randHMC}
The randomized Hamiltonian Monte Carlo algorithm (see \cite{bou2017randomized}) is defined on $E=\R^d\times\R^d$ by the generator
\begin{equation*}
    \cL f(q,p) = \langle p,\nabla_q f(q,p)\rangle - \langle \nabla_q \pot(q),\nabla_p f(q,p) \rangle + \lambda_r \int \big(f(q,p') - f(q,p)\big) \nu(\dd p'),
\end{equation*}
where $\nu$ is a Gaussian measure on $\R^d$. The Hamiltonian flow cannot be simulated exactly in most cases, and thus it becomes necessary to approximate it by a numerical integrator $\phibar_s$. Then  according to Algorithm \ref{alg:basic_EPDMP} we obtain the next state by first denoting $(\tilde{Q}_{t_{n+1}},\tilde{P}_{t_{n+1}})=\phibar_{\delta_{n+1}}(\Zbar_{t_{n+1}};\delta_{n+1})$ and thus
\begin{align*}
    (\Qbar_{t_{n+1}},\Pbar_{t_{n+1}}) =  \begin{cases}
        (\tilde{Q}_{t_{n+1}},\tilde{P}_{t_{n+1}}) \quad  &\textnormal{ with probability } \exp(-\lambda_r\delta_{n+1}),\\
        (\tilde{Q}_{t_{n+1}},\mathcal{Z}) \quad  &\textnormal{ with probability } 1-\exp(-\lambda_r\delta_{n+1}),
    \end{cases}
\end{align*}
where $\mathscr{Z}\sim \nu$. We remark that the most efficient implementation is to simulate the next refreshment time and then follow the numerical integrator until then, without drawing a new refreshment time at each iteration.
\end{example}

\begin{example}[Modelling the size of a cell]
Following Section 1.5 in \cite{rudnicki2017piecewise}, denote the size of a cell by $z\in \mathbb{R}$. The cell grows in time with deterministic flow $\varphi_t$, and splits into two daughter cells with division rate $\lambda(z)$. Then denote as $\tau_n$ the time when a cell from the $n$-generation splits. The size of a daughter cell is half of the parent cell, and thus $Z_{\tau_n} = \frac{1}{2}Z_{\tau_n-}$. We can characterise the resulting process with its generator:
\begin{equation*}
    \cL f(z) = \langle \vf(z),\nabla f(z)\rangle + \lambda(z) \left(f\left(\frac{z}{2}\right)-f(z)\right).
\end{equation*}
Therefore it may not be possible to simulate such a process if the desired $\varphi$ and $\lambda$ are complicated functions. An approximation can be obtained applying the ideas above introducing a numerical integrator $\phibar$ and approximate division rate $\lambdabar$.
\end{example}

\begin{example}[Chemotaxis in Escherichia coli]
 It was shown in \cite{berg1972chemotaxis} that the bacteria Escheria coli have two types of behaviour describing their motion, which are called ``runs'' and ``twiddles''. When the bacteria is ``running" it moves with near uniform speed. However when ``twiddling'' the bacteria changes direction very abruptly. We will describe this using the stochastic model as given in \cite{stroock1974some}.  We describe the bacteria by giving its position $x\in \R^3$ and velocity $v\in \mathbb{S}^2$ at each time, where $\mathbb{S}^2$ is the sphere in $\R^3$. Then there exists a function $\lambda:[0,\infty)\times \R^3\times \mathbb{S}^2\to (0,\infty)$ which describes the next time the bacteria twiddles; at such a twiddle the velocity changes according to some probability measure $\mu_v$ on $\mathbb{S}^2\setminus \{v\}$ where $v$ is the velocity before the twiddle. The dynamics of the bacteria are given as a PDMP described by the backward equation
 \begin{equation*}
     \frac{\partial u}{\partial t}(t,x,v) + \langle v, \nabla_x u(t,x,v)\rangle +\lambda(t,x,v)\int_{\mathbb{S}^2} [u(t,x,\eta)-u(t,x,v)]\mu_v(\dd \eta)=0.
 \end{equation*}
 Note if $\lambda$ is independent of $t$ then we can describe this process by writing a generator in the form \eqref{eq:genPDMPmanykernels}; otherwise we can extend the space to include a time variable and then write a corresponding generator in the form \eqref{eq:genPDMPmanykernels} which is given by
 \begin{equation*}
     \cL f(t,x,v) = \partial_t f(t,x,v) +  \langle v, \nabla_x f(t,x,v)\rangle +\lambda(t,x,v)\int_{\mathbb{S}^2} [f(t,x,\eta)-f(t,x,v)]\mu_v(\dd \eta).
 \end{equation*}
 We can introduce an approximation of this process by using frozen switching rates.
\end{example}

\subsection{Higher order schemes}\label{sec:higherorder}
A natural question is how to obtain higher order schemes. 
The first important observation is that that the probability that a PDMP has more than one jump in a time interval of length $\delta$ is of order $\delta^2$. Therefore in order to construct higher order schemes it is natural to allow multiple jumps in the same time step. 
\begin{algorithm}[t]
\SetAlgoLined
\SetKwInOut{Input}{Input}\SetKwInOut{Output}{Output}
\review{\Input{Number of iterations $N$, initial condition $z$, step sizes $(\delta_n)_{n=0}^N$.}
\Output{Chain $(\Zbar_{t_{n}})_{n=0}^N$.}
 Set $n=0$, $\Zbar_0 = z$\;
 \While{$n < N$}{
 set $\tilde{Z}=\Zbar_{t_n}$\;
  draw $U^1 \sim \text{Unif}[0,1]$ and simulate $$\bar{\tau}_1 = \inf\left\{ r>0: 1-\exp\left(-\int_0^r \lambdabar(\tilde{Z},s;\delta_{n+1},2)\dd s\right) \geq U^1 \right\} \;$$
  \eIf{$\bar{\tau}_1 < \delta_{n+1}$}{
   draw $U_{n+1}^1 \sim \nuU$ and $\bar{I}_1\sim \multi\Big(\Big\{\frac{\lambdabar_i(\tilde{Z},\bar{\tau}_1;\,\delta_{n+1},2)}{\lambdabar(\tilde{Z},\bar{\tau}_1;\,\delta_{n+1},2)}\Big\}_{i=1}^m\Big)$\;
   set $\tilde{Z} = \phibar_{\bar{\tau}_1}(\tilde{Z};\delta_{n+1},2)$\;
   set $\tilde{Z} = \overline{F}_{\Bar{I}_1}(\tilde{Z},U^1_{n+1};\delta_{n+1},2)$\;
  draw $U^2\sim \text{Unif}[0,1]$ and simulate $$\bar{\tau}_2 = \inf\left\{ r>0: 1-\exp\left(-\int_0^r \lambdabar(\tilde{Z},s;\delta_{n+1},1)\dd s\right) \geq U^2 \right\}\; $$
  \eIf{$\bar{\tau}_2 < t_{n+1}-\overline{\tau}_1$}{
  draw $U^2_{n+1} \sim \nuU$ and $\bar{I}_2\sim \multi\Big(\Big\{\frac{\lambdabar_i(\Tilde{Z},\Bar{\tau}_2;\,\delta_{n+1},1)}{\lambdabar(\tilde{Z},\Bar{\tau}_2;\,\delta_{n+1},1)}\Big\}_{i=1}^m\Big)$\;
  set $\tilde{Z} = \overline{F}_{\Bar{I}_2}(\tilde{Z},U^2_{n+1};\delta_{n+1},1)$\;
  simulate $\Zbar_{t_{n+1}} = \phibar_{t_{n+1}-\overline{\tau}_2-\overline{\tau}_1}(\Tilde{Z};\delta_{n+1},1)$\;
  }{
  simulate $\Zbar_{t_{n+1}} = \phibar_{t_{n+1}-\overline{\tau}_1}(\Tilde{Z};\delta_{n+1},1)$\;
  }
   }{
   set $\Zbar_{t_{n+1}} = \phibar(\tilde{Z};\delta_{n+1},2)$\;
   }
 }}
 \caption{Second order Partially Discrete Approximation of a PDMP}
 \label{alg:trulysecondorder_EPDMP}
\end{algorithm}
A detailed implementation of a higher order approximation scheme can be found in Algorithm \ref{alg:secondorder_EPDMP}.  Let us first describe a second order algorithm. Starting at state $\Zbar_{t_n}=z$, the proposed time for the first event is given by $\bar{\tau}$ where $$\mathbb{P}(\bar{\tau}>r)=\exp\left(-\int_0^r \overline{\lambda}(z,s;\delta_{n+1},2)\dd s \right).$$ If $\bar{\tau}  <\delta_{n+1}$, the process moves according to the numerical flow $\phibar_{s}(z;\delta_{n+1},2)$ for time $\bar{\tau}$, and at time $t_n+\bar{\tau} $  the random event takes place according to $\Fbar_{\Bar{I}}(\Zbar_{t_n+\bar{\tau}},\cdot;\delta_{n+1},2)$, where $\bar{I}$ has discrete distribution. In this case, a second jump is allowed in the current time step. The simulation of this event can be made using first order approximations $\lambdabar(\cdot,\cdot\,;\delta_{n+1},1), \phibar_s(\cdot\,;\delta_{n+1},1)$, and $\Fbar_i(\cdot,\cdot\,;\delta_{n+1},1)$.

\begin{algorithm}[t]
\SetAlgoLined
\SetKwInOut{Input}{Input}\SetKwInOut{Output}{Output}
\Input{Number of iterations $N$, initial condition $z$, step sizes $(\delta_n)_{n=0}^N$.}
\Output{Chain $(\Zbar_{t_{n}})_{n=0}^N$.}
 Set $n=0$, $\Zbar_0 = z$\;
 \While{$n < N$}{
 set $q=p$, $\tilde{Z}=\Zbar_{t_n}$\;
 set $t_{\mathrm{left}}=\delta_{n+1}$\;
 \While{$q >0$}{
  simulate $$\bar{\tau} = \inf\left\{ r>0: 1-\exp\left(-\int_0^r \lambdabar(\tilde{Z},s;\delta_{n+1},q)\dd s\right) \geq U \right\} $$ where $U \sim \text{Unif}[0,1]$ \;
  \eIf{$\bar{\tau} < t_{\mathrm{left}}$}{
   draw $U_{n+1} \sim \nuU$ and $\bar{I}\sim \multi\Big(\Big\{\frac{\lambdabar_i(\tilde{Z},\bar{\tau};\,\delta_{n+1},q)}{\lambdabar(\tilde{Z},\bar{\tau};\,\delta_{n+1},q)}\Big\}_{i=1}^m\Big)$\;
   set $\tilde{Z} = \phibar_{\bar{\tau}}(\tilde{Z};\delta_{n+1},q)$\;
   set $\tilde{Z} = \overline{F}_{\Bar{I}}(\tilde{Z},U_{n+1};\delta_{n+1},q)$\;
   set $q=q-1$ and $t_{\mathrm{left}}=t_{\mathrm{left}}-\bar{\tau}$\; 
   
   }{
   set $\tilde{Z} = \phibar_{\delta_{n+1}}(\tilde{Z};\delta_{n+1},q)$\;
   set $q=0$\;
   
   }
  }
  set $\Zbar_{n+1}=\tilde{Z}$, $n=n+1$\;
 }
 \caption{Order $p$ Partially Discrete Approximation of a PDMP}
 \label{alg:secondorder_EPDMP}
\end{algorithm}

Let us consider as an example how to obtain a second order approximation for smooth switching rates. For $s\leq\delta$ the first order Taylor approximation of $\lambda_i(\varphi_s(z))$ is given by
$$\lambdabar_i(z,s;\delta_{n+1},2) = \lambda_i(z) + s \langle \Phi(z),\nabla\lambda_i(z)\rangle.
$$
Because the integral in \eqref{eq:switching_time_pdmp} is with respect to $s$, this choice of $\lambdabar_i(z,s;\delta_{n+1},2)$ is such that computing the corresponding switching time is equivalent to computing the root of a second order polynomial. The downside is that an evaluation of the gradient of $\lambda_i$ is needed and may be unavailable or expensive to compute. However, we can further approximate the product $\langle \Phi(z),\nabla\lambda_i(z)\rangle$ with a finite difference scheme to obtain for $s\leq \delta_{n+1}$ the expression
\begin{equation}\label{eq:lambda_approx_2ndorder_gradfree}
    \lambdabar_i(z,s;\delta_{n+1},2) = \lambda_i(z) + \frac{s}{\delta_{n+1}} (\lambda_i(\varphi_{\delta_{n+1}}(z))-\lambda_i(z)),
\end{equation}
which is a second order approximation provided $\lambda$ is sufficiently smooth. \review{The algorithm for $p=2$ is given by Algorithm~\ref{alg:trulysecondorder_EPDMP}.}

Similarly, it is possible to obtain an order $p>2$ approximation. The simulation up to and counting the first event of each time step should be made according to approximations of order $\delta^p$ of the flow map, switching rates, and jump kernels. After the first event it is then possible to use approximations of order $p-1$, then of order $p-2$, and so on until one reaches the end of the current time interval, with the constraint that at most $p$ events take place. Finally, it is clearly possible to use approximations of order $\delta^p$ for the simulation of all events in the same time step, although such approximations can be in general more expensive to compute.

\section{Main results}\label{sec:main_results}
\subsection{Error bounds in Wasserstein distance}\label{sec:Wass_bounds}
The main result of this section is Theorem~\ref{thm:strong_error_pdmp}, which shows convergence of the Wasserstein distance between the approximation and the continuous process as the step size goes to $0$. We consider the Wasserstein distance of order $1$ with respect to any normed distance, that is we take $d(x,y)=\lVert x-y\rVert$ in  Equation~\eqref{eq:wasserstein} for any vector norm $\lVert \cdot\rVert$.
For convenience we assume that for all $n\in\N$ we have an upper bound $\delta_n\leq \delta_0$. 

Let us now state the assumptions on the process and on the various approximations that are required to show Theorem~\ref{thm:strong_error_pdmp}. We start with assumptions on the continuous time PDMP, and specifically from a condition on the deterministic dynamics. In particular, we require that $\vf$ is Lipschitz.
\begin{assumption}\label{ass:lipschitz_phi}
	For the vector field $\vf$ there exists a constant $C>0$ such that for all $z,z'\in E$ it holds that 
	\begin{equation*}
	\lVert \vf(z)-\vf(z')\rVert\leq C\lVert z-z'\rVert.
	\end{equation*}
\end{assumption}

We now shift our focus to the jump part of the process. In particular, we need the kernel $Q(z,\cdot)$ to satisfy the next conditions. 
\begin{assumption}\label{ass:F_new}
	There exist constants $D_1,D_2,D_3>0$ such that for $\tilde{U}\sim \nuU$ the following conditions hold for all $i\in \{1,\ldots,m\}$:
	\begin{enumerate}[label=(\alph*)]
	    \item For any $z\in E$
	        \begin{equation}\notag
	             \mathbb{E} [\lVert z - F_i(z,\tilde{U})\rVert] \leq D_1.
	        \end{equation}
	   \item For all $z,z'\in E$
	        \begin{equation}\notag
	            \mathbb{E} [\lVert F_i(z,\tilde{U})-F_i(z',\tilde{U})\rVert] \leq D_2\lVert z-z'\rVert.
	        \end{equation}
	   \item For all $z\in E$ and all $s\leq \delta\leq \delta_0$
	        \begin{equation}\notag
	            \mathbb{E} \left[\lVert \varphi_{\delta-s}(F_i(\varphi_s(z),\tilde{U})) - F_i(\varphi_\delta(z),\tilde{U}) \rVert \right]  \leq D_3 \delta.
	        \end{equation}
	\end{enumerate}
\end{assumption}
\noindent The first assumption asks that after a random jump the process is in expectation at bounded distance to its previous state, while condition (b) states that a Lipschitz condition with respect to the previous state holds for coupled jumps. Finally, condition (c) asks that the error committed by switching at the end of the time step or at an earlier time is of order $\delta$ if the two jumps are coupled.
Moreover, the following Lipschitz condition for the switching rates is required.
\begin{assumption}\label{ass:lambda_lipschitz}
	There exists $D_4>0$ such that for all $z,z'\in E$  and $i=1,\dots,m$
	\begin{equation}\notag
	    \lvert \lambda_i(z)-\lambda_i(z')\rvert \leq D_4\lVert z-z' \rVert .
	\end{equation}
\end{assumption}

Let us now focus on the required assumptions on the various approximations employed in the approximation process. We state the assumptions for a general order of accuracy $p\geq 1$, with $p\in\N$. Starting from the deterministic dynamics, we assume that the numerical integrator for the flow map is an approximation of order $p$. 
\begin{assumption}\label{ass:integrator}
    There exists $\Tilde{C}\geq 0$ such that for any $z\in E$ and any $0\leq s\leq \delta\leq \delta_0$ 
    \begin{equation}\notag
        \lVert \varphi_s(z)-\phibar_s(z;\delta,p)\rVert \leq \tilde{C}s^{p+1}.
    \end{equation}
\end{assumption}

\noindent In case the flow map can be simulated exactly, one can simply take $\phibar_s=\varphi_s$ and $\tilde{C}=0$. Next we focus on the approximate jump kernels $\Fbar_i$.
\begin{assumption}\label{ass:approx_jump_kernel}
The approximate jump kernels $\Fbar_i:E\times \cU\times [0,\delta_0] \to E$, satisfy for any $z \in E$ and $\delta\in(0,\delta_0]$
\begin{equation}\notag
    \mathbb{E}_z[\lVert \Fbar_i(z,\tilde{U};\delta,p) - F_i(z,\tilde{U})\rVert] \leq M_1 \delta^p
\end{equation}
for all $i=1,\dots,m$. 
\end{assumption}
\noindent Let us now state the requirement on the approximate switching rates $\lambdabar_i$.
\begin{assumption}\label{ass:lambda_wasserstein}
The following conditions hold:
    \begin{enumerate}[label=(\alph*)]
        \item 
        There exists $\overline{M}_2(z)$ such that for all $0\leq s\leq \delta\leq \delta_0$ and $i\in\{1,\dots,m\}$ 
        \begin{equation}\notag
            \lvert \lambdabar_i(z,s;\delta,p)-\lambda_i(\varphi_s(z))\rvert \leq \delta^p \overline{M}_2(z).
        \end{equation}
        \item For any $n\in\N$ there is a function $M_2(t,z)$ such that
        \begin{equation}\notag
            \mathbb{E}_z\left[\overline{M}_2(\Zbar_{t_n})\right]\leq M_2(t_n,z) <\infty.
        \end{equation}
    \end{enumerate}
\end{assumption}

\noindent As a final assumption, we require that both the continuous time PDMP and the approximation process have almost surely bounded norm for a finite time horizon. This assumption is verified for instance if the state space is compact, or if the processes travel with bounded velocity.
\begin{assumption}\label{ass:boundedPDMP}
	For any $t>0$ there exists $B(t,z)>0$ such that almost surely both $\lVert Z_t \rVert \leq B(t,z)$  and $\lVert \Zbar_t\rVert \leq B(t,z)$, where $Z_0=\Zbar_0 = z$.
\end{assumption}

\begin{remark}\label{note:wass_theorem}
Let us comment on these assumptions:
\begin{itemize}
    \item It is worth observing that conditions such as Assumption~\ref{ass:lambda_lipschitz} can be weakened to forms such as
$$\lvert \lambda_i(z)-\lambda_i(z')\rvert \leq D_4\lVert z-z' \rVert (1 +\lVert z\rVert^q + \lVert z'\rVert^{q'}),$$ for some $q,q'\in \N$. This is because by Assumption~\ref{ass:boundedPDMP} the norms at time $t$ of the two processes are bounded almost surely and therefore for some $M(t)$ we have $$(1 +\lVert Z_t\rVert^q + \lVert \Zbar_t\rVert^{q'}) \leq M(t) < \infty$$ almost surely. A similar reasoning can be applied to other assumptions that have this structure. For simplicity we will not consider this set of weakened assumptions in the proof of Theorem~\ref{thm:strong_error_pdmp}, but we remark that the extension is straightforward.
\item In both Example \ref{ex:ZZ} on the ZZS and Example \ref{ex:BPSintro} on the BPS we can write $\lambda$ of the form 
$$
\lambda(x,v) = f(r)
$$
where $r=\partial_i\pot(x)v_i$ for ZZS or $r=\langle \nabla\pot(x),v\rangle$ for BPS and $f(r)=r_+$. Note that it is possible to take a smooth function $f$ for which the process still has the desired invariant measure, see \cite{AndrieuLivingstone}. We will demonstrate some choices of $\lambdabar$ for ZZS which satisfy Assumption~\ref{ass:lambda_wasserstein}, and analogous choices hold for BPS. For smooth $\lambda$ we can use \eqref{eq:lambda_approx_2ndorder_gradfree} to obtain a second order approximation or similarly a $p$-th order finite difference scheme to have an order $p$ approximation. However if $\lambda(x,v)=(v_i\partial_i\pot(x))_+$ is only Lipschitz then this approximation is no longer valid; instead we can write
\begin{equation*}
    \lambdabar_i((x,v),s;\delta,p)= (\overline{\partial_i\pot}((x,v),s;\delta,p)\,v_i)_+
\end{equation*}
where $\overline{\partial_i\pot}((x,v),s;\delta,p)$ is a $p$-th order approximation in $s$ of $\partial_i\pot(x+sv)$ and can be obtained either by a truncated Taylor expansion or using a finite difference scheme. Then using that $(\cdot)_+$ is $1$-Lipschitz
\begin{equation*}
    \lvert \lambdabar_i((x,v),s;\delta,p)-\lambda_i(\varphi_s(x,v))\rvert \leq \lvert \overline{\partial_i\pot}((x,v),s;\delta,p) - \partial_i\pot(x+sv)\rvert \leq \overline{M}_2 \delta^p.
\end{equation*}
For example, for $\pot$ sufficiently smooth, we can take
\begin{equation*}
    \lambdabar_i((x,v),s;\delta,p) =\left(\sum_{q=0}^{p-1} \frac{(sv_i)^q}{q!}\Delta_{i,\delta,p-q}^{q+1} \pot(x)\right)_+,
\end{equation*}
where $\Delta_{\delta,p-q}^{q}\pot$ denotes the $\delta^{p-q}$-th order approximation of the $q$-th derivative of $\pot$ in the variable $x_i$.  
\end{itemize}

\end{remark}

We are ready to state the main result of this section. 
\begin{theorem}\label{thm:strong_error_pdmp}
	Let $p\geq 1$. Denote by $\{\cP_t\}_{t\geq 0}$ the semigroup of a PDMP with generator \eqref{eq:genPDMPmanykernels}, which satisfies Assumptions \ref{ass:lipschitz_phi}-\ref{ass:lambda_lipschitz}. Denote by $\overline{\cP}_t$ the transition probability of the Markov chain described by either Algorithms \ref{alg:basic_EPDMP} or \ref{alg:advanced_EPDMP} in the case $p=1$, or by Algorithm \ref{alg:secondorder_EPDMP} for $p>1$. Suppose that $\phibar_t(\cdot;\delta,q),\,\lambdabar(\cdot,\cdot;\delta,q),\, \Fbar_i(\cdot;\delta,q)$ satisfy Assumptions \ref{ass:integrator}-\ref{ass:boundedPDMP} for some $\delta_0>0$ and for every $1\leq q \leq p$ with $q\in\N$. Then for a fixed $T>0$  there exist $K_1=K_1(T)$, $K_2=K_2(T)$ such that for any mesh $0=t_0< t_1< \ldots < t_N=T$ with $\delta_n=t_n-t_{n-1}$ and $\delta_n\leq \delta_0$ for any $n\leq N$
	\begin{equation*}
	    \mathcal{W}_1(\cP_{T}(z,\cdot),\overline{\cP}_{T}(z,\cdot))
	    \leq K_2 \sum_{k=1}^{N} \delta^{p+1}_k \left(\prod_{\ell=k}^{N} (1+\delta_\ell K_1)\right).
	\end{equation*}
	If the step size is uniform, i.e. $\delta_n=\delta$ and $t_{n}=n\delta$, then 
	\begin{equation*}
	    \mathcal{W}_1(\cP_{T}(z,\cdot),\overline{\cP}_{T}(z,\cdot)) \leq \delta^p \left( e^{T K_1} -1 \right) \frac{K_2}{K_1}.
	\end{equation*}
\end{theorem}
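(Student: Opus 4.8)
The plan is to establish the result by a one-step-error plus error-propagation argument, which is the standard template for strong-error estimates but needs to be adapted to the jump structure of a PDMP. The key object is a coupling between the exact PDMP and its approximation started from (possibly) different points, which the paper announces as Coupling~\ref{coup:wass_gen_pdmp}. Concretely, I would fix a time step $[t_{k-1},t_k]$ of length $\delta_k$ and bound the \emph{local error}
\begin{equation*}
  \mathcal{W}_1\bigl(\cP_{\delta_k}(z,\cdot),\overline{\cP}_{\delta_k}(z,\cdot)\bigr) \leq C_{\mathrm{loc}}(t_k,z)\,\delta_k^{p+1},
\end{equation*}
and separately a \emph{one-step stability} (Lipschitz-in-initial-condition) estimate for the exact semigroup,
\begin{equation*}
  \mathcal{W}_1\bigl(\cP_{\delta}(z,\cdot),\cP_{\delta}(z',\cdot)\bigr) \leq (1+\delta K_1)\,\lVert z-z'\rVert,
\end{equation*}
obtained by coupling two copies of the PDMP driven by the same randomness. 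Then the usual telescoping identity
\begin{equation*}
  \cP_T - \overline{\cP}_T = \sum_{k=1}^N \cP_{T-t_k}\bigl(\cP_{\delta_k}-\overline{\cP}_{\delta_k}\bigr)\overline{\cP}_{t_{k-1}},
\end{equation*}
combined with the contraction-type bound on $\cP_{T-t_k}$ and the local error at step $k$, yields $\mathcal{W}_1(\cP_T(z,\cdot),\overline{\cP}_T(z,\cdot)) \leq K_2 \sum_{k=1}^N \delta_k^{p+1}\prod_{\ell=k}^N(1+\delta_\ell K_1)$; the uniform-step corollary follows by bounding $\prod_{\ell=k}^N(1+\delta K_1)\leq e^{(N-k+1)\delta K_1}\leq e^{TK_1}$ and summing the geometric series $\delta^p\sum_{k} \delta e^{(T-t_{k-1})K_1} \leq \delta^p (e^{TK_1}-1)/K_1$.

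\textbf{Building the coupling and the local error.} The heart of the matter is the local error bound, and here the assumptions are used as follows. Over a single step one decomposes on the number of jumps of the exact process: on the event of zero exact jumps (probability $1-O(\delta)$), the discrepancy is governed by the integrator error, Assumption~\ref{ass:integrator}, giving $O(\delta^{p+1})$; on the event of exactly one jump, one must couple the exact jump time $\tau$ with $\overline\tau$, the exact jump index $I$ with $\overline I$, and the exact flow/jump with the approximate ones, using Assumption~\ref{ass:lambda_wasserstein} (closeness of rates, hence of the laws of $\tau$ and $\overline\tau$, and of the multinomial weights), Assumption~\ref{ass:F_new}(c) (so that jumping at $\overline\tau$ then flowing, versus flowing then jumping at $\delta$, costs only $O(\delta)$ — this is why PD-PDMP and FD-PDMP are handled uniformly), Assumption~\ref{ass:approx_jump_kernel} (closeness of $\overline F_i$ to $F_i$), and Assumptions~\ref{ass:lipschitz_phi},~\ref{ass:lambda_lipschitz},~\ref{ass:F_new}(a)(b) for the Lipschitz/boundedness estimates that turn these pointwise bounds into expectations; since a jump carries a prefactor of probability $O(\delta)$, an $O(\delta)$ discrepancy on that event contributes $O(\delta^2)$, matching $p=1$, and the Taylor-refined rates/integrators push this to $O(\delta^{p+1})$ for $p>1$ via the nested structure of Algorithm~\ref{alg:secondorder_EPDMP}; on the event of two or more exact jumps (probability $O(\delta^2)$, or $O(\delta^{p+1})$ after iterating for the higher-order scheme) one uses Assumption~\ref{ass:boundedPDMP} to bound the displacement by a deterministic $B(t_k,z)$ and absorb the contribution. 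The functions $\overline M_2$, $B$, $M_2$ enter through their expectations under $\overline{\cP}_{t_{k-1}}$, which is where Assumption~\ref{ass:lambda_wasserstein}(b) and Assumption~\ref{ass:boundedPDMP} are invoked to ensure $C_{\mathrm{loc}}(t_k,z)<\infty$ uniformly for $t_k\leq T$; I would define $K_2$ as the supremum of these quantities over $[0,T]$.

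\textbf{Main obstacle.} The delicate step is the simultaneous coupling of $(\tau,I)$ with $(\overline\tau,\overline I)$ and the subsequent flow/jump on the one-jump event, together with showing its expected cost is $O(\delta)$ rather than $O(1)$: one has to control $\mathbb{E}\lVert \varphi_{\delta-\tau}(F_I(\varphi_\tau(z),U)) - \overline{\text{(approx trajectory)}}\rVert$ where the two processes may disagree on \emph{whether} a jump occurs (because $\tau\leq\delta<\overline\tau$ or vice versa) — controlling the probability of this disagreement by $|\mathbb{P}_z(\tau\leq\delta)-\mathbb{P}_z(\overline\tau\leq\delta)| = O(\delta^{p+1})$ via Assumption~\ref{ass:lambda_wasserstein}, and bounding the cost on that rare event by $B(t_k,z)$, is the crux; handling the higher-order case requires additionally tracking that the \emph{second} jump in Algorithm~\ref{alg:secondorder_EPDMP} is simulated with an order-$(p-1)$ approximation but occurs only with probability $O(\delta)$, so the product is again $O(\delta^{p+1})$ — this bookkeeping, while routine in spirit, is where most of the technical work lies and is deferred to the dedicated proof section and to Coupling~\ref{coup:wass_gen_pdmp}.
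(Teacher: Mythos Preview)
Your proposal is correct in spirit and identifies the right ingredients, but it organizes them differently from the paper. You split the argument into (i) a local error bound $\mathcal{W}_1(\cP_{\delta_k}(z,\cdot),\overline{\cP}_{\delta_k}(z,\cdot))\leq C_{\mathrm{loc}}\delta_k^{p+1}$ starting from the \emph{same} point and (ii) a separate Wasserstein--Lipschitz estimate for the exact semigroup $\cP_t$, then combine via the telescoping identity. The paper instead maintains a single synchronous coupling of $(Z_{t_n},\Zbar_{t_n})$ throughout (Coupling~\ref{coup:wass_gen_pdmp}) and proves directly the one-step recursion
\[
\mathbb{E}_z\bigl[\lVert Z_{t_{n+1}}-\Zbar_{t_{n+1}}\rVert\bigr] \;\leq\; (1+\delta_{n+1}K_1)\,\mathbb{E}_z\bigl[\lVert Z_{t_n}-\Zbar_{t_n}\rVert\bigr] + K_2\,\delta_{n+1}^{p+1},
\]
by partitioning on the four events $E_{ij}$ (approximation has $i\in\{0,1\}$ jumps, exact process has $j\in\{0,\geq 1\}$ jumps) in Lemmas~\ref{Lemma:strong_error_E00}--\ref{lemma:strong_error_E01}. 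The disagreement-probability bound on $E_{10}$ and $E_{01}$ then carries \emph{both} a $\delta^{p+1}\overline{M}_2$ term (from Assumption~\ref{ass:lambda_wasserstein}) \emph{and} a $\delta D_4 C'\lVert Z_{t_n}-\Zbar_{t_n}\rVert$ term (from Assumption~\ref{ass:lambda_lipschitz}), so the local error and the growth factor emerge simultaneously from one estimate rather than from two separate lemmas. Your route recovers the same conclusion but requires proving the stability of $\cP_t$ as a standalone result (via the same synchronous coupling applied to two copies of the exact PDMP), and your propagation factor naturally comes out as $e^{K_1(T-t_k)}$ rather than the marginally sharper $\prod_{\ell>k}(1+\delta_\ell K_1)$.

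For $p>1$ the paper does not refine the local one-jump analysis directly but proceeds by induction on $p$ via a different, Poisson-thinning based coupling (Coupling~\ref{coup:general_higherorder}): at each proposal time one accepts/rejects with the true and approximate rates using a common uniform, and on the event that both accept one invokes the order-$p$ inductive hypothesis on the remaining sub-interval. Your informal description (``the second jump uses an order-$(p{-}1)$ approximation but occurs with probability $O(\delta)$'') is precisely this inductive step, but note it is packaged as a coupling-plus-induction rather than a direct multi-jump decomposition.
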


\begin{proof}[Proof of Theorem~\ref{thm:strong_error_pdmp}]
The proof of Theorem~\ref{thm:strong_error_pdmp} can be found in Section \ref{sec:proof_wass_theorem}.
\end{proof}

We now give a setting in which Assumption~\ref{ass:F_new} simplifies. This is motivated by and includes the ZZS.
Let us now consider a PDMP $Z_t=(X_t,V_t)\in \mathbb{R}^n\times \mathcal{V}$, where $X_t$ and $V_t$ should be interpreted as the position and velocity at time $t$. Here $\mathcal{V}$ is some subset of Euclidean space.
Consider the case in which the deterministic dynamics with initial condition $(x,v)$ are of the form
\begin{equation}\notag
    \begin{cases}
        \dot{x} = \vf(v), \\
        \dot{v} = 0.
    \end{cases}
\end{equation}
Therefore the deterministic motion is $X_t=\varphi_t(x,v)$ and $V_t=v$ if $(X_0,V_0)=(x,v)$. Then assume that the random events affect only the velocity, and leave the position unchanged, i.e.  $F_i((x,v),U) = (x,F_i^v((x,v),U))$. This is the setting for example of the ZZS and BPS.
Consider the following assumption.
\begin{assumption}\label{ass:discr_velocities_wass}
The space $\mathcal{V}$ is such that for all $v,w\in \mathcal{V}$ with $v\neq w$ it holds that 
\begin{equation*}
    0<V_{min} \leq \lVert v-w\rVert \leq V_{max}<\infty.
\end{equation*}
Assume also that there exists $D>0$ such that for any $x,y \in \mathbb{R}^n$, $i\in \{1,\ldots,m\}$ and $v\in \mathcal{V}$
\begin{equation*}
    \mathbb{E}_{(x,v)}[\lVert F_i^v((x,v),U)-F_i^v((y,v),U)\rVert ] \leq D \lVert x-y\rVert.
\end{equation*}
\end{assumption}
The next corollary states that in this setting Assumption~\ref{ass:discr_velocities_wass} implies Assumption~\ref{ass:F_new}.
\begin{corollary}\label{cor:strong_error_pdmp}
Consider a PDMP of the particular form described above. Suppose Assumptions \ref{ass:lipschitz_phi}, \ref{ass:lambda_lipschitz}-\ref{ass:lambda_wasserstein}, as well as Assumption~\ref{ass:discr_velocities_wass} hold. Then Theorem~\ref{thm:strong_error_pdmp} applies.
\end{corollary}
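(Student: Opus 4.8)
The plan is to verify that Assumption~\ref{ass:F_new}(a)--(c) hold in this setting, since those are the only hypotheses of Theorem~\ref{thm:strong_error_pdmp} not already assumed in the corollary (Assumption~\ref{ass:boundedPDMP} is immediate here: the velocity coordinate stays in the bounded set $\mathcal V$, so the position coordinate grows at most linearly in $t$, and the same bound propagates to the approximation). Throughout I would work with the product norm $\lVert(x,v)\rVert = \lVert x\rVert + \lVert v\rVert$ on $E$, which is harmless by equivalence of norms on finite-dimensional spaces, and record three preliminary observations. First, Assumption~\ref{ass:discr_velocities_wass} makes $\mathcal V$ a set of diameter at most $V_{max}$, and for distinct $v,w\in\mathcal V$ one has the crude bound $V_{max}\le \tfrac{V_{max}}{V_{min}}\lVert v-w\rVert$ since $\lVert v-w\rVert\ge V_{min}$. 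Second, applying Assumption~\ref{ass:lipschitz_phi} to the vector field $\Phi(x,v)=(\vf(v),0)$ with equal first components shows $v\mapsto\vf(v)$ is $C$-Lipschitz on $\mathcal V$, hence bounded there by some $\Lambda$. Third, $F_i^v((x,v),U)\in\mathcal V$ for every input, because $F_i$ is $E$-valued.

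With these in hand, (a) and (b) should be short. For (a), $z - F_i(z,\tilde U) = (0,\, v - F_i^v((x,v),\tilde U))$ has norm at most $V_{max}$ pointwise in $\tilde U$, so $D_1 = V_{max}$ works. For (b) I would insert the intermediate state $(x',v)$ between $(x,v)$ and $(x',v')$: the position contributes $\lVert x - x'\rVert$, the first velocity difference is controlled in expectation by $D\lVert x-x'\rVert$ via Assumption~\ref{ass:discr_velocities_wass}, and the second velocity difference vanishes when $v=v'$ and is otherwise at most $V_{max}\le\tfrac{V_{max}}{V_{min}}\lVert v-v'\rVert$; this gives (b) with $D_2 = \max\{1+D,\,V_{max}/V_{min}\}$.

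Condition (c) is where the actual computation lies, and I expect it to be the main (though still modest) obstacle. Using $\varphi_t(x,v)=(x+t\vf(v),v)$ and the form of $F_i$, writing $w_s = F_i^v((x+s\vf(v),v),\tilde U)$ and $w_\delta = F_i^v((x+\delta\vf(v),v),\tilde U)$, one obtains
\[
\varphi_{\delta-s}\big(F_i(\varphi_s(z),\tilde U)\big) - F_i(\varphi_\delta(z),\tilde U) = \big((\delta-s)(\vf(w_s)-\vf(v)),\ w_s - w_\delta\big).
\]
The position component has norm at most $(\delta-s)\,C\lVert w_s - v\rVert\le \delta\,C\,V_{max}$ by the Lipschitz property of $\vf$ and $w_s,v\in\mathcal V$; the velocity component involves $F_i^v$ evaluated at the common velocity $v$ and at positions a distance $(\delta-s)\lVert\vf(v)\rVert\le (\delta-s)\Lambda$ apart, so Assumption~\ref{ass:discr_velocities_wass} bounds its expectation by $D(\delta-s)\Lambda\le D\Lambda\delta$. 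Summing gives (c) with $D_3 = CV_{max}+D\Lambda$, and then Theorem~\ref{thm:strong_error_pdmp} applies directly. The only thing to watch is the bookkeeping of the product norm and the double role played by $V_{max}$ --- as a uniform diameter bound and, through division by $V_{min}$, as a crude Lipschitz constant in the velocity variable; beyond that I anticipate no genuine difficulty.
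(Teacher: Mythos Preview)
Your proposal is correct and takes essentially the same approach as the paper: verify Assumption~\ref{ass:boundedPDMP} from bounded velocities, then check Assumption~\ref{ass:F_new}(a)--(c) one by one, using the diameter bound $V_{max}$ for~(a), an intermediate state and the ratio $V_{max}/V_{min}$ for~(b), and the explicit flow formula $\varphi_t(x,v)=(x+t\vf(v),v)$ for~(c). The only cosmetic differences are that the paper inserts the intermediate state $(x,w)$ rather than your $(x',v)$ in part~(b), and in part~(c) bounds the velocity discrepancy via the flow Lipschitz constant $C'$ rather than your explicit supremum $\Lambda=\sup_{v\in\mathcal V}\lVert\vf(v)\rVert$; both lead to the same constants up to relabelling.
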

\begin{proof}
The proof can be found in Appendix \ref{sec:proof_corollary_wass}.
\end{proof}

Finally, we consider the setting in which we have a deterministic upper bound for the switching rates, but the process is not almost surely bounded as was required by Assumption~\ref{ass:boundedPDMP}. This is the case for instance of the Randomized HMC algorithm \cite{bou2017randomized}. We shall show that in this case Theorem~\ref{thm:strong_error_pdmp} holds as long as for a finite time horizon the processes are bounded in expectation. The formal condition is the following.
\begin{assumption}\label{ass:bdd_lambda}
There exists a constant $\lambda_{max}>0$ such that  $\lambda(z)\leq \lambda_{max}$ for all $z\in E$. Moreover there exists $L(t,z)<\infty$ such that
\[
    \max\{\mathbb{E}_z[\lVert Z_t\rVert],\mathbb{E}_z[\lVert \Zbar_t\rVert] \} \leq B(t,z).
\]
\end{assumption}

\begin{prop}\label{prop:wass_rhmc}
Suppose Assumptions \ref{ass:lipschitz_phi}-\ref{ass:lambda_wasserstein} and \ref{ass:bdd_lambda} hold. Then Theorem~\ref{thm:strong_error_pdmp} applies.
\end{prop}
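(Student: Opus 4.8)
The plan is to reduce Proposition~\ref{prop:wass_rhmc} to Theorem~\ref{thm:strong_error_pdmp} by showing that, under the boundedness-in-expectation condition of Assumption~\ref{ass:bdd_lambda} together with the global bound $\lambda\leq \lambda_{\max}$, one can rerun the coupling argument of Coupling~\ref{coup:wass_gen_pdmp} and obtain the same recursive estimate on $\mathcal{W}_1(\cP_{t_n}(z,\cdot),\overline{\cP}_{t_n}(z,\cdot))$, the only difference being that wherever the proof of Theorem~\ref{thm:strong_error_pdmp} invoked the almost-sure bound $B(t,z)$ from Assumption~\ref{ass:boundedPDMP}, we instead take expectations and use $\mathbb{E}_z[\lVert Z_t\rVert]$ and $\mathbb{E}_z[\lVert \Zbar_t\rVert]$. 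Concretely, I would first inspect the proof of Theorem~\ref{thm:strong_error_pdmp} to isolate the (only) two places where Assumption~\ref{ass:boundedPDMP} is used: (i) to control the local error over one step, where terms of the form $\mathbb{E}[\overline{M}_2(\Zbar_{t_n})]$ or $\lambda$-dependent quantities appear and are bounded using boundedness of the processes, and (ii) to absorb any polynomial-growth Lipschitz corrections of the kind discussed in Remark~\ref{note:wass_theorem}. The point is that all such quantities enter the one-step local error estimate \emph{linearly} (or affinely) and under expectation, so they can be controlled by $\mathbb{E}_z[\lVert Z_t\rVert]+\mathbb{E}_z[\lVert\Zbar_t\rVert]\leq 2B(t,z)$ rather than by a pathwise bound.

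The key steps, in order, are: first, verify that the synchronous coupling of $Z_t$ and $\Zbar_t$ constructed in Coupling~\ref{coup:wass_gen_pdmp} still makes sense here --- it does, since its construction only requires the characteristics $(\varphi,\lambda,Q)$ and their approximations, and $\lambda\leq\lambda_{\max}$ is in fact a convenient (rather than obstructive) feature for bounding the probability of a mismatch event between the two processes over a step. Second, derive the one-step local-error bound
\begin{equation*}
    \mathcal{W}_1(\cP_{\delta_{n+1}}(\Zbar_{t_n},\cdot),\overline{\cP}_{\delta_{n+1}}(\Zbar_{t_n},\cdot)) \leq C(\lambda_{\max},\delta_0)\,\delta_{n+1}^{p+1}\bigl(1+\mathbb{E}_z[\lVert\Zbar_{t_n}\rVert]\bigr),
\end{equation*}
where the factor $1+\mathbb{E}_z[\lVert\Zbar_{t_n}\rVert]$ comes from Assumption~\ref{ass:lambda_wasserstein}(b) via $M_2(t_n,z)$ and any growth correction, and is finite by Assumption~\ref{ass:bdd_lambda}. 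Third, combine this with the one-step contraction/expansion estimate $\mathcal{W}_1(\cP_{\delta_{n+1}}(\mu,\cdot),\cP_{\delta_{n+1}}(\nu,\cdot))\leq (1+\delta_{n+1}K_1)\mathcal{W}_1(\mu,\nu)$ (which follows exactly as in the proof of Theorem~\ref{thm:strong_error_pdmp}, using Assumptions~\ref{ass:lipschitz_phi}, \ref{ass:F_new}, \ref{ass:lambda_lipschitz} and the global bound on $\lambda$ in place of the a.s. bound), and iterate the telescoping sum over $n=0,\dots,N-1$ exactly as there. Since $\sup_{n\leq N}\mathbb{E}_z[\lVert\Zbar_{t_n}\rVert]\leq B(T,z)<\infty$ uniformly in the mesh, the constants $K_1,K_2$ can be chosen depending only on $T$, $z$, $\lambda_{\max}$, and $\delta_0$, and the same two displayed bounds as in Theorem~\ref{thm:strong_error_pdmp} follow.

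The main obstacle I anticipate is purely bookkeeping rather than conceptual: making sure that every appeal to Assumption~\ref{ass:boundedPDMP} in the original proof really does occur inside an expectation and really is affine in the process norm, so that replacing ``$\lVert Z_t\rVert\leq B$ a.s.'' by ``$\mathbb{E}_z\lVert Z_t\rVert\leq B$'' is legitimate. In particular one must be careful with the step where the probability that the continuous process and the approximation experience a different number of jumps in $[t_n,t_{n+1}]$ is estimated: this is where $\lambda\leq\lambda_{\max}$ is essential, since without an a.s. bound on the state we cannot otherwise bound the switching rate along trajectories; with the global bound $\lambda_{\max}$, that probability is $O(\delta_{n+1}^{2})$ uniformly, which is more than enough at order $p\geq 1$. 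Once that point is handled, the remainder of the argument is a verbatim repetition of the proof of Theorem~\ref{thm:strong_error_pdmp}, so I would phrase the write-up as ``the proof is identical to that of Theorem~\ref{thm:strong_error_pdmp}, with the following modifications,'' and only spell out the modified estimates (i) and (ii) above.
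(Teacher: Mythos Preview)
Your high-level diagnosis is correct and matches the paper: the only place Assumption~\ref{ass:boundedPDMP} enters the proof of Theorem~\ref{thm:strong_error_pdmp} is in bounding $\mathbb{E}_z[\lVert Z_{t_{n+1}}-\Zbar_{t_{n+1}}\rVert\,\mathbbm{1}_{\overline{E}^c}]$, where $\overline{E}^c$ is the event that the continuous-time process has two or more jumps in the current step. You are also right that this is where $\lambda\leq\lambda_{\max}$ becomes essential.

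However, you understate the difficulty of the fix when you call it ``purely bookkeeping'' and say the norm enters ``affinely''. In the original proof this term is handled via
\[
\mathbb{E}_z\big[\lVert Z_{t_{n+1}}-\Zbar_{t_{n+1}}\rVert\,\mathbbm{1}_{\overline{E}^c}\big]\leq 2B(t_{n+1},z)\,\mathbb{P}_z(\overline{E}^c),
\]
which pulls the almost-sure bound $B$ outside the expectation. With only $\mathbb{E}_z[\lVert Z_{t_{n+1}}\rVert]\leq B$ you \emph{cannot} do this: the indicator $\mathbbm{1}_{\overline{E}^c}$ and the norm $\lVert Z_{t_{n+1}}\rVert$ are correlated (more jumps can mean larger displacement), so the factorisation is illegitimate, and Cauchy--Schwarz would require second moments you do not have. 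The missing idea, which the paper supplies, is to condition on the number of jumps $\ell\geq 2$ and obtain a pathwise bound of the form
\[
\lVert Z_{t_{n+1}}\rVert \leq C_\ell\,(1+\lVert Z_{t_n}\rVert),\qquad C_\ell \sim \ell L^\ell,
\]
using only the linear growth of $\varphi$ (from Assumption~\ref{ass:lipschitz_phi}) and Assumption~\ref{ass:F_new}(a) iteratively along the jump chain. The point is that this bound is in terms of $\lVert Z_{t_n}\rVert$, which is independent of the jump count in the current step once you dominate the jump process by a homogeneous Poisson process of rate $\lambda_{\max}$. One then computes
\[
\sum_{\ell\geq 2} C_\ell\, e^{-\delta\lambda_{\max}}\frac{(\delta\lambda_{\max})^\ell}{\ell!}=O(\delta^2),
\]
and only at this stage invokes $\mathbb{E}_z[\lVert Z_{t_n}\rVert]\leq B(t_n,z)$. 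So the argument is not simply ``replace a.s.\ by expectation'': it requires decoupling the norm from the multi-jump indicator by pushing the norm dependence back to time $t_n$.
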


\begin{proof}
The proof is given in Appendix \ref{sec:proof_wass_rhmc}.
\end{proof}

\subsection{Error bounds in total variation distance}\label{sec:tv_distance}
In this section we show that a bound of order $\delta^p$ on the total variation distance between the approximation and the PDMP can be derived for Algorithm \ref{alg:secondorder_EPDMP} assuming it is possible to simulate exactly the flow $\varphi_t$ and the Markov kernels $Q_i$.  Interestingly, this result can be proved under considerably weaker assumptions on the PDMP compared to what is considered in Section \ref{sec:Wass_bounds}. We remark in particular that no assumption on the maps $F_i$ is needed, which was the case in Assumption~\ref{ass:F_new}. Moreover the process needs not be bounded almost surely for finite time horizons, as described in Assumption~\ref{ass:boundedPDMP}. The main result of this section is proved by coupling the event times of the PDMP and of the approximations via Poisson thinning. It follows that with a positive probability the processes, which are initialised at the same point, will remain together during a time step. 

Let us state the required assumptions on the switching rates and on the continuous time process. \review{Recall that for first order approximations of the characteristics we drop the specific order of accuracy, e.g. for switching rates we have $\lambdabar_i(z,s;\delta)=\lambdabar_i(z,s;\delta,1)$ for $i=1,\dots,m$. We distinguish the assumptions between the setting $p=1$ and $p>1$. In the case $p=1$ we impose the following assumption.} 
\begin{assumption}\label{ass:lambda_tv}
Each of the approximate switching rates $\overline{\lambda}_{i}(\cdot;\delta)$ for $i=1,\dots,m$ satisfies Assumption~\ref{ass:lambda_wasserstein}(a) with $p=1$ for some $\overline{M}_2(z)$. Furthermore for $z\in E$ and $s\geq 0$ define $\overline{\lambda}(z,s;\delta)= \sum_{i=1}^m \overline{\lambda}_{i}(z,s;\delta) $, and $\lambda_{tot}(z,s;\delta) = \lambda(z) + \overline{\lambda}(z,s;\delta)+m$.
Let $T>0$. Then there exist $L_1(T,z)$, $L_2(T,z)$, $L_3(T,z)<\infty$ such that for any mesh $0=t_0<t_1<\dots<t_N=T$ with $t_{k+1}-t_k=\delta_{k+1}$ and $N\in\N$ the following conditions hold:
\begin{align*}
    & \sup_{n\leq N} \sup_{i=1,\dots,m} \sup_{s\in [0,\delta_n]} \sup_{r\in [s,\delta_n]} \mathbb{E}_z \Big[  \lambda(\varphi_s(F_{i}(\varphi_r(Z_{t_{n-1}}),\Tilde{U}_n) \lambda_{tot}(Z_{t_{n-1}},s;\delta)  \Big]\leq L_1(T,z),\\
    & \sup_{n\leq N} \sup_{s\in [0,\delta_n]} \mathbb{E}_z \left[\overline{M}_2(Z_{t_{n-1}}) \lambda_{tot}(Z_{t_{n-1}},s;\delta)  \right] \leq L_2(T,z), \\
    & \sup_{n\leq N} \sup_{s\in [0,\delta_n]} \sup_{r\in [s,\delta_n]} \mathbb{E}_z \Big[  \left(\lambda(\varphi_r(Z_{t_{n-1}}))+ \lambdabar(Z_{t_{n-1}},r;\delta)\right) \lambda_{tot}(Z_{t_{n-1}},s;\delta)  \Big]\leq L_3(T,z).
\end{align*}
%
\end{assumption}

\review{For the case $p>1$ we make the following assumption. Recall in the case $p>1$ if in a single time step there have been $q$ jumps then we use $\lambdabar_i(\cdot;\delta,p-q)$ to simulate the next jump time. As the probability of there having been $q$ jumps in a time interval is order $\delta^q$ the conditions required on $\lambdabar_i(\cdot;\delta,q)$ are lessened, for this reason there are different requirements for each $q$.
\begin{assumption}\label{ass:lambda_tvp>1}
Each of the approximate switching rates $\overline{\lambda}_{i}(\cdot;\delta,q)$ for $i=1,\dots,m$ and $q=1,\dots,p$ satisfies Assumption~\ref{ass:lambda_wasserstein}(a) for some $\overline{M}_2(z)$. When $q=1$ the approximate switching rates $\overline{\lambda}_{i}(\cdot;\delta,1)$ for $i=1,\dots,m$ satisfy Assumption~\ref{ass:lambda_tv}. We make the additional moment bound for any $1\leq q \leq p$ 
\[
\sup_{n\leq N}  \sup_{s\in [0,\delta_n]}\mathbb{E}_z \left[(1+\overline{M}_2(Z_{t_{n-1}})) \lambda_{tot}(Z_{t_{n-1}},s;\delta,q) + \lambda_{tot}(Z_{t_{n-1}},s;\delta,q)^{q+1}   \right] \leq L_4(T,z).
\]
\end{assumption}}
\begin{remark} 
The moment bounds in Assumption~\ref{ass:lambda_tv} are rather technical, but also general. For instance Assumption~\ref{ass:lambda_tv} holds if Assumptions \ref{ass:lambda_wasserstein} and \ref{ass:boundedPDMP} hold, i.e. when the process has bounded norm for any finite time horizon. Furthermore, as Assumption~\ref{ass:lambda_tv} does not depend on moment bounds for the approximate process $\{\Zbar_{t_n}\}_{n\geq 1}$, one can verify Assumption~\ref{ass:lambda_tv} by finding a suitable Lyapunov function for the PDMP. Indeed if there exists a Lyapunov function which bounds the functions appearing in Assumption~\ref{ass:lambda_tv} then Assumption~\ref{ass:lambda_tv} holds with $L_1,L_2,L_3$ independent of $T$.
This is the case for instance of the ZZS and BPS, see Example \ref{ex:bps_tv}.
Alternatively, one can take advantage of Holder's inequality to reduce the problem to bounding polynomial moments of the various quantities. In Section \ref{sec:examples_tv} we show that the assumption holds for several examples. Finally we remark that in Assumption~\ref{ass:lambda_tv} it is possible to substitute $Z_{t_{n-1}}$ with $\Zbar_{t_{n-1}}$ and Theorem~\ref{thm:tv_distance} still holds.
\end{remark}

\begin{theorem}\label{thm:tv_distance}
    Denote as $\overline{\cP}_{t}(z,\cdot)$ the transition probability of the approximation process obtained by Algorithm \ref{alg:advanced_EPDMP} for $p=1$ or by Algorithm \ref{alg:secondorder_EPDMP} for $p>1$. Denote by $\{\cP_t\}_{t\geq 0}$ the semigroup of a PDMP with generator \eqref{eq:genPDMPmanykernels} satisfying Assumption~\ref{ass:lipschitz_phi}. Let $p\geq 1$ and \review{suppose the approximations $\lambdabar_i(z,s;\delta,q)$ for $q\leq p$ satisfy Assumption~\ref{ass:lambda_tv} if $p=1$ or Assumption~\ref{ass:lambda_tvp>1} if $p>1$}. Suppose the mesh $t_n=\sum_{i=1}^n \delta_n$ is such that $\delta_n<\delta_0$ for $\delta_0$ as in Assumption~\ref{ass:lambda_wasserstein}(a).  Suppose that $\phibar_s=\varphi_s$ and $\overline{F}_i=F_i$ for all $i=1,\dots,m$.
    Then for any $z\in E$ and any mesh $0=t_0< t_1< \ldots < t_N=T$ with $\delta_n=t_n-t_{n-1}$ and $\delta_n\leq \delta_0$ for any $n\leq N$
    \begin{equation*}
        \lVert \cP_{T}(z,\cdot)-\overline{\cP}_{T}(z,\cdot) \rVert_{TV} \leq \sum_{i=1}^N \delta_i^{p+1} D(T,z) \prod_{\ell=i+1}^N (1-D(T,z)\delta_\ell),
    \end{equation*}
    where $D(t,z)$ is a non-decreasing function of $t$.
    If $\delta_n = \delta $ for all $n\in\N$ then
    \begin{equation*}
        \lVert \cP_{T}(z,\cdot)-\overline{\cP}_{T}(z,\cdot) \rVert_{TV} \leq 1-e^{-D(T,z) T \delta^p} .
    \end{equation*}
\end{theorem}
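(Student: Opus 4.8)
The plan is to build a coupling of the PDMP $(Z_t)_{t\ge 0}$ and the approximation $(\Zbar_{t_n})$ on each time step $[t_{n-1},t_n]$, starting both processes from the same point, and to show that the probability they fail to end the step together is $O(\delta_n^{p+1})$; the stated product bound then follows by an iteration over the $N$ steps. The key structural fact making this work is the hypothesis $\phibar_s=\varphi_s$ and $\overline F_i=F_i$, so that the \emph{only} source of discrepancy within a step is the mismatch between the event-time law of the PDMP (driven by $\lambda(\varphi_s(z))$) and that of the approximation (driven by $\lambdabar(z,s;\delta,q)$), together with the fact that the approximation allows at most $p$ events per step while the PDMP allows arbitrarily many. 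I would first treat the base case $p=1$ in detail and then indicate the inductive bookkeeping for $p>1$.

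For the single-step coupling at step $n$ with common starting point $z=Z_{t_{n-1}}$, I would use Poisson thinning on a shared dominating rate. Concretely, let $M(s)=\lambda(\varphi_s(z))+\lambdabar(z,s;\delta_n)+m$ (this is the quantity $\lambda_{tot}$ appearing in Assumption~\ref{ass:lambda_tv}), generate a single inhomogeneous Poisson process of rate $M(s)$ on $[0,\delta_n]$, and at each of its points flip independent coins to decide (i) whether it is an event for the PDMP (probability $\lambda(\varphi_s(\cdot))/M(s)$, choosing component $i$ with the usual weights), (ii) whether it is an event for the approximation (probability $\lambdabar(z,s;\delta_n)/M(s)$), and when both fire, couple the component indices and the jump realisations $F_i(\cdot,\tilde U)$ maximally so that the two processes take \emph{the same} jump whenever possible. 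The two processes stay glued together on all of $[t_{n-1},t_n]$ unless one of the following ``bad'' events occurs: a Poisson point is accepted by exactly one of the two processes; the component indices or the coupled jumps disagree at a common event; or the PDMP produces two or more accepted events in the step (the approximation is capped at one when $p=1$). Each of these has probability controlled via the standard thinning estimate $\mathbb P(\text{disagreement at a point}) \le \int_0^{\delta_n}\mathbb E[|\lambda(\varphi_s(z))-\lambdabar(z,s;\delta_n)|]\,\dd s + (\text{terms from the index/jump mismatch})$, and after plugging in Assumption~\ref{ass:lambda_wasserstein}(a) (the $\delta^p\overline M_2(z)$ bound) and bounding the residual expectations by $M(s)$-weighted moments, these are exactly the quantities that Assumption~\ref{ass:lambda_tv} (resp.\ \ref{ass:lambda_tvp>1}) controls by $L_1,L_2,L_3$ (resp.\ $L_4$); the two-or-more-events term is $O(\delta_n^2)$ by integrating $\lambda\cdot\lambda$ over the simplex, again dominated by $L_3$. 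So the single-step decoupling probability is at most $C(T,z)\delta_n^{p+1}$ for a finite constant built from the $L_j$'s.

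To pass from one step to $N$ steps I would chain the couplings: given the coupling was successful up to step $n-1$ (so the two processes are at the same point), run the step-$n$ coupling as above conditional on that common point. Writing $D(T,z)$ for the per-step constant (taken non-decreasing in $T$ by monotonicity of the $L_j$), the probability of first decoupling at step $n$ is at most $D(T,z)\delta_n^{p+1}\prod_{\ell>n}(1-D(T,z)\delta_\ell)$ — here the factor $(1-D(T,z)\delta_\ell)$ is a lower bound on the probability of \emph{not} decoupling at a later step, valid once $\delta_0$ is small enough that $D(T,z)\delta_\ell\le 1$. Summing over $n$ and using the coupling inequality $\|\cP_T(z,\cdot)-\overline\cP_T(z,\cdot)\|_{TV}\le \mathbb P(\text{processes differ at }T)$ gives the first displayed bound; for the uniform mesh, $\sum_{i=1}^N \delta^{p+1}D\prod_{\ell>i}(1-D\delta) \le \delta^p\big(1-(1-D\delta)^N\big)\le 1-e^{-D T\delta^p}$ using $N\delta=T$ and $(1-x)\ge e^{-x/(1-x)}$-type estimates, yielding the second bound. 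For $p>1$ the only change is that the capped number of events is $p$, so the ``extra events'' bad event is the PDMP having $\ge p+1$ events in the step, which is $O(\delta_n^{p+1})$; and after the $q$-th coupled jump the two processes must be re-coupled with the order-$(p-q)$ rate $\lambdabar_i(\cdot;\delta,p-q)$, so one applies Assumption~\ref{ass:lambda_wasserstein}(a) at order $p-q$ and absorbs the $M$-weighted residuals into $L_4$ — the geometric decay in $\delta^q$ of the probability of having had $q$ jumps is exactly why the weaker requirement on $\lambdabar_i(\cdot;\delta,p-q)$ suffices. The main obstacle, and where care is needed, is the bookkeeping of the thinning coupling when both processes can jump and of keeping track, across the nested ``at most $p$ events'' loop of Algorithm~\ref{alg:secondorder_EPDMP}, of which order of approximation is in force on which sub-interval; the probabilistic estimates themselves are routine once the dominating rate $M(s)=\lambda_{tot}$ is fixed and Assumption~\ref{ass:lambda_tv}/\ref{ass:lambda_tvp>1} is invoked to turn the $M$-weighted expectations into the finite constants $L_j$.
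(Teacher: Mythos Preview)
Your overall architecture is correct and matches the paper's: couple one step at a time via Poisson thinning from the dominating rate $\lambda_{tot}$, bound the conditional probability of separating in one step by $D(T,z)\delta_n^{p+1}$ using the three ``bad'' events (one-sided acceptance, both accept followed by a further PDMP event, both reject then recurse), and then iterate. A minor difference: the paper generates $m$ candidate times $T_i$ with rates $\lambda_{tot}^i$ and takes the minimum, so the component index $i^*$ is automatically shared; together with $\Fbar_i=F_i$ and a common $U_{n+1}$ this means there is no ``index/jump mismatch'' event to worry about at all, so that part of your list of bad events is vacuous.

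Where your write-up goes wrong is the chaining step. Your sentence ``the probability of first decoupling at step $n$ is at most $D\delta_n^{p+1}\prod_{\ell>n}(1-D\delta_\ell)$'' is not a correct decomposition: once the processes separate they stay separated under this coupling, so factors for later steps cannot appear in the probability of \emph{first} decoupling at $n$. The product over $\ell>i$ in the theorem comes instead from the simple recursion
\[
p_n \;=\; \mathbb{P}(Z_{t_n}\neq\Zbar_{t_n}) \;\le\; p_{n-1} + D\delta_n^{p+1}(1-p_{n-1}) \;=\; (1-D\delta_n^{p+1})\,p_{n-1} + D\delta_n^{p+1},
\]
with $p_0=0$, which unrolls to $p_N\le \sum_{i=1}^N D\delta_i^{p+1}\prod_{\ell=i+1}^N(1-D\delta_\ell^{p+1})$; for constant step size this is $1-(1-D\delta^{p+1})^N\le 1-e^{-cDT\delta^p}$. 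Your geometric-sum computation producing an extra factor $\delta^p$ in front (``$\delta^p(1-(1-D\delta)^N)$'') is a symptom of taking the product factors to be $(1-D\delta)$ rather than $(1-D\delta^{p+1})$; using the recursion above fixes both issues at once.
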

\begin{proof}
The proof can be found in Section \ref{sec:proofs_tv}.
\end{proof}
\begin{remark}
    Let us for simplicity consider the constant step size case. If we fix a time horizon $t$, then the theorem shows that $\lVert \cP_{t}(z,\cdot)-\overline{\cP}_{t}(z,\cdot) \rVert_{TV}\to 0 \textnormal{ as } \delta \to 0.$
    \review{On the other hand, the upper bound tends to $1$ as $T\to\infty$ if the step size $\delta$ is fixed.}
    Moreover, because $1-\exp(-D(t_n,z) t_n \delta) \leq D(t_n,z) t_n \delta$ we have $$\lVert \cP_{t_n}(z,\cdot)-\overline{\cP}_{t_n}(z,\cdot) \rVert_{TV} \leq  D(t_n,z)\,t_n\, \delta^p$$ and therefore we have convergence of order $\delta^p$ as $\delta \to 0$. 
\end{remark} 

\review{
\begin{remark}
In a similar fashion to \cite{pdmp_inv_meas}, it is possible to obtain a bound as that in Theorem \ref{thm:tv_distance} also when the jump kernel is approximated. To prove such result it is sufficient to define a coupled jump kernel that keeps the two processes together with strictly positive probability if they are together right before the jump.
\end{remark}}

\subsection{Convergence to the invariant measure}\label{sec:weakerror}

In this section we give conditions for the approximation process $\{\Zbar_{t_n}\}_{n\geq 1}$ to converge to $\mu$, the invariant measure of the PDMP, which we shall assume to exist and be unique. We do this by showing convergence in law to the PDMP uniformly in time and requiring that the PDMP converges to its invariant measure. In the following we consider the case of geometric convergence as it is verified for a range of PDMPs, however convergence with any rate $r(t)$ which is integrable over $[0,\infty)$ is sufficient.

The strategy of this proof is inspired by \cite{CDO}, which uses derivative estimates to obtain uniform in time convergence of an Euler Scheme for an SDE. In that case the authors rely on having exponential decay of the derivatives of the semigroup for the SDE of interest, for which conditions are given in \cite{CrisanOttobre}.

\begin{assumption}\label{ass:geoerg}
Let $\{Z_t\}_{t\geq 0}$ be a PDMP with corresponding generator \eqref{eq:genPDMPonekernel}. Recall the definition of $Q$ given by \eqref{eq:defoofQandlambda}. We assume the following:
\begin{enumerate}[label=(\alph*)]
    \item There exists an invariant measure, $\mu$, for the PDMP, $\{Z_t\}_{t\geq 0}$, and $\mu$ is invariant under $Q$, that is \begin{equation*}
	    \mu(Qf)=\mu(f)
	\end{equation*}
	for any $f$ measurable and integrable.
	\item The Markov process $\{Z_t\}_{t\geq 0}$ is geometrically ergodic with invariant measure $\mu$. Specifically fix $\overline{G}:E\to [1,\infty)$ and define $\mathcal{G}=\{\text{measurable } g:E\to\R, \lvert g\rvert\leq \lf\}$. Assume that $\lf(Z_t)$ is integrable for all $t\geq 0$. For some $R_1>0$, $\omega>0$ 
	\begin{equation}\label{eq:geoerg}
	\sup_{g\in \mathcal{G}} \lvert \mathbb{E}_z[g(Z_t)]-\mu(g)\rvert \leq R_1e^{-\omega t} \lf(z).
	\end{equation}
\end{enumerate}
\end{assumption}
This Assumption has been shown in a variety of cases, for example for the $1$-dimensional Zig-Zag process this was shown in \cite[Theorem 5]{BierkensRoberts} and for higher dimensions in  \cite[Theorem 2]{Bierkensergodicity}. For BPS this was shown in \cite{BPS_Durmus, BPSexp_erg}. For RHMC see \cite[Theorem 3.9]{bou2017randomized}. 



The following assumption is required for Algorithm \ref{alg:basic_EPDMP}, but not for Algorithm \ref{alg:advanced_EPDMP}, for the reasons explained in Note \ref{note:derivativeest}. In general, derivative estimates on the semigroup are useful for proving convergence of approximations as they control the effect of a small error in the initial condition of a stochastic process. In this case we are not using explicitly a derivative estimate but instead the operator $[\vfd,Q]$. The role of this commutator is to describe the difference in the direction of the process over an infinitesimal time interval if the process first jumps then follows the flow map or first follows the flow map and then jumps.

\begin{assumption}\label{ass:derivativedeccommutatoralongflow} Let $\{\cP_t\}_{t\geq 0}$ denote the semigroup corresponding to the generator $\cL$ given by \eqref{eq:genPDMPonekernel}. Recall the notation $ [\vfd, Q]$ defined in Section \ref{sec:notation}.  
Let $\lf$ and $\mathcal{G}$ be given as in Assumption~\ref{ass:geoerg}. There exist some $R_2>0$, $\omega>0$ and set $\mathcal{G}_1\subseteq\mathcal{G}$, such that for all $t\geq 0$ we have
\begin{align}\notag
    \sup_{g\in \mathcal{G}_1}\sup_{\delta\in (0,\delta_0), s\in[0,\delta]} [\vfd, Q](\cP_tg\circ \varphi_{\delta-s})(\varphi_s(z)) \leq R_2 e^{-\omega t} \lf(z).
\end{align}
\end{assumption}
In Example \ref{ex:zzs_weakerror} we show that this assumption is satisfied for ZZS with a non-trivial set $\mathcal{G}_1$. 

Finally, we require the following moment bounds.

\begin{assumption}\label{ass:momentboundadvEulerWE}
Let $\{\Zbar_t\}_{t\geq 0}$ be the process described by Algorithm $i$ where $i$ is either \ref{alg:basic_EPDMP} or \ref{alg:advanced_EPDMP} and suppose Assumption~\ref{ass:geoerg} holds for the function $\lf$. Recall the definition of $\lambda$ and $Q$ given by \eqref{eq:defoofQandlambda}. Define for $i=2$ or $3$ (corresponding to Algorithm $i$)
\begin{equation}\label{eq:largerlf}
    \begin{aligned}
        \lf_i(z,r,s) &=  K_i(z,r,s)+ \lambda(\varphi_r(z))Q((Q\lf +\lf )\lambda)(\varphi_{s-r}(z))\\
    &\quad + \lambda(\varphi_r(z)) \lambda(\varphi_s(z))(Q\lf(\varphi_s(z)) +\lf(\varphi_s(z)) )
    \end{aligned}
\end{equation} 
where 
    \begin{align*}
        K_{\ref{alg:basic_EPDMP}}(z,r,s)&= \left( \lf(z) \lambdabar(z,s;\delta)  +   K_{\ref{alg:advanced_EPDMP}}(z,r,s)\right),\\
        K_{\ref{alg:advanced_EPDMP}}(z,r,s)&=\left( (Q\lf(\varphi_s(z))+\lf(\varphi_s(z))) ( \lambdabar(z,s;\delta)\lambdabar(z,r;\delta)  +\overline{M}_2(z))\right).
    \end{align*}
For $i=2$ or $3$ there exist a function $H_i(z)$ such that for any mesh $0=t_0\leq t_1\leq ...$ with $\delta_k=t_k-t_{k-1}<\delta_0$ for any $k$
\begin{equation*}
    \mathbb{E}_z\left[ \sup_{0\leq r<s\leq \delta_0}\lf_i(\Zbar_{t_k},r,s) \right] \leq CH_i(z).
\end{equation*}
\end{assumption}

\begin{remark}
    Observe that since $\lf$ is a Lyapunov function for the PDMP $\{Z_t\}$ we have that $\mathbb{E}_z[\lf(Z_t)]$ is bounded in $t$ for any $z$. Since $\{\Zbar_{t_n}\}_{n\geq 0}$ is designed to be a good approximation of $\{Z_t\}_{t\geq 0}$ we may hope that $\mathbb{E}_z[\lf(\Zbar_{t_n})]$ is also bounded in $n$. We confirm this for ZZS and BPS in 1 dimension in Lemma \ref{lem:1dLyapunovfunction} and test numerically in a higher dimensional setting.
    
    In each of the references discussed after Assumption~\ref{ass:geoerg} there is some freedom in the choice of parameters in the Lyapunov function. 
    By adjusting these parameters we can bound the terms in $\lf_i(z)$ appearing in Assumption~\ref{ass:momentboundadvEulerWE} by using a different choice of the parameters of the Lyapounov function. Confirming Assumption~\ref{ass:momentboundadvEulerWE}  then reduces to showing that, for a fixed Lyapunov function $\lf$ for the PDMP, we have
    \begin{equation*}
        \sup_{n}\mathbb{E}_z[\lf(\Zbar_{t_n})]<\infty.
    \end{equation*}
    
\end{remark}

\begin{theorem}\label{thm:weakerror}
Let $\{Z_t\}_{t\geq 0}$ be the PDMP with generator given by \eqref{eq:genPDMPonekernel}. Let $\{\Zbar_t\}_{t\geq 0}$ be the process described by Algorithm \ref{alg:basic_EPDMP} or  \ref{alg:advanced_EPDMP}, with $\phibar=\varphi$ and $\Fbar=F$. Suppose that Assumption~\ref{ass:lambda_wasserstein} (a), \ref{ass:geoerg},   \ref{ass:momentboundadvEulerWE} holds and that if $\{\Zbar_t\}_{t\geq 0}$ is described by Algorithm \ref{alg:basic_EPDMP} that Assumption~\ref{ass:derivativedeccommutatoralongflow} holds also. Let $\mathcal{G}_1\subseteq\mathcal{G}$ be given as in Assumption~\ref{ass:derivativedeccommutatoralongflow} if this assumption is required and $\mathcal{G}_1=\mathcal{G}$ otherwise.

Then there exists $K>0$ which depends only on $R_1, R_2$ and $C$ such that for any $g\in \mathcal{G}_{1}$, $n\in \N, z\in E$ we have
\begin{equation}\label{eq:WE}
    \left\lvert \mathbb{E}_{z}[g(Z_{t_n})] - \mathbb{E}_{z}[g(\Zbar_{t_n})] \right\rvert \leq KS_n H_i(z).
\end{equation}
Here
\begin{equation}\label{eq:Sn}
   S_n=\sum_{k=0}^{n-1}   \delta_{k+1}^2 e^{-\omega(t_n-t_{k+1})}.
\end{equation}
\end{theorem}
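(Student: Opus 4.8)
The plan is to telescope the weak error along the mesh, writing
$\mathbb{E}_z[g(Z_{t_n})] - \mathbb{E}_z[g(\Zbar_{t_n})]$
as a sum of one-step errors, each propagated forward by the exact semigroup $\cP$. Concretely, using the Markov property and denoting by $\overline{\cP}_{\delta_{k+1}}$ the one-step transition of the approximation, one has the standard decomposition
\begin{equation}\label{eq:we_telescope}
    \mathbb{E}_z[g(Z_{t_n})] - \mathbb{E}_z[g(\Zbar_{t_n})] = \sum_{k=0}^{n-1} \mathbb{E}_z\Big[ \big(\cP_{\delta_{k+1}} - \overline{\cP}_{\delta_{k+1}}\big)\big(\cP_{t_n - t_{k+1}} g\big)(\Zbar_{t_k}) \Big].
\end{equation}
So the core of the argument is a \emph{local weak error estimate}: for a suitable class of test functions $h$ (namely $h = \cP_{t_n-t_{k+1}}g$ for $g\in\mathcal{G}_1$), show that
$\big|(\cP_{\delta} - \overline{\cP}_{\delta})h(z)\big|$
is bounded by $\delta^2$ times a decaying factor $e^{-\omega(t_n-t_{k+1})}$ times a function of $z$ controlled by Assumption~\ref{ass:momentboundadvEulerWE}. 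Summing \eqref{eq:we_telescope} then produces exactly $S_n H_i(z)$ up to the constant $K$.

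First I would expand both $\cP_\delta h(z)$ and $\overline{\cP}_\delta h(z)$ to second order in $\delta$. For the exact PDMP, conditioning on whether zero, one, or $\ge 2$ jumps occur in $[0,\delta]$, one gets $\cP_\delta h(z) = h(\varphi_\delta(z)) + \int_0^\delta \lambda(\varphi_r(z)) Q\big(h\circ\varphi_{\delta-r}\big)(\varphi_r(z))\,\dd r - \int_0^\delta \lambda(\varphi_r(z)) h(\varphi_\delta(z))\,\dd r + O(\delta^2)$ terms coming from the two-jump contribution (whose probability is $O(\delta^2)$, hence collected into the remainder after being bounded using the quantities in \eqref{eq:largerlf}). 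One does the analogous expansion for $\overline{\cP}_\delta h(z)$ using the definition of Algorithm~\ref{alg:basic_EPDMP} or \ref{alg:advanced_EPDMP}: since $\phibar=\varphi$ and $\Fbar=F$, the \emph{only} discrepancy at first order comes from replacing $\lambda(\varphi_r(z))$ by $\lambdabar(z,r;\delta)$, and — crucially, for Algorithm~\ref{alg:basic_EPDMP} — from forcing the jump to occur at the \emph{end} of the step rather than at time $r$. Matching terms, the $O(1)$ and $O(\delta)$ contributions cancel except for (i) a term controlled by Assumption~\ref{ass:lambda_wasserstein}(a), giving $\overline{M}_2(z)\delta^2$, and (ii) for Algorithm~\ref{alg:basic_EPDMP} only, a term of the form $\int_0^\delta\int_r^\delta \lambda(\varphi_r(z))\, [\vfd,Q](h\circ\varphi_{\delta-s})(\varphi_s(z))\,\dd s\,\dd r$ arising from the difference between ``jump at $r$ then flow'' versus ``flow then jump at the endpoint''; this is $O(\delta^2)$ and is exactly what Assumption~\ref{ass:derivativedeccommutatoralongflow} is designed to bound.

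The decaying factor $e^{-\omega(t_n-t_{k+1})}$ enters because $h = \cP_{t_n-t_{k+1}}g$. For the $\overline{M}_2$-type and two-jump-type terms one uses that $h(y) - h(y') = \cP_{t_n-t_{k+1}}g(y) - \cP_{t_n-t_{k+1}}g(y')$, together with geometric ergodicity (Assumption~\ref{ass:geoerg}): since $g\in\mathcal{G}$, $\cP_t g - \mu(g)$ decays like $R_1 e^{-\omega t}\lf$, and $\mu(g)$ is a constant that cancels in every difference of $h$-values, so effectively each occurrence of $h$ in the local error can be replaced by $h - \mu(g)$ and bounded by $R_1 e^{-\omega(t_n-t_{k+1})}\lf(\cdot)$ evaluated at the relevant flowed/jumped point. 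For the commutator term one invokes Assumption~\ref{ass:derivativedeccommutatoralongflow} directly, which already supplies the $R_2 e^{-\omega(t_n-t_{k+1})}\lf(z)$ bound. Finally, taking expectations over $\Zbar_{t_k}$ and using Assumption~\ref{ass:momentboundadvEulerWE} — whose function $\lf_i$ in \eqref{eq:largerlf} was precisely engineered to dominate all the $\lf$-evaluated-at-flowed-and-jumped-points quantities appearing above, uniformly over $0\le r<s\le\delta_0$ — yields $\big|\mathbb{E}_z[(\cP_{\delta_{k+1}} - \overline{\cP}_{\delta_{k+1}})h(\Zbar_{t_k})]\big| \le K\delta_{k+1}^2 e^{-\omega(t_n-t_{k+1})} H_i(z)$. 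Summing over $k$ gives \eqref{eq:WE} with $S_n$ as in \eqref{eq:Sn}.

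The main obstacle I anticipate is the careful bookkeeping of the second-order expansion of $\overline{\cP}_\delta$ for Algorithm~\ref{alg:basic_EPDMP}, and in particular isolating the commutator term $[\vfd,Q]$ cleanly: one must expand $h(\varphi_{\delta-s}(F(\varphi_s(z),U)))$ versus $h(F(\varphi_\delta(z),U))$ to first order in $\delta$, show the leading difference is exactly $(\delta - s)\,[\vfd,Q](h\circ\varphi_{\cdot})(\cdot)$ up to $O(\delta^2)$, and verify that the resulting object lies in the class controlled by Assumption~\ref{ass:derivativedeccommutatoralongflow} (here the restriction to $g\in\mathcal{G}_1$ matters). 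For Algorithm~\ref{alg:advanced_EPDMP} this term is absent — the jump is placed at the correct time $r$ — which is why that algorithm needs only Assumptions~\ref{ass:lambda_wasserstein}(a), \ref{ass:geoerg}, \ref{ass:momentboundadvEulerWE}; making this dichotomy precise in the expansion is the delicate part. Everything else is Taylor expansion plus triangle inequalities plus the moment bound.
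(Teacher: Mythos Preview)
Your proposal is correct and follows the same overall architecture as the paper: the same telescoping decomposition along the mesh (your \eqref{eq:we_telescope} is exactly the paper's starting identity), followed by a local weak error bound of order $\delta_{k+1}^2$ with the exponential decay $e^{-\omega(t_n-t_{k+1})}$ supplied by geometric ergodicity, and finally the moment bound from Assumption~\ref{ass:momentboundadvEulerWE}. Your identification of where the commutator $[\vfd,Q]$ enters---only for Algorithm~\ref{alg:basic_EPDMP}, from the mismatch between ``jump at $r$ then flow'' and ``flow then jump at endpoint''---is also correct and matches the paper's Lemma~\ref{lem:estlawforjump}, which needs $\lvert\partial_s h_s\rvert$ (hence the commutator) for Algorithm~\ref{alg:basic_EPDMP} but only $\lvert h_s\rvert$ for Algorithm~\ref{alg:advanced_EPDMP}.

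The one technical difference worth noting is in how the local error is extracted. You propose to expand both $\cP_\delta h$ and $\overline{\cP}_\delta h$ probabilistically by conditioning on the number of jumps, tracking an $O(\delta^2)$ remainder from the two-jump event of the continuous PDMP. The paper instead sets $f_k(y,s)=\cP_{t_n-t_k-s}g(y)$ and uses the Kolmogorov backward identity $\partial_s f_k+\langle\Phi,\nabla f_k\rangle=-\lambda(Qf_k-f_k)$ along the flow, together with the fundamental theorem of calculus, to obtain an \emph{exact} integral formula for $\mathbb{E}_z[f_k(\Zbar_\delta,\delta)]-f_k(z,0)$ (the display \eqref{eq:weexpansion}). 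This sidesteps any explicit expansion of $\cP_\delta$ and any separate two-jump remainder: the three resulting terms are bounded directly via Lemma~\ref{lem:estlawforjump} (Step~(i)) and two short computations (Steps~(ii)--(iii)) that rely only on Assumption~\ref{ass:geoerg}. Your route would work as well, but the backward-equation trick is cleaner bookkeeping and makes the role of $\lf_i$ in \eqref{eq:largerlf} more transparent.
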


\begin{proof}[Proof of Theorem~\ref{thm:weakerror}]
The proof of this theorem is given in Section \ref{sec:proof_weakerror_thm}.
\end{proof}

The choice of the set $\mathcal{G}_1$ here determines the type of convergence that we obtain. For example if $\mathcal{G}_1$ contains the set of continuous functions with supremum norm bounded by $1$ then this corresponds to convergence in the total variation distance. On the other hand, if $\mathcal{G}$ contains the set of functions with Lipschitz constant less than $1$ then we have convergence in the Wasserstein distance of order $1$. Since we do not require Assumption~\ref{ass:derivativedeccommutatoralongflow} to hold when we use Algorithm \ref{alg:advanced_EPDMP} we can typically take $\mathcal{G}_1=\mathcal{G}$ in that case and hence we have convergence in a metric that is stronger than total variation. However for Algorithm \ref{alg:basic_EPDMP} we need an additional bound on the derivatives of the function so we have convergence in a weaker metric, see Example \ref{ex:zzs_weakerror}.

\begin{remark}\label{note:derivativeest}
An important estimate in the proof of Theorem~\ref{thm:weakerror} will be obtaining a bound between the law of the first jump of the PDMP, $\tau$, and of the approximation process, $\overline{\tau}$. This is done in Lemma \ref{lem:estlawforjump}. In this lemma we need to treat Algorithm \ref{alg:basic_EPDMP} differently to Algorithm \ref{alg:advanced_EPDMP}. In particular, we show convergence as $\delta \to 0$ by considering $\mathbb{E}[h(\tau)]-\mathbb{E}[h(\overline{\tau})]$ for a class $\C$ of test functions $h$. In the case of Algorithm \ref{alg:advanced_EPDMP} we use the set $\C=\C_b([0,\delta])$ of test functions whereas in the case of Algorithm \ref{alg:basic_EPDMP} we use the set $\C=\C^1_b([0,\delta])$. The result of using this weaker convergence is that we need a form of derivative estimate. The derivative estimate we require is given by Assumption~\ref{ass:derivativedeccommutatoralongflow} and is needed only if we are considering Algorithm \ref{alg:basic_EPDMP}.   
\end{remark}

\begin{remark}\label{note:numerical}
To simplify the exposition we have only considered the case when we can simulate the flow exactly. We can extend this proof to allow also for the use of a numerical integrator provided we have a suitable derivative bound. More precisely we require that for some $R_1>0$, $\omega>0$ and some set $\mathcal{G}_1\subseteq\mathcal{G}$ and for any $\delta\leq \delta_0, t>0, z\in \gE, i\in \{1,\ldots,m\}$
\begin{align}
    \sup_{g\in \mathcal{G}_1}\left\lvert \cP_tg(\phibar_{\delta}(z))-\cP_tg(\varphi_{\delta}(z))\right\rvert \leq \delta^2R_3e^{-\omega t} \lf(z),\label{eq:numintest}\\
    \sup_{g\in \mathcal{G}_1}\left\lvert Q_i\cP_tg(\phibar_{\delta}(z))-Q_i\cP_tg(\varphi_{\delta}(z))\right\rvert \leq \delta^2R_3e^{-\omega t} \lf(z) \notag.
\end{align}
\end{remark}

Now using the uniform in time weak error estimate \eqref{eq:WE} and exponential ergodicity \eqref{eq:geoerg} we can show convergence to the invariant measure of the PDMP.
\begin{corollary}
\label{cor:convergence-to-stationary-measure}
Suppose that the conclusion of Theorem~\ref{thm:weakerror} holds. Set $\delta_k=\delta$ for all $k\in \N$. Then for $g\in \mathcal{G}_1$ we have
\begin{align}
    \left\lvert \mathbb{E}_{z}[g(Z_{t_n})] - \mathbb{E}_{z}[g(\Zbar_{t_n})] \right\rvert &\leq C\delta H(z),\label{eq:WEfixeddelta}\\
    \left\lvert \mathbb{E}_{z}[g(\Zbar_{t_n})] -\mu(g)\right\rvert &\leq C H(z)(\delta +e^{-\omega t_n})\label{eq:weakerrortotarget}\\
     \left\lvert \frac{1}{N}\sum_{n=1}^N\mathbb{E}_{z}[g(\Zbar_{t_n})] -\mu(g)\right\rvert &\leq C H(z)\left(\delta+\frac{1}{t_N}\right).\label{eq:timeav}
    \end{align}
\end{corollary}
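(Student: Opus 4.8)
The plan is to obtain all three estimates directly from the uniform-in-time weak error bound \eqref{eq:WE} of Theorem~\ref{thm:weakerror} together with the geometric ergodicity \eqref{eq:geoerg} of Assumption~\ref{ass:geoerg}, the only analytic input being the summation of two geometric series. Throughout we use the constant step size, so $t_n=n\delta$. First I would specialise $S_n$ from \eqref{eq:Sn}: with $\delta_{k+1}=\delta$ one has
\begin{equation*}
    S_n = \delta^2\sum_{k=0}^{n-1}e^{-\omega(n-1-k)\delta} = \delta^2\sum_{j=0}^{n-1}e^{-\omega j\delta} \leq \frac{\delta^2}{1-e^{-\omega\delta}}.
\end{equation*}
Since $x\mapsto x/(1-e^{-x})$ is bounded on the interval $(0,\omega\delta_0]$, say by $c_\omega$, this gives $S_n\leq c_\omega\,\delta/\omega$ uniformly in $n$; substituting into \eqref{eq:WE} and absorbing $K$ and $c_\omega/\omega$ into $C$ yields \eqref{eq:WEfixeddelta}, after (if necessary) replacing $H$ by $H+\lf$ so that also $\lf\leq H$, which is used below.

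Next, \eqref{eq:weakerrortotarget} follows from the triangle inequality. For $g\in\mathcal{G}_1\subseteq\mathcal{G}$,
\begin{equation*}
    \lvert\mathbb{E}_z[g(\Zbar_{t_n})]-\mu(g)\rvert \leq \lvert\mathbb{E}_z[g(\Zbar_{t_n})]-\mathbb{E}_z[g(Z_{t_n})]\rvert + \lvert\mathbb{E}_z[g(Z_{t_n})]-\mu(g)\rvert,
\end{equation*}
where the first term is bounded by $C\delta H(z)$ via \eqref{eq:WEfixeddelta} and the second by $R_1e^{-\omega t_n}\lf(z)\leq R_1 e^{-\omega t_n}H(z)$ via \eqref{eq:geoerg}; combining these and enlarging $C$ gives $CH(z)(\delta+e^{-\omega t_n})$.

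Finally, for the ergodic-average bound \eqref{eq:timeav} I would average \eqref{eq:weakerrortotarget} over $n=1,\dots,N$:
\begin{equation*}
    \left\lvert \frac{1}{N}\sum_{n=1}^N\mathbb{E}_z[g(\Zbar_{t_n})]-\mu(g)\right\rvert \leq \frac{1}{N}\sum_{n=1}^N CH(z)\bigl(\delta+e^{-\omega n\delta}\bigr) \leq CH(z)\delta + \frac{CH(z)}{N}\cdot\frac{1}{1-e^{-\omega\delta}}.
\end{equation*}
Bounding the geometric sum once more by $c_\omega/(\omega\delta)$, the last term is at most $CH(z)\,c_\omega/(\omega N\delta)=CH(z)\,c_\omega/(\omega t_N)$, and absorbing $c_\omega/\omega$ into $C$ produces \eqref{eq:timeav}. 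There is essentially no obstacle here: everything reduces to the elementary lower bound $1-e^{-\omega\delta}\gtrsim\omega\delta$ on the bounded step-size range $(0,\delta_0]$, which is what converts the two geometric sums into the announced rates $\delta$ and $1/t_N$, together with the cosmetic observation that a single Lyapunov-type prefactor $H$ may be taken to dominate both $\lf$ and the approximate-chain moments appearing in Theorem~\ref{thm:weakerror}.
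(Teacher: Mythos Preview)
Your proof is correct and follows essentially the same approach as the paper: bound $S_n$ by a geometric series to get \eqref{eq:WEfixeddelta}, combine with \eqref{eq:geoerg} via the triangle inequality for \eqref{eq:weakerrortotarget}, and then average and sum the resulting geometric series for \eqref{eq:timeav}. The only cosmetic difference is that for \eqref{eq:timeav} the paper splits the time average into the weak-error part and the ergodicity part before bounding, whereas you average the already-combined estimate \eqref{eq:weakerrortotarget}; the two organizations are equivalent.
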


\begin{proof}[Proof of Corollary~\ref{cor:convergence-to-stationary-measure}]
The proof of this corollary is given in Appendix \ref{sec:proofs_weak_error_theorem_appendix}.
\end{proof}

\begin{corollary}\label{cor:convergence_variable_step_size}
Suppose that the assumptions of Theorem~\ref{thm:weakerror} holds. Assume that $\delta_k\to 0$ as $k\to \infty$ and $\sum_{k=1}^\infty\delta_k=\infty$. For any $g\in \mathcal{G}_{1}$ we have
\begin{equation}\notag
    \lim_{n\to\infty}\left\lvert \mu(g) - \mathbb{E}_{x,v}[g(\Xbar_{t_n},\Vbar_{t_n})] \right\rvert =0.
\end{equation}
\end{corollary}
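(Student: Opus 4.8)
The plan is to deduce the corollary from the uniform-in-time weak error estimate of Theorem~\ref{thm:weakerror} combined with the geometric ergodicity of the continuous-time PDMP from Assumption~\ref{ass:geoerg}(b). Writing $z=(x,v)$ and fixing $g\in\mathcal{G}_1\subseteq\mathcal{G}$, I would first split by the triangle inequality
\begin{equation*}
\lvert \mu(g) - \mathbb{E}_z[g(\Zbar_{t_n})] \rvert \le \lvert \mu(g) - \mathbb{E}_z[g(Z_{t_n})] \rvert + \lvert \mathbb{E}_z[g(Z_{t_n})] - \mathbb{E}_z[g(\Zbar_{t_n})] \rvert .
\end{equation*}
The first term is bounded by $R_1 e^{-\omega t_n}\lf(z)$ via \eqref{eq:geoerg} (applicable since $g\in\mathcal{G}$), and the second by $K S_n H_i(z)$ via \eqref{eq:WE}, with $S_n$ as in \eqref{eq:Sn}. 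Since $\lf(z)$ and $H_i(z)$ are finite constants depending only on the fixed initial point, the corollary reduces to showing $e^{-\omega t_n}\to 0$ and $S_n\to 0$ as $n\to\infty$.

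The first limit is immediate: the hypothesis $\sum_{k\ge 1}\delta_k=\infty$ gives $t_n=\sum_{k=1}^n\delta_k\to\infty$, hence $e^{-\omega t_n}\to 0$.

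The only real work, and thus the main obstacle, is showing $S_n=\sum_{k=0}^{n-1}\delta_{k+1}^2 e^{-\omega(t_n-t_{k+1})}\to 0$; this is where \emph{both} hypotheses $\delta_k\to 0$ and $\sum_k\delta_k=\infty$ are used. I would argue as follows. Fix $\varepsilon>0$, choose $M\in\N$ with $\delta_k<\varepsilon$ for all $k>M$, and split $S_n$ into an ``old'' block over $k<M$ and a ``recent'' block over $k\ge M$. For the old block, bound $\delta_{k+1}\le\delta_0$ and $e^{-\omega(t_n-t_{k+1})}\le e^{-\omega(t_n-t_M)}$ to obtain a bound $\le M\delta_0^2 e^{-\omega(t_n-t_M)}$, which tends to $0$ as $n\to\infty$ since $t_n\to\infty$. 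For the recent block, use $\delta_{k+1}^2\le\varepsilon\,\delta_{k+1}$ together with the comparison $\delta_{k+1}e^{-\omega(t_n-t_{k+1})}\le e^{\omega\delta_0}\int_{t_k}^{t_{k+1}}e^{-\omega(t_n-s)}\,\dd s$ (valid because $s\mapsto e^{-\omega(t_n-s)}$ is increasing and $\delta_{k+1}\le\delta_0$), and telescope the integrals to get
\begin{equation*}
\sum_{k=M}^{n-1}\delta_{k+1}^2 e^{-\omega(t_n-t_{k+1})} \le \varepsilon\, e^{\omega\delta_0}\int_0^{t_n}e^{-\omega(t_n-s)}\,\dd s \le \frac{\varepsilon\, e^{\omega\delta_0}}{\omega}.
\end{equation*}
Hence $\limsup_{n\to\infty}S_n\le \varepsilon e^{\omega\delta_0}/\omega$, and letting $\varepsilon\downarrow 0$ yields $S_n\to 0$. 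Feeding the two limits back into the triangle inequality gives $\lvert \mu(g)-\mathbb{E}_{x,v}[g(\Xbar_{t_n},\Vbar_{t_n})]\rvert\to 0$, which is the claim.
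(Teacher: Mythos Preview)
Your proposal is correct and follows essentially the same approach as the paper: both reduce the statement to showing $S_n\to 0$, split the sum at a fixed index, control the ``old'' block via the exponential decay $e^{-\omega(t_n-t_M)}\to 0$ (using $t_n\to\infty$), and control the ``recent'' block by the smallness of the late step sizes combined with a Riemann-sum comparison to $\int e^{-\omega(t_n-s)}\,\dd s$. Your bookkeeping of the Riemann-sum inequality (with the explicit factor $e^{\omega\delta_0}$) is in fact slightly more careful than the paper's version.
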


\begin{proof}[Proof of Corollary~\ref{cor:convergence_variable_step_size}]
The proof of this corollary is given in Appendix \ref{sec:proofs_weak_error_theorem_appendix}.
\end{proof}

\section{\review{Examples}}\label{sec:examples}

\subsection{Examples for Section \ref{sec:Wass_bounds}}
\begin{example}[Zig-zag sampler continued]\label{ex:zzs_wass}
    We continue Example \ref{ex:ZZ} checking that the conditions of the previous section are satisfied. Let us check that approximations of the ZZS based on Algorithm \ref{alg:basic_EPDMP} or \ref{alg:advanced_EPDMP} satisfy Corollary~\ref{cor:strong_error_pdmp}. Assumption~\ref{ass:lipschitz_phi} clearly holds. 
    Assumption~\ref{ass:boundedPDMP} holds because the process travels with bounded velocity, so we can apply the reasoning in Note \ref{note:wass_theorem} to verify Assumption~\ref{ass:lambda_lipschitz}. In particular, Assumption~\ref{ass:F_new} holds as long as $\pot \in \C^2$ and $\gamma_i$ is locally Lipschitz for all $i\in\{1,\dots,m\}$. Assumptions \ref{ass:integrator} and \ref{ass:approx_jump_kernel} follow from the fact that we can simulate exactly the flow and the kernels. Assumption~\ref{ass:lambda_wasserstein}(a) is satisfied for $p=1$ both for $\lambdabar_i(z,s;\delta)=\lambda_i(z)$ and \eqref{eq:lambda_gradfree_zzs} for $\pot \in \C^2$ . Assumption~\ref{ass:lambda_wasserstein}(b) follows from Assumption~\ref{ass:boundedPDMP}. Finally Assumption~\ref{ass:discr_velocities_wass} clearly holds for any $D>0$. 
    
    Note that we could define the same algorithm with $m=1$ according to Note \ref{note:manykernels}. However, in this case neither Assumption~\ref{ass:F_new} nor Assumption~\ref{ass:discr_velocities_wass} hold as the function $F$ is not Lipschitz.
    
    In Figure \ref{fig:zzs_wass} we demonstrate numerically the difference between the ZZS with 50-dimensional Gaussian target and the approximation scheme corresponding to Algorithm \ref{alg:basic_EPDMP} with constant step size $\delta$ and frozen switching rates. \footnote{The codes for all experiments in this paper can be found in a dedicated GitHub repository at \url{https://github.com/andreabertazzi/Euler_PDMC_algorithms}} In this plot the two processes have been coupled according to Coupling \ref{coup:wass_gen_pdmp}, which is a synchronous coupling that is used in Section \ref{sec:proof_wass_theorem} to prove Theorem~\ref{thm:strong_error_pdmp}. In the figure we see that as $\delta$ tends to zero that the distance between the two processes converges to zero. We also observe there is an upper bound on how large the error can get, which roughly corresponds to the velocities having the opposite sign, i.e. $\Vbar_{t_n}=-V_{t_n}$. 
\end{example}

\begin{figure}[t]
\centering
\begin{subfigure}{0.49\textwidth}
    \includegraphics[width=\textwidth]{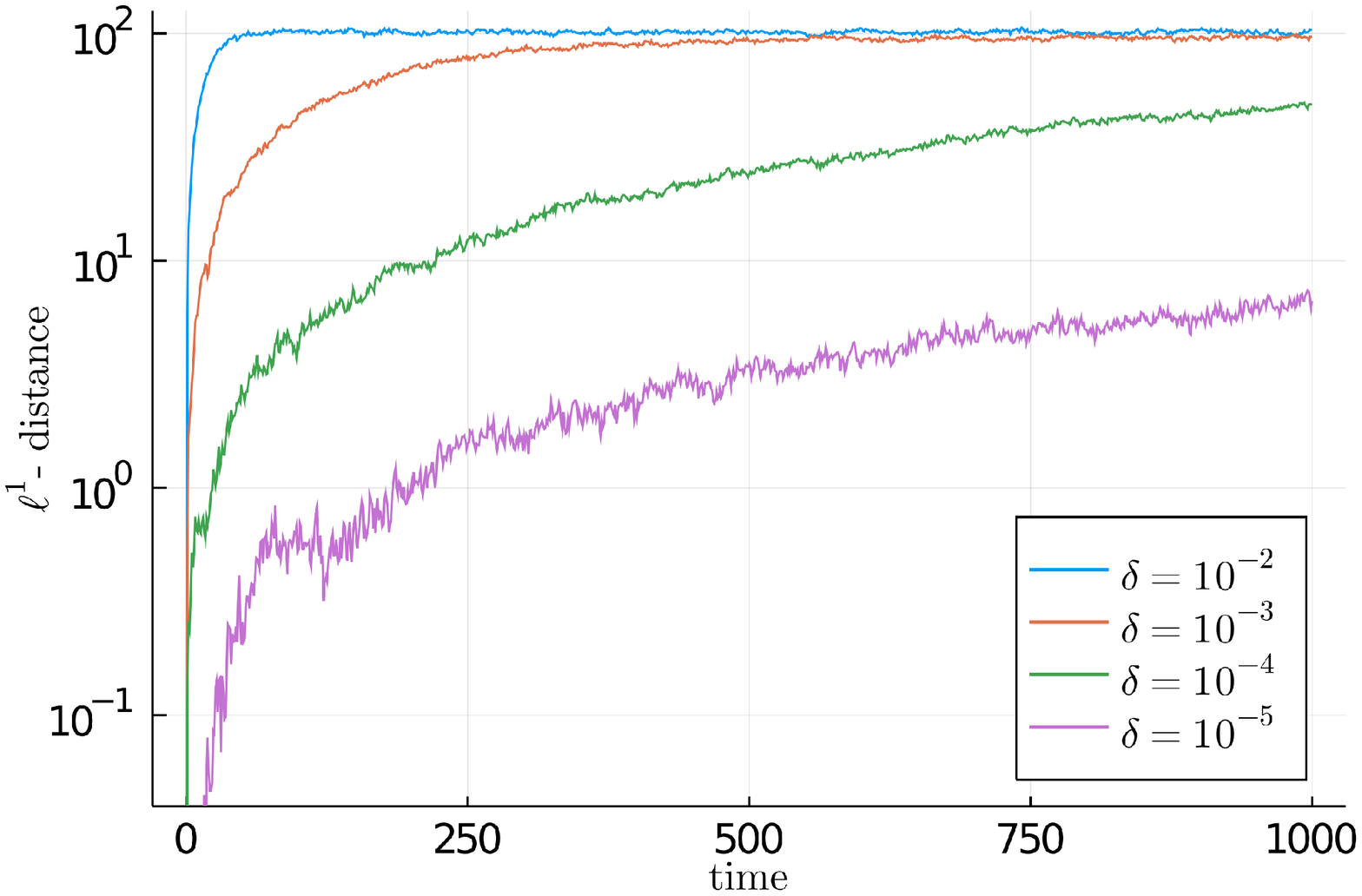}
    \caption{Results for the ZZS with $\gamma(x,v)=0$.}
    \label{fig:zzs_wass}
\end{subfigure}
\begin{subfigure}{0.49\textwidth}
    \includegraphics[width=\textwidth]{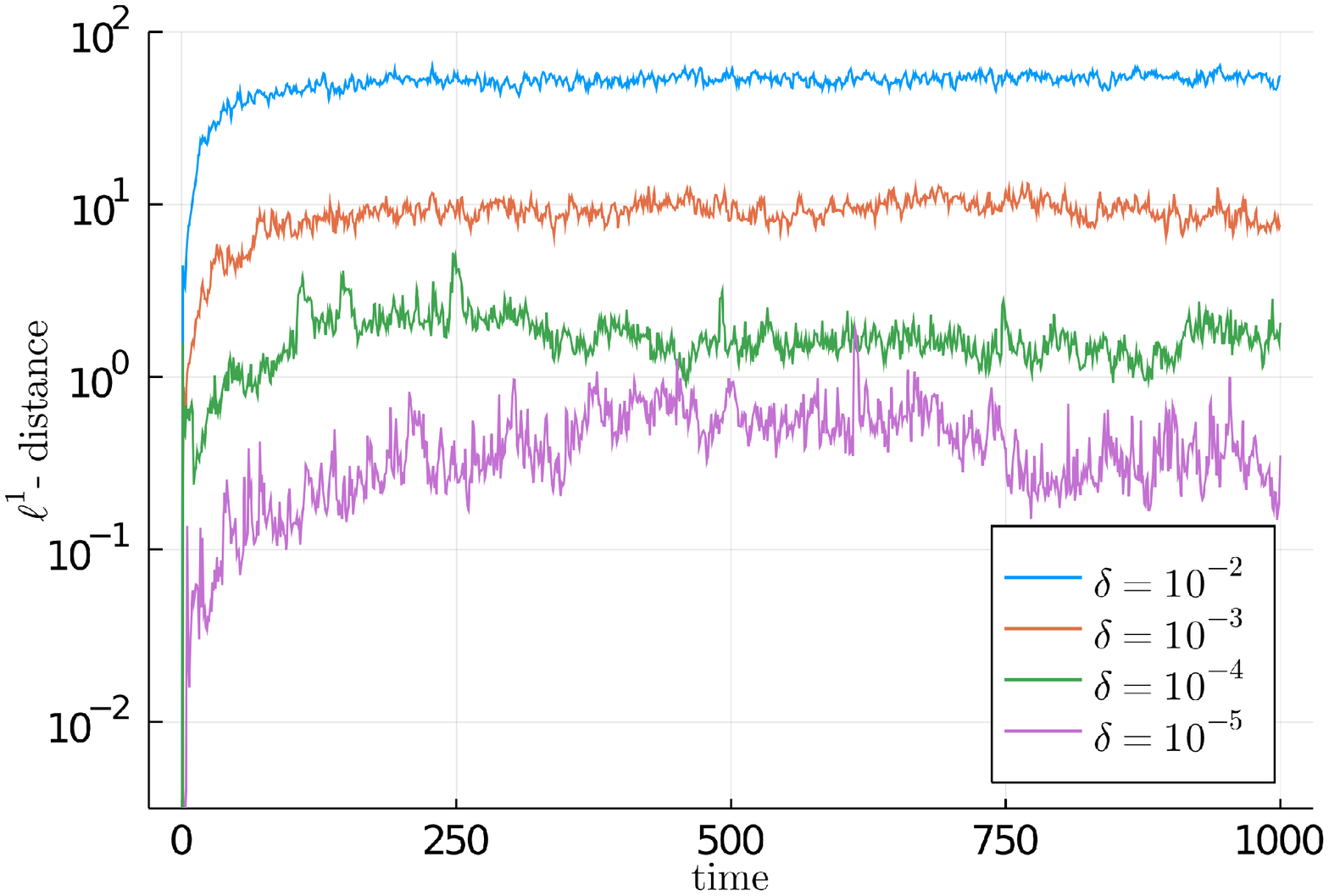}
    \caption{Results for the BPS  with $\lambda_r=1$.}
    \label{fig:bps_wass}
\end{subfigure}
\caption{Plots of the distance between the continuous time PDMPs and their approximations given by Algorithm \ref{alg:basic_EPDMP} for several values of the step size. \review{The $x$-axis shows continuous time units, i.e. the time of $\Zbar_{t_n}$ is $t_n=n\delta$.} The distance is $\lVert x-y\rVert_1=\sum_{i=1}^d\lvert x_i-y_i\rvert$ The plots show the average of $50$ experiments. The processes are coupled according to Coupling \ref{coup:wass_gen_pdmp}. The continuous PDMPs have a $50$-dimensional standard Gaussian as stationary measure. Here we choose $\lambdabar((x,v),s;\delta)=\lambda(x,v)$.}
\label{fig:truncation_error}
\end{figure}

\begin{example}[Bouncy Particle Sampler continued]\label{ex:BPS_wass} We continue Example \ref{ex:BPSintro} and discuss the assumptions of this section in this context. We show that $x\mapsto R(x)v$ need not be Lipschitz. Indeed, for a Gaussian example with $d>1$ fix $v\in\{1,-1\}^d$ and take $y\in \R^d$ orthogonal to $v$ then for any $s>0$ consider
\begin{align*}
    \lVert R(sv)v-R(y)v\rVert  = 2\left\lVert \frac{\langle v,sv\rangle}{\lVert sv\rVert^2} sv\right\rVert=2\lVert v\rVert.
\end{align*}
Letting $s$ tend to zero we see that $x\mapsto R(x)v$ is not Lipschitz at zero and hence Assumption~\ref{ass:F_new} does not hold.

In Figure \ref{fig:bps_wass} we demonstrate numerically the difference between the BPS with 50-dimensional Gaussian target, Gaussian refreshments with rate $1$ and the approximation scheme corresponding to Algorithm \ref{alg:basic_EPDMP} with constant step size $\delta$, and frozen switching rates when coupled according to Coupling \ref{coup:wass_gen_pdmp}. We see that although the assumptions of the theory do not hold the error appears to tend to zero as $\delta\to 0$. Indeed in Section \ref{sec:tv_distance} we obtain theory supporting this observation. Moreover, from the plot it appears that the error converges as $\delta\to 0$ uniformly in time. We will investigate this property further in Section \ref{sec:weakerror}.
\end{example}

\begin{example}[Randomized Hamiltonian Monte Carlo algorithm continued]\label{ex:rhmc_wass}
We continue Example \ref{ex:randHMC}. As long as $\nabla\pot$ is Lipschitz, Proposition~\ref{prop:wass_rhmc} can be applied to the approximations based on Algorithms \ref{alg:basic_EPDMP} and \ref{alg:advanced_EPDMP}.
\end{example}

\begin{example}[PDMP two-dimensional Morris-Lecar model \cite{lemaire_thieullen_thomas_2020}]\label{ex:Morris-Lecar}
Let us consider the PDMP defined on $E=\{0,\ldots,N_K\}\times \R$ whose characteristics are given by
\begin{align*}
    &\Phi(\theta,\nu)=\left(\begin{array}{c}
    0\\
 \frac{1}{C}\left(1-g_{\mathrm{Leak}}(\nu-V_{\mathrm{Leak}})-g_{\mathrm{Ca}}M_\infty(\nu)(\nu-V_{\mathrm{Ca}})-g_K\frac{\theta}{N_K}(\nu-V_K)\right) 
    \end{array}\right),\\
    &\lambda(\theta,\nu)=(N_K-\theta)\alpha_K(\nu)+\theta \beta_K(\nu),\\
    &Q((\theta,\nu),(\theta+1))=\frac{(N_K-\theta)\alpha_K(\nu)}{\lambda(\theta,\nu)},\quad  Q((\theta,\nu), \{\theta-1\}) =\frac{\theta\beta_K(\nu)}{\lambda(\theta,\nu)},\\
    &M_\infty(\nu)=(1+\tanh((\nu-V_1)/V_2))/2,\\ &\alpha_K(\nu)=\lambda_K(\nu)N_\infty(\nu), \quad \beta_K(\nu)=\lambda_K(\nu)(1-N_\infty(\nu)),\\ &N_\infty(\nu)=(1+\tanh((\nu-V_3)/4))/2, \quad  \lambda_K(\nu)=\bar{\lambda}_K\cosh((\nu-V_3)/2V_4).
\end{align*}
This model was given in \cite{lemaire_thieullen_thomas_2020} and is a PDMP version of the deterministic Morris-Lecar model introduced in \cite{MorrisLecar} to explain the dynamics of the barnacle muscle fibre. Here $\nu$ denotes the membrane potential, $\theta$ is number of open Potassium channels, $g_\mathrm{Leak}, g_\mathrm{Ca},g_K$ is maximum conductance value for leak, Calcium, and Potassium respectively, $C$ is the membrane capacitance, $V_\mathrm{leak}, V_{\mathrm{Ca}},V_K$ is the equilibrium potential of relevant ion channels, $M_\infty(\nu)$ ($N_\infty(\nu)$ respectively) is the fraction of open Calcium (Potassium resp.) channels at steady state. $V_1$ ($V_3$ respectively) is the potential at which $M_\infty=0.5$ ($N_\infty=0.5$ resp.). $V_2$ (respectively $V_4$) is the reciprocal is the slope of the voltage dependence of $M_\infty$ ($N_\infty$ resp.). 

We will consider a PD-PDMP approximation of this PDMP. Note that in this case the flow does not have an explicit solution so a numerical integrator is required. Therefore we will set $\phibar_t$ to be an Euler approximation of $\varphi_t$ and $\lambdabar((\theta,\nu),t;\delta)=\lambda(\phibar_t(\theta,\nu;\delta))=\lambda((\theta,\nu)+t\Phi(\theta,\nu))$. Note we can simulate jump times with this approximate rate using Poisson thinning. Since the kernel $Q$ can be simulated exactly we do not need to approximate this. This algorithm is very similar to the approximation proposed in \cite{lemaire_thieullen_thomas_2020} and we confirm their results in our framework. Indeed, one can verify that Assumptions \ref{ass:lipschitz_phi}, \ref{ass:lambda_lipschitz}-\ref{ass:lambda_wasserstein}, as well as Assumption~\ref{ass:discr_velocities_wass} hold so by Theorem~\ref{thm:strong_error_pdmp} we have that the approximation converges as $\delta\to 0$ to the PDMP. 
\end{example}

\subsection{Examples for Section \ref{sec:tv_distance}}\label{sec:examples_tv}
It is straightforward to verify Assumption~\ref{ass:lambda_tv} for either ZZS or BPS. Below we give details for BPS.
\begin{example}[Bouncy Particle Sampler continued]\label{ex:bps_tv}
We continue Examples \ref{ex:BPSintro} and \ref{ex:BPS_wass} and discuss when we may apply Theorem~\ref{thm:tv_distance} in this setting. Recall in Example \ref{ex:BPS_wass} we showed that we can not expect BPS to satisfy the assumptions of Theorem~\ref{thm:strong_error_pdmp} because the reflection operator is in general not Lipschitz. However, we do not need any assumption of this type for Theorem~\ref{thm:tv_distance}. Consider for instance a simple example in which $\lambdabar_1((x,v),s)=\lambda_1(x,v)$. Then Assumption~\ref{ass:lambda_wasserstein} (a) follows provided $\pot\in \C^2$.  If in particular $\pot$ has bounded Hessian, then $\overline{M}_2(x,v)\leq\lVert v\rVert \lVert \nabla^2 U\rVert_\infty$. It remains to verify the moment bounds in Assumption~\ref{ass:lambda_tv} hold. It is clear that these are satisfied if the velocities are bounded, as for instance when refreshments are from $\mathscr{Z}_{n+1}\sim\textnormal{Unif}(\mathbb{S}^{d-1})$ where $\mathbb{S}^{d-1}$ is the unit sphere in $\mathbb{R}^d$. On the other hand, for the BPS with Gaussian refreshments we observe that outside of a compact set one can bound the moments in Assumption~\ref{ass:lambda_tv} by the expectation of the Lyapunov functions derived in \cite{BPSexp_erg} or \cite{BPS_Durmus}. Therefore we can apply Theorem~\ref{thm:tv_distance} to obtain convergence as $\delta\to 0$.

Let us derive a rough estimate on the dimensional dependence of Theorem~\ref{thm:tv_distance} in the $p=1$ case. In particular we focus on the dependence of $D(t_n,z)$ in the dimension. Observe from the proof of the theorem that $D(t,z)$ depends linearly on $L_1,L_2,L_3$, and thus it is sufficient to check the dimensional dependence of such constants. By applying Cauchy-Schwartz inequality the interesting terms are of the form $\mathbb{E}_z[(\lambda_1(X_t,V_t))^2]$. We approximate this expectation with its value in stationarity.
Let us restrict to the case of Gaussian refreshments and a standard Gaussian invariant measure. Then in stationarity we obtain
\begin{align*}
    \mathbb{E}_\pi[(\lambda_1(X,V))^2] = \mathbb{E}_\pi[\langle V,X\rangle_+^2] \leq d^2. 
\end{align*}
Therefore we expect $D(t,z)$ to have a quadratic dependence in the dimension of the PDMP. In order to obtain a fixed error in total variation distance one should then choose $\delta$ such that $D(t,z)\delta$ is constant, and thus $\delta$ of order $d^{-2}$. Observe that taking refreshments on the unit sphere the dependence would be linear in $d$.
\end{example}

\begin{example}[Continuous time approximations of PDMP]\label{ex:continuous_approx_pdmp}
So far we have concentrated on discrete time approximations of PDMP, however it is also possible to apply our results to approximate a PDMP with a second continuous time PDMP. Similarly to the setting of \cite{huggins2017quantifying} suppose we have an approximation $\tilde{\lambda}_i$ of $\lambda_i$, i.e. there exists $\varepsilon>0$ such that for all $i\in\{1,\ldots,m\}$ we have
\begin{equation}\label{eq:cts_lambda_error}
    \lvert \tilde{\lambda}_i(z)-\lambda_i(z) \rvert \leq \Tilde{M}(z) \varepsilon.
\end{equation}
A possible motivation for this approach is the case of ZZS when we either can not evaluate $\partial_i\pot$ exactly or it is too expensive to do so. Then we can use an approximation $\overline{\partial_i\pot}$ to obtain an approximation of $\lambda$, i.e. $\tilde{\lambda}_i=(v_i\overline{\partial_i\pot})_+$.
Now we define a PDMP with approximated rates $\tilde{\lambda}_i$ which moves according to the generator $\tilde{\cL}$ acting on sufficiently smooth functions by
\begin{equation}\label{eq:gen_cts_approx_pdmp}
    \tilde{\cL} f(z) = \langle \phi(z),\nabla_zf(z)\rangle + \sum_{i=1}^m \tilde{\lambda}_i(z)\int (f(z')-f(z)) Q_i(z,\dd z').
\end{equation}
We are interested in comparing this process to the PDMP with generator $\cL$ given by \eqref{eq:genPDMPmanykernels}. In order to use Theorem~\ref{thm:tv_distance} we introduce a discrete time process which we can use to compare to both the PDMPs corresponding to $\cL$ and $\tilde{\cL}$. Set $\delta=\varepsilon$ and $t_n=n\delta$, define $\{\Zbar_{t_n}\}_{n\in \N}$ to be given by Algorithm \ref{alg:advanced_EPDMP} with rates $\lambdabar_i(z,t)=\tilde{\lambda}_i(\varphi_t(z))$, and according to the exact flow $\phibar_t=\varphi_t$ and Markov kernels $Q_i$, i.e. $\Fbar_i=F_i$. We assume that the moment bounds of Assumption~\ref{ass:lambda_tv} are satisfied which is clear for example if the processes move with bounded velocity. We may apply Theorem~\ref{thm:tv_distance} both when the PDMP is given by $\cL$ and by $\tilde{\cL}$. Therefore for any $t>0$ there exist a constant $D(t,z)>0$ such that
\begin{equation*}
   \lVert \cP_{t}(z,\cdot)-\tilde{\cP}_{t}(z,\cdot) \rVert_{TV} \leq 2(1-e^{-D(t,z) \,t\,
   \varepsilon})\leq 2D(t,z) \,t\,
   \varepsilon.  
\end{equation*}
Here $\{\cP_{t}\}_{t\geq 0}$ denotes the semigroup of a PDMP with generator $\cL$ and  $\{\tilde{\cP}_{t}\}_{t\geq 0}$ denotes the semigroup of a PDMP with generator $\tilde{\cL}$. We remark that the analysis above could be adapted to the setting of the Numerical Zig-zag algorithm introduced in \cite{cotter2020nuzz}.
\end{example}

\begin{example}[ZZS with subsampling]\label{ex:zzs_subsampling}
In Bayesian statistics the posterior distribution $\pi(x)\propto\exp(-\pot(x))$ is often of the form $\pot(x) = \sum_{j=1}^N \pot_j(x)$, where $\pot_j(x)$ depends only on a subset of the data.
As described in \cite{ZZ}, the ZZS allows for exact subsampling, which means that the simulation of each event time is calculated using $\pot_J$ for some $J\sim \Unif\{1,\dots,N\}$ instead of the full negative log-density $\pot$. This is achieved by defining switching rates $\lambda_i^j(x,v)= (v_i\partial_i \pot_j(x))_+$ and computational bounds $M_i(t)$ such that $\lambda_i^j(x+vt,v) \leq M_i(t)$. Then starting at state $(x,v)$ at time $t$, a proposal for the next event time is found by taking $\tau_{i^*}=\min \tau_i$, where $\tau_i$ has rate $M_i(t)$ for $i=1,\dots,d$. Then the proposal is accepted with probability $\lambda_{i^*}^J(x+v\tau_{i^*},v) / M_{i^*}(\tau_{i^*})$ for $J\sim \Unif\{1,\dots,N\}$ and in case of acceptance we set $V_{t+\tau_{i^*}}=R_{i^*}V_t$. 

Motivated by the fact that the bounds $M_i(t)$ may be unavailable or hard to compute, we can approximate this process as follows. Here we restrict to the case of frozen switching rates, that is $$\lambdabar_i^j((x,v),s;\delta)=\lambda_i^j(x,v).$$ We apply the same idea behind Algorithm \ref{alg:advanced_EPDMP} to obtain $(\overline{X}_{t_{n+1}},\overline{V}_{t_{n+1}})$ given the previous state by first drawing $J\sim \Unif\{1,\dots,N\}$, and then simulating the next switching time $\bar{\tau}=\bar{\tau}_{i^*}=\min\Bar{\tau}_i$ with rates $\lambda_i^J(\Xbar_{t_n},\Vbar_{t_n})$. Finally
	\begin{align*}
	\overline{X}_{t_{n+1}}&\coloneqq\overline{X}_{t_n}+(\delta_{n+1}-\bar{\tau})\overline{V}_{t_n} +\bar{\tau}\overline{V}_{t_{n+1}}\\
	\overline{V}_{t_{n+1}}&\coloneqq\begin{cases} R_{i^*}\overline{V}_{t_n} & \text{ if } \bar{\tau}\leq \delta_{n+1},\\
	\overline{V}_{t_n} & \text{ if } \bar{\tau}> \delta_{n+1},
	\end{cases}
	\end{align*} 
where the operator $R_i$ was defined in Example \ref{ex:ZZ}. 

The fundamental difference with respect to the setting of Algorithm \ref{alg:advanced_EPDMP} lies in the additional level of randomness introduced by the random variable $J$.
We can adapt the proof of Theorem~\ref{thm:tv_distance} to also allow this additional randomness. Indeed in each step we use a synchronous coupling of the random variable $J$, then conditional on $J$ we can apply Coupling \ref{coup:error_tv_distance} with $\lambda_i$ replaced by $\lambda_i^J$ in \eqref{eq:accrej_cts}-\eqref{eq:accrej_dis},  and setting 
$$
\lambda_{tot}^i((x,v),t;\delta) = \sum_{j=1}^N\lambda_i^j(x+vt,v) +\sum_{j=1}^N\lambda_i^j(x,v)+1.
$$
Thus provided $\pot\in \C^2$ for any $(x,v)\in E$ and $t>0$ there exists $D=D(t,(x,v))>0$ such that
    \begin{equation*}
        \lVert \cP_{t}((x,v),\cdot)-\overline{\cP}_{t}((x,v),\cdot) \rVert_{TV} \leq 1-e^{-D t \delta},
    \end{equation*}
    where $\{\cP_t\}_{t\geq 0}$ is the semigroup of ZZS with subsampling and $\{\overline{\cP}_t\}_{t\geq 0}$ is the transition probability of the approximation. We remark that a similar reasoning can be applied to the BPS with subsampling (see \cite{BPS}).
\end{example}

\subsection{Examples for Section \ref{sec:weakerror}}

\begin{example}[Randomized Hamiltonian Monte Carlo algorithm continued]\label{ex:rhmc_weakerror}
We continue Example \ref{ex:randHMC} by verifying the various assumptions for Theorem~\ref{thm:weakerror}. For this example we assume that $\pot\in \C^2$, is strongly convex and has bounded Hessian, i.e. for some $K,L>0$
\begin{equation}\label{eq:Hessianboundabovebelow}
    K I_d \preceq \nabla^2\pot(q)\preceq L I_d.
\end{equation}
When this holds we have that Assumption~\ref{ass:geoerg} is satisfied by \cite[Theorem 3.9]{bou2017randomized} with $\lf=H$, where $H$ is the Hamiltonian function
$$
H(q,p)=\pot(q)+\frac{1}{2}\lVert p\rVert^2.
$$
Since we consider the approximation based on Algorithm \ref{alg:advanced_EPDMP} we do not need Assumption~\ref{ass:derivativedeccommutatoralongflow}. As $\lambda$ is constant in this case Assumption~\ref{ass:momentboundadvEulerWE} is satisfied provided
\begin{equation}\label{eq:momentbound_rhmc}
    \sup_{n\in \N}\mathbb{E}_{(q,p)}\left[\pot(\Qbar_{t_n})+\frac{1}{2}\lVert \Pbar_{t_n}\rVert^2\right]<\infty.
\end{equation}
Because $\pot$ has bounded second order derivative this reduces to showing that the second moment of the approximation is bounded uniformly in time. This condition depends on the choice of the numerical integrator and should be checked depending on the specific choice.

As mentioned in Note \ref{note:numerical} in order to apply Theorem~\ref{thm:weakerror} with a numerical error we need to verify \eqref{eq:numintest} holds. It is sufficient to show that the derivative of the semigroup decays exponentially. In order to prove this we shall rely on two Lipschitz conditions for the Hamiltonian flow $\varphi_t$: there exist $\nu,K_1, C\geq1, \gamma\in(0,1)$ such that for any $q,\bar{q},p,\bar{p}\in \R^d$
\begin{align}
      \sup_{t>0}\lVert \varphi_t(q,p)-\varphi_t(\bar{q},\bar{p})\rVert &\leq C \lVert (q,p)-(\bar{q},\bar{p})\rVert\label{eq:Lipschitzhamflow}\\
      \lVert \varphi_t(q,p)-\varphi_t(\bar{q},p)\rVert &\leq \gamma\lVert q-\bar{q}\rVert \quad \text{ for } \nu<t\leq K_1. \label{eq:contractionhamflow}
\end{align}
It is shown in \cite{Bou_Rabee_2020} that under \eqref{eq:Hessianboundabovebelow} the contraction \eqref{eq:contractionhamflow} holds for some $\nu,\gamma,K_1$. Indeed the authors prove a stronger result under which $\nu=0$, but $\gamma$ depends on $t$. There are also extensions to non-convex functions $\psi$, however here we only consider the convex setting. On the other hand, \eqref{eq:Lipschitzhamflow} is for instance satisfied for linear flows since the flow preserves the Hamiltonian and the Hamiltonian is equivalent to the norm. To simplify the exposition we will restrict to the case where \eqref{eq:Lipschitzhamflow} and \eqref{eq:contractionhamflow} hold. 
\end{example}
\begin{prop}\label{prop:rHMC_derivativeest}
    Let $\{\cP_t\}_{t\geq 0}$ denote the semigroup of RHMC. Suppose that \eqref{eq:Lipschitzhamflow} and \eqref{eq:contractionhamflow} hold. Moreover assume that
    \begin{equation*}
        0<\kappa:=C (1-(e^{-\lambda\nu}-e^{-\lambda K_1})(1-\gamma C^{-1}))<1.
    \end{equation*}
    Then
    \begin{equation*}
        \lVert \nabla_{q,p} \cP_tf(q,p)\rVert \leq C^2 e^{-\kappa t} \lVert f\rVert_{C_b^1}.
    \end{equation*}
\end{prop}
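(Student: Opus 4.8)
The plan is to deduce the gradient bound from a Wasserstein contraction for the RHMC semigroup, obtained through a synchronous coupling, and then dualise. Let $\{Z_t\}$ and $\{Z_t'\}$ be two copies of RHMC started from $z=(q,p)$ and $z'=(\bar q,\bar p)$, driven by the \emph{same} Poisson clock of rate $\lambda$ (the refreshment rate $\lambda_r$) and the \emph{same} refreshed momenta $\mathscr{Z}_1,\mathscr{Z}_2,\dots\sim\nu$; write $0<\tau_1<\tau_2<\cdots$ for the refreshment times, $S_k=\tau_k-\tau_{k-1}$ (with $\tau_0:=0$) for the i.i.d.\ $\mathrm{Exp}(\lambda)$ inter-refreshment times, and $N_t$ for the number of refreshments in $[0,t]$. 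Between refreshments both copies follow the Hamiltonian flow $\varphi$, and at each $\tau_k$ the momentum components are set to the common value $\mathscr{Z}_k$, so that the refreshment does not increase the distance, $\|Z_{\tau_k}-Z_{\tau_k}'\|$ being just the position difference afterwards.

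The two hypotheses then supply the local estimates. On $[0,\tau_1)$ the momenta of the copies need not agree, so only \eqref{eq:Lipschitzhamflow} applies and $\|Z_t-Z_t'\|\le C\|z-z'\|$. For $k\ge 1$ and $t\in[\tau_k,\tau_{k+1})$ the momenta coincide at $\tau_k$, so \eqref{eq:Lipschitzhamflow} gives $\|Z_t-Z_t'\|\le C\|Z_{\tau_k}-Z_{\tau_k}'\|$, and if moreover $t-\tau_k\in(\nu,K_1]$ then \eqref{eq:contractionhamflow} sharpens this to $\gamma\|Z_{\tau_k}-Z_{\tau_k}'\|$. Writing $D_k=\|Z_{\tau_k}-Z_{\tau_k}'\|$, chaining gives $D_1\le C\|z-z'\|$ and $D_{k+1}\le A_{k+1}D_k$ with $A_{k+1}=\gamma\mathbbm{1}_{\{S_{k+1}\in(\nu,K_1]\}}+C\mathbbm{1}_{\{S_{k+1}\notin(\nu,K_1]\}}$, and hence, for any $t$ with $N_t\ge 1$,
\[
\|Z_t-Z_t'\|\ \le\ C\,D_{N_t}\ \le\ C^2\|z-z'\|\prod_{k=2}^{N_t}A_k ,
\]
with $\|Z_t-Z_t'\|\le C\|z-z'\|$ when $N_t=0$.

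Taking expectations, the $A_k$ are i.i.d.\ functions of the $S_k$ with $\mathbb{E}[A_k]=\gamma(e^{-\lambda\nu}-e^{-\lambda K_1})+C(1-(e^{-\lambda\nu}-e^{-\lambda K_1}))=\kappa$. A renewal computation—conditioning on the first refreshment time $\tau_1$ and iterating, which turns the bound into a Volterra inequality for $t\mapsto\mathbb{E}_z[\|Z_t-Z_t'\|]$—then produces, using $0<\kappa<1$, the uniform-in-time estimate $\mathbb{E}_z[\|Z_t-Z_t'\|]\le C^2 e^{-\kappa t}\|z-z'\|$, i.e.\ $\mathcal{W}_1(\cP_t(z,\cdot),\cP_t(z',\cdot))\le C^2 e^{-\kappa t}\|z-z'\|$. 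Finally, for $f\in\C_b^1$, $|\cP_tf(z)-\cP_tf(z')|=|\mathbb{E}[f(Z_t)-f(Z_t')]|\le\|\nabla f\|_\infty\,\mathbb{E}_z[\|Z_t-Z_t'\|]\le C^2 e^{-\kappa t}\|f\|_{\C_b^1}\|z-z'\|$; dividing by $\|z-z'\|$ and letting $z'\to z$ (differentiability of $\cP_tf$ following from that of the Hamiltonian flow) yields $\|\nabla_{q,p}\cP_tf(q,p)\|\le C^2 e^{-\kappa t}\|f\|_{\C_b^1}$.

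The main obstacle is the renewal step: since $N_t$ is random and correlated with the inter-refreshment times $S_k$ (and with the length $t-\tau_{N_t}$ of the final flow segment, which in general does not contract), one cannot simply replace $\prod_{k=2}^{N_t}A_k$ by $\kappa^{N_t}$, and obtaining a genuinely uniform-in-time bound with the stated constant $C^2$ and rate requires handling this dependence carefully—for instance through the explicit conditional law of the spacings given $N_t=n$, or through the Laplace transform of the associated renewal equation—with the hypothesis $0<\kappa<1$ entering precisely here. A minor additional point is the first cycle $[0,\tau_1)$, where the momenta have not yet been synchronised and only \eqref{eq:Lipschitzhamflow} is available, which is responsible for one of the two factors $C$ in $C^2$.
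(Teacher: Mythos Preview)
Your synchronous coupling and the pathwise bound $\|Z_t-Z_t'\|\le C^2\|z-z'\|\prod_k A_k$ are exactly the paper's approach; the chain rule reduction to $\mathbb{E}[\|\nabla_{q,p}(Q_t,P_t)\|]$ and the bookkeeping of the two extra factors of $C$ (one for the initial segment before $\tau_1$, one for the tail $[T_{N_t},t]$) are identical. The only substantive difference is at the step you flag as the main obstacle. Rather than a renewal or Volterra argument, the paper writes the product as $C^{N_t-M_t}\gamma^{M_t}$ with $M_t=\#\{k\le N_t:S_k\in[\nu,K_1]\}$, asserts that $M_t\mid N_t\sim\mathrm{Bin}\bigl(N_t,\,e^{-\lambda\nu}-e^{-\lambda K_1}\bigr)$, and then combines the binomial and Poisson moment-generating functions: $\mathbb{E}\bigl[C^{N_t-M_t}\gamma^{M_t}\mid N_t\bigr]=\kappa^{N_t}$ and $\mathbb{E}[\kappa^{N_t}]=e^{\lambda(\kappa-1)t}$, which decays since $\kappa<1$. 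So the missing ingredient in your argument is precisely this two-step MGF computation, which sidesteps the renewal analysis entirely.

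Your reservation about the dependence between $N_t$ and the spacings $S_k$ is pertinent---conditioning on $N_t=n$ forces $\sum_{k\le n}S_k\le t$, so the spacings are no longer i.i.d.\ exponential and the binomial conditional law is not exact (e.g.\ if $t<\nu$ then $M_t=0$ deterministically regardless of $N_t$). The paper nevertheless proceeds via this device; if you want a fully rigorous version, the renewal-equation route you sketch is indeed the natural fix, but you would need to carry it out explicitly rather than invoke it.
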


\begin{proof}
The proof is deferred to Appendix \ref{sec:proofs_rhmc_weakerror}.
\end{proof}

A case where it is easy to see that $\kappa<1$ is the standard Gaussian case, i.e. $\pot(q)=\lVert q\rVert^2/2$, since in this case we have that $C=1$. 

\begin{theorem}
Suppose that $\pot$ satisfies \eqref{eq:Hessianboundabovebelow}, \eqref{eq:Lipschitzhamflow}, \eqref{eq:contractionhamflow}, and the numerical integrator satisfies \eqref{eq:momentbound_rhmc}. Then the conclusions of Theorem~\ref{thm:weakerror} hold.
\end{theorem}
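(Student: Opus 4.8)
The plan is to verify, for the RHMC approximation of Example~\ref{ex:rhmc_weakerror} (Algorithm~\ref{alg:advanced_EPDMP} with a numerical integrator $\phibar$ and exact Gaussian momentum refreshments), the hypotheses of Theorem~\ref{thm:weakerror} together with the two derivative estimates \eqref{eq:numintest} of Note~\ref{note:numerical} that license replacing the exact flow $\varphi$ by $\phibar$. Since we use the partially discrete Algorithm~\ref{alg:advanced_EPDMP} rather than Algorithm~\ref{alg:basic_EPDMP}, Assumption~\ref{ass:derivativedeccommutatoralongflow} is not needed; and since the refreshment kernel (drawing a fresh Gaussian momentum) is simulated exactly, $\Fbar=F$. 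Throughout I would set $\lf=1+H$ with $H(q,p)=\pot(q)+\tfrac12\lVert p\rVert^2$ and take $\mathcal{G}_1=\{g\in\C_b^1:\lVert g\rVert_{\C_b^1}\le 1\}\subseteq\mathcal{G}$; this is the set on which \eqref{eq:WE}, and hence Corollaries~\ref{cor:convergence-to-stationary-measure} and \ref{cor:convergence_variable_step_size}, will be obtained.

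First I would dispatch the assumptions that do not involve the integrator. The refreshment rate is the constant $\lambda_r$, so taking $\lambdabar(z,s;\delta)=\lambda_r$ makes Assumption~\ref{ass:lambda_wasserstein}(a) hold with $\overline{M}_2\equiv 0$. Assumption~\ref{ass:geoerg} holds by \cite[Theorem~3.9]{bou2017randomized} under \eqref{eq:Hessianboundabovebelow}, with Lyapunov function $H$ and some rate $\omega_0>0$, and integrability of $\lf(Z_t)$ follows from the associated Lyapunov drift. For Assumption~\ref{ass:momentboundadvEulerWE} I would use that $\lambda\equiv\lambda_r$ and $\overline{M}_2\equiv0$ collapse \eqref{eq:largerlf}: since $H$ is conserved along the Hamiltonian flow and $QH(q,p)=\pot(q)+c_\nu\le H(q,p)+c_\nu$ for the refreshment kernel (with $c_\nu=\int\tfrac12\lVert p'\rVert^2\,\nu(\dd p')$), a direct computation gives $\lf_3(z,r,s)\le c\,(1+H(z))$ uniformly in $0\le r<s\le\delta_0$, with $c=c(\lambda_r,\nu)$. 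Hence $\E_z[\sup_{r,s}\lf_3(\Zbar_{t_k},r,s)]\le c\,(1+\E_z[H(\Zbar_{t_k})])$, and the right-hand side is bounded in $k$ exactly by hypothesis \eqref{eq:momentbound_rhmc}; this is the reduction already anticipated in Example~\ref{ex:rhmc_weakerror}.

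Next I would establish \eqref{eq:numintest}. For $g\in\mathcal{G}_1$, Proposition~\ref{prop:rHMC_derivativeest} gives $\lVert\nabla_{q,p}\cP_tg(w)\rVert\le C^2 e^{-\kappa t}$ for every $w$, where $\kappa\in(0,1)$ is as in that proposition. By the mean value inequality along the segment joining $\varphi_\delta(z)$ to $\phibar_\delta(z)$, combined with Assumption~\ref{ass:integrator} for $p=1$,
\[
\bigl\lvert\cP_tg(\phibar_\delta(z))-\cP_tg(\varphi_\delta(z))\bigr\rvert\le C^2 e^{-\kappa t}\lVert\phibar_\delta(z)-\varphi_\delta(z)\rVert\le \tilde{C}\,C^2\,\delta^2 e^{-\kappa t},
\]
which is the first inequality of \eqref{eq:numintest} with $R_3=\tilde{C}C^2$ and $\omega=\min(\omega_0,\kappa)$ (exponential decay at a faster rate implies decay at any slower one), using $\lf\ge1$. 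The second inequality is identical: $Q\cP_tg(q,p)=\int\cP_tg(q,p')\,\nu(\dd p')$ depends on $q$ only, so its increment is bounded by $\sup_w\lVert\nabla_q\cP_tg(w)\rVert$ times the distance between the position components of $\phibar_\delta(z)$ and $\varphi_\delta(z)$, again at most $\tilde{C}\delta^2$.

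With Assumptions~\ref{ass:lambda_wasserstein}(a), \ref{ass:geoerg}, \ref{ass:momentboundadvEulerWE} and the estimates \eqref{eq:numintest} in hand, the extension of Theorem~\ref{thm:weakerror} described in Note~\ref{note:numerical} applies and yields \eqref{eq:WE} for every $g\in\mathcal{G}_1$, hence the conclusions of Corollaries~\ref{cor:convergence-to-stationary-measure} and \ref{cor:convergence_variable_step_size}. The main obstacle is in fact already behind us: it is Proposition~\ref{prop:rHMC_derivativeest}, the exponential decay of $\nabla\cP_t$, whose proof exploits the contraction \eqref{eq:contractionhamflow} and the global Lipschitz bound \eqref{eq:Lipschitzhamflow} for the Hamiltonian flow; given that estimate, the remaining steps above are routine. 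The only genuinely integrator-dependent input is the uniform moment bound \eqref{eq:momentbound_rhmc}, which is assumed here and must be checked case by case (e.g.\ for the St\"ormer--Verlet scheme under \eqref{eq:Hessianboundabovebelow}).
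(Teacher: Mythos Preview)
Your proposal is correct and follows essentially the same route as the paper. The paper does not give a separate proof for this theorem; it is stated as the summary of the verification carried out in Example~\ref{ex:rhmc_weakerror}, namely: Assumption~\ref{ass:geoerg} via \cite[Theorem~3.9]{bou2017randomized}, Assumption~\ref{ass:momentboundadvEulerWE} reducing to \eqref{eq:momentbound_rhmc} since $\lambda\equiv\lambda_r$, and the numerical-integrator estimate \eqref{eq:numintest} via Proposition~\ref{prop:rHMC_derivativeest} combined with the order of the integrator---exactly the steps you give, with your choice $\lf=1+H$ and $\mathcal{G}_1\subset\C_b^1$ being the natural explicit choices the paper leaves implicit.
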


\begin{example}[Zig Zag Sampler continued]\label{ex:zzs_weakerror}
Recall the notation of Example \ref{ex:ZZ} and \ref{ex:zzs_wass}. Let us verify the assumptions of Theorem~\ref{thm:weakerror} for the ZZS. 
\end{example}

We will make the following assumptions on $\pot$. Assume $\pot\in \C^2$ and
		\begin{equation} \label{eq:growthbounds}          
		\lim_{\lVert x\rVert\to\infty}\frac{\max(1,\lVert \nabla_x^2 \pot(x)\rVert)}{\lVert \nabla_x \pot(x)\rVert}=0 \quad \text{ and } \lim_{\lVert x\rVert \to \infty}\frac{\lVert \nabla_x\pot(x)\rVert}{\pot(x)}=0.
		\end{equation}

\textbf{Verifying Assumption~\ref{ass:geoerg}:}

Geometric ergodicity of the ZZS was established in \cite{Bierkensergodicity} under \eqref{eq:growthbounds} with Lyapunov function 
\begin{equation}\label{eq:1dlfZZS}
    \lf_{\alpha,\epsilon}(x,v) = \exp\left(\alpha \pot(x)+\sum_{i=1}^d\phi_\epsilon(v_i\partial_i\pot(x))\right).
\end{equation}
Here $\phi_\epsilon(s)=\mathrm{sign}(s)\log(1+\epsilon \lvert s\rvert)/2$, $\alpha\in(0,1)$, $\epsilon>0$, $\alpha>\epsilon \overline{\gamma}$, where $\overline{\gamma}$ upper bounds the excess switching rate $\gamma:E\to\mathbb{R}_+$.

\textbf{Verifying Assumption~\ref{ass:derivativedeccommutatoralongflow}:}

When dealing with this assumption it is convenient to use a smooth choice of $\lambda_i$, so for this section we will set $\phi(r)=r(1+r)^{-1}$ and
\begin{equation}\label{eq:smooth_lambda}
    \lambda_i(x,v) = -\log\left(\phi(\exp(-v_i\partial_i\pot(x))\right)
\end{equation}
which was shown in \cite{AndrieuLivingstone} to be a smooth choice of $\lambda_i$ for which the ZZS has the correct invariant measure. Note that for this choice of $\lambda_i$ the excess switching rate $\gamma$ takes values between $0$ and $\overline{\gamma}=\log(2)$.

\begin{lemma}\label{lem:ZZS_commutator}
Let $\{\cP_t\}_{t\geq 0}$ denote the semigroup corresponding to the ZZS as described in Example \ref{ex:ZZ}. Assume $\pot\in \C^2$ and has bounded Hessian. For $\lambda_i$ given by \eqref{eq:smooth_lambda} there exist a constant $C$ depending on the Hessian of $\pot$ and on $d$ such that for any $f\in \C^1$ we have
\begin{align*}
    [\vfd,Q](f\circ\varphi_{\delta-s})(x+sv,v) &\leq C \sup_{i\in\{1,\ldots,d\}}\{\lvert f(x+sv+(\delta-s)F_iv,F_iv)\rvert \\
    & \quad +\lvert\partial_{x_i}f(x+sv+(\delta-s)F_iv,F_iv)\rvert\}.
\end{align*}

\end{lemma}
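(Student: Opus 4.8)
The plan is to expand the commutator $[\vfd,Q]$ explicitly, exploiting the special structure of the ZZS. Recall that for the ZZS $\vfd f(x,v)=\langle v,\nabla_x f(x,v)\rangle$, the combined kernel is $Qf(x,v)=\sum_{i=1}^d\frac{\lambda_i(x,v)}{\lambda(x,v)}f(x,R_iv)$, and $\varphi_t(x,v)=(x+tv,v)$, so that $g:=f\circ\varphi_{\delta-s}$ satisfies $g(x,v)=f(x+(\delta-s)v,v)$. Computing $\vfd(Qg)$ by the product rule and $Q(\vfd g)$ directly, subtracting, and using that $v-R_iv=2v_ie_i$ has only its $i$-th coordinate nonzero, one obtains after evaluating at $(x+sv,v)$ the identity
\begin{align*}
[\vfd,Q](f\circ\varphi_{\delta-s})(x+sv,v) &= \sum_{i=1}^d \Big\langle v, \nabla_x\Big(\tfrac{\lambda_i}{\lambda}\Big)(x+sv,v)\Big\rangle\, f\big(x+sv+(\delta-s)F_iv,\,F_iv\big)\\
&\quad + \sum_{i=1}^d \frac{2\lambda_i(x+sv,v)\,v_i}{\lambda(x+sv,v)}\, \partial_{x_i} f\big(x+sv+(\delta-s)F_iv,\,F_iv\big),
\end{align*}
where $F_iv=R_iv$; the $\partial_{x_i}f$ term arises because $v$ and $R_iv$ differ only in the $i$-th coordinate, which is the direction along which $\vfd$ differentiates.

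The coefficients in the second sum are bounded by $2$ since $|v_i|=1$ and $0\le\lambda_i\le\lambda$, so the remaining task is to bound $\big|\langle v,\nabla_x(\lambda_i/\lambda)\rangle\big|$ by a constant depending only on $d$ and $\|\nabla^2\pot\|_\infty$. Writing $\nabla_x(\lambda_i/\lambda)=\lambda^{-1}\nabla_x\lambda_i-\lambda_i\lambda^{-2}\nabla_x\lambda$ and rewriting \eqref{eq:smooth_lambda} as $\lambda_i(x,v)=\log\big(1+e^{v_i\partial_i\pot(x)}\big)$ (the softplus of $v_i\partial_i\pot(x)$), one gets $\partial_{x_j}\lambda_i(x,v)=\sigma\big(v_i\partial_i\pot(x)\big)\,v_i\,\partial_i\partial_j\pot(x)$ with $\sigma(a)=e^a/(1+e^a)$, hence $\langle v,\nabla_x\lambda_i\rangle=\sigma\big(v_i\partial_i\pot(x)\big)\,v_i\,(\nabla^2\pot(x)v)_i$, and analogously for $\nabla_x\lambda=\sum_k\nabla_x\lambda_k$.

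The one genuine obstacle is that $\lambda$ is not bounded below — with softplus rates $\lambda(x,v)$ can be arbitrarily small — so $1/\lambda$ cannot be bounded naively. This is resolved by the elementary pointwise inequality $\sigma(a)\le\log(1+e^a)$ for all $a\in\R$: indeed $\frac{d}{da}\big[\log(1+e^a)-\sigma(a)\big]=\sigma(a)^2\ge0$ and the difference vanishes as $a\to-\infty$, so it is nonnegative. Consequently $\sigma\big(v_k\partial_k\pot(x)\big)\le\lambda_k(x,v)\le\lambda(x,v)$ for every $k$, so the ratios $\sigma(v_k\partial_k\pot(x))/\lambda(x,v)$ and $\lambda_i/\lambda$ all lie in $[0,1]$. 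Hence $\big|\lambda^{-1}\langle v,\nabla_x\lambda_i\rangle\big|\le|(\nabla^2\pot(x)v)_i|$ and $\big|\lambda_i\lambda^{-2}\langle v,\nabla_x\lambda\rangle\big|\le\sum_k|(\nabla^2\pot(x)v)_k|$, each at most a $d$-dependent constant times $\|\nabla^2\pot\|_\infty$ (using $\|v\|=\sqrt d$). Summing the two sums then gives $\big|[\vfd,Q](f\circ\varphi_{\delta-s})(x+sv,v)\big|\le C\sup_i\big\{|f(x+sv+(\delta-s)F_iv,F_iv)|+|\partial_{x_i}f(x+sv+(\delta-s)F_iv,F_iv)|\big\}$ with $C$ depending only on $d$ and the Hessian bound, which is the assertion. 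I expect the commutator bookkeeping in the first step and the $\sigma\le$ softplus inequality in the last step to be the only non-routine points; the rest is a direct estimate.
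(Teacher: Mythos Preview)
Your proof is correct and follows essentially the same route as the paper's: both expand $[\vfd,Q]$ directly for the ZZS to obtain the same two-term formula $\sum_i\langle v,\nabla_x(\lambda_i/\lambda)\rangle f(\cdot,R_iv)+2\sum_i(\lambda_i/\lambda)v_i\partial_{x_i}f(\cdot,R_iv)$, then bound the coefficients. Your argument is in fact more complete than the paper's, which writes out $\nabla_x(\lambda_i/\lambda)$ and concludes ``under our assumptions this is bounded'' without addressing that $\lambda$ is not bounded away from zero; your inequality $\sigma(a)\le\log(1+e^a)$ (hence $\sigma(v_k\partial_k\pot)\le\lambda_k\le\lambda$) is precisely the missing step that makes that claim rigorous.
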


\begin{proof}[Proof of Lemma \ref{lem:ZZS_commutator}]
The proof is deferred to Appendix \ref{sec:proofs_zzs_weakerror}.
\end{proof}

We apply this Lemma with $f=\cP_tg$ with $g\in \mathcal{G}_1$ where
\begin{equation}\label{eq:setoffunctions}
    \mathcal{G}_1=\{g:E\to\R: x\mapsto g(x,v)\in \C^1, \mu(g)=0, \lvert g\rvert \leq \lf_{\overline{\alpha},\epsilon}, \lVert \nabla_x g\rVert \leq \lf_{\overline{\alpha},\epsilon}\}
\end{equation}
where $\overline{\gamma}\epsilon<\overline{\alpha}<\alpha<1$. Such an $\overline{\alpha}$ can always be found by taking $\epsilon$ sufficiently small. It remains to show that $\nabla_x\cP_tg$ converges to zero for $g\in \mathcal{G}_1$.


\begin{theorem}\label{thm:exptimederivativebound}
	Let $\{\cP_t\}_{t\geq 0}$ denote the semigroup of the ZZS with generator given by \eqref{eq:ZZgen} and with $\lambda_i$ such that $x\mapsto\lambda_i(x,v)\in \C^1$ for each $v$ and has bounded derivative $\nabla_x\lambda_i(x,v)$. Fix $\epsilon\overline{\gamma}<\overline{\alpha}<\alpha$, and let $\mathcal{G}_1$ be given by \eqref{eq:setoffunctions}. Then there exists a constant $C$ such that for any $g\in \mathcal{G}_{1}$ 
	\begin{align}\notag
	\lVert\nabla_x\cP_tg(x,v)\rVert &\leq  C(1+t)e^{-\omega t} \lf_{\alpha,\epsilon}(x,v). 
	\end{align}
\end{theorem}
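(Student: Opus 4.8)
The plan is to prove Theorem~\ref{thm:exptimederivativebound} by first obtaining a crude uniform-in-time bound for $\nabla_x\cP_t g$ and then upgrading it to exponential decay, the upgrade being driven by the geometric ergodicity of Assumption~\ref{ass:geoerg} together with an integration by parts against the invariant measure. Write $\Psi_t:=\nabla_x\cP_t g$. Since $x\mapsto g(x,v)$ and $x\mapsto\lambda_i(x,v)$ are $\C^1$ and the flow $\varphi_s(x,v)=(x+sv,v)$ is a pure $x$-translation, the first-jump (Dynkin) decomposition of $\cP_t g$ shows that $x\mapsto\cP_t g(x,v)\in\C^1$ and that $\partial_t\Psi_t=\cL\Psi_t+S_t$, where $\cL$ is understood to act componentwise on the vector $\Psi_t$ and the source is $S_t(x,v)=\sum_i\nabla_x\lambda_i(x,v)\,[\cP_t g(x,R_iv)-\cP_t g(x,v)]$. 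Variation of constants gives $\Psi_t(z)=\E_z[\nabla_x g(Z_t)]+\int_0^t\E_z[S_s(Z_{t-s})]\,\dd s$, and, more generally, splitting $\cP_t g=\cP_{t-a}(\cP_a g)$ for $0\le a\le t$ and applying the same formula on $[0,t-a]$ with initial datum $\cP_a g$ yields $\Psi_t(z)=\E_z[\Psi_a(Z_{t-a})]+\int_0^{t-a}\E_z[S^a_s(Z_{t-a-s})]\,\dd s$, where $\|S^a_s(z)\|\le Ce^{-\omega(a+s)}\lf_{\alpha,\epsilon}(z)$ by \eqref{eq:geoerg} and $\mu(g)=0$.

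First I would establish \emph{uniform boundedness}: from \eqref{eq:geoerg} and $\mu(g)=0$ one has $|\cP_s g|\le R_1e^{-\omega s}\lf_{\overline\alpha,\epsilon}$, the velocity flip in $S_s$ costing only a polynomial-in-$\nabla\psi$ factor which is absorbed using \eqref{eq:growthbounds} and the slack $\overline\alpha<\alpha$, so that $\|S_s(z)\|\le Ce^{-\omega s}\lf_{\alpha,\epsilon}(z)$; combining this with $\|\nabla_x g\|\le\lf_{\overline\alpha,\epsilon}$ and the Lyapunov bound $\E_z[\lf_{\alpha,\epsilon}(Z_r)]\le C\lf_{\alpha,\epsilon}(z)$ in the variation-of-constants formula gives $\beta(t):=\sup_z\|\Psi_t(z)\|/\lf_{\alpha,\epsilon}(z)\le C_0$ for all $t\ge0$. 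Next I would prove \emph{decay of the mean}: an integration by parts in $x$ against $\mu(\dd x,\dd v)=\pi(x)\,\dd x\,\nu(\dd v)$, using $\nabla\pi=-\pi\nabla\psi$ and $\cP_t g(x,v)\pi(x)\to0$ as $\lVert x\rVert\to\infty$, gives $\mu(\Psi_t)=\int\cP_t g(x,v)\,\nabla\psi(x)\,\mu(\dd x,\dd v)$, whence $\|\mu(\Psi_t)\|\le R_1e^{-\omega t}\int\lf_{\overline\alpha,\epsilon}\,\|\nabla\psi\|\,\dd\mu\le Ce^{-\omega t}$, the last integral being finite by \eqref{eq:growthbounds} and integrability of $\lf_{\overline\alpha,\epsilon}$ under $\mu$.

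With these two facts I would close the argument by a Gronwall-type bootstrap. In $\Psi_t(z)=\E_z[\Psi_a(Z_{t-a})]+\int_0^{t-a}\E_z[S^a_s(Z_{t-a-s})]\,\dd s$ the integral term is $\le Ce^{-\omega a}\lf_{\alpha,\epsilon}(z)$; for the first term write $\Psi_a=\mu(\Psi_a)+\widetilde\Psi_a$, so $\mu(\Psi_a)$ contributes $\le Ce^{-\omega a}$ by the mean estimate, while $\|\widetilde\Psi_a\|\le(\beta(a)+Ce^{-\omega a})\lf_{\alpha,\epsilon}$ and $\mu(\widetilde\Psi_a)=0$, so applying \eqref{eq:geoerg} to the scalar functions $\langle\xi,\widetilde\Psi_a\rangle$ for unit vectors $\xi$ gives $\|\E_z[\widetilde\Psi_a(Z_{t-a})]\|\le R_1(\beta(a)+Ce^{-\omega a})e^{-\omega(t-a)}\lf_{\alpha,\epsilon}(z)$. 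Altogether
\[
\beta(t)\ \le\ C_1e^{-\omega a}+C_2e^{-\omega t}+C_3\,e^{-\omega(t-a)}\beta(a)\qquad\text{for all }0\le a\le t,
\]
and iterating this inequality (taking $t-a$ to be a large fixed constant and feeding in the uniform bound $\beta\le C_0$) turns it into $\beta(t)\le C(1+t)e^{-\omega t}$. Applying this with $g\in\mathcal{G}_1$ as in \eqref{eq:setoffunctions}, and with $\lf_{\alpha,\epsilon},\lf_{\overline\alpha,\epsilon}$ the ergodicity Lyapunov functions of \eqref{eq:1dlfZZS} from \cite{Bierkensergodicity}, gives the stated bound $\lVert\nabla_x\cP_tg(x,v)\rVert\le C(1+t)e^{-\omega t}\lf_{\alpha,\epsilon}(x,v)$.

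The main obstacle is the bootstrap: the transport part of the ZZS does not contract in $x$ (the flow has identity Jacobian), so the decay of $\Psi_t$ cannot be obtained by propagating a small initial gradient forward and must be extracted entirely from the ergodicity of $g$; the delicate point is that the coefficient $C_3$ of $\beta(a)$ in the recursion is essentially the geometric-ergodicity constant $R_1$, so one must track it carefully — or renormalise the Lyapunov function so that the effective constant equals $1$ — in order to close the iteration with the sharp rate $\omega$ and only a linear-in-$t$ loss rather than a slightly worse exponential rate. A secondary, routine point is to justify the $\C^1$-regularity of $\cP_t g$ in $x$ and the Duhamel identity for $\Psi_t$ via the first-jump decomposition, together with the finiteness of the $\mu$-moments of $\lf_{\overline\alpha,\epsilon}\lVert\nabla\psi\rVert$; both follow from \eqref{eq:growthbounds} and are best recorded as a preliminary lemma.
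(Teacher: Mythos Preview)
Your Duhamel identity $\Psi_t=\cP_t(\nabla_x g)+\int_0^t\cP_{t-s}(S_s)\,\dd s$ and the integration-by-parts estimate $\|\mu(\Psi_t)\|\le Ce^{-\omega t}$ are both exactly right and match the paper. The gap is in Step~3: the bootstrap you set up does not close at the sharp rate~$\omega$. In your recursion $\beta(t)\le C_1e^{-\omega a}+C_3e^{-\omega(t-a)}\beta(a)$ the first term comes from bounding $\int_0^{t-a}\E_z[|S^a_s(Z_{t-a-s})|]\,\dd s$ \emph{crudely} via the Lyapunov inequality $\E_z[\lf(Z_r)]\le C\lf(z)$, and this yields only $e^{-\omega a}$, not $e^{-\omega t}$. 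No choice of $a$ (nor any finite iteration) recovers $(1+t)e^{-\omega t}$ from such a recursion; at best one gets $e^{-\omega' t}$ with $\omega'<\omega$, as you yourself flag in the ``main obstacle'' paragraph. The suggested fixes (``track $R_1$ carefully'' or ``renormalise the Lyapunov function'') do not visibly help, because the loss already occurs in the $C_1e^{-\omega a}$ term, not in the $C_3$ constant.

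The paper avoids the bootstrap entirely by a one-line observation you have all the ingredients for: subtract $\mu(\Psi_t)$ from the Duhamel formula \emph{with $a=0$} and use invariance, $\mu(\cP_rh)=\mu(h)$, to center each term separately:
\[
\Psi_t(z)-\mu(\Psi_t)=\bigl[\cP_t(\nabla_x g)(z)-\mu(\nabla_x g)\bigr]+\int_0^t\bigl[\cP_{t-s}(S_s)(z)-\mu(S_s)\bigr]\,\dd s.
\]
Now apply \eqref{eq:geoerg} to each bracket. Since $|\nabla_x g|\le\lf_{\overline\alpha,\epsilon}\le\lf_{\alpha,\epsilon}$ the first bracket is $\le R_1e^{-\omega t}\lf_{\alpha,\epsilon}(z)$. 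Since $|S_s|\le Ke^{-\omega s}\lf_{\alpha,\epsilon}$ (your own bound), the function $S_s/(Ke^{-\omega s})$ lies in $\mathcal{G}$, so $|\cP_{t-s}(S_s)(z)-\mu(S_s)|\le R_1Ke^{-\omega s}e^{-\omega(t-s)}\lf_{\alpha,\epsilon}(z)=R_1Ke^{-\omega t}\lf_{\alpha,\epsilon}(z)$; integrating over $s\in[0,t]$ produces exactly the $t\,e^{-\omega t}$ factor. Adding back $|\mu(\Psi_t)|\le Ce^{-\omega t}$ from your integration-by-parts step gives $\|\Psi_t(z)\|\le C(1+t)e^{-\omega t}\lf_{\alpha,\epsilon}(z)$ directly, with no iteration and no loss of rate. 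The point is to apply geometric ergodicity to the \emph{explicit} source $S_s$ (whose size $Ke^{-\omega s}$ is known), rather than to the \emph{unknown} $\Psi_a$; this is what makes the two exponential factors combine as $e^{-\omega(t-s)}\cdot e^{-\omega s}=e^{-\omega t}$.
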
 

\begin{proof}[Proof of Theorem~\ref{thm:exptimederivativebound}]
The proof is deferred to Appendix \ref{sec:proofs_zzs_weakerror}.
\end{proof}

Note that by adjusting $C,\kappa$ and $\alpha$ we can show that
\begin{align*}
	\sup_{\delta\in (0,\delta),s\in[0,\delta]}\lVert\nabla_x\cP_tg\circ \varphi_{\delta-s}(x+sv,v)\rVert &\leq  Ce^{-\omega t} \lf_{\alpha,\epsilon}(x,v). 
\end{align*}
Therefore Assumption~\ref{ass:derivativedeccommutatoralongflow} holds by combining Lemma \ref{lem:ZZS_commutator}, Theorem~\ref{thm:exptimederivativebound} and Assumption~\ref{ass:geoerg}.

\textbf{Verifying Assumption~\ref{ass:momentboundadvEulerWE}:}
Note that since $\lambda$ grows at most linearly it is sufficient to show that
there exist a function $H(x,v)$ for any mesh $0=t_0\leq t_1\leq ...$ with $\delta_k=t_k-t_{k-1}<\delta_0$ for any $k$
\begin{equation*}
    \mathbb{E}_z\left[ \lVert \Xbar_{t_k} \rVert^2\lf_{\alpha,\epsilon}(\Xbar_{t_k},\Vbar_{t_k}) \right] \leq H(x,v).
\end{equation*}
Note that $\lf_{\alpha,\epsilon}$ is dominated by the $e^{\alpha\pot}$ term so we can bound $\lVert x\rVert^2\lf_{\alpha,\epsilon}(x,v)$ by $e^{\alpha_1\pot}$ for any $\alpha_1>\alpha$ so it remains to show there exist a function $H(x,v)$ for any mesh $0=t_0\leq t_1\leq ...$ with $\delta_k=t_k-t_{k-1}<\delta_0$ for any $k$
\begin{equation}\label{eq:mom_bound_zzs}
    \mathbb{E}_z\left[ e^{\alpha_1\pot(\Xbar_{t_k})} \right] \leq H(x,v).
\end{equation}
Then in the $1$-dimensional case we prove that the required bound holds.
\begin{lemma}
Suppose \eqref{eq:growthbounds} hold and $d=1$. Then \eqref{eq:mom_bound_zzs} holds for both Algorithms \ref{alg:basic_EPDMP} and \ref{alg:advanced_EPDMP}, hence Assumption~\ref{ass:momentboundadvEulerWE} is satisfied.
\end{lemma}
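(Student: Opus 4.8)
The plan is to prove a single geometric Foster--Lyapunov drift inequality for the Markov chain $\{\Zbar_{t_n}\}=\{(\Xbar_{t_n},\Vbar_{t_n})\}$ produced by Algorithm~\ref{alg:basic_EPDMP} or Algorithm~\ref{alg:advanced_EPDMP}, valid uniformly over admissible meshes, and then to iterate it. Fixing $\alpha_1\in(\alpha,1)$ as in the statement, I would work with the function
\[
    W(x,v)=\exp\!\big(\alpha_1\pot(x)+\mu\,(v\pot'(x))_+\big)
\]
on $\R\times\{-1,+1\}$, where $\mu$ is a constant and $\delta_0$ is taken small enough that $\alpha_1\delta_0<\mu<1-\alpha_1\delta_0$ (possible whenever $\delta_0<1/(2\alpha_1)$). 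The target is to show that there exist $c>0$ and $B<\infty$, depending only on $\pot,\alpha_1,\mu,\delta_0$, such that for every $n$ and every step $\delta_{n+1}\le\delta_0$
\begin{equation*}
    \mathbb{E}_z\!\big[W(\Zbar_{t_{n+1}})\mid\Zbar_{t_n}=z\big]\le(1-c\,\delta_{n+1})\,W(z)+B\,\delta_{n+1},\qquad z\in\R\times\{-1,+1\}.
\end{equation*}
Since $W(x,v)\ge e^{\alpha_1\pot(x)}$, iterating this and telescoping the product (using $c\delta_0<1$) gives $\sup_n\mathbb{E}_z[e^{\alpha_1\pot(\Xbar_{t_n})}]\le W(z)+B/c=:H(z)<\infty$, which is exactly \eqref{eq:mom_bound_zzs}; Assumption~\ref{ass:momentboundadvEulerWE} then follows from the domination already noted in the text preceding the lemma, the $\lf_{\alpha,\epsilon}$-weighted expressions there being dominated by $e^{\alpha_1\pot}$ because $\lambda$ grows at most linearly.

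To obtain the drift I would condition on $\Zbar_{t_n}=(x,v)$, set $s:=v\pot'(x)$, and treat two regimes. When $|x|$ lies in a large ball, $W$ and its one-step image are bounded (all critical points of $\pot$ being in a compact set by \eqref{eq:growthbounds}), so a crude Lipschitz estimate together with the fact that the one-step switch probability is $O(\delta_{n+1})$ yields $\mathbb{E}_z[W(\Zbar_{t_{n+1}})\mid(x,v)]\le W(x,v)+B'\delta_{n+1}$. When $|x|$ is large I split further on whether a switch occurs in $[t_n,t_{n+1}]$ and on the sign of $s$; here \eqref{eq:growthbounds} makes $\pot'$ essentially constant over intervals of length $\le\delta_0$, so that $\pot(x+v\delta)\approx\pot(x)+\delta s$, while the switching rate, frozen at $\lambda(x,v)$ for the smooth choice \eqref{eq:smooth_lambda}, satisfies $\lambda(x,v)\asymp(s)_++e^{-(s)_-}$. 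The one-step ratio $\mathbb{E}_z[W(\Zbar_{t_{n+1}})\mid(x,v)]/W(x,v)$ then splits, for $s>0$ (uphill), into a no-switch contribution $\le e^{-(1-\alpha_1)s\delta_{n+1}}$ (small since $\alpha_1<1$) and a switch contribution $\lesssim s\,\delta_{n+1}\,e^{(\alpha_1\delta_0-\mu)s}$ (small since $\mu>\alpha_1\delta_0$); and, for $s<0$ (downhill), into a no-switch contribution $\le e^{-\alpha_1|s|\delta_{n+1}}$ and a switch contribution $\lesssim\delta_{n+1}\,e^{(\mu-1)|s|}$ (small because the switch probability is exponentially small in $|s|$ and $\mu<1$). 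Beyond a sufficiently large radius all of these combine to at most $1-c\,\delta_{n+1}$, which with the first regime gives the displayed inequality. The only difference between Algorithm~\ref{alg:basic_EPDMP} and Algorithm~\ref{alg:advanced_EPDMP} is the position of the jump within the step --- at its end, versus at the simulated time $\bar\tau\le\delta_{n+1}$ --- and the same case analysis carries over, the only change being that in the PD-PDMP a quick uphill switch sends the chain backwards and hence decreases $\pot$, which only helps.

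The hard part is the large-$|x|$ analysis for Algorithm~\ref{alg:basic_EPDMP}, and it is the reason the continuous-time Lyapunov function $\lf_{\alpha,\epsilon}$ of \eqref{eq:1dlfZZS} cannot simply be recycled. In the FD-PDMP a switch is applied only at the end of a step, so a chain moving steeply uphill climbs a full step before reversing, multiplying $e^{\alpha_1\pot}$ by the large factor $e^{\alpha_1 s\delta_{n+1}}$ on an event of non-negligible probability; the logarithmic term $\phi_\epsilon$ in $\lf_{\alpha,\epsilon}$ is too weak to offset this, whereas the linear term $\mu(v\pot'(x))_+$ in $W$ offsets it exactly, provided $\mu>\alpha_1\delta_0$. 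After such a step the chain is necessarily far out and heading downhill with large $|\pot'|$, and should it reverse again while still far out the factor $e^{\mu|\pot'|}$ reappears; this is controllable only because the smooth rate \eqref{eq:smooth_lambda} decays exponentially as the motion becomes more downhill, making that event exponentially unlikely --- which is precisely why the proof uses \eqref{eq:smooth_lambda} and why it needs $\mu<1-\alpha_1\delta_0$, hence $\delta_0$ small. Finally I would record that the argument is insensitive to replacing the frozen rate by any first-order approximation $\lambdabar_i$ satisfying Assumption~\ref{ass:lambda_wasserstein}(a): the $O(\delta_{n+1}^2)$ discrepancy this introduces in the switch probability is absorbed into the $B\,\delta_{n+1}$ term.
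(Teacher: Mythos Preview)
Your strategy---to establish a one-step drift inequality $\mathbb{E}[W(\Zbar_{t_{n+1}})\mid z]\le(1-c\,\delta_{n+1})W(z)+B\,\delta_{n+1}$ for a single $\delta$-independent function $W(x,v)=\exp(\alpha_1\pot(x)+\mu(v\pot'(x))_+)$ and then iterate---works in outline but follows a genuinely different route from the paper. The paper (Lemmas~\ref{lem:1dLyapunovfunction} and~\ref{lem:lfordering}) instead builds a \emph{step-size-dependent} Lyapunov function $\lf_{\alpha,\beta}(x,v;\delta)$ whose correction term has size $\beta\delta\lvert\pot'(x)\rvert$ rather than your $\mu\lvert\pot'(x)\rvert$, proves a per-step contraction outside a compact set, and then sandwiches $e^{\overline{\alpha}\pot}$ between such functions. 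The $\delta$-scaling is precisely what lets the paper treat an \emph{arbitrary bounded} excess rate $\gamma$: when the chain switches from downhill to uphill the Lyapunov function jumps by at most a factor $e^{2\beta\delta\lvert\pot'(x)\rvert}$, and this is offset by the $O(\delta)$ switch probability even when $\gamma$ is merely bounded, via the condition $\alpha<2\beta$. Your fixed coefficient $\mu$ makes the corresponding jump $e^{\mu\lvert\pot'(x)\rvert}$, which is controllable only because the smooth rate \eqref{eq:smooth_lambda} is $O(e^{-\lvert\pot'(x)\rvert})$ downhill; so your argument is tied to that specific choice of $\lambda$, whereas the paper's is not. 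In return you avoid carrying a $\delta$-dependent function and obtain the natural per-unit-time drift form directly, without a separate sandwich lemma.

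There is, however, a gap in your last paragraph. The claim that the argument is insensitive to replacing the frozen rate by any first-order $\lambdabar$ satisfying Assumption~\ref{ass:lambda_wasserstein}(a) is not correct as stated: the $O(\delta_{n+1}^2)$ discrepancy in the switch probability is multiplied, in the downhill-to-uphill transition, by the ratio $W(x+v\delta,-v)/W(x,v)\approx e^{\mu\lvert\pot'(x)\rvert}$, which is unbounded in $x$. The resulting perturbation to $\mathbb{E}[W]/W$ is therefore of order $\delta_{n+1}^2\,e^{\mu\lvert\pot'(x)\rvert}$, not $O(\delta_{n+1})$ uniformly, and cannot be absorbed into the $B\,\delta_{n+1}$ term. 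Your argument does go through for the frozen smooth rate $\lambdabar((x,v),s;\delta)=\lambda(x,v)$ actually used here, and more generally whenever the approximate downhill rate itself decays like $e^{-c\lvert\pot'(x)\rvert}$; but the bare first-order hypothesis does not guarantee this.
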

This follows from Lemmas \ref{lem:1dLyapunovfunction} and \ref{lem:lfordering} which can be found in Appendix \ref{sec:proofs_zzs_weakerror}. The generalisation to the $d$-dimensional setting is challenging and is thus left as a conjecture, supported by the experiments in Figure \ref{fig:zzs_momentbound}.
\begin{figure}[t]
\centering
\begin{subfigure}{0.49\textwidth}
    \includegraphics[width=\textwidth]{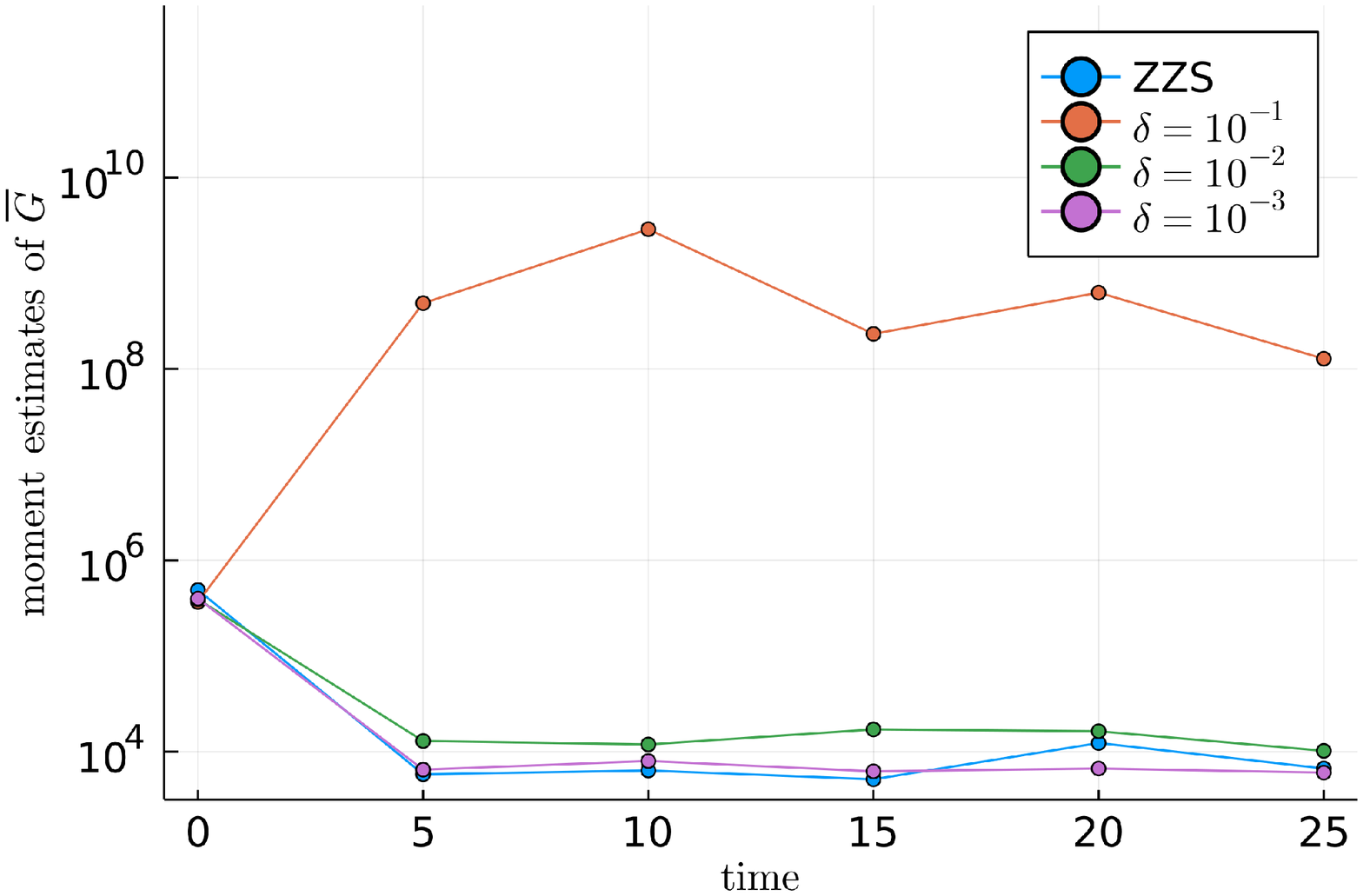}
    \caption{Results for the ZZS and its approximations given by Algorithm \ref{alg:basic_EPDMP}. Here $\gamma(x,v)=0$.}
    \label{fig:zzs_momentbound}
\end{subfigure}
\begin{subfigure}{0.49\textwidth}
    \includegraphics[width=\textwidth]{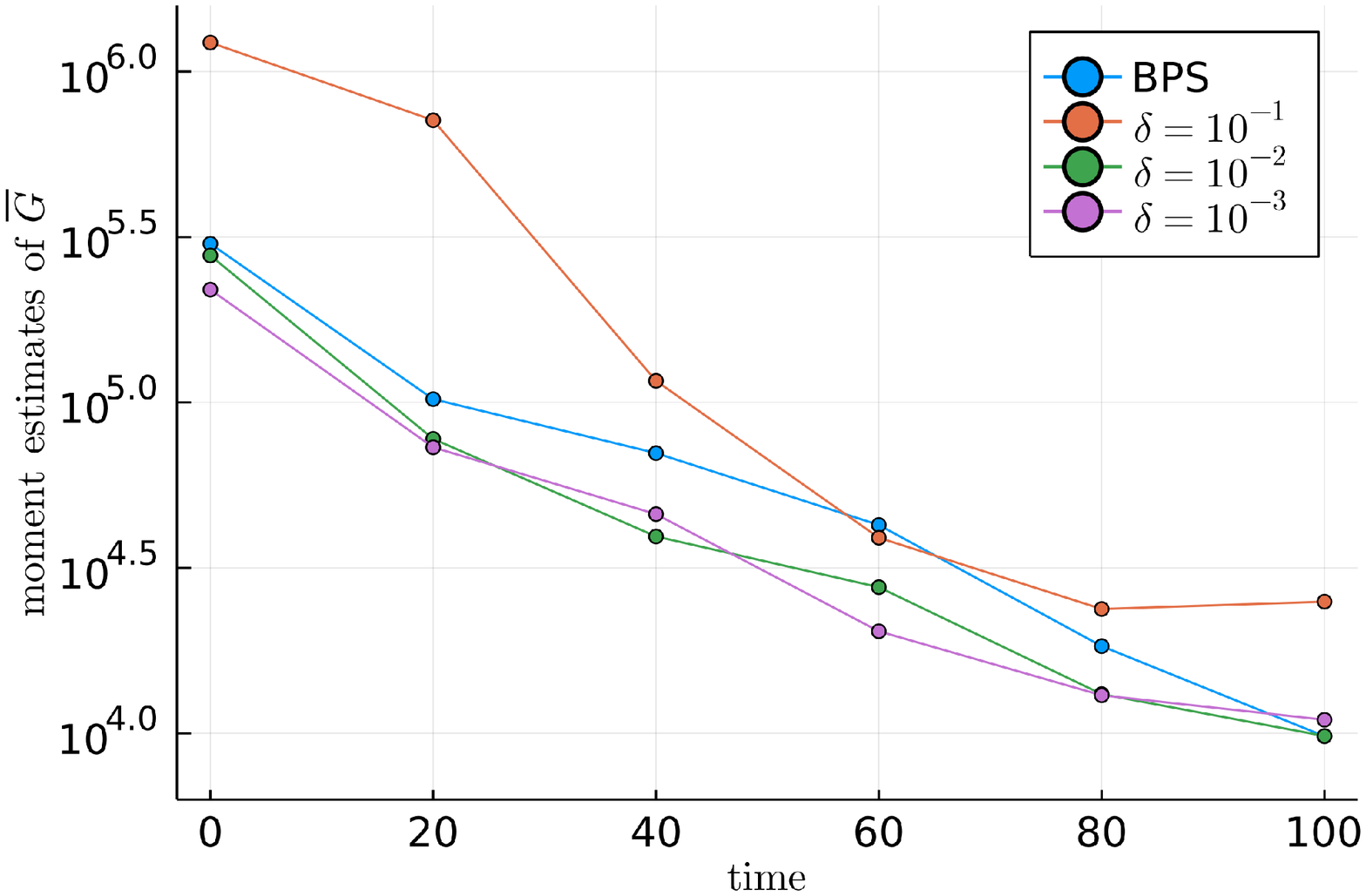}
    \caption{Results for the BPS and its approximations given by Algorithm \ref{alg:advanced_EPDMP}. Here $\lambda_r=1$.}
    \label{fig:bps_momentbound}
\end{subfigure}
\caption{Plots of the estimates of $\mathbb{E}[\overline{G}(X_t,V_t)]$ and $\mathbb{E}[\lf(\Xbar_t,\Vbar_t)]$, which are respectively for the continuous time PDMPs and their approximations for several values of the step size. The plots show the average of $10^5$ experiments. The continuous PDMPs have a $25$-dimensional standard Gaussian as stationary measure. Here we choose $\lambdabar((x,v),s;\delta)=\lambda(x,v)$. For each experiment $X_0,\overline{X}_0$ are given by an independent realisation of the sum of a $25$-dimensional standard Gaussian and a uniform random variable on $[0,1]^{25}$, while $V_0,\overline{V}_0$ are drawn from the stationary distribution of the continuous time PDMP.}
\label{fig:momentbounds}
\end{figure}

\begin{conjecture}\label{con:momentbound}
Suppose $\pot$ satisfies \eqref{eq:growthbounds}. Then inequality \eqref{eq:mom_bound_zzs} holds for Algorithms \ref{alg:basic_EPDMP} and \ref{alg:advanced_EPDMP}.
\end{conjecture}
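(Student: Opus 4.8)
\emph{Idea of proof.} The plan is to establish a Foster--Lyapunov (possibly multi-step) geometric drift condition for the Markov chain $\{\Zbar_{t_n}\}$ produced by Algorithm~\ref{alg:basic_EPDMP} or~\ref{alg:advanced_EPDMP}, using the Lyapunov function $\lf_{\beta,\epsilon}$ of \eqref{eq:1dlfZZS} with a slightly enlarged exponent $\beta\in(\alpha_1,1)$ (keeping the same $\epsilon$, so that $\beta>\alpha>\epsilon\overline{\gamma}$, which will be used), and then to deduce \eqref{eq:mom_bound_zzs}. The reduction is immediate: since $|\phi_\epsilon(s)|\leq\tfrac12\log(1+\epsilon|s|)$ one has $\lf_{\beta,\epsilon}(x,v)\geq e^{\beta\pot(x)}(1+\epsilon\|\nabla\pot(x)\|)^{-d/2}$, and \eqref{eq:growthbounds} (which forces $\|\nabla\pot\|$ to grow sub-exponentially relative to $\pot$, together with coercivity of $\pot$) gives $e^{\alpha_1\pot}\leq C\,\lf_{\beta,\epsilon}$ on $E$; hence it suffices to show $\sup_n\mathbb{E}_z[\lf_{\beta,\epsilon}(\Zbar_{t_n})]<\infty$ uniformly over meshes with $\delta_n\leq\delta_0$.

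First I would treat the one-step drift. In both algorithms the position advances by at most $\delta_{n+1}$ and at most one coordinate of the velocity is flipped, the flipped index being chosen with probability proportional to $\lambda_i(x,v)=(v_i\partial_i\pot(x))_+ +\gamma_i(x,v)$. Taylor-expanding $\pot$ and $\partial_i\pot$ along the segment traversed during the step, all Hessian remainders are $o(\|\nabla\pot(x)\|)$, uniformly in the step, by \eqref{eq:growthbounds} and continuity of $\nabla^2\pot$. For Algorithm~\ref{alg:advanced_EPDMP} the flip occurs at an $\mathrm{Exp}(\overline{\lambda})$ time \emph{inside} the step, so when the rate is large the trajectory reverses almost immediately and $\pot$ barely increases; one then checks that the one-step drift of $\lf_{\beta,\epsilon}$ reproduces, up to the $o(\|\nabla\pot\|)$ error and the change $\alpha\rightsquigarrow\beta$, the continuous-time bound $\cL\lf_{\beta,\epsilon}/\lf_{\beta,\epsilon}\leq -c\|\nabla\pot\|+b$ of \cite{Bierkensergodicity} (the condition $\beta>\epsilon\overline{\gamma}$ absorbing the excess-rate contributions $\gamma_i(1+\epsilon|v_i\partial_i\pot|)$). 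This already yields a genuine one-step geometric drift, hence the claim, for Algorithm~\ref{alg:advanced_EPDMP}.

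For Algorithm~\ref{alg:basic_EPDMP} the position advances the \emph{whole} step before the possible flip, and on states where $\langle v,\nabla\pot(x)\rangle$ is large and positive the one-step drift of $\lf_{\beta,\epsilon}$ is in fact positive --- already for the one-dimensional Gaussian target. The remedy, generalising Lemmas~\ref{lem:1dLyapunovfunction} and~\ref{lem:lfordering}, is to iterate over a finite window of $m=m(d,\beta)$ steps: when the rate is large a flip is near-certain at each step and lands (with high probability) on a most ``uphill'' coordinate, so after $O(d)$ steps all uphill coordinates have been reversed, after which the position turns back towards lower values of $\pot$ while the factors $e^{\phi_\epsilon(v_i\partial_i\pot(x))}>1$ in $\lf_{\beta,\epsilon}$ have become $e^{-\phi_\epsilon(v_i\partial_i\pot(x))}<1$; the minority paths with several consecutive non-flips form a geometric sum that is negligible once $m$ exceeds a dimensional multiple of $\max(2,(1-\beta)^{-1})$. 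Together with the trivial contraction on ``downhill'' states this would give an $m$-step drift $\mathbb{E}_z[\lf_{\beta,\epsilon}(\Zbar_{t_{n+m}})\mid \Zbar_{t_n}=z]\leq\rho\,\lf_{\beta,\epsilon}(z)+b$ with $\rho<1$, $b<\infty$, uniform over the admissible meshes; iterating along the $m$ residue classes of $n$ gives $\sup_n\mathbb{E}_z[\lf_{\beta,\epsilon}(\Zbar_{t_n})]<\infty$, and hence \eqref{eq:mom_bound_zzs}.

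The hard part, and the reason this is stated only as a conjecture, is precisely this multi-step analysis in dimension $d>1$: because only one coordinate is flipped per step, one must control the random order in which the uphill coordinates are reversed and the simultaneous excursions of the remaining coordinates, and show that the net exponent --- growth from the position increments in the early steps versus decay from the reversed velocity factors and the eventual decrease of $\pot$ --- is negative outside a compact set, uniformly over all $2^d$ sign patterns of $v$ and all $\delta_n\leq\delta_0$. The excess-rate terms $\gamma_i$ (momentarily a downhill coordinate may flip to uphill, raising $\lf_{\beta,\epsilon}$) are the remaining delicate point and would be handled, as in the continuous-time proof, via $\beta>\epsilon\overline{\gamma}$; everything else is uniform Taylor bookkeeping controlled by \eqref{eq:growthbounds}.
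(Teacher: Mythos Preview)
The paper does not prove this statement: it is explicitly left as a conjecture, supported only by the numerical experiments in Figure~\ref{fig:zzs_momentbound}, with the $1$-dimensional case treated separately in Lemmas~\ref{lem:1dLyapunovfunction} and~\ref{lem:lfordering}. There is therefore no proof in the paper to compare your attempt against.

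Your outline in fact goes further than the paper does: you sketch a plausible Foster--Lyapunov strategy and, to your credit, correctly isolate the genuine obstruction (the multi-step combinatorics of coordinate flips in $d>1$, uniformly over all $2^d$ sign patterns of $v$ and over step sizes). Two comments are nevertheless worth making. First, even in one dimension the paper does not work with $\lf_{\beta,\epsilon}$ directly but with the $\delta$-dependent modification $\lf_{\alpha,\beta}(\cdot\,;\delta)$ of \eqref{eq:approximatelf}, and only afterwards sandwiches it between continuous-time Lyapunov functions via Lemma~\ref{lem:lfordering}; your assertion that Algorithm~\ref{alg:advanced_EPDMP} enjoys a one-step drift for the unmodified $\lf_{\beta,\epsilon}$ is plausible but unproven, and the paper's $1$d argument (which interleaves the two algorithms depending on the sign of $v\pot'(x)$) suggests that some $\delta$-dependent correction may be needed even there. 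Second, the $m$-step argument you propose for Algorithm~\ref{alg:basic_EPDMP} is precisely the hard part, and nothing in your sketch closes it: controlling the random order in which uphill coordinates are reversed, the simultaneous excursions of the not-yet-flipped coordinates, and the uniformity of the resulting contraction over all velocity patterns and step sizes is exactly what the authors were unable to carry through, which is why the statement is recorded as a conjecture rather than a theorem.
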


\begin{theorem}\label{thm:weakerrorZZS}
Let $\{(X_t,V_t)\}_{t\geq 0}$ be the ZZS. Let $\{(\Xbar_t,\Vbar_t)\}_{t\geq 0}$ be the process described in Example \ref{ex:ZZ}. Assume that $\pot$ satisfies \eqref{eq:growthbounds} and that Conjecture \ref{con:momentbound} holds. Let $\mathcal{G}_1$ be given by \eqref{eq:setoffunctions}. Then the conclusions of Theorem~\ref{thm:weakerror} hold.
\end{theorem}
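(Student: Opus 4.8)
The plan is to apply Theorem~\ref{thm:weakerror} directly, so the work reduces to verifying its hypotheses for the ZZS approximation of Example~\ref{ex:ZZ}. Since that approximation is built from either Algorithm~\ref{alg:basic_EPDMP} or Algorithm~\ref{alg:advanced_EPDMP} with $\phibar=\varphi$ and $\Fbar=F$ — the linear flow and the deterministic swaps $R_i$ are simulated exactly — we must check Assumption~\ref{ass:lambda_wasserstein}(a) with $p=1$, Assumption~\ref{ass:geoerg}, Assumption~\ref{ass:momentboundadvEulerWE}, and, in the case of Algorithm~\ref{alg:basic_EPDMP}, Assumption~\ref{ass:derivativedeccommutatoralongflow}. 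The set $\mathcal{G}_1$ in the conclusion will be the one in \eqref{eq:setoffunctions}.

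First I would note that Assumption~\ref{ass:lambda_wasserstein}(a) with $p=1$ holds for the frozen rates $\lambdabar_i(z,s;\delta)=\lambda_i(z)$ (and for the finite-difference choice \eqref{eq:lambda_gradfree_zzs}) whenever $\pot\in\C^2$, exactly as in Example~\ref{ex:zzs_wass}; with the smooth rates \eqref{eq:smooth_lambda} one may take $\overline{M}_2(x,v)$ controlled by $\lVert v\rVert\sup_{r\in[0,\delta_0]}\lVert\nabla^2\pot(x+rv)\rVert$, which under \eqref{eq:growthbounds} is dominated by $\lf_{\alpha,\epsilon}$. Next, Assumption~\ref{ass:geoerg} is the geometric ergodicity of the ZZS established in \cite{Bierkensergodicity} under \eqref{eq:growthbounds}, with Lyapunov function $\lf=\lf_{\alpha,\epsilon}$ as in \eqref{eq:1dlfZZS}; invariance of $\mu$ under $Q$ is immediate since the swaps are volume-preserving and $\mu$ is a product of $\pi$ and the uniform law on $\{\pm1\}^d$.

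The core of the argument is Assumption~\ref{ass:derivativedeccommutatoralongflow}, needed for Algorithm~\ref{alg:basic_EPDMP}, which I would obtain as in Example~\ref{ex:zzs_weakerror}: adopt the smooth rates \eqref{eq:smooth_lambda} of \cite{AndrieuLivingstone}, use Lemma~\ref{lem:ZZS_commutator} to bound $[\vfd,Q](\cP_tg\circ\varphi_{\delta-s})(\varphi_s(z))$ by the supremum over $i$ of $\lvert\cP_tg\rvert$ and $\lvert\partial_{x_i}\cP_tg\rvert$ at a shifted point, then invoke Assumption~\ref{ass:geoerg} to control $\cP_tg$ and Theorem~\ref{thm:exptimederivativebound} to control $\nabla_x\cP_tg$, both decaying like $e^{-\omega t}\lf_{\alpha,\epsilon}$ on $\mathcal{G}_1$. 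The residual $(1+t)$ factor from Theorem~\ref{thm:exptimederivativebound} is absorbed by slightly lowering the exponential rate and enlarging the constant, which is possible because \eqref{eq:growthbounds} leaves room to increase $\alpha$. For Algorithm~\ref{alg:advanced_EPDMP} this assumption is not required and one takes $\mathcal{G}_1=\mathcal{G}$.

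Finally, Assumption~\ref{ass:momentboundadvEulerWE} reduces, as in the example, to the single moment bound \eqref{eq:mom_bound_zzs}, namely $\sup_n\mathbb{E}_z[e^{\alpha_1\pot(\Xbar_{t_n})}]<\infty$ for some $\alpha_1>\alpha$, because $\lambda$ grows at most linearly under \eqref{eq:growthbounds} and $\lVert x\rVert^2\lf_{\alpha,\epsilon}(x,v)$ is bounded by $e^{\alpha_1\pot(x)}$; for $d=1$ this is Lemmas~\ref{lem:1dLyapunovfunction} and~\ref{lem:lfordering}, and in general it is precisely Conjecture~\ref{con:momentbound}, which is assumed here. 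Collecting all verified hypotheses, Theorem~\ref{thm:weakerror} applies and gives \eqref{eq:WE}, hence the claim. The main obstacle is the moment bound for $d>1$: a uniform-in-time Lyapunov estimate for the discrete chain $\{\Zbar_{t_n}\}$ is delicate because the Euler step can overshoot the region where the switching rate dominates the drift, which is exactly why the statement is conditional on Conjecture~\ref{con:momentbound}; the exponential derivative decay of Theorem~\ref{thm:exptimederivativebound} is the other technically demanding ingredient, but it is already established.
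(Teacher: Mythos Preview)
Your proposal is correct and follows essentially the same route as the paper: the theorem has no standalone proof there, being stated as the culmination of Example~\ref{ex:zzs_weakerror}, whose structure you reproduce exactly---verify Assumption~\ref{ass:geoerg} via \cite{Bierkensergodicity}, Assumption~\ref{ass:derivativedeccommutatoralongflow} via Lemma~\ref{lem:ZZS_commutator} and Theorem~\ref{thm:exptimederivativebound}, and Assumption~\ref{ass:momentboundadvEulerWE} by reducing to \eqref{eq:mom_bound_zzs} and invoking Conjecture~\ref{con:momentbound}. One minor point worth flagging: both Lemma~\ref{lem:ZZS_commutator} and Theorem~\ref{thm:exptimederivativebound} require bounded $\nabla_x\lambda_i$ (equivalently bounded Hessian of $\pot$ for the smooth rates \eqref{eq:smooth_lambda}), which is not implied by \eqref{eq:growthbounds} alone, so strictly speaking this extra hypothesis is being used implicitly---the paper does the same.
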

In Figure \ref{fig:errors} we show some numerical results in the case of a Gaussian target. We observe that the error in the estimation of the first component of the mean of the approximations is similar to that of the continuous ZZS, while the error for the radius statistic, i.e. $t(x)=\sum_{i=1}^d x_i^2$, obtained with the approximations decreases to that of the ZZS as $\delta$ becomes smaller.

\begin{example}[BPS continued]\label{ex:bps_weakerror}
Let us discuss the assumptions of Theorem~\ref{thm:weakerror} for the approximation of BPS as given in Example \ref{ex:BPSintro}, \ref{ex:BPS_wass}, and \ref{ex:bps_tv}. Since this approximation is based on Algorithm \ref{alg:advanced_EPDMP} it is sufficient to check Assumptions \ref{ass:geoerg} and \ref{ass:momentboundadvEulerWE}. Conditions under which Assumption~\ref{ass:geoerg} holds are given in \cite{BPSexp_erg} and \cite{BPS_Durmus}. To be concrete we  concentrate on \cite{BPSexp_erg}, in which the Lyapunov function is given by
\begin{equation*}
    \lf(x,v)=\frac{e^{\frac{1}{2}\psi(x)}}{\sqrt{\lambda(x,-v)}}.
\end{equation*}
Here at refreshment times a new velocity vector is drawn from the uniform distribution on the unit sphere. In Figure \ref{fig:bps_momentbound} we estimate the moments of $\lf$ for the continuous time BPS with a $25$-dimensional standard Gaussian target and compare it to the approximations obtained by applying Algorithm \ref{alg:advanced_EPDMP} for several values of $\delta$. We observe that the moments of the approximations resemble the continuous BPS and $\mathbb{E}[\lf(\Xbar_t,\Vbar_t)]$ appears to be bounded uniformly in time. Therefore we conjecture that Theorem~\ref{thm:weakerror} holds under the assumptions of \cite{BPSexp_erg} for approximations of the BPS according to Algorithm \ref{alg:advanced_EPDMP}. 

In Figure~\ref{fig:errors} we compare the errors of the BPS and its approximations given by Algorithm \ref{alg:advanced_EPDMP} in the case of a Gaussian target. We observe that the approximations perform similarly to the BPS. \review{Note that for this target measure the BPS and the approximation are both rotationally invariant so they both have mean zero and hence in Figure~\ref{fig:errors}~(c) we do not see the effect of the bias of the approximation.}
\end{example}

\begin{figure}[t]
\centering
\begin{subfigure}{0.49\textwidth}
    \includegraphics[width=\textwidth]{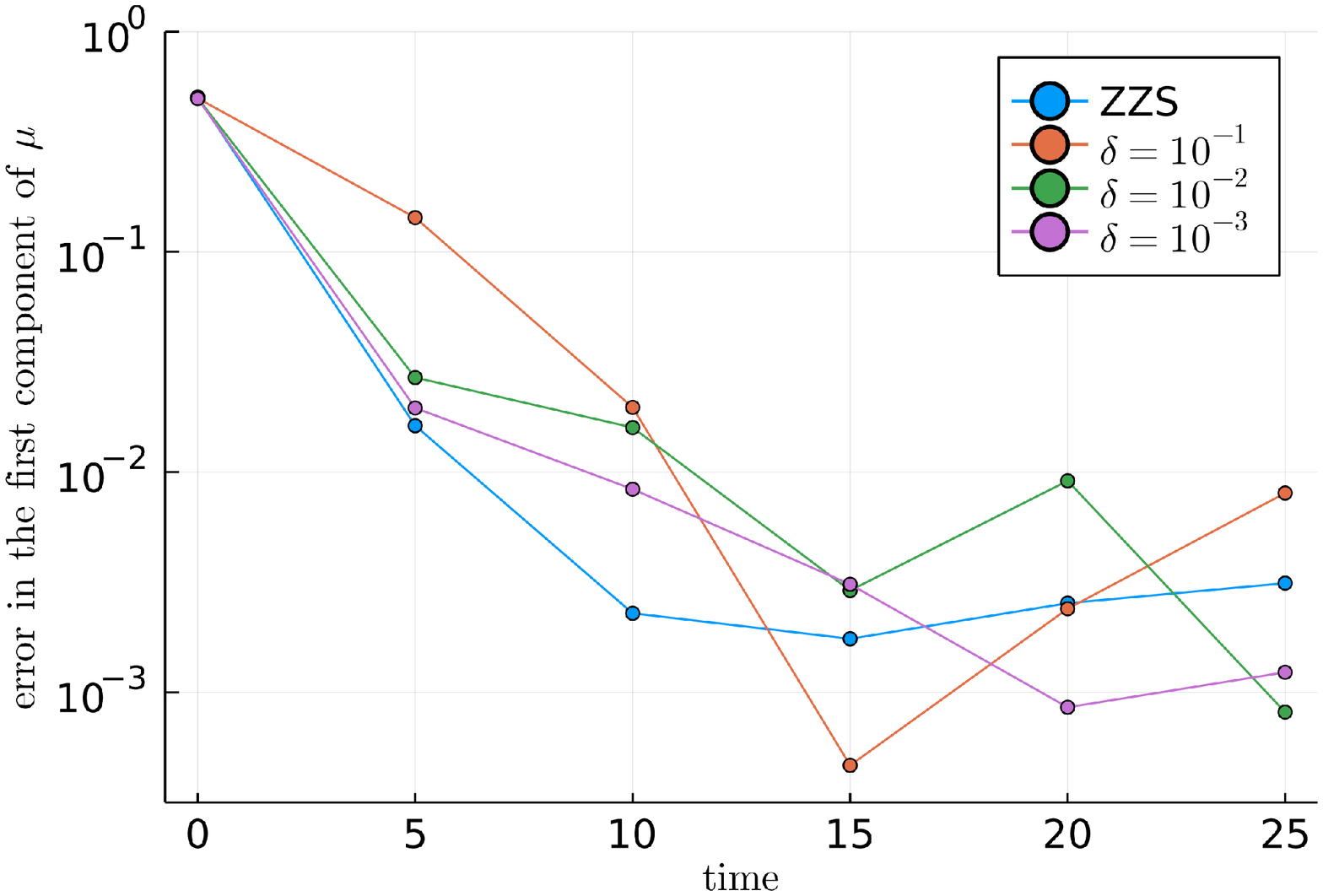}
    \caption{Error for the mean for the ZZS and its approximations given by Algorithm \ref{alg:basic_EPDMP}. }
    \label{fig:zzs_meanerror}
\end{subfigure}
\begin{subfigure}{0.49\textwidth}
    \includegraphics[width=\textwidth]{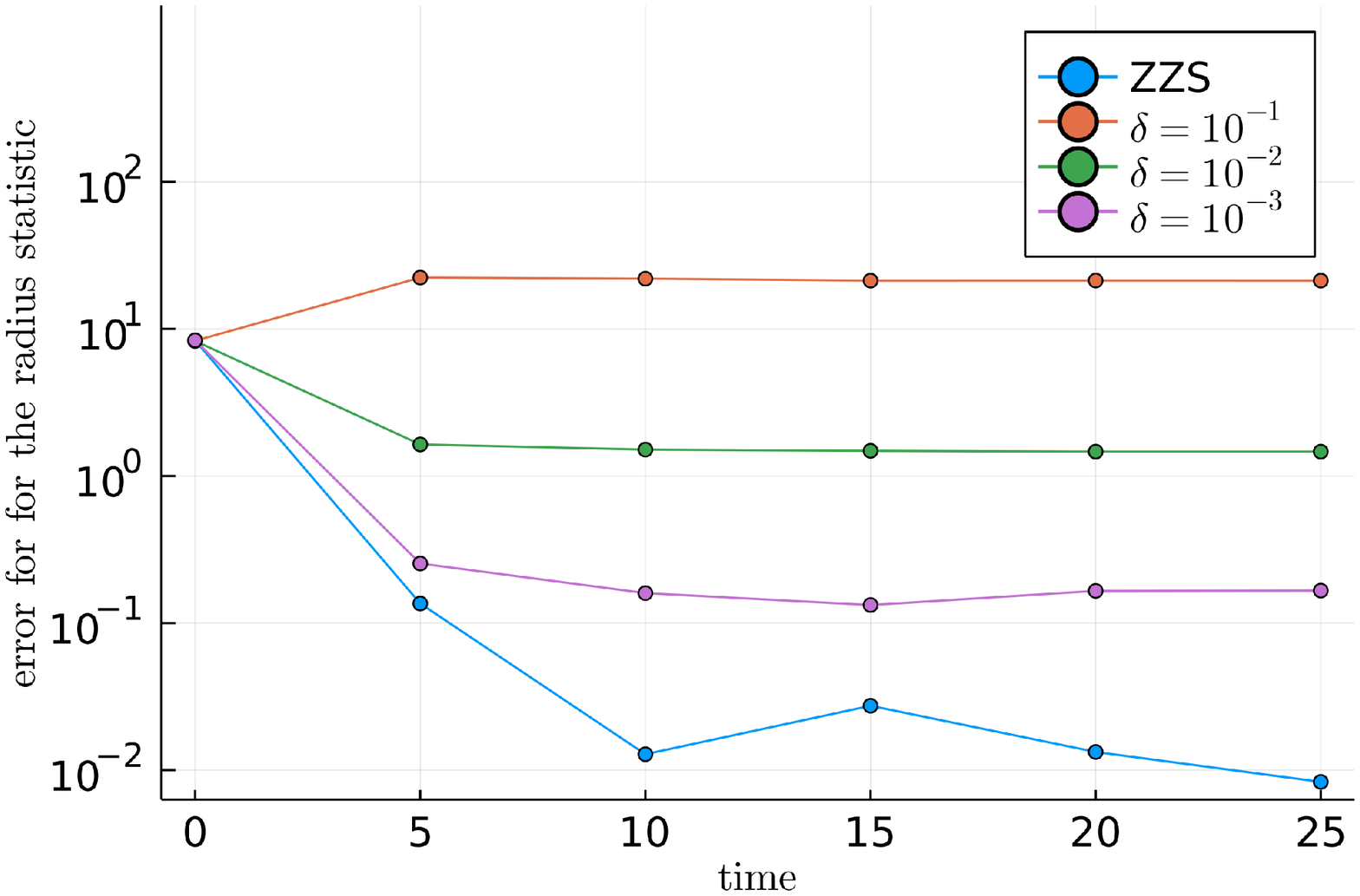}
    \caption{Error for the radius for the ZZS and its approximations given by Algorithm \ref{alg:basic_EPDMP}. }
    \label{fig:zzs_raderror}
\end{subfigure}
\begin{subfigure}{0.49\textwidth}
    \includegraphics[width=\textwidth]{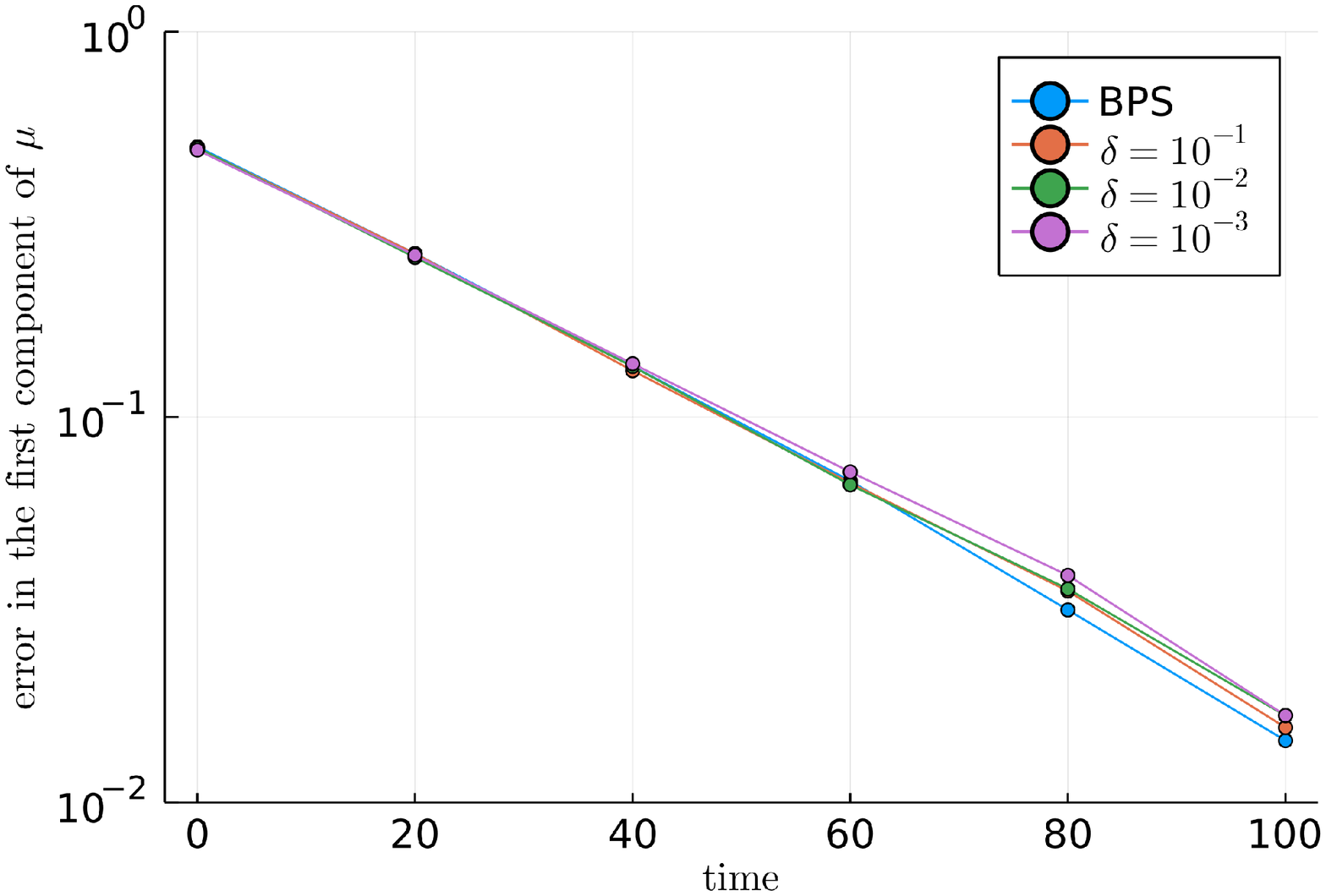}
    \caption{Error for the mean for the BPS and its approximations given by Algorithm \ref{alg:advanced_EPDMP}. }
    \label{fig:bps_meanerror}
\end{subfigure}
\begin{subfigure}{0.49\textwidth}
    \includegraphics[width=\textwidth]{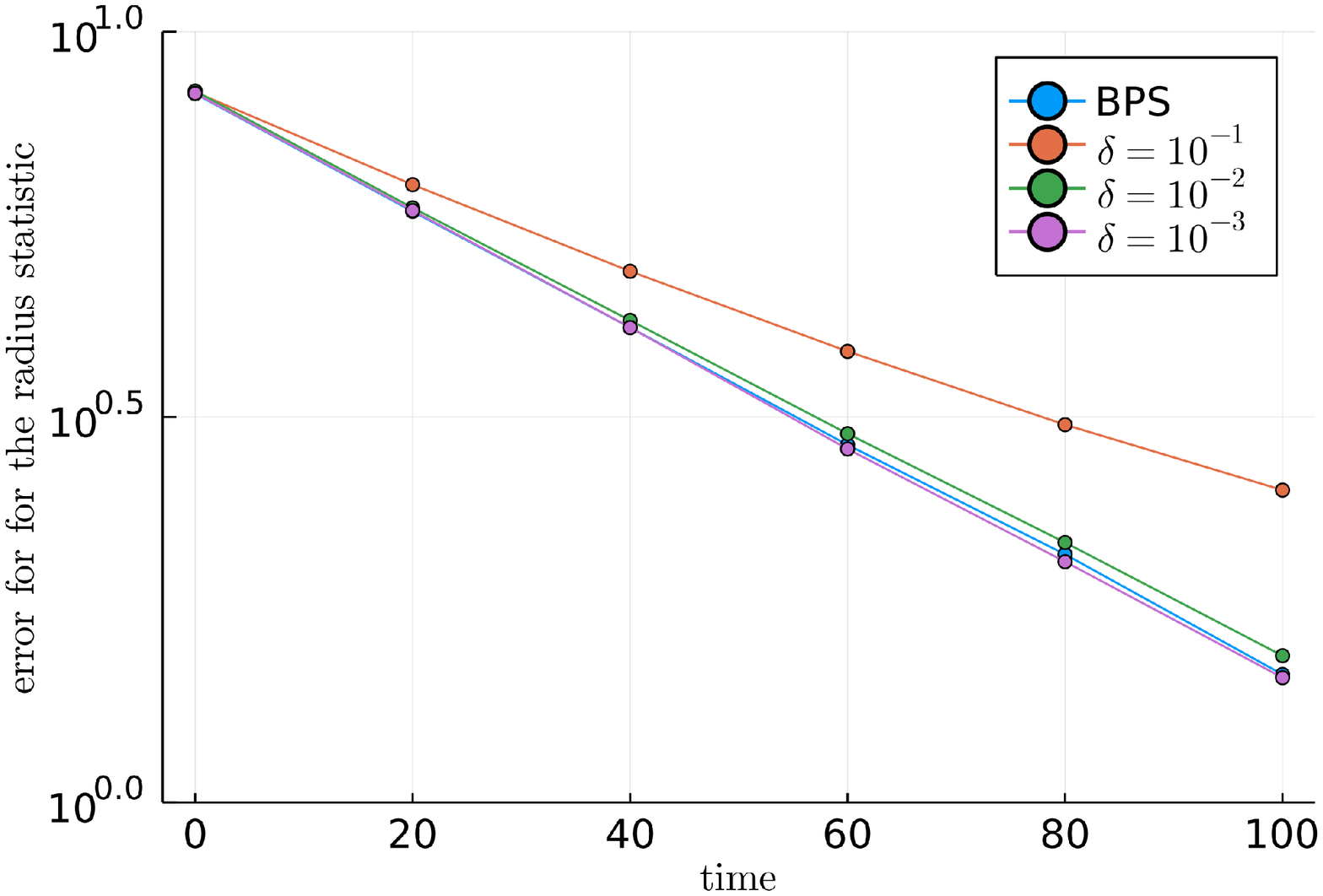}
    \caption{Error for the radius for the BPS and its approximations given by Algorithm \ref{alg:advanced_EPDMP}.}
    \label{fig:bps_raderror}
\end{subfigure}
\caption{Errors in the estimation of the first component of the mean and radius statistic in the context of Figure \ref{fig:momentbounds}. For the ZZS we take $\gamma(x,v)=0$, while for the BPS we have $\lambda_r=1$.}
\label{fig:errors}
\end{figure}

\begin{example}[Continuous time approximation of a PDMP]\label{ex:continuous_approx_weakerror}
We continue our analysis of the setting introduced in Example  \ref{ex:continuous_approx_pdmp}. We wish to extend the conclusions of Theorem~\ref{thm:weakerror} to the continuous PDMP with generator $\Tilde{\cL}$ given by \eqref{eq:gen_cts_approx_pdmp}.
\end{example}
\begin{theorem}
Suppose both the PDMPs with generators $\cL$ and $\Tilde{\cL}$ satisfy Assumption~\ref{ass:geoerg} with Lyapunov functions $\overline{G}$ and $\Tilde{G}$ respectively, and with invariant measures $\mu$ and $\Tilde{\mu}$. Assume \eqref{eq:cts_lambda_error} holds for some $\varepsilon>0$. Moreover, suppose the approximation of the PDMP with generator $\cL$ described in Example \ref{ex:continuous_approx_pdmp} satisfies Assumption~\ref{ass:momentboundadvEulerWE} both for  $\overline{G}$ and $\Tilde{G}$. Set $\mathcal{G}_1=\{g\in\C(E): \lvert g(x,v)\rvert\leq \min\{\overline{G}(x,v),\tilde{G}(x,v)\}\}$.
Then for all $g\in\mathcal{G}_1$
\begin{equation*}
    \left\lvert \mathbb{E}_{z}[g(Z_{t})] - \mathbb{E}_{z}[g(\tilde{Z}_{t})] \right\rvert \leq C \varepsilon H(z).
\end{equation*}
Moreover, letting $t\to\infty$ we have
\begin{equation*}
    \left\lvert \mu(g) - \tilde{\mu}(g) \right\rvert \leq D \varepsilon .
\end{equation*}
\end{theorem}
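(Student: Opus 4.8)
The plan is to use the discrete-time chain $\{\Zbar_{t_n}\}_{n\in\N}$ constructed in Example~\ref{ex:continuous_approx_pdmp} as a \emph{single common reference} that is simultaneously an Algorithm~\ref{alg:advanced_EPDMP} approximation of the PDMP with generator $\cL$ and of the PDMP with generator $\tilde{\cL}$, to apply Corollary~\ref{cor:convergence-to-stationary-measure} once against each continuous process, and to conclude by the triangle inequality. Recall that $\{\Zbar_{t_n}\}$ has constant step $\delta=\varepsilon$, uses the exact flow $\phibar_t=\varphi_t$, the exact jump maps $\Fbar_i=F_i$, and switching rates $\lambdabar_i(z,s;\delta)=\tilde\lambda_i(\varphi_s(z))$; in particular $t_n=n\varepsilon$.

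I would first verify the hypotheses of Theorem~\ref{thm:weakerror} (and hence of Corollary~\ref{cor:convergence-to-stationary-measure}) for the pair $(Z_t,\Zbar_{t_n})$. Since $\cL$ and $\tilde{\cL}$ share the drift $\Phi$ and the kernels $Q_i$ in \eqref{eq:genPDMPmanykernels}--\eqref{eq:gen_cts_approx_pdmp}, the flow and jump maps used by $\{\Zbar_{t_n}\}$ are exact, and \eqref{eq:cts_lambda_error} with $\delta=\varepsilon$ gives, for $0\le s\le\delta\le\delta_0$,
\[
  \bigl|\lambdabar_i(z,s;\delta)-\lambda_i(\varphi_s(z))\bigr|
  =\bigl|\tilde\lambda_i(\varphi_s(z))-\lambda_i(\varphi_s(z))\bigr|
  \le\varepsilon\,\tilde M(\varphi_s(z))=\delta\,\overline{M}_2(z),
\]
with $\overline{M}_2(z):=\sup_{0\le s\le\delta_0}\tilde M(\varphi_s(z))$, so Assumption~\ref{ass:lambda_wasserstein}(a) holds with $p=1$; Assumption~\ref{ass:geoerg} (with $\overline{G}$, geometric rate $\omega$) and Assumption~\ref{ass:momentboundadvEulerWE} (for $\overline{G}$) hold by hypothesis, and as Algorithm~\ref{alg:advanced_EPDMP} is used, Assumption~\ref{ass:derivativedeccommutatoralongflow} is not required and $\mathcal{G}_1=\mathcal{G}$. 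For the pair $(\tilde Z_t,\Zbar_{t_n})$ note that $\{\Zbar_{t_n}\}$ is \emph{exactly} the Algorithm~\ref{alg:advanced_EPDMP} approximation of the $\tilde{\cL}$-PDMP: the rate along the flow $\lambdabar_i(z,s;\delta)=\tilde\lambda_i(\varphi_s(z))$ and the weights $\lambdabar_i/\lambdabar$ match the switching rate and jump-selection probabilities of the $\tilde{\cL}$-PDMP, so Assumption~\ref{ass:lambda_wasserstein}(a) holds with $\overline{M}_2\equiv0$, while Assumptions~\ref{ass:geoerg} (with $\tilde G$, rate $\tilde\omega$) and \ref{ass:momentboundadvEulerWE} (for $\tilde G$) hold by hypothesis. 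Finally, every $g\in\mathcal{G}_1$ from the statement satisfies $|g|\le\overline{G}$ and $|g|\le\tilde G$, hence lies in the admissible class $\mathcal{G}$ of Theorem~\ref{thm:weakerror} for \emph{both} PDMPs.

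Writing $H_\cL,H_{\tilde\cL}$ for the functions $H$ of Assumption~\ref{ass:momentboundadvEulerWE} associated with $\overline{G}$ and $\tilde G$, Corollary~\ref{cor:convergence-to-stationary-measure} applied to each comparison gives, via \eqref{eq:WEfixeddelta} and \eqref{eq:weakerrortotarget},
\[
  \bigl|\E_z[g(Z_{t_n})]-\E_z[g(\Zbar_{t_n})]\bigr|\le C\varepsilon H_\cL(z),
  \quad
  \bigl|\E_z[g(\tilde Z_{t_n})]-\E_z[g(\Zbar_{t_n})]\bigr|\le C\varepsilon H_{\tilde\cL}(z),
\]
and the triangle inequality yields the first bound of the theorem (at $t=t_n$) with $H:=H_\cL+H_{\tilde\cL}$. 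For the stationary bound, \eqref{eq:weakerrortotarget} gives $|\E_z[g(\Zbar_{t_n})]-\mu(g)|\le CH_\cL(z)(\varepsilon+e^{-\omega t_n})$ and $|\E_z[g(\Zbar_{t_n})]-\tilde\mu(g)|\le CH_{\tilde\cL}(z)(\varepsilon+e^{-\tilde\omega t_n})$; adding these and letting $n\to\infty$ gives $|\mu(g)-\tilde\mu(g)|\le C\varepsilon H(z)$ for every $z\in E$, and since the left-hand side is independent of $z$ one takes the infimum over $z$ — finite because $H$ is finite-valued by Assumption~\ref{ass:momentboundadvEulerWE} — to obtain $|\mu(g)-\tilde\mu(g)|\le D\varepsilon$ with $D:=C\inf_{z\in E}H(z)$.

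The main obstacle is the admissibility check of the second paragraph: one must confirm that the single chain $\{\Zbar_{t_n}\}$ is genuinely a valid Algorithm~\ref{alg:advanced_EPDMP} approximation of \emph{both} PDMPs — in particular that the jump-selection law of the algorithm coincides with that of the $\tilde{\cL}$-PDMP, and that Assumption~\ref{ass:momentboundadvEulerWE} may legitimately be imposed separately for $\overline{G}$ and for $\tilde G$ on the same process (which is precisely why both moment hypotheses appear in the statement). A minor technical point, which I would not dwell on, is that the weak-error estimates are naturally stated at mesh points $t_n=n\varepsilon$; this already suffices for the stationary conclusion since $t_n\to\infty$, and a bound at arbitrary $t$ would require only a slightly more careful choice of mesh near $t$.
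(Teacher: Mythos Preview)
Your proposal is correct and follows essentially the same approach the paper intends: use the discrete chain of Example~\ref{ex:continuous_approx_pdmp} as a common bridge, apply Theorem~\ref{thm:weakerror}/Corollary~\ref{cor:convergence-to-stationary-measure} once against each of the two continuous PDMPs, and conclude by the triangle inequality. The paper does not spell out a proof beyond this strategy, and your verification of the hypotheses (in particular that $\{\Zbar_{t_n}\}$ is simultaneously a valid Algorithm~\ref{alg:advanced_EPDMP} approximation of both $\cL$- and $\tilde{\cL}$-PDMPs, with $\overline{M}_2\equiv 0$ in the latter case) is exactly what is needed.
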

Hence, in the case of ZZS we recover the result obtained in Theorem 6.2 of \cite{huggins2017quantifying}.

\section{Proof of Theorem~\ref{thm:strong_error_pdmp}}\label{sec:proof_wass_theorem}
We shall first prove the case of $p=1$ in Section \ref{sec:proof_wass_p=1}, and then in Section \ref{sec:proofs_wass_p>1} we will use the $p=1$ setting as a base case in a proof by induction to obtain the result for $p>1$.

\subsection{The case of $p=1$}\label{sec:proof_wass_p=1}
To prove Theorem~\ref{thm:strong_error_pdmp} in this setting we define a coupling of $Z_{t_n}$ and $\Zbar_{t_n}$ that satisfies the bounds in the statement. Then because the Wasserstein distance is defined as an infimum over all couplings we immediately obtain 
$$\mathcal{W}_1(\cP_{T}(z,\cdot),\overline{\cP}_{T}(z,\cdot))\leq  \mathbb{E}_z[\lVert Z_{T}-\Zbar_{T} \rVert],$$
where the expectation in the right hand side is with respect to the specific coupling we consider.

Let us now introduce a general framework that contains both Algorithm \ref{alg:basic_EPDMP} and Algorithm \ref{alg:advanced_EPDMP}. Denote the approximation process as $\Zbar_{t_n}$ with initial state $\Zbar_0 = z$. Then given the previous state $\Zbar_{t_n}$ define 
\begin{equation}
\begin{aligned}\label{eq:tautilde_wass}
    \tilde{\tau}_{n+1}^i &\coloneqq \inf \left\{t\geq 0: 1-\exp\left(-\int_0^t \lambdabar_i(\Zbar_{t_n},s;\delta_{n+1})\dd s  \right) \geq \tilde{U}_{n+1}^i\right\}, \\ \tilde{\tau}_{n+1} &\coloneqq \min_{i=1,\dots,m} \tilde{\tau}_{n+1}^i, \quad I_{n+1}=\argmin_{i=1,\dots,m} \tilde{\tau}_{n+1}^i
\end{aligned}
\end{equation}
where $\tilde{U}_{n+1}^1,\ldots, \tilde{U}_{n+1}^m\stackrel{iid}{\sim} \text{Unif}[0,1]$. Then the  switching time of the process is
\begin{equation}\notag
    \taub = \overline{\tau}_{n+1}(\Tilde{\tau}_{n+1},\delta_{n+1}),
\end{equation}
with the requirement that $\taub\leq \delta_{n+1}$ if and only if $\tilde{\tau}_{n+1} \leq \delta_{n+1}$.
In particular Algorithm \ref{alg:basic_EPDMP} corresponds to the choice
\[\taub(\Tilde{\tau}_{n+1},\delta_{n+1}) = \delta_{n+1}\mathbbm{1}_{\{\tilde{\tau}_{n+1}\leq \delta_{n+1}\}} + \infty \mathbbm{1}_{\{\tilde{\tau}_{n+1}> \delta_{n+1}\}}, \]
while Algorithm \ref{alg:advanced_EPDMP} corresponds to 
\[\taub(\Tilde{\tau}_{n+1},\delta_{n+1}) = \tilde{\tau}_{n+1}.\]
The process can now be defined as follows:
\begin{align*}
    &\Zbar_{t_{n+1}} = \begin{cases}  \phibar_{\delta_{n+1}}(\Zbar_{t_n};\delta_{n+1}) &\text{if } \taub> \delta_{n+1}, \\
    \phibar_{\delta_{n+1}-\taub}(\Fbar_{I_{n+1}}(\phibar_{\taub}(\Zbar_{t_n};\delta_{n+1}),U_{n+1};\delta_{n+1});\delta_{n+1}) &\text{if } \taub\leq \delta_{n+1},
    \end{cases}
\end{align*}
where $U_{n+1}\sim \nuU$ takes values in $\cU$.

Let us now define the coupling of $Z_{t_n}$ and $\Zbar_{t_n}$ that we will use to prove Theorem~\ref{thm:strong_error_pdmp}.

\begin{coupling}\label{coup:wass_gen_pdmp}
Fix both processes up to time $t_n$ and let $(Z_{t_{n+1}},\Zbar_{t_{n+1}})$ evolve as follows. Let $U_{n+1} \sim \nuU$ and $\tilde{U}_{n+1}^1,\ldots,\tilde{U}_{n+1}^m \stackrel{iid}{\sim} \textnormal{Unif}([0,1])$ be independent of each other and of $Z_{t_n}$ and $\Zbar_{t_n}$. The coupling evolves as follows:
\begin{itemize}
    \item Define the next switching time of the continuous process as 
    \begin{equation}\label{eq:tau_wass}
    \begin{aligned}
        \tau_{n+1}^i &\coloneqq \inf \left\{t\geq 0: 1-\exp\left(-\int_0^t \lambda_i(\varphi_s(Z_{t_n}))\dd s  \right) \geq \tilde{U}_{n+1}^i \right\},\\
     \tau_{n+1} &\coloneqq \min_{i=1,\dots,m} \tau_{n+1}^i.
        \end{aligned}
    \end{equation}  
    Then there are two cases:
    \begin{itemize}
        \item if $\tau_{n+1} \leq \delta_{n+1}$, then set $Z_{t_n+s}=\varphi_s(Z_{t_n})$ for $s\in(0,\tau_{n+1})$ and $$Z_{t_n + \tau_{n+1}} = F_{I_{n+1}}(\varphi_{\tau_{n+1}}(Z_{t_n}),U_{n+1})$$ where ${I_{n+1}}=\argmin_i \tau_{n+1}^i$. Then simulate the process independently of the rest for the remaining time $\delta_{n+1} - \tau_{n+1}$.
        \item if $\tau_{n+1} > \delta_{n+1}$, then set $Z_{t_n+s} = \varphi_s(Z_{t_n})$ for $s\in(0,\delta_{n+1}]$.
    \end{itemize}
    \item Define $\tilde{\tau}_{n+1}$ as in Equation~\eqref{eq:tautilde_wass}, where $\Tilde{U}_{n+1}^1,\ldots, \Tilde{U}_{n+1}^m$ is the same random variable used in \eqref{eq:tau_wass}. Compute $\taub = \overline{\tau}_{n+1}(\Tilde{\tau}_{n+1},\delta_{n+1})$. Then the approximation process evolves as:
    \begin{itemize}
        \item if $\taub \leq \delta_{n+1}$, then set $$\Zbar_{t_{n+1}}= \phibar_{\delta_{n+1}-\taub}(\overline{F}_{\overline{I}_{n+1}}(\phibar_{\taub}(\Zbar _{t_n};\delta_{n+1}),U_{n+1};\delta_{n+1});\delta_{n+1})$$ where ${\bar{I}_{n+1}}=\argmin_i \tilde{\tau}_{n+1}^i$. 
        \item if $\taub > \delta_{n+1}$, then set $\Zbar_{t_{n+1}} = \phibar_{\delta_{n+1}}(\Zbar_{t_n};\delta_{n+1})$.
    \end{itemize}
\end{itemize}
Therefore the first switching times of the two processes are coupled, and so is the eventual random jump. Once $Z_{t_{n+1}}$ and $\Zbar_{t_{n+1}}$ have been obtained, repeat the same procedure to obtain $Z_{t_{n+2}}$ and $\Zbar_{t_{n+2}}$.
\end{coupling}
\noindent We remark that the marginal distributions of each process is the correct one, and thus this is indeed a valid coupling of the two processes.

In the proof that follows we simplify the notation denoting the approximations as $\phibar_z(z)$, $\lambdabar_i(z,s)$, and $\Fbar_i(z,U)$, instead of $\phibar_z(z;\delta_{n+1})$, $\lambdabar_i(z,s;\delta_{n+1})$, and $\Fbar_i(z,U;\delta_{n+1})$.
\begin{proof}[Proof (Theorem~\ref{thm:strong_error_pdmp})]
    We begin by partitioning the space as
	\begin{equation*}
	\mathbb{E}_z [\lVert Z_{t_{n+1}}-\Zbar_{t_{n+1}}\rVert] = \mathbb{E}_z [\lVert Z_{t_{n+1}}-\Zbar_{t_{n+1}}\rVert (\mathbbm{1}_{E_{00}}+\mathbbm{1}_{E_{11}}+\mathbbm{1}_{E_{10}}+\mathbbm{1}_{E_{01}})],
	\end{equation*}
	where $E_{ij}$ for $i,j=0,1$ denotes the event in which there are $i$ random events for the approximation process, while $j=0$ denotes that no events take place for the continuous process, and $j=1$ that at least one event for the original process happens in the time interval $s\in[t_n,t_{n+1})$. The four events are considered respectively in Lemmas \ref{Lemma:strong_error_E00}, \ref{lemma:strong_error_E11}, \ref{lemma:strong_error_E10}, and \ref{lemma:strong_error_E01}. \review{Since the upper bounds in these results are non-decreasing functions of the time $t_n$, we combine the results of the Lemmas to obtain that there exist constants $K_1= K_1(t_{n+1})$ and $K_2=K_2(t_{n+1})$ such that }
	\begin{equation*}
	    \mathbb{E}_z[\lVert Z_{t_{n+1}}-\Zbar_{t_{n+1}} \rVert] \leq (1+\delta_{n+1} K_1) \mathbb{E}_z[\lVert Z_{t_{n}}-\Zbar_{t_{n}} \rVert] + \delta_{n+1}^2 K_2.
	\end{equation*}
	Since the two processes start at the same point this implies, by recursion, 
	\begin{equation}\label{eq:wassbound_nonconst}
	    \mathbb{E}_z[\lVert Z_{t_{n+1}}-\Zbar_{t_{n+1}} \rVert] \leq \sum_{k=1}^{n+1} K_2\delta^2_k \left(\prod_{\ell=k}^{n+1} (1+\delta_\ell K_1)\right).
	\end{equation}
	In the setting when $\delta_n=\delta$ the right hand side of \eqref{eq:wassbound_nonconst} becomes a geometric series which leads to the estimate 
	\begin{equation}\label{eq:wassbound_const}
	\begin{aligned}
	\mathbb{E}_z[\lVert Z_{t_{n+1}}-\Zbar_{t_{n+1}} \rVert] &\leq \delta^2 \frac{(1+\delta K_1)^{n+1} -1}{(1+\delta K_1) -1} K_2 \\
	& \leq \delta \left( e^{K_1 (n+1)\delta } -1 \right) \frac{K_2}{K_1}.
	\end{aligned}
	\end{equation}
\end{proof}

\subsection{The case of $p>1$}\label{sec:proofs_wass_p>1}

In order to simply the notation we shall restrict to the case $\delta_n=\delta$. To prove the result we reason by induction on $p$. In particular, we consider the following inductive hypothesis. Fix $p\geq 1$ and $n\geq 1$.
\begin{hypothesis}\label{hyp:induction_higherorder}
Suppose the PDMP satisfies Assumptions \ref{ass:lipschitz_phi}-\ref{ass:lambda_lipschitz}. Moreover suppose the approximation given by Algorithm \ref{alg:secondorder_EPDMP} satisfies Assumptions \ref{ass:integrator}-\ref{ass:boundedPDMP} hold for some $\delta_0>0$. Given $(Z_{t_n},\Zbar_{t_n})$ there exist a coupling $(Z_{t_{n+1}},\Zbar_{t_{n+1}})$ with respective marginals corresponding to $\cP_{\delta}(Z_{t_n},\cdot),\overline{\cP}_{\delta}(\Zbar_{t_n},\cdot;\delta,p)$ there exist $A=A(T)$, $B=B(T)$ independent of $n$ such that for any $0<\delta\leq \delta_0$ 
	\begin{equation}\label{eq:inductivehypothesis}
	    \mathbb{E}_z[\lVert Z_{t_{n+1}}-\Zbar_{t_{n+1}} \rVert] \leq A \delta^{p+1} +  (1+B\delta) \mathbb{E}_z[\lVert Z_{t_n}-\Zbar_{t_n} \rVert].
	\end{equation}
\end{hypothesis}
It is sufficient to show that the Inductive hypothesis holds and then the statement of the Theorem follows by recursion in $n$ as done in \eqref{eq:wassbound_const}.
Observe that the case $p=1$, which corresponds to Algorithm \ref{alg:advanced_EPDMP}, holds by the proof of Section \ref{sec:proof_wass_p=1}. Suppose the Inductive Hypothesis holds for some $p\geq 1$, let us consider the case of $p+1$.
Let us define the following coupling of $(Z_{t_{n+1}},\Zbar_{t_{n+1}})$ given $(Z_{t_n},\Zbar_{t_n})$.
\begin{coupling}\label{coup:general_higherorder}
Define for $0\leq t\leq \delta$ $$\lambda^i_{tot}(z,\overline{z},t;\delta,p+1) = \lambdabar_i(\overline{z},t;\delta,p+1)+\lambda_i(\varphi_t(z))+1.$$ 
Then for $i=1,\dots,m$ draw the proposed event times $T_i$ with distribution given by $$\mathbb{P}(T_i > t) = \exp\left(-\int_{0}^t \lambda_{tot}^i(Z_{t_n},\Zbar_{t_n},r;\delta,p+1)\dd r\right).$$  Let $T_{i^*} = \min_{i=1,\dots,m} T_i $ and let $i^*$ be the argument that minimises $T_i$. If $T_{i^*}\geq \delta$, then let $Z_{t_{n+1}}=\varphi_\delta(Z_{t_n})$, $\Zbar_{t_{n+1}}=\phibar_{\delta}(\Zbar_{t_n};\delta,p+1)$.

Consider now the case in which $T_{i^*}< \delta$. Let $ U\sim \nuU$ and $\overline{U}\sim \textnormal{Unif}([0,1])$ independent of the $T_i$'s and independent of each other. Then set 
$$
\tau_* = T_{i^*} \quad \textnormal{ if } \overline{U} \leq \frac{\lambda_{i^*}(\varphi_{T_{i^*}}(Z_{t_n}))}{\lambda_{tot}^{i^*} (Z_{t_n},\Zbar_{t_n},T_{i^*};\delta,p+1)},
$$ 
i.e. the proposed event time is accepted for the continuous time process. Alternatively set $\tau_*(z) = R$ for some constant $R>\delta$. Similarly let
$$ 
\overline{\tau}_* = T_{i^*} \quad \textnormal{ if } \overline{U}\leq \frac{\lambdabar_{i^*}(\Zbar_{t_n},T_{i^*};\delta,p+1)}{\lambda_{tot}^{i^*} (Z_{t_n},\Zbar_{t_n},T_{i^*};\delta,p+1)},
$$
and thus conditional on acceptance $\overline{\tau}_*$ is the next event time for the approximation process. In case of rejection set $\overline{\tau}_* = R$ for some constant $R>\delta$ as done above.
Set $Z_{t_n+s}=\varphi_s(z)$ and $\Zbar_{t_n+s}=\phibar_s(\Zbar_{t_n};\delta,p+1)$ for $s\in[0,T_{i^*})$. We distinguish three scenarios:
\begin{enumerate}[label=(\arabic*)]
    \item The proposed switching time $T_{i^*}$ is accepted by both processes. Then set 
    \begin{align*}
        & Z_{t_n+T_{i^*}}=F_{i^*}(\varphi_{T_{i^*}}(Z_{t_n}),U), \\
        & \Zbar_{t_n+T_{i^*}}=\Fbar_{i^*}(\phibar_{T_{i^*}}(\Zbar_{t_n};\delta,p+1),U;\delta,p+1).
    \end{align*}
    To get from time $t_n+T_{i^*}$ to $t_{n+1}$ we apply the coupling given by the Inductive Hypothesis \ref{hyp:induction_higherorder}.
    \item The proposed switching time $T_{i^*}$ is accepted for one process, but rejected for the other. To get from time $t_n+T_{i^*}$ to $t_{n+1}$ we let the two processes evolve independently according to their marginal distributions.
    \item The proposed switching time $T_{i^*}$ is rejected by both processes. Then set 
    $Z_{T_{i^*}}= \varphi_{T_{i^*}}(Z_{t_n})$ and $\Zbar_{T_{i^*}} =\phibar_{T_{i^*}}(\Zbar_{t_n};\delta,p+1)$.
    To get from time $t_n+T_{i^*}$ to $t_{n+1}$ we repeat this procedure starting at time $t_n+T_{i^*}$ and with $\delta$ replaced with $\delta-T_{i^*}$.
\end{enumerate}
\end{coupling}

\begin{proof}[Proof of Theorem~\ref{thm:strong_error_pdmp}]
Assume $Z_0=\Zbar_0=z$. Suppose that \eqref{eq:inductivehypothesis} holds for some $p\geq 1$. We will show that \eqref{eq:inductivehypothesis} then follows for $p$ replaced $p+1$ by using Coupling \ref{coup:general_higherorder}.

Suppose first that $T_{i^*}>\delta$. Then the two processes follow the deterministic flow and by Assumption~\ref{ass:integrator} with order $p+1$ and Lemma \ref{lem:flow_to_integrator} we have
\begin{align*}
     \mathbb{E}_z[\lVert Z_{t_{n+1}}-\Zbar_{t_{n+1}} \rVert \mathbbm{1}_{T_{i^*}>\delta}] &= \mathbb{E}_z[\lVert \varphi_\delta(Z_{t_{n}})-\phibar_\delta(\Zbar_{t_{n}};\delta,p+1) \rVert \mathbbm{1}_{T_{i^*}>\delta}] \\
     & \leq (1+CC'\delta) \mathbb{E}_z[\lVert Z_{t_{n+1}}-\Zbar_{t_{n+1}} \rVert ] + \tilde{C}^{p+2}.
\end{align*}

Let us consider the case (1) in Coupling \ref{coup:general_higherorder} and denote the corresponding event as $E_1$. Then using the Inductive Hypothesis \ref{hyp:induction_higherorder}
\begin{align*}
     &\mathbb{E}_z[\lVert Z_{t_{n+1}}-\Zbar_{t_{n+1}} \rVert \mathbbm{1}_{E_1}] = \mathbb{E}_z \mathbb{E}_z[\lVert Z_{t_{n+1}}-\Zbar_{t_{n+1}} \rVert \mathbbm{1}_{E_1}\rvert T_{i^*}] \\
     & \leq \mathbb{E}_z[(A\delta^{p+1} + (1+B\delta)\lVert Z_{t_{n}+T_{i^*}}-\Zbar_{t_{n}+T_{i^*}} \rVert ) \mathbbm{1}_{E_1}]\\
     & = \mathbb{E}_z[(A\delta^{p+1} + (1+B\delta)\lVert F_{i^*}(\varphi_{T_{i^*}}(Z_{t_{n}}),U)\!-\!\Fbar_{i^*}(\phibar_{T_{i^*}}(\Zbar_{t_{n}};\delta,p+1),U;\delta,p+1) \rVert ) \mathbbm{1}_{E_1}]\\
     & \leq \mathbb{E}_z[(A\delta^{p+1} + (1+B\delta)(M_1\delta^{p+1}  + D_2\lVert \varphi_{T_{i^*}}(Z_{t_{n}})-\phibar_{T_{i^*}}(\Zbar_{t_{n}};\delta,p+1) \rVert) ) \mathbbm{1}_{E_1}] \\
     & \leq \mathbb{E}_z[(A\delta^{p+1} + (1+B\delta)(M_1\delta^{p+1}  + D_2(1+CC'\delta)\lVert Z_{t_{n}}-\Zbar_{t_{n}} \rVert + D_2\tilde{C}\delta^{p+2} ) ) \mathbbm{1}_{E_1}].
\end{align*}
Here we used Assumption~\ref{ass:F_new}(b), and Lemma \ref{lem:flow_to_integrator}.
Then we take advantage of $$\mathbb{P}_z(T_{i^*}<\delta)\leq 1-\exp(-\delta(2L(t_{n+1},z,p+1)+m))\leq  \delta(2L(t_{n+1},z,p+1)+m)$$ to get
\begin{align}\notag
    \mathbb{E}_z[\lVert Z_{t_{n+1}}-\Zbar_{t_{n+1}} \rVert \mathbbm{1}_{E_1}] \leq \Tilde{A}_1\delta^{p+2} + (1+\Tilde{B}_1\delta) \mathbb{E}_z[\lVert Z_{t_{n}}-\Zbar_{t_{n}} \rVert]
\end{align}
for suitable constants $\tilde{A}_1,\Tilde{B}_1$ and taking advantage of $\delta<\delta_0$.

Now consider the case (2) in Coupling \ref{coup:general_higherorder} and denote the corresponding event as $E_2$. Note that 
\begin{equation*}
    \mathbb{P}_z(E_2\rvert Z_{t_n},\Zbar_{t_n}) =  \delta(2L(t_{n+1},z,p+1)+m) \frac{\lvert \lambda_{i^*}(\varphi_{T_{i^*}}(Z_{t_n}))-\lambdabar_{i^*}(\Zbar_{t_n},T_{i^*};\delta,p+1)\rvert}{\lambda_{tot}^{i^*} (Z_{t_n},\Zbar_{t_n},T_{i^*};\delta,p+1)}.
\end{equation*}
Using Assumptions \ref{ass:lipschitz_phi}, \ref{ass:lambda_lipschitz}, \ref{ass:lambda_wasserstein}, and the triangle inequality we obtain
\begin{align*}
    \mathbb{P}_z(E_2\rvert Z_{t_n},\Zbar_{t_n}) \leq  \delta(2L(t_{n+1},z,p+1)+m) (D_4C'\lVert Z_{t_n}-\Zbar_{t_n}\rVert+\delta^{p+1}\overline{M}_2(\Zbar_{t_n})).
\end{align*}
Therefore using Assumption~\ref{ass:boundedPDMP}
\begin{equation*}
    \mathbb{E}_z[\lVert Z_{t_{n+1}}-\Zbar_{t_{n+1}} \rVert \mathbbm{1}_{E_2}] \leq \Tilde{A}_2\delta^{p+2} + (1+\Tilde{B}_2\delta) \mathbb{E}_z[\lVert Z_{t_{n}}-\Zbar_{t_{n}} \rVert]
\end{equation*}
    for some constants $\tilde{A}_2,\tilde{B}_2$.
    
Let us consider the case (3) in Coupling \ref{coup:general_higherorder} and denote the corresponding event as $E_3$. Note that since case (3) involves repeating the coupling we may have to repeat this step an arbitrary number of times. Let $q$ denote the number of times we propose a candidate jumping time. If $q<p+2$ then we must have reached case (1) or (2), so it is sufficient to use the respective estimate derived above to get the desired result. On the other hand, the probability that $q\geq p+2$  is bounded by $(2L(t_{n+1},z,p+1)+m)^{p+2}\delta^{p+2}$, which gives us the correct order. 
\end{proof}

\section{Proof of Theorem~\ref{thm:tv_distance}}\label{sec:proofs_tv}

\subsection{The case of $p=1$}\label{sec:proofs_tv_p=1}
To prove the result we define a coupling of the continuous process with the approximation process. The intuitive idea is that, assuming the two processes are equal at the beginning of the current time step, we can use Poisson thinning \cite{devroye:1986,lewis_shedler_thinning} to simulate a proposal for the next event time that is common to both processes. This is achieved by simulating a Poisson process with rate given by the sum of the rates of the two processes. The proposal is then accepted or rejected individually for each process based on the correct switching rates. For this acceptance-rejection step a common uniform random variable is used. If the proposal is accepted for both processes, then a coupled event takes place, thus ensuring that the processes are equal after the event has happened. If the thinning step is successful it follows that the processes are equal for all $s \in (t_n,t_{n+1}]$  unless a second event takes place for the continuous time process in the current time interval, which is an event with $\mathcal{O}(\delta_{n+1}^2)$ probability. Let us now give the formal definition of the coupling.

\begin{coupling}\label{coup:error_tv_distance}
Let $t_n$ be the current time and assume $Z_{t_n}=\Zbar_{t_n}=z_n$. Define $\lambda^i_{tot}(z,t;\delta_{n+1}) = \lambdabar_{i}(z,t;\delta_{n+1})+\lambda_i(\varphi_t(z))+1$. Then for $i=1,\dots,m$ draw the proposed event times $T_i(z_n)$ with distribution 
\begin{equation*}
    \mathbb{P}(T_i(z_n) \leq t) = 1-\exp\left(-\int_{0}^t \lambda_{tot}^i(z_n,r;\delta_{n+1})\dd r\right).
\end{equation*}
Let $T_{i^*}(z) = \min_{i=1,\dots,m} T_i(z) $. Now let $U_{n+1}\sim \nuU$ and $\overline{U}\sim \textnormal{Unif}([0,1])$ independent of the $T_i$'s and of $Z_{t_n}$. Then set 
\begin{equation}\label{eq:accrej_cts}
\tau(z_n) = T_{i^*}(z_n) \quad \textnormal{ if } \overline{U} \leq \frac{\lambda_{i^*}(\varphi_{T_{i^*}(z_n)}(z_n))}{\lambda_{tot}^{i^*} (z_n,T_{i^*}(z_n);\delta_{n+1})},
\end{equation}
hence upon acceptance the proposed event time is the next switching time for the continuous time process. Alternatively set $\tau(z_n) = R>\delta_{n+1}$ for some constant $R\neq T_{i^*}(z_n)$. Similarly let
\begin{equation}\label{eq:accrej_dis}
\overline{\tau}(z_n) = T_{i^*}(z_n) \quad \textnormal{ if } \overline{U}\leq \frac{\overline{\lambda}_{i^*}(z_n,T_{i^*}(z_n);\delta_{n+1})}{\lambda_{tot}^{i^*} (z_n,T_{i^*}(z_n);\delta_{n+1})},
\end{equation}
and thus conditional on acceptance $\overline{\tau}(z_n)$ is the next event time for the approximation process. In case of rejection set $\overline{\tau}(z_n) = R>\delta_{n+1}$ for some constant $R\neq T_{i^*}(z_n)$ as done above.

If $T_{i^*}\geq \delta_{n+1}$, then let $Z_{t_n+s}= \Zbar_{t_n+s}=\varphi_s(z_n)$ for $s\in(0,\delta_{n+1}]$. In this case the two processes are equal at time $t_{n+1}$.

Alternatively, we have $T_{i^*}< \delta_{n+1}$ and thus we set $Z_{t_n+s}=\Zbar_{t_n+s}=\varphi_s(z_n)$ for $s\in(0,T_{i^*}(z_n))$. Then the continuous process evolves as follows:

\begin{itemize}
    \item if $\tau(z_n) = T_{i^*}(z_n)$, then set $$Z_{t_n+\tau(z_n)} = F_{i^*}(\varphi_{\tau(z_n)}(z_n), U_{n+1}).$$ Then let the process evolve independently of the approximation until time $t_{n+1}$.
    \item  if $\tau(z_n) \neq T_{i^*}(z_n)$, the proposed event time is rejected and we let the process evolves independently of the approximation until time $t_{n+1}$.
\end{itemize}
On the other hand, the approximation process evolves as follows:
\begin{itemize}
    \item if $\overline{\tau}(z_n)= T_{i^*}(z)$, set  
    $$
    Z_{t_n+\overline{\tau}(z_n)} = F_{i^*}(\varphi_{\overline{\tau}(z_n)}(z_n), U_{n+1}),
    $$
    and finally $\Zbar_{t_n+s}=\varphi_s(Z_{t_n+\overline{\tau}(z_n)})$ for $s\in(\overline{\tau}(z_n),\delta_{n+1}]$.
    \item if $\overline{\tau}(z_n) \neq T_{i^*}(z_n)$, then repeat this procedure from the beginning starting at time $t_{n}+T_{i^*}(z_n)$ and with step $\delta_{n+1}-T_{i^*}(z_n)$.
\end{itemize}
\end{coupling}
\begin{lemma}\label{lem:tv_dist_aux}
Under Assumption~\ref{ass:lambda_tv}, there exists $D(t_n,z)>0$ such that
\begin{equation*}
    \mathbb{P}_z(Z_{t_{n}} \neq \Zbar_{t_{n}}\rvert Z_{t_{n-1}}=\Zbar_{t_{n-1}}) \leq D(t_n,z)\delta_{n}^2,
\end{equation*}
for $D(t_n,z) = (L_1(t_{n},z)/2 +L_2(t_n,z) + L_3(t_n,z)/2)$.
\end{lemma}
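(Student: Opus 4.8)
The plan is to analyse one step of Coupling~\ref{coup:error_tv_distance} started from the common value $z:=Z_{t_{n-1}}=\Zbar_{t_{n-1}}$ with step $\delta:=\delta_n$, and to show that the only mechanisms that can leave the two processes unequal at $t_n$ are events of probability $O(\delta^2)$. By construction, whenever the first common proposed time $T_{i^*}$ exceeds $\delta$ both processes follow $\varphi$ and agree at $t_n$; whenever $T_{i^*}<\delta$ and the common threshold $\overline U$ yields the same accept/reject decision for both, either it is a rejection by both (both stay on $\varphi$ and the procedure is re-run from $\varphi_{T_{i^*}}(z)$ on the shorter step $\delta-T_{i^*}$, still with equal processes) or it is an acceptance by both (both jump to the same point $F_{i^*}(\varphi_{T_{i^*}}(z),U_n)$, after which the approximation follows $\varphi$ while the continuous process runs freely). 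Hence $\{Z_{t_n}\neq\Zbar_{t_n}\}$ is contained in $E_{\mathrm{mis}}\cup E_{\mathrm{dbl}}\cup E_{\geq2}$, where $E_{\mathrm{mis}}$ is the event that at the first proposed time the decision differs between the two processes, $E_{\mathrm{dbl}}$ the event that the first proposed time is accepted by both and the continuous process performs a second jump before $t_n$, and $E_{\geq2}$ the event that at least two proposed times are generated inside $[t_{n-1},t_n]$ (every discrepancy produced through the recursion requires a second proposed time, hence sits in $E_{\geq2}$). It then suffices to bound the three conditional probabilities and average over the common value of $Z_{t_{n-1}}$ using Assumption~\ref{ass:lambda_tv}.

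For $E_{\mathrm{mis}}$: conditioning on $(T_{i^*},i^*)=(t,i)$ with $t<\delta$, a disagreement occurs exactly when $\overline U$ lies between $\lambda_i(\varphi_t(z))/\lambda^i_{tot}(z,t;\delta)$ and $\overline\lambda_i(z,t;\delta)/\lambda^i_{tot}(z,t;\delta)$, so its conditional probability is $|\lambda_i(\varphi_t(z))-\overline\lambda_i(z,t;\delta)|/\lambda^i_{tot}(z,t;\delta)\le \delta\,\overline{M}_2(z)/\lambda^i_{tot}(z,t;\delta)$ by Assumption~\ref{ass:lambda_wasserstein}(a) with $p=1$ (contained in Assumption~\ref{ass:lambda_tv}) together with $\lambda^i_{tot}\ge1$. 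Integrating against the law of $(T_{i^*},i^*)$, the density $\lambda^i_{tot}(z,t;\delta)\exp(-\int_0^t\sum_j\lambda^j_{tot})$ cancels the denominator, giving $\mathbb P(E_{\mathrm{mis}}\mid z)\le \delta^2\,m\,\overline{M}_2(z)$, which after averaging over $Z_{t_{n-1}}$ is at most $\delta^2 L_2(t_n,z)$ since $\lambda_{tot}\ge m$. For $E_{\mathrm{dbl}}$: on the event that a proposal occurs at time $t$ and is accepted by both, the continuous process sits at $F_{i^*}(\varphi_t(z),U_n)$ from time $t_{n-1}+t$ and, independently, jumps again before $t_n$ with probability $1-\exp(-\int_0^{\delta-t}\lambda(\varphi_u(F_{i^*}(\varphi_t(z),U_n)))\,\mathrm du)\le (\delta-t)\sup_{u\le\delta}\lambda(\varphi_u(F_{i^*}(\varphi_t(z),U_n)))$; multiplying by the proposal density (bounded by the total thinning rate, comparable to $\lambda_{tot}(z,\cdot;\delta)$), integrating $\int_0^\delta(\delta-t)\,\mathrm dt=\delta^2/2$, and taking the expectation over $U_n$ gives $\mathbb P(E_{\mathrm{dbl}}\mid z)\le \tfrac{\delta^2}{2}\sup_{s,r}\mathbb E\!\big[\lambda(\varphi_s(F_{i^*}(\varphi_r(z),\tilde U)))\,\lambda_{tot}(z,s;\delta)\big]$, i.e.\ $\le\tfrac{\delta^2}{2}L_1(t_n,z)$ after averaging over $Z_{t_{n-1}}$.

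For $E_{\geq2}$: a second proposed time appears only if the first proposal at time $t<\delta$ is rejected by both, after which the (still coupled) processes sit at $\varphi_t(z)$ and a further proposal before $t_n$ has probability at most $(\delta-t)\sup_{u\le\delta}\big(\lambda(\varphi_{u}(\varphi_t(z)))+\overline\lambda(\varphi_t(z),u;\cdot)+m\big)$; bounding ``rejected by both'' crudely by $1$, multiplying by the proposal density at time $t$, using Assumption~\ref{ass:lambda_wasserstein}(a) once more to replace the approximate rates at the intermediate point $\varphi_t(z)$ by exact rates along the flow up to an $O(\delta)$ correction, and integrating $\int_0^\delta(\delta-t)\,\mathrm dt=\delta^2/2$ yields $\mathbb P(E_{\geq2}\mid z)\le \tfrac{\delta^2}{2}\sup_{s,r}\mathbb E\!\big[(\lambda(\varphi_r(z))+\overline\lambda(z,r;\delta))\,\lambda_{tot}(z,s;\delta)\big]\le\tfrac{\delta^2}{2}L_3(t_n,z)$; any discrepancy created at the third or later proposed time is only $O(\delta^3)$ and is absorbed because the number of rounds is geometric with ratio $O(\delta)$ (shrink $\delta_0$ if needed). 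Summing the three estimates and averaging over the common value of $Z_{t_{n-1}}=\Zbar_{t_{n-1}}$ gives $\mathbb P_z(Z_{t_n}\neq\Zbar_{t_n}\mid Z_{t_{n-1}}=\Zbar_{t_{n-1}})\le \big(L_1(t_n,z)/2+L_2(t_n,z)+L_3(t_n,z)/2\big)\delta_n^2$; monotonicity of $D(\cdot,z)$ in $t_n$ follows since each $L_j(T,z)$, a supremum over all meshes of $[0,T]$, is non-decreasing in $T$.

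The main obstacle is the recursion built into Coupling~\ref{coup:error_tv_distance}: one must ensure that the a priori unbounded sequence of ``rejected by both'' rounds does not accumulate more than $O(\delta^2)$ of discrepancy. This is handled by the structural observation above — any discrepancy requires a second proposed time — together with the fact that each extra round costs another factor $O(\delta)$, so the geometric series is dominated by its first term for $\delta\le\delta_0$ small. A secondary, purely bookkeeping, difficulty is matching the raw bounds — whose rates are evaluated after a jump, along the deterministic flow, or with a shifted base point in $\overline\lambda$ — to the precise moment quantities $L_1,L_2,L_3$ of Assumption~\ref{ass:lambda_tv}; this uses only Assumption~\ref{ass:lambda_wasserstein}(a), the bound $\lambda^i_{tot}\ge1$, and Grönwall's estimate for $\varphi$ from Assumption~\ref{ass:lipschitz_phi}.
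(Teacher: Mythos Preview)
Your proposal is correct and follows essentially the same three-event decomposition as the paper: your $E_{\mathrm{mis}}$, $E_{\mathrm{dbl}}$, $E_{\ge2}$ correspond exactly to the paper's $E_2$, the discrepancy event on $E_1$, and the discrepancy event on $E_3$, and each is bounded by the same constant ($L_2$, $L_1/2$, $L_3/2$ respectively).

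The one point worth flagging is your description of the ``both reject'' branch. You say the procedure is re-run from $\varphi_{T_{i^*}}(z)$ with both processes still coupled, and that any later discrepancy therefore requires a second proposed time. In Coupling~\ref{coup:error_tv_distance} as written, however, only the approximation re-runs its thinning after a rejection; the continuous process is explicitly sent off to evolve independently after the first proposal, so it can jump without a second thinning proposal ever being drawn. Your inclusion $\{Z_{t_n}\neq\Zbar_{t_n}\}\cap E_3\subseteq E_{\ge2}$ is thus not literally true under the paper's coupling. The bound you write down is still valid---because the thinning rate $\lambda_{tot}=\lambda+\overline\lambda+m$ already dominates both the continuous process's jump rate and the approximation's---but the cleaner route (and the one the paper takes) is simply to bound, on $E_3$, the probability that \emph{either} process has a switch after $T_{i^*}$, using the rates $\lambda(\varphi_r(Z_{t_{n-1}}))+\overline\lambda(Z_{t_{n-1}},r;\delta)$ evaluated at the original base point $Z_{t_{n-1}}$. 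This plugs directly into the definition of $L_3$ and avoids the extra bookkeeping you describe (shifting $\overline\lambda$ to the intermediate point $\varphi_t(z)$ and then undoing it via Assumption~\ref{ass:lambda_wasserstein}(a)).
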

\begin{proof}
The proof is postponed to Appendix \ref{sec:proofs_tv_p=1_appendix}.
\end{proof}
\begin{proof}[Proof of Theorem~\ref{thm:tv_distance}]
    By the coupling inequality we have $$\lVert P_{t_n}(z,\cdot)-\overline{P}_{t_n}(z,\cdot) \rVert_{TV} \leq \mathbb{P}_z(Z_{t_n}\neq \Zbar_{t_n})$$ and thus it is sufficient to bound the right hand side.
    Apply Lemma \ref{lem:tv_dist_aux} to obtain
    \begin{equation}
    \begin{aligned}\label{eq:recursion_tv}
        \mathbb{P}_z(Z_{t_{n}} \neq \Zbar_{t_n}) & = \mathbb{P}_z(Z_{t_{n}} \neq \Zbar_{t_n}\rvert Z_{t_{n-1}} \neq \Zbar_{t_{n-1}})  \mathbb{P}_z(Z_{t_{n-1}} \neq \Zbar_{t_{n-1}}) \\
        & \quad + \mathbb{P}_z(Z_{t_{n}} \neq \Zbar_{t_n}\rvert Z_{t_{n-1}} = \Zbar_{t_{n-1}})  (1-\mathbb{P}_z(Z_{t_{n-1}} \neq \Zbar_{t_{n-1}})) \\
        & \leq \mathbb{P}_z(Z_{t_{n-1}} \neq \Zbar_{t_{n-1}}) + D(t_n,z)\delta_n^2(1-\mathbb{P}_z(Z_{t_{n-1}} \neq \Zbar_{t_{n-1}})) \\
        & = (1-D(t_n,z)\delta_n^2) \mathbb{P}_z(Z_{t_{n-1}} \neq \Zbar_{t_{n-1}}) + D(t_n,z)\delta_n^2.
        \end{aligned}
    \end{equation}
    Thus by recursion and since $\Zbar_0=Z_0=z$ it follows that
    \begin{equation*}
        \mathbb{P}_z(Z_{t_{n}} \neq \Zbar_{t_n}) \leq \sum_{i=1}^n D(t_n,z)\delta_i^2 \prod_{\ell=i+1}^n (1-D(t_n,z)\delta_\ell^2).
    \end{equation*}
    In particular if $\delta_n=\delta$ for all $n\in\N$ we have that 
    \begin{align*}
        \mathbb{P}_z(Z_{t_{n}}  \neq \Zbar_{t_n}) &\leq D(t_n,z)\delta^2 \sum_{\ell=0}^{n-1} (1-D(t_n,z)\delta^2)^\ell \\
        & \leq 1-(1-D(t_n,z)\delta^2)^n \\
        & \leq 1-e^{-D(t_n,z) t_n\delta}.
    \end{align*}
\end{proof}

\subsection{The case of $p>1$}\label{sec:proof_higher_order_tv}
In order to simplify the notation we shall restrict to the case $\delta_n=\delta$. To prove the result we reason by induction on $p$ similarly to Section \ref{sec:proofs_wass_p>1}. In particular, we consider the following inductive hypothesis. Fix $p\geq 1$ and $n\geq 1$.
\begin{hypothesis}\label{hyp:induction_higherorder_tv}
Suppose $\lambdabar$ satisfies Assumption~\ref{ass:lambda_tv} for some $\delta_0>0$. Given $Z_{t_n}=\Zbar_{t_n}$ there exist a coupling $(Z_{t_{n+1}},\Zbar_{t_{n+1}})$ with respective marginals corresponding to $P_{\delta}(Z_{t_n},\cdot)$, $\overline{P}_{\delta}(\Zbar_{t_n},\cdot;\delta,p)$, and constants $A=A(T)$, $B=B(T)$ independent of $n$ such that for any $0<\delta\leq \delta_0$ 
	\begin{equation}\notag
	    \mathbb{P}_z(Z_{t_{n+1}}\neq\Zbar_{t_{n+1}}\rvert Z_{t_{n}}=\Zbar_{t_{n}}) \leq 
	    A \delta^{p+1}.
	\end{equation}
\end{hypothesis}
It is sufficient to show that the Inductive hypothesis holds and the statement of the Theorem follows by recursion in $n$ as done in \eqref{eq:recursion_tv}.
Observe that the case $p=1$ holds by the proof of Section \ref{sec:proofs_tv_p=1}. To obtain the result we use Coupling \ref{coup:general_higherorder} but with $\phibar=\varphi$, $\Fbar_i=F_i$, and replacing Inductive Hypothesis \ref{hyp:induction_higherorder} with Inductive Hypothesis \ref{hyp:induction_higherorder_tv}. Because the strategy is similar to that in Section \ref{sec:proofs_wass_p>1} we postpone the formal proof to Appendix \ref{sec:proof_higher_order_tv_appendix}.

\section{Proof of Theorem~\ref{thm:weakerror}}\label{sec:proof_weakerror_thm}
Recall in Section \ref{sec:proof_wass_p=1} we introduced a general framework which includes both Algorithms \ref{alg:basic_EPDMP} and \ref{alg:advanced_EPDMP}. We  now introduce some further notation. Let $\ptau{\delta,i}$ be a probability measure on $[0,\infty]$ which denotes the law of $\overline{\tau}$ for Algorithm $i$ with initial condition at $z$ for a time step of length $\delta$. Note that for Algorithm \ref{alg:basic_EPDMP} we have $\ptau{\delta,i}$ is a point measure with
\begin{equation}\label{eq:basicjumplaw}
\begin{alignedat}{3}
&\ptau{\delta,\ref{alg:basic_EPDMP}}(\{\delta\}) &&= 1-e^{-\int_0^{\delta}\lambdabar(z,s;\delta)\dd s},\\
&\ptau{\delta,\ref{alg:basic_EPDMP}}(\{+\infty\}) &&= e^{-\int_0^{\delta}\lambdabar(z,s;\delta)\dd s}.
\end{alignedat}
\end{equation}
On the other hand, in the case of Algorithm \ref{alg:advanced_EPDMP}  $\ptau{\delta,\ref{alg:advanced_EPDMP}}$ is admits a density which is given by
\begin{equation}\label{eq:advancedjumplaw}
    \ptau{\delta,\ref{alg:advanced_EPDMP}}(\dd s) = \overline{\lambda}(z,s;\delta) \exp\left(-\int_0^s \overline{\lambda}(z,r;\delta)\dd r\right)\dd s.
\end{equation}

\begin{proof}[Proof of Theorem~\ref{thm:weakerror}]
Fix $g\in \mathcal{G}_1$. Then by a telescoping sum we have
\begin{equation*}
     \mathbb{E}_z[g(\Zbar_{t_n})] -\mathbb{E}_z[g(Z_{t_n})]= \sum_{k=0}^{n-1}(\mathbb{E}_z[\cP_{t_n-t_{k+1}}g(\Zbar_{t_{k+1}})] -\mathbb{E}_z[\cP_{t_n-t_k}g(\Zbar_{t_k})]).  
\end{equation*}
For each $k\in \{0,\ldots, n-1\}$, set $f_k(y,s) = \cP_{t_n-t_{k}-s}g(y)$ then we have
\begin{equation*}
     \mathbb{E}_z[g(\Zbar_{t_n})] -\mathbb{E}_z[g(Z_{t_n})] = \sum_{k=0}^{n-1} \mathbb{E}_z[f_k(\Zbar_{t_{k+1}},\delta_{k+1})-f_k(\Zbar_{t_k},0)].  
\end{equation*}
By conditioning on $\Zbar_{t_k}$ it is sufficient to prove that
\begin{equation}\label{eq:boundkthterm}
    \lvert \mathbb{E}_z[f_k(\Zbar_{\delta_{k+1}},\delta_{k+1})]-f_k(z,0)\rvert \leq  Re^{-\omega(t_n-t_{k+1})} \lf_i(z)\delta_{k+1}^2.
\end{equation}
Here with an abuse of notation we have denoted as $\Zbar_{\delta_{k+1}}$ the approximation process with initial condition at $z$ and step size $\delta_{k+1}$.
Indeed if we have that \eqref{eq:boundkthterm} holds then by Assumption~\ref{ass:momentboundadvEulerWE} we have
\begin{align*}
    \lvert \mathbb{E}_z[g(Z_{t_n})] -\mathbb{E}_z[g(\Zbar_{t_n})]\rvert &\leq R\sum_{k=0}^{n-1}e^{-\omega(t_n-t_{k+1})} \delta_{k+1}^2\mathbb{E}_z[ \lf_i(\Zbar_{t_k})] \\
    &\leq RCS_n H_{i}(z) .  
\end{align*}
Which gives the desired result. It remains to show that \eqref{eq:boundkthterm} holds.

Using that the approximation process jumps according to $Q$ at a time determined by $\ptau{\delta_{k+1},i}$ we can evaluate the expectations
\begin{align*}
    & \mathbb{E}_z[f_k(\Zbar_{\delta_{k+1}},\delta_{k+1})]-f_k(z,0)]  = \\
    & \quad=\mathbb{E}_z[f_k(\Zbar_{\delta_{k+1}},\delta_{k+1})] - f_k(\varphi_{\delta_{k+1}}(z),\delta_{k+1})+f_k(\varphi_{\delta_{k+1}}(z),\delta_{k+1})-f_k(z,0)\\
    &\quad =\int_0^{\delta_{k+1}} Q(f_k(\varphi_{\delta_{k+1}-s}(\cdot),\delta_{k+1}))(\varphi_s(z)) - f_k(\varphi_{\delta_{k+1}}(z),\delta_{k+1}) \ptau{\delta_{k+1},i}(\dd s)\\
    &\qquad+f_k(\varphi_{\delta_{k+1}}(z),\delta_{k+1})-f_k(z,0).
\end{align*}
Recall $\ptau{\delta,\ref{alg:basic_EPDMP}}$ ($\ptau{\delta,\ref{alg:advanced_EPDMP}}$ respectively) is defined in \eqref{eq:basicjumplaw} (resp. \eqref{eq:advancedjumplaw}). 
Using the fundamental Theorem of calculus we can rewrite this as
\begin{align*}
    & \mathbb{E}_z[f_k(\Zbar_{\delta_{k+1}},\delta_{k+1})]-f_k(z,0)  =\\
    & \quad =\int_0^{\delta_{k+1}} Q(f_k(\varphi_{\delta_{k+1}-s}(\cdot),\delta_{k+1}))(\varphi_s(z)) - f_k(\varphi_{\delta_{k+1}}(z),\delta_{k+1}) \ptau{\delta_{k+1},i}(\dd s)\\
    & \qquad +\int_0^{\delta_{k+1}}\frac{\dd }{\dd r}f_k(\varphi_{r}(z),r)\dd r\\
    & \quad=\int_0^{\delta_{k+1}} Q(f_k(\varphi_{\delta_{k+1}-s}(\cdot),\delta_{k+1}))(\phi_s(z)) - f_k(\varphi_{\delta_{k+1}}(z),\delta_{k+1}) \ptau{\delta_{k+1},i}(ds)\\
    & \qquad +\int_0^{\delta_{k+1}}\langle \vf(\varphi_r(z)),\nabla f_k(\varphi_r(z),r)\rangle +(\partial_sf_k)(\varphi_{r}(z),r)\dd r.
\end{align*}
Note that $\partial_sf_k(y,s)=-\cL f_k(y,s)$
\begin{align*}
    & \mathbb{E}_z[f_k(\Zbar_{\delta_{k+1}},\delta_{k+1})]-f_k(z,0)  =\\
    & \quad = \int_0^{\delta_{k+1}} Q(f_k(\varphi_{\delta_{k+1}-s}(\cdot),\delta_{k+1}))(\varphi_s(z)) - f_k(\varphi_{\delta_{k+1}}(z),\delta_{k+1}) \ptau{\delta_{k+1},i}(\dd s)\\
    &\qquad+\int_0^{\delta_{k+1}}\langle \vf(\varphi_r(z)),\nabla f_k(\varphi_r(z),r)\rangle -\cL f_k(\varphi_{r}(z),r)\dd r.
\end{align*}
 Recall $\cL$ is given by \eqref{eq:genPDMPonekernel} so we can write the above as
 \begin{align*}
    & \mathbb{E}_z[f_k(\Zbar_{\delta_{k+1}},\delta_{k+1})]-f_k(z,0)  =\\
    & \quad = \int_0^{\delta_{k+1}} Q(f_k(\varphi_{\delta_{k+1}-s}(\cdot),\delta_{k+1}))(\varphi_s(z))- f_k(\varphi_{\delta_{k+1}}(z),\delta_{k+1}) \ptau{\delta_{k+1},i}(\dd s)\\
    &\qquad+\int_0^{\delta_{k+1}} -\lambda(\varphi_r(z))[Q(f_k(\cdot,r))(\varphi_{r}(z))-f_k(\varphi_r(z),r)]\dd r.
\end{align*}
We rewrite this as
\begin{align}
    &\mathbb{E}_z[f_k(\Zbar_{\delta_{k+1}},\delta_{k+1})]-f_k(z,0)=\nonumber\\  
    &=\!\!\int_0^{\delta_{k+1}}\!\!\!\!\! \!\!\!\!\!\left( Q(f_k(\varphi_{\delta_{k+1}-s}(\cdot),\delta_{k+1}))(\varphi_s(z)) \!-\! f_k(\varphi_{\delta_{k+1}}(z),\delta_{k+1})\right) (\ptau{\delta_{k+1},i}(\dd s)\!-\!\lambda(\varphi_s(z))\dd s) \notag\\
    &\quad-\int_0^{\delta_{k+1}} \lambda(\varphi_r(z))[Q(f_k(\cdot,r))(\varphi_{r}(z))-Q(f_k( \varphi_{\delta_{k+1}-r}(\cdot),\delta_{k+1}))(\varphi_r(z))]\dd r \label{eq:weexpansion}\\
    &\quad -\int_0^{\delta_{k+1}} \lambda(\varphi_r(z))[f_k(\varphi_{\delta_{k+1}}(z),\delta_{k+1})-f_k(\varphi_r(z),r)]\dd r.\notag
    \end{align}

We will divide the remainder of the proof into 3 steps:
\begin{enumerate}[\textbf{Step (\roman*):}]
    \item For this step we distinguish between Algorithm \ref{alg:basic_EPDMP} and \ref{alg:advanced_EPDMP}. Let
    \begin{equation*}
        h_s=  Q(f_k(\varphi_{\delta_{k+1}-s}(\cdot),\delta_{k+1}))(\varphi_s(z)) - f_k(\varphi_{\delta_{k+1}}(z),\delta_{k+1}).
    \end{equation*}
    Then we will show that there exists a constant $R>0$ such that for any $h\in C^1_b([0,\delta])$ (for Algorithm \ref{alg:advanced_EPDMP} we only need $h\in C_b([0,\delta])$) we have
    \begin{equation}\label{eq:WEstep1}
        \left\lvert\int_0^{\delta_{k+1}} h_s (\ptau{\delta_{k+1},i}(\dd s)-\lambda(\varphi_s(z))\dd s)\right\rvert \leq Re^{-\omega(t_n-t_{k+1})}\delta_{k+1}^2 \sup_{s,r\in [0,\delta_0]} K_i(z,s,r)
    \end{equation}
    where $K_i$ is as in Assumption~\ref{ass:momentboundadvEulerWE}.
    \item For any $z\in E, r\in [0,\delta_{k+1}]$ we have 
    \begin{equation}
    \begin{aligned}\label{eq:boundondifferenceoff}
    & \lvert f_k(\varphi_{\delta_{k+1}}(z),\delta_{k+1})-f_k(\varphi_r(z),r)\rvert 
    \leq\\
    & \quad \leq R_1\int_r^{\delta_{k+1}} e^{-\omega (t_n-t_k-s)}\lambda(\varphi_s(z))(Q\lf(\varphi_s(z)) +\lf(\varphi_s(z)) )\dd s.
\end{aligned}
\end{equation}
    \item For any $z\in E, r\in [0,\delta_{k+1}]$ we have 
    \begin{equation}
    \begin{aligned}\label{eq:boundondifferenceofQf}
    & \lvert Q(f_k(\varphi_{\delta_{k+1}-r}(\cdot),\delta_{k+1}))(z)-Q(f_k(\cdot,r))(z)\rvert 
    \leq\\
    & \quad \leq R_1\int_r^{\delta_{k+1}} e^{-\omega (t_n-t_k-s)}Q((Q\lf +\lf )\lambda)(\varphi_{s-r}(z))\dd s.
\end{aligned}
\end{equation}
\end{enumerate}

Equation~\eqref{eq:boundkthterm} follows from Step (i), (ii), (iii) and \eqref{eq:weexpansion}, as this gives
\begin{align*}
    & \mathbb{E}_z[f_k(\Zbar_{\delta_{k+1}},\delta_{k+1})]-f_k(z,0)  \leq Re^{-\omega(t_n-t_{k+1})}\delta_{k+1}^2 \sup_{s,r\in [0,\delta_0]}K_i(z,r,s)\\
    &\quad+\int_0^{\delta_{k+1}} \lambda(\varphi_r(z))R_1\int_r^{\delta_{k+1}} e^{-\omega (t_n-t_k-s)}Q(\lambda[Q\lf +\lf ])(\varphi_{s-r}(z))\dd s\dd r\\
    &\quad +\int_0^{\delta_{k+1}} \lambda(\varphi_r(z)) R_1\int_r^{\delta_{k+1}} e^{-\omega (t_n-t_k-s)}\lambda(\varphi_s(z))[Q\lf(\varphi_s(z)) +\lf(\varphi_s(z)) ]\dd s \dd r.
\end{align*}
Recall that $\lf_i(z,r,s)$ is given by \eqref{eq:largerlf}, then we have
\begin{align*}
    \lvert\mathbb{E}_z[f_k(\Zbar_{\delta_{k+1}},\delta_{k+1})]-f_k(z,0) \rvert &\leq Re^{-\omega(t_n-t_{k+1})}\delta_{k+1}^2 \sup_{s,r\in [0,\delta_0]}\lf_i(z,r,s) .
\end{align*}

\textbf{Proof of Step (i):} This step follows from Lemma \ref{lem:estlawforjump}. It remains to find a bound for $\lvert h_s\rvert$ and $\lvert \partial_s h_s\rvert$ for the case of Algorithm \ref{alg:basic_EPDMP}. 
By Assumption~\ref{ass:geoerg}
\begin{align}
   \lvert h_s\rvert &=\left\lvert  Q(f_k(\varphi_{\delta_{k+1}-s}(\cdot),\delta_{k+1}))(\varphi_s(z))  - f_k(\varphi_{\delta_{k+1}}(z),\delta_{k+1})\right\rvert  \notag \\
   &\leq \left\lvert  Q(f_k(\varphi_{\delta_{k+1}-s}(\cdot),\delta_{k+1}))(\varphi_s(z))  -\mu(f_k(\varphi_{\delta_{k+1}-s}(\cdot),\delta_{k+1}))\right\rvert \notag\\
   &\quad +\left\lvert \mu(f_k(\varphi_{\delta_{k+1}-s}(\cdot),\delta_{k+1}))- f_k\circ\varphi_{\delta_{k+1}-s}(\varphi_{s}(z),\delta_{k+1})\right\rvert \notag \\
   &\leq R_1e^{-\omega(t_n-t_{k+1})} [Q\lf(\varphi_s(z))+\lf(\varphi_s(z))].\label{eq:boundhs}
\end{align}

For Algorithm \ref{alg:basic_EPDMP} we also require to control $\lvert \partial_sh_s\rvert$ for which we require a bound on the derivative of $f$, for this case we use Assumption~\ref{ass:derivativedeccommutatoralongflow}. Note that
\begin{align*}
    \partial_sh_s &= \Big\lvert \langle \vf(\varphi_s(z)), \nabla_z( Q(f_k(\varphi_{\delta_{k+1}-s}(\cdot),\delta_{k+1})))(\varphi_s(z))\rangle  \\
    & \quad -Q(\langle \vf, \nabla_z( f_k( \varphi_{\delta_{k+1}-s}(\cdot),\delta_{k+1}))\rangle)(\varphi_s(z))) \Big\rvert\\
    &=[\vfd,Q](f_k(\varphi_{\delta_{k+1}-s}(\cdot),\delta_{k+1}))(\varphi_s(z)).
\end{align*}
Recall here we have defined the commutator in Section \ref{sec:notation} and we are denoting by $\vfd$ the differential operator corresponding to $\vf$. This term is bounded by Assumption~\ref{ass:derivativedeccommutatoralongflow} and we have
\begin{align}\label{eq:boundderivativehs}
    \lvert \partial_sh_s\rvert &\leq R_2 e^{-\omega (t_n-t_{k+1})}\lf(z).
\end{align}
Combining Lemma \ref{lem:estlawforjump} with \eqref{eq:boundhs} and \eqref{eq:boundderivativehs} we have that \eqref{eq:WEstep1} holds.

\textbf{Proof of Step (ii):} Observe that since $\partial_sf_k(y,s) +\langle \vf(y),\nabla f_k(y,s)\rangle = -\lambda(y)[Qf_k(y,s)-f_k(y,s)]$ we have
\begin{align*}
    &\lvert f_k(\varphi_{\delta_{k+1}}(z),\delta_{k+1})-f_k(\varphi_r(z),r)\rvert =\left\lvert \int_r^{\delta_{k+1}} \frac{\dd }{\dd s} f_k(\varphi_s(z),s)\dd s \right\rvert \\
    &= \left\lvert \int_r^{\delta_{k+1}} \lambda(\varphi_s(z))[Qf_k(\varphi_s(z),s)-f_k(\varphi_s(z),s)]\dd s\right\rvert\\
    &\leq  \int_r^{\delta_{k+1}} \lambda(\varphi_s(z))[\lvert Qf_k(\varphi_s(z),s)-\mu(Q(f_k(\cdot,s)))\rvert +\lvert \mu(f_k(\cdot,s))-f_k(\varphi_s(z),s)\rvert ]\dd s.
\end{align*}
We can bound this using Assumption~\ref{ass:geoerg} we obtain \eqref{eq:boundondifferenceoff}.

\textbf{Proof of Step (iii):} Applying \eqref{eq:boundondifferenceoff} with $z$ replaced by $\varphi_{-r}(y)$ and applying $Q$ we have \eqref{eq:boundondifferenceofQf}.

\end{proof}



\begin{appendix}
\section{Proofs of Section \ref{sec:Wass_bounds}}\label{sec:proofs_wasserstein_appendix}
\subsection{Proof of Theorem~\ref{thm:strong_error_pdmp}}\label{sec:proof_wass_theorem_appendix}
In this section we prove the lemmas that are used to prove Theorem~\ref{thm:strong_error_pdmp} in the case $p=1$.
In the proofs that follow we simplify the notation denoting the approximations as $\phibar_z(z)$, $\lambdabar_i(z,s)$, and $\Fbar_i(z,U)$, instead of $\phibar_z(z;\delta_{n+1})$,  $\lambdabar_i(z,s;\delta_{n+1})$, and $\Fbar_i(z,U;\delta_{n+1})$.
Before proving bounds on the events $E_{ij}$, let us state three simple lemmas which will be used multiple times in the proof. The proofs are omitted as they are a straightforward consequence of the assumptions.
\begin{lemma}\label{lemma:ineq_flow_strong_error}
Assumption~\ref{ass:lipschitz_phi} implies that for any $\delta_0>0$ there exists a constant $C'=C'(\delta_0)>0$ such that for any $z,z'\in E$ and $t\in(0,\delta_0)$ it holds that
\begin{equation}\label{eq:varphi_lipschitz}
\lVert \ic_t(z) - \ic_t(z')\rVert \leq C' \lVert z-z'\rVert.
\end{equation}
Moreover, for any $t\in(0,\delta_0)$ and any $z,z'\in E$ we have the alternative bound
\begin{equation*}
    \lVert \varphi_t(z)-\varphi_t(z')\rVert \leq (1+C C' t) \lVert z-z'\rVert.
\end{equation*}
\end{lemma}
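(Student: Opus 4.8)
\textbf{Proof proposal for Lemma~\ref{lemma:ineq_flow_strong_error}.}
The plan is a standard Gr\"onwall argument. First I would write the flow in integral form: since $\frac{\dd}{\dd t}\varphi_t(z) = \vf(\varphi_t(z))$ with $\varphi_0(z)=z$ (and the solution exists globally because $\vf$ is globally Lipschitz, so this is well posed), for any $z,z'\in E$ and $t\ge 0$ we have
\begin{equation*}
    \varphi_t(z) - \varphi_t(z') = (z-z') + \int_0^t \big(\vf(\varphi_s(z)) - \vf(\varphi_s(z'))\big)\,\dd s .
\end{equation*}
Taking norms and applying Assumption~\ref{ass:lipschitz_phi} to the integrand gives the scalar integral inequality
\begin{equation*}
    \lVert \varphi_t(z)-\varphi_t(z')\rVert \le \lVert z-z'\rVert + C\int_0^t \lVert \varphi_s(z)-\varphi_s(z')\rVert\,\dd s .
\end{equation*}

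Next I would apply Gr\"onwall's lemma to the function $t\mapsto \lVert \varphi_t(z)-\varphi_t(z')\rVert$, which yields $\lVert \varphi_t(z)-\varphi_t(z')\rVert \le e^{Ct}\lVert z-z'\rVert$ for all $t\ge 0$. Restricting to $t\in(0,\delta_0)$ and setting $C' = C'(\delta_0) := e^{C\delta_0}$ gives the first claimed bound \eqref{eq:varphi_lipschitz}.

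For the second, sharper bound I would feed this estimate back into the integral inequality above: for $t\in(0,\delta_0)$ and all $s\in(0,t)\subseteq(0,\delta_0)$ we have $\lVert \varphi_s(z)-\varphi_s(z')\rVert \le C'\lVert z-z'\rVert$, hence
\begin{equation*}
    \lVert \varphi_t(z)-\varphi_t(z')\rVert \le \lVert z-z'\rVert + C\int_0^t C'\lVert z-z'\rVert\,\dd s = (1+CC't)\lVert z-z'\rVert .
\end{equation*}
There is no real obstacle here; the only points requiring a word of care are that the flow is globally defined (guaranteed by the global Lipschitz assumption on $\vf$ already invoked in Section~\ref{sec:algorithms}) so that the integral representation is legitimate, and that $\varphi_s$ maps $E$ into $E$ (assumed in Section~\ref{sec:algorithms}) so that Assumption~\ref{ass:lipschitz_phi} applies along the trajectory.
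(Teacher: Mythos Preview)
Your proof is correct and is exactly the standard Gr\"onwall argument one would supply here; the paper in fact omits the proof entirely, stating only that it is ``a straightforward consequence of the assumptions.'' Your derivation fills this in completely, including the bootstrap step that produces the sharper $(1+CC't)$ bound from the first estimate.
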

\begin{lemma}\label{lem:flow_to_integrator}
Suppose Assumptions \ref{ass:lipschitz_phi} and \ref{ass:integrator} hold. Then for any $p\geq 1$, $s\geq 0$ and $z,z'\in E$ it holds that 
\begin{align*}
    \lVert \varphi_s(z)-\phibar_s(z')\rVert\leq C'\lVert z-z'\rVert + \Tilde{C} s^{p+1}.
\end{align*}
Moreover, using Lemma \ref{lemma:ineq_flow_strong_error} we can replace $C'$ with $1+CC'\delta$.
\end{lemma}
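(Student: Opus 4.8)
The plan is to insert the intermediate point $\varphi_s(z')$ and apply the triangle inequality:
\begin{equation*}
\lVert \varphi_s(z) - \phibar_s(z') \rVert \leq \lVert \varphi_s(z) - \varphi_s(z') \rVert + \lVert \varphi_s(z') - \phibar_s(z') \rVert .
\end{equation*}
The first term compares two exact flow lines started at different points, and the second compares the exact flow with its numerical approximation started at the \emph{same} point $z'$, so each term is handled by a single hypothesis.

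For the first term I would invoke the Lipschitz estimate for the exact flow supplied by Lemma~\ref{lemma:ineq_flow_strong_error}, giving $\lVert \varphi_s(z) - \varphi_s(z') \rVert \leq C' \lVert z - z' \rVert$ with $C' = C'(\delta_0)$ (this is valid for $s$ in the range $(0,\delta_0)$, which is where the statement is used, $s\le\delta\le\delta_0$). For the second term I would apply Assumption~\ref{ass:integrator} with initial condition $z'$, which yields $\lVert \varphi_s(z') - \phibar_s(z';\delta,p) \rVert \leq \tilde{C}\, s^{p+1}$. Summing the two bounds gives exactly $\lVert \varphi_s(z) - \phibar_s(z') \rVert \leq C' \lVert z - z' \rVert + \tilde{C}\, s^{p+1}$.

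For the ``moreover'' part I would replace the crude constant $C'$ by the sharper bound from the second half of Lemma~\ref{lemma:ineq_flow_strong_error}, $\lVert \varphi_s(z) - \varphi_s(z') \rVert \leq (1 + C C' s)\lVert z - z' \rVert$; since the lemma is applied with $s \leq \delta$, this is at most $(1 + C C' \delta)\lVert z - z' \rVert$, and the estimate on the second term is unchanged. There is essentially no obstacle here: the only point worth a word is checking that $s$ lies in $[0,\delta_0]$ so that both Lemma~\ref{lemma:ineq_flow_strong_error} and Assumption~\ref{ass:integrator} apply, which is the case in every use of this lemma within the proof of Theorem~\ref{thm:strong_error_pdmp}.
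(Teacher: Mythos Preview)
Your proof is correct and is exactly the straightforward argument the paper has in mind; indeed, the paper omits the proof entirely, stating only that it is ``a straightforward consequence of the assumptions.'' Your triangle-inequality decomposition via $\varphi_s(z')$ together with Lemma~\ref{lemma:ineq_flow_strong_error} and Assumption~\ref{ass:integrator} is precisely the intended route.
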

\begin{lemma}\label{lem:bounded_lambdas}
Under Assumptions \ref{ass:lambda_lipschitz}, \ref{ass:lambda_wasserstein}, and \ref{ass:boundedPDMP}, for any $t\geq 0$, $p\geq 1$ there exists a positive constant $L(t,z,p)$ such that 
\begin{equation}\notag
    \sup_{r\in[0,t], s\in[0,\delta_0]} \max\{ \lambda(Z_r),\overline{\lambda}(\Zbar_r,s;\delta_{n+1},p) \}\leq L(t,z,p) \quad \textnormal{a.s.}
\end{equation}
where in particular $z=Z_0=\Zbar_0$. Note if $p=1$ we write $L(t,z,1)=L(t,z)$.
\end{lemma}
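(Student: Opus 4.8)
The plan is to bound the two quantities $\lambda(Z_r)$ and $\overline{\lambda}(\overline{Z}_r,s;\delta_{n+1},p)$ separately, in each case combining the Lipschitz continuity of the switching rates (Assumption~\ref{ass:lambda_lipschitz}) with the almost sure boundedness of the trajectories on $[0,t]$ (Assumption~\ref{ass:boundedPDMP}), and for the approximate rate additionally with the accuracy estimate of Assumption~\ref{ass:lambda_wasserstein}(a) and the flow estimate of Lemma~\ref{lemma:ineq_flow_strong_error}.

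\emph{The continuous process.} Each $\lambda_i$ is $D_4$-Lipschitz, so $\lambda=\sum_{i=1}^m\lambda_i$ is $mD_4$-Lipschitz and hence $\lambda(Z_r)\le\lambda(z)+mD_4(\lVert Z_r\rVert+\lVert z\rVert)$. By Assumption~\ref{ass:boundedPDMP}, after enlarging $B(\cdot,z)$ so that it is non-decreasing and using the regularity of the paths to pass from the pointwise-in-$r$ bound to a uniform one, one has $\sup_{r\in[0,t]}\lVert Z_r\rVert\le B(t,z)$ almost surely, which gives $\sup_{r\in[0,t]}\lambda(Z_r)\le\lambda(z)+mD_4(B(t,z)+\lVert z\rVert)=:L_1(t,z)$.

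\emph{The approximate process.} Summing Assumption~\ref{ass:lambda_wasserstein}(a) over $i$ and using $\delta_{n+1}\le\delta_0$ gives, for every $w\in E$ and $0\le s\le\delta_0$,
\begin{equation*}
\overline{\lambda}(w,s;\delta_{n+1},p)\le\lambda(\varphi_s(w))+m\delta_0^p\,\overline{M}_2(w).
\end{equation*}
For the first term, Lemma~\ref{lemma:ineq_flow_strong_error} yields $\lVert\varphi_s(w)\rVert\le\lVert\varphi_s(w)-\varphi_s(0)\rVert+\lVert\varphi_s(0)\rVert\le C'\lVert w\rVert+c_0$ with $c_0:=\sup_{u\in[0,\delta_0]}\lVert\varphi_u(0)\rVert<\infty$, and the Lipschitz bound on $\lambda$ then gives $\lambda(\varphi_s(w))\le\lambda(0)+mD_4(C'\lVert w\rVert+c_0)$. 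Evaluating at $w=\overline{Z}_r$ and using $\sup_{r\in[0,t]}\lVert\overline{Z}_r\rVert\le B(t,z)$ a.s. (Assumption~\ref{ass:boundedPDMP} again) bounds the first term by a finite deterministic constant; for the second term, since $\overline{M}_2$ is locally bounded in all the settings we treat (it has at most polynomial growth in $\lVert\cdot\rVert$), one has $\overline{M}_2(\overline{Z}_r)\le\sup_{\lVert w\rVert\le B(t,z)}\overline{M}_2(w)<\infty$ a.s. Collecting the estimates produces a finite deterministic $L_2(t,z,p)$, and the claim holds with $L(t,z,p):=\max\{L_1(t,z),L_2(t,z,p)\}$.

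\emph{Main obstacle.} The only genuinely delicate point is the term $\overline{M}_2$: Assumption~\ref{ass:lambda_wasserstein} merely requires $\overline{M}_2(\overline{Z}_{t_n})$ to be integrable, not that $\overline{M}_2$ be bounded on bounded sets, so in full generality one should either add local boundedness of $\overline{M}_2$ as a (very mild) extra hypothesis or state the conclusion with the $\overline{M}_2$-dependence made explicit; in every example of Section~\ref{sec:examples} $\overline{M}_2$ has at most polynomial growth and the argument above applies verbatim. A secondary, routine technicality is the passage from the pointwise-in-$r$ bounds of Assumption~\ref{ass:boundedPDMP} to bounds uniform over $r\in[0,t]$, which follows from the right-continuity and existence of left limits of the paths together with the fact that, almost surely, each process traverses only finitely many deterministic flow segments on $[0,t]$.
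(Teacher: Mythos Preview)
The paper does not prove this lemma: it is one of three ``simple lemmas'' whose ``proofs are omitted as they are a straightforward consequence of the assumptions.'' Your argument is therefore the proof the paper declines to spell out, and its structure --- Lipschitz bound on $\lambda$ plus the a.s.\ norm bound from Assumption~\ref{ass:boundedPDMP} for the exact rate, and the same together with Assumption~\ref{ass:lambda_wasserstein}(a) for the approximate rate --- is exactly what the authors have in mind.

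You are right to flag the $\overline{M}_2$ term as the one point where the stated assumptions do not literally suffice: Assumption~\ref{ass:lambda_wasserstein}(b) gives only integrability of $\overline{M}_2(\Zbar_{t_n})$, not local boundedness of $\overline{M}_2$, so the a.s.\ bound requires the mild extra hypothesis you name. The paper evidently regards this as implicit (and indeed in every example $\overline{M}_2$ grows at most polynomially), but your caveat is a genuine clarification rather than pedantry. The remark about passing from pointwise to uniform-in-$r$ bounds is likewise correct and routine given the c\`adl\`ag structure of the paths.
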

We can now start showing a bound on event $E_{00}$, followed by the other events.
\begin{lemma}\label{Lemma:strong_error_E00}
Under Assumptions \ref{ass:lipschitz_phi} and \ref{ass:integrator}, it holds that 
\begin{equation}\notag
    \mathbb{E}_z [\lVert Z_{t_{n+1}}-\Zbar_{t_{n+1}}\rVert \mathbbm{1}_{E_{00}}] \leq (1+ \delta_{n+1} C C')\mathbb{E}_z [\lVert Z_{t_n}-\Zbar_{t_n}\rVert ] + \Tilde{C}\delta_{n+1}^2.
\end{equation}
\end{lemma}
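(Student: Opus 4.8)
The key observation is that on the event $E_{00}$ neither the continuous process nor the approximation experiences a random event during $[t_n,t_{n+1}]$, so by the definition of Coupling~\ref{coup:wass_gen_pdmp} we simply have $Z_{t_{n+1}}=\varphi_{\delta_{n+1}}(Z_{t_n})$ and $\Zbar_{t_{n+1}}=\phibar_{\delta_{n+1}}(\Zbar_{t_n};\delta_{n+1})$. Hence on $E_{00}$
\begin{equation*}
    \lVert Z_{t_{n+1}}-\Zbar_{t_{n+1}}\rVert = \lVert \varphi_{\delta_{n+1}}(Z_{t_n})-\phibar_{\delta_{n+1}}(\Zbar_{t_n};\delta_{n+1})\rVert.
\end{equation*}

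The plan is then to apply Lemma~\ref{lem:flow_to_integrator} with $p=1$, $s=\delta_{n+1}$, $z=Z_{t_n}$, $z'=\Zbar_{t_n}$, using the refined version in which $C'$ is replaced by $1+CC'\delta_{n+1}$ (valid since $\delta_{n+1}\leq\delta_0$). This gives the pathwise bound, valid on $E_{00}$,
\begin{equation*}
    \lVert \varphi_{\delta_{n+1}}(Z_{t_n})-\phibar_{\delta_{n+1}}(\Zbar_{t_n};\delta_{n+1})\rVert \leq (1+CC'\delta_{n+1})\lVert Z_{t_n}-\Zbar_{t_n}\rVert + \tilde{C}\delta_{n+1}^{2}.
\end{equation*}
Alternatively one can obtain this directly from the triangle inequality by inserting $\varphi_{\delta_{n+1}}(\Zbar_{t_n})$, bounding the difference of flows started from $Z_{t_n}$ and $\Zbar_{t_n}$ via Lemma~\ref{lemma:ineq_flow_strong_error} and the difference $\varphi_{\delta_{n+1}}(\Zbar_{t_n})-\phibar_{\delta_{n+1}}(\Zbar_{t_n};\delta_{n+1})$ via Assumption~\ref{ass:integrator} with $p=1$.

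Finally, multiplying by $\mathbbm{1}_{E_{00}}$, discarding the indicator on the right-hand side since all terms are nonnegative, and taking expectations yields
\begin{equation*}
    \mathbb{E}_z[\lVert Z_{t_{n+1}}-\Zbar_{t_{n+1}}\rVert \mathbbm{1}_{E_{00}}] \leq (1+CC'\delta_{n+1})\,\mathbb{E}_z[\lVert Z_{t_n}-\Zbar_{t_n}\rVert] + \tilde{C}\delta_{n+1}^{2},
\end{equation*}
which is the claim. There is no substantive obstacle here; the only point requiring care is to use the form of the flow-Lipschitz estimate with constant $1+CC'\delta_{n+1}$ rather than the crude constant $C'$, so that the multiplicative factor is $1+\mathcal{O}(\delta_{n+1})$ as required for the subsequent Gr\"onwall-type recursion in the proof of Theorem~\ref{thm:strong_error_pdmp}.
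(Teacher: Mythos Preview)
Your proposal is correct and matches the paper's proof essentially line for line: the paper also observes that on $E_{00}$ one has $Z_{t_{n+1}}=\varphi_{\delta_{n+1}}(Z_{t_n})$ and $\Zbar_{t_{n+1}}=\phibar_{\delta_{n+1}}(\Zbar_{t_n})$, then inserts $\varphi_{\delta_{n+1}}(\Zbar_{t_n})$ via the triangle inequality and applies Lemma~\ref{lemma:ineq_flow_strong_error} and Assumption~\ref{ass:integrator} to the two resulting pieces. Your alternative of invoking Lemma~\ref{lem:flow_to_integrator} directly is just the packaged form of that same argument.
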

\begin{proof}
    On $E_{00}$ we are interested only in the error introduced by the integrator $\phibar$. We have
    \begin{align*}
	    \mathbb{E}_z [\lVert Z_{t_{n+1}}-\Zbar_{t_{n+1}}\rVert \mathbbm{1}_{E_{00}}] & = \mathbb{E}_z \left[ \left\lVert \varphi_{\delta_{n+1}}(Z_{t_n})  - \phibar_{\delta_{n+1}}(\Zbar_{t_n}) \right\rVert \mathbbm{1}_{E_{00}} \right]\\
	    & \leq  \mathbb{E}_z \left[ \left\lVert \varphi_{\delta_{n+1}}(Z_{t_n})  - \varphi_{\delta_{n+1}}(\Zbar_{t_n}) \right\rVert \right]\\
	    & \quad + \mathbb{E}_z \left[ \left\lVert \varphi_{\delta_{n+1}}(\Zbar_{t_n})  - \phibar_{\delta_{n+1}}(\Zbar_{t_n}) \right\rVert \right] ,
	\end{align*}
    Then one can directly apply to the first term Assumption~\ref{ass:lipschitz_phi} and thus Lemma \ref{lemma:ineq_flow_strong_error}, together with the assumption that $\delta_n\leq \delta_0$, and to the second term Assumption~\ref{ass:integrator} to obtain the wanted result for $C'=C'(\delta_0)$.
\end{proof}

\begin{lemma}\label{lemma:strong_error_E11}
Under Assumption~\ref{ass:lipschitz_phi}, parts (b) and (c) of Assumption~\ref{ass:F_new}, as well as Assumptions \ref{ass:lambda_lipschitz}-\ref{ass:boundedPDMP}, it holds that 
\begin{equation}\notag
    \begin{aligned}
    \mathbb{E}_z [\lVert Z_{t_{n+1}}-\Zbar_{t_{n+1}}\rVert \mathbbm{1}_{E_{11}}] & \leq \delta^2_{n+1} \big(m K_2 + m(m-1) \Tilde{K}_2 + 2B(t_{n+1},z) (L(t_{n+1},z))^2)  \big) \\
    & \quad + \delta_{n+1} (m K_1 + m(m-1) \Tilde{K}_1) \mathbb{E}_z\left[ \lVert Z_{t_{n}}-\Zbar_{t_{n}} \rVert \right],
    \end{aligned}
\end{equation}
where $L(t_{n+1},z)$ was defined in Lemma \ref{lem:bounded_lambdas}, while
\begin{align*}
    & K_1 = D_2 (C')^2 L(t_{n+1},z), \\
    & K_2 = (D_2 \Tilde{C} C'+L(t_{n+1},z)(2D_3 + M_1 C') ) ,\\
    & \tilde{K}_1 = D_2 (C')^2 (L(t_{n+1},z))^2, \\
    & \tilde{K}_2 = (D_2 \Tilde{C} C'+ (L(t_{n+1},z))^2)(2D_3 + M_1 C' ).
\end{align*}
\end{lemma}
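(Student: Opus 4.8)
The plan is to work on the event $E_{11}$, on which exactly one random event occurs for each of the two coupled processes. By the construction of Coupling~\ref{coup:wass_gen_pdmp}, on $E_{11}$ the proposed first switching times satisfy $\tau_{n+1}\le\delta_{n+1}$ and $\bar\tau_{n+1}\le\delta_{n+1}$ and the two jumps use the \emph{same} uniform variables $\tilde U^1_{n+1},\dots,\tilde U^m_{n+1}$ and the same $U_{n+1}\sim\nuU$; however the realised jump times $\tau_{n+1}$, $\bar\tau_{n+1}$ and the selected indices $I_{n+1}$, $\bar I_{n+1}$ need not agree. I would therefore split $E_{11}=E_{11}^{=}\cup E_{11}^{\ne}$ according to whether $I_{n+1}=\bar I_{n+1}$ or not. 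On $E_{11}^{=}$ (say $I_{n+1}=\bar I_{n+1}=i$) I would write, using $\phibar_s=\varphi_s$ would be too much to assume, so instead chain the triangle inequality through the intermediate quantities
\[
\varphi_{\delta_{n+1}-\tau_{n+1}}\!\big(F_i(\varphi_{\tau_{n+1}}(Z_{t_n}),U_{n+1})\big)\ \rightsquigarrow\ \varphi_{\delta_{n+1}-\tau_{n+1}}\!\big(F_i(\varphi_{\tau_{n+1}}(\Zbar_{t_n}),U_{n+1})\big)\ \rightsquigarrow\ \varphi_{\delta_{n+1}-\bar\tau_{n+1}}\!\big(F_i(\varphi_{\bar\tau_{n+1}}(\Zbar_{t_n}),U_{n+1})\big)\ \rightsquigarrow\ \phibar_{\delta_{n+1}-\bar\tau_{n+1}}\!\big(\Fbar_i(\phibar_{\bar\tau_{n+1}}(\Zbar_{t_n}),U_{n+1})\big).
\]
The first transition is controlled by Lemma~\ref{lemma:ineq_flow_strong_error} and Assumption~\ref{ass:F_new}(b), giving a factor $(1+CC'\delta_{n+1})D_2\,(C')\,\lVert Z_{t_n}-\Zbar_{t_n}\rVert$; the second transition is exactly Assumption~\ref{ass:F_new}(c) applied to the initial point $\Zbar_{t_n}$ with $s=\min\{\tau_{n+1},\bar\tau_{n+1}\}$, $\delta=\max\{\tau_{n+1},\bar\tau_{n+1}\}\le\delta_{n+1}$, yielding $D_3\delta_{n+1}$ (up to applying $\varphi$ once more, another $C'$); the third transition is handled by Assumption~\ref{ass:integrator} ($\phibar$ vs.\ $\varphi$, cost $\tilde C\delta_{n+1}^{2}$) together with Assumption~\ref{ass:approx_jump_kernel} ($\Fbar_i$ vs.\ $F_i$, cost $M_1\delta_{n+1}$, then propagated through $\varphi$ at cost $C'$) and Lemma~\ref{lem:flow_to_integrator}. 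Taking expectations, using Assumption~\ref{ass:boundedPDMP} where a factor $B(t_{n+1},z)$ is needed and Lemma~\ref{lem:bounded_lambdas} to bound any appearance of $\lambda$ or $\lambdabar$ by $L(t_{n+1},z)$, and summing over the $m$ possible common indices $i$ gives the ``$m K_1$'', ``$m K_2$'' terms.

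For the cross term $E_{11}^{\ne}$ I would bound $\lVert Z_{t_{n+1}}-\Zbar_{t_{n+1}}\rVert$ crudely by a constant times $B(t_{n+1},z)$ (plus Lipschitz-in-$Z_{t_n}$ pieces from Lemma~\ref{lemma:ineq_flow_strong_error} and Assumption~\ref{ass:F_new}(b)) on this event, and then estimate the probability $\mathbb{P}_z(E_{11}^{\ne}\mid Z_{t_n},\Zbar_{t_n})$. The point is that $I_{n+1}\ne\bar I_{n+1}$ with the synchronous uniforms forces two \emph{distinct} indices $i\ne j$ both to propose times below $\delta_{n+1}$, which has probability at most
\[
\big(1-e^{-\int_0^{\delta_{n+1}}\lambda_i(\varphi_s(Z_{t_n}))\,\dd s}\big)\big(1-e^{-\int_0^{\delta_{n+1}}\lambdabar_j(\Zbar_{t_n},s)\,\dd s}\big)\ \le\ \delta_{n+1}^{2}\,L(t_{n+1},z)^{2}
\]
for each ordered pair $(i,j)$, of which there are $m(m-1)$; this is exactly where the factors $(L(t_{n+1},z))^2$, $m(m-1)$, and hence $\tilde K_1,\tilde K_2$ enter, and also the stray $2B(t_{n+1},z)(L(t_{n+1},z))^2\delta_{n+1}^2$ summand in the statement. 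Because this probability is already $\mathcal O(\delta_{n+1}^2)$, multiplying by the $\mathcal O(1)$ bound on the displacement and the $\mathcal O(\lVert Z_{t_n}-\Zbar_{t_n}\rVert)$ Lipschitz pieces keeps everything of the right order. Finally I would collect the $E_{11}^{=}$ and $E_{11}^{\ne}$ bounds and match constants with the statement, absorbing lower-order terms using $\delta_{n+1}\le\delta_0$.

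The main obstacle I anticipate is the bookkeeping in the $E_{11}^{=}$ chain: one must be careful that $\tau_{n+1}$ and $\bar\tau_{n+1}$ are different random times, so the ``middle'' step really does require Assumption~\ref{ass:F_new}(c) (switching early versus late with coupled jumps) rather than a naive Lipschitz estimate, and one must correctly order $s=\min$, $\delta=\max$ to fit that assumption's hypotheses. A secondary subtlety is verifying that on $E_{11}^{\ne}$ the displacement is genuinely bounded (not merely integrable): this uses Assumption~\ref{ass:boundedPDMP} for both $Z$ and $\Zbar$, which is why $B(t_{n+1},z)$ appears, and one should check that the independent ``tail'' evolution of the continuous process after its jump (as prescribed by the coupling) is also covered by Assumption~\ref{ass:boundedPDMP}. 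Everything else is a routine application of the three auxiliary lemmas together with Assumptions~\ref{ass:F_new}, \ref{ass:lambda_lipschitz}, \ref{ass:integrator}, \ref{ass:approx_jump_kernel}.
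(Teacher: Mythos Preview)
Your overall decomposition and the chain of triangle-inequality steps on the ``matched index'' part are essentially what the paper does. However, there is a genuine gap coming from a misreading of the event $E_{11}$.

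In the paper's partition (see the sentence just before Lemma~\ref{Lemma:strong_error_E00}), $j=1$ means the \emph{continuous} process has \emph{at least one} event in $(t_n,t_{n+1}]$, not exactly one. Your opening sentence ``exactly one random event occurs for each of the two coupled processes'' is therefore incorrect, and as a consequence your expression
\[
Z_{t_{n+1}}=\varphi_{\delta_{n+1}-\tau_{n+1}}\!\big(F_{I_{n+1}}(\varphi_{\tau_{n+1}}(Z_{t_n}),U_{n+1})\big)
\]
is only valid on the sub-event where the continuous process has \emph{exactly} one jump. The paper handles this by introducing $\overline{E}=\{\text{exactly one jump for }Z\}$ and treating $E_{11}\cap\overline{E}^c$ separately: there one simply bounds $\lVert Z_{t_{n+1}}-\Zbar_{t_{n+1}}\rVert\le 2B(t_{n+1},z)$ via Assumption~\ref{ass:boundedPDMP} and uses that the probability of two or more jumps for the PDMP is at most $\delta_{n+1}^2 (L(t_{n+1},z))^2$. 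This is precisely the origin of the $2B(t_{n+1},z)(L(t_{n+1},z))^2$ summand; it does \emph{not} come from the mismatched-index event $E_{11}^{\ne}$ as you suggest (note there is no factor $m(m-1)$ on that term in the statement).

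A secondary difference: on the mismatched-index event $A_{ij}$ (your $E_{11}^{\ne}$) the paper does \emph{not} bound the displacement crudely by $B(t_{n+1},z)$. Instead it runs another four-term decomposition, inserting $F_j(\varphi_{\delta_{n+1}}(Z_{t_n}),U_{n+1})$ and $F_i(\varphi_{\delta_{n+1}}(Z_{t_n}),U_{n+1})$ as intermediate points; the cross-kernel gap $\lVert F_i-F_j\rVert$ is handled via Assumption~\ref{ass:F_new}(a) (adding and subtracting $\varphi_{\delta_{n+1}}(Z_{t_n})$, so it is bounded by $2D_1$). This is why $\tilde K_1,\tilde K_2$ carry the specific structure in the statement rather than a factor of $B$. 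Your crude bound would still give the correct order in $\delta_{n+1}$, but would not reproduce the stated constants and would produce an extra $m(m-1)B L^2$ term that is absent from the lemma.
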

\begin{proof}
    	Let us first restrict to the event that $Z_s$ has only one event for $s\in(t_{n},t_{n+1}]$ and denote such event as $\overline{E}$. For any $i,j\in \{1,\ldots,m\}$, let $A_{ij}$ be the event that $Z_{t_n}$ jumps according to $F_i$ and $\Zbar_{t_n}$ jumps according to $\Fbar_j$ and no other jumps occur. Note that $\{A_{ij}\}_{i,j=1}^m$ is a partition of $E_{11}\cap \overline{E}$ so we may write
        \begin{equation*}
            \mathbb{E}_z [\lVert Z_{t_{n+1}}-\Zbar_{t_{n+1}}\rVert \mathbbm{1}_{E_{11}\cap \overline{E}}]  = \sum_{i,j=1}^m \mathbb{E}_z [\lVert Z_{t_{n+1}}-\Zbar_{t_{n+1}}\rVert \mathbbm{1}_{A_{ij}}].
        \end{equation*}
    
     Let us first consider event $A_{ii}$, i.e. the processes have a switch according to kernels $F_i$ and $\Fbar_i$. Considering Coupling \ref{coup:wass_gen_pdmp}, we first observe that $A_{ii}$ is an order $\delta_{n+1}$ event. This follows from the fact that in this case we require
	$$
	\tilde{U}^i_{n+1} \leq \min \left\{ 1-\exp\left(-\int_0^{\delta_{n+1}}
	\lambdabar_i(\Zbar_{t_{n}},s)\dd s\right),1-\exp\left( -\int_{0}^{\delta_{n+1}} \lambda_i(\varphi_s(Z_{t_{n}}))\dd s\right) \right\}.
	$$ 
	Therefore, using that $1-\exp(-z) \leq z$ we obtain
	\begin{equation}\label{eq:aux_E_11}
	    \mathbb{E}_z\left[ \mathbbm{1}_{A_{ii}}\rvert Z_{t_n},\Zbar_{t_n} \right] \leq \min \left\{\int_0^{\delta_{n+1}}
    	\lambdabar_i (\Zbar_{t_{n}},s)\dd s , \int_{0}^{\delta_{n+1}} \lambda_i(\varphi_s(Z_{t_{n}}))\dd s \right\}  \leq \delta_{n+1} L(t_{n+1},z),
	\end{equation}
	where $L(t_{n+1},z)<\infty$ was defined in Lemma \ref{lem:bounded_lambdas}.
	We can then separate the effects of the different approximations by the triangle inequality:
\reqnomode
	\begin{align*}
	    & \mathbb{E}_z  \left[ \lVert Z_{t_{n+1}} -\Zbar_{t_{n+1}} \rVert \mathbbm{1}_{A_{ii}} \right] = \\
	    & = \mathbb{E}_z[\lVert \varphi_{\delta_{n+1}-\tau_{n+1}}(F_i(\varphi_{\tau_{n+1}}(Z_{t_{n}}),U_{n+1})) - \phibar_{\delta_{n+1}-\taub}(\Fbar_{i}(\phibar_{\taub}(\Zbar_{t_n}),U_{n+1})) \rVert \mathbbm{1}_{A_{ii}} ] \\
	    & \leq \mathbb{E}_z\left[\lVert \varphi_{\delta_{n+1}-\tau_{n+1}}(F_i(\varphi_{\tau_{n+1}}(Z_{t_{n}}),U_{n+1})) - \varphi_{\delta_{n+1}-\taub} F_i(\varphi_{\taub}(Z_{t_{n}}),U_{n+1}) \rVert \mathbbm{1}_{A_{ii}} \right] \label{eq:E_11_term1}\tag{*} \\
	    &  + \mathbb{E}_z\left[\lVert \varphi_{\delta_{n+1}-\taub} (F_i(\varphi_{\taub}(Z_{t_{n}}),U_{n+1}))  - \varphi_{\delta_{n+1}-\taub}(F_i(\phibar_{\taub}(\Zbar_{t_{n}}),U_{n+1})) \rVert \mathbbm{1}_{A_{ii}} \right] \label{eq:E_11_term2}\tag{**} \\
	    &  +\mathbb{E}_z\left[\lVert \varphi_{\delta_{n+1}-\taub}(F_i(\phibar_{\taub}(\Zbar_{t_{n}}),U_{n+1})) - \varphi_{\delta_{n+1}-\taub}(\Fbar_i(\phibar_{\taub}(\Zbar_{t_{n}}),U_{n+1})) \rVert \mathbbm{1}_{A_{ii}} \right] \label{eq:E_11_term3}\tag{***} \\
	    &  +\mathbb{E}_z\left[\lVert  \varphi_{\delta_{n+1}-\taub}(\Fbar_i(\phibar_{\taub}(\Zbar_{t_{n}}),U_{n+1})) -\phibar_{\delta_{n+1}-\taub}(\Fbar_i(\phibar_{\taub}(\Zbar_{t_n}),U_{n+1})) \rVert \mathbbm{1}_{A_{ii}} \right]. \label{eq:E_11_term4}\tag{****}
	\end{align*}
	For term \eqref{eq:E_11_term1} we first compare both terms to $F_i(\varphi_{\delta_{n+1}}(Z_{t_n}),U_{n+1})$, and then we condition on all random variables apart from $U_{n+1}$ in order to apply Assumption~\ref{ass:F_new}(c):
	\begin{align*}
	    \eqref{eq:E_11_term1} & \leq \mathbb{E}_z\big[\lVert \varphi_{\delta_{n+1}-\tau_{n+1}}(F_i(\varphi_{\tau_{n+1}}(Z_{t_{n}}),U_{n+1})) - F_i(\varphi_{\delta_{n+1}}(Z_{t_n}),U_{n+1}) \rVert \mathbbm{1}_{A_{ii}} \big]  \\
	    & \quad + \mathbb{E}_z\left[\lVert F_i(\varphi_{\delta_{n+1}}(Z_{t_n}),U_{n+1}) - \varphi_{\delta_{n+1}-\taub} F_i(\varphi_{\taub}(Z_{t_{n}}),U_{n+1}) \rVert \mathbbm{1}_{A_{ii}} \right] & \\
	    & \leq 2 D_3 \delta_{n+1} \mathbb{P}_z(A_{ii})  \\
	    &\leq \delta_{n+1}^2 2 D_3  L(t_{n+1},z).
	\end{align*}
	In the last inequality we used the inequality derived in \eqref{eq:aux_E_11}.
    Term \eqref{eq:E_11_term2} can be bounded applying inequality \eqref{eq:varphi_lipschitz}, then conditioning on $Z_{t_n},\Zbar_{t_n},\taub$ and using Assumption~\ref{ass:F_new}(b), and finally applying Lemma \ref{lem:flow_to_integrator} and \eqref{eq:aux_E_11}:
    \begin{equation}\label{eq:aux_dblstar}
    \begin{aligned}
        \eqref{eq:E_11_term2} &\leq C' \mathbb{E}_z\left[\lVert  F_i(\varphi_{\taub}(Z_{t_{n}}),U_{n+1})  - F_i(\phibar_{\taub}(\Zbar_{t_{n}}),U_{n+1}) \rVert \mathbbm{1}_{A_{ii}} \right] \\
        & \leq C' D_2  \mathbb{E}_z\left[\lVert  \varphi_{\taub}(Z_{t_{n}}) - \phibar_{\taub}(\Zbar_{t_{n}}) \rVert \mathbbm{1}_{A_{ii}} \right] \\
        & \leq (C')^2 D_2 \mathbb{E}_z[\lVert Z_{t_{n}}-\Zbar_{t_{n}}\rVert \mathbbm{1}_{A_{ii}} ] + \delta_{n+1}^2 C' D_2 \Tilde{C}\\
        & \leq (C')^2 D_2 L(t_{n+1},z) \delta_{n+1} \mathbb{E}_z[\lVert Z_{t_{n}}-\Zbar_{t_{n}}\rVert] + \delta_{n+1}^2 C' D_2 \Tilde{C}.
    \end{aligned}
    \end{equation}
    Term \eqref{eq:E_11_term3} is estimated by inequality \eqref{eq:varphi_lipschitz}, then again conditioning on $Z_{t_n},\Zbar_{t_n},\taub$ and applying Assumption~\ref{ass:approx_jump_kernel}, and finally using \eqref{eq:aux_E_11}:
    \begin{equation}\label{eq:aux_trplstar}
    \begin{aligned}
        \eqref{eq:E_11_term3} & \leq C'\mathbb{E}_z\left[\lVert F_i(\phibar_{\taub}(\Zbar_{t_{n}}),U_{n+1}) - \Fbar_i(\phibar_{\taub}(\Zbar_{t_{n}}),U_{n+1}) \rVert \mathbbm{1}_{A_{ii}} \right]\\
        & \leq C'M_1 \delta_{n+1} \mathbb{P}_z(A_{ii})\\
        & \leq \delta_{n+1}^2 C' M_1 L(t_{n+1},z).
    \end{aligned}
    \end{equation}
	Term \eqref{eq:E_11_term4} is bounded using Assumption~\ref{ass:integrator} and bounding by $1$ the probability of $A_{ii}$:
	\begin{align*}
	    \eqref{eq:E_11_term4} &\leq \tilde{C}\delta_{n+1}^2\mathbb{P}_z(A_{ii}) \leq \tilde{C}\delta_{n+1}^2. 
	\end{align*}
	Putting together terms \eqref{eq:E_11_term1}, \eqref{eq:E_11_term2}, \eqref{eq:E_11_term3}, \eqref{eq:E_11_term4} we obtain the following bound on event $A_{ii}$:
	\leqnomode
	\begin{align}
	    \mathbb{E}_z  \left[ \lVert Z_{t_{n+1}} -\Zbar_{t_{n+1}} \rVert \mathbbm{1}_{A_{ii}} \right] &\leq \delta^2_{n+1} K_2 + \delta_{n+1} K_1 \mathbb{E}_z\left[ \lVert Z_{t_{n}}-\Zbar_{t_{n}} \rVert \right]
	\end{align}
    where $K_1, K_2$ are as in the statement of the lemma.
    
    Now consider event $A_{ij}$ for $i\neq j$. In this case we take advantage of independence of $\tilde{U}_{n+1}^i$ and $\tilde{U}_{n+1}^j$ to conclude that 
    \begin{align}
        &\mathbb{E}_z[\mathbbm{1}_{A_{ij}}\rvert Z_{t_n},\Zbar_{t_n}]  \leq \notag \\
        &\quad \leq \left(1-\exp\left(-\int_0^{\delta_{n+1}} \lambda_i(\varphi_s(Z_{t_n}))\dd s \right)\right)  \left(1-\exp\left(-\int_0^{\delta_{n+1}} \lambdabar_j(\Zbar_{t_n},s)\dd s \right)\right) \notag\\
        & \quad\leq \int_0^{\delta_{n+1}} \lambda_i(\varphi_s(Z_{t_n}))\dd s \int_0^{\delta_{n+1}} \lambdabar_j(\Zbar_{t_n},s)\dd s \notag\\
        & \quad\leq \delta_{n+1}^2 (L(t_{n+1},z))^2 .\label{eq:prob_A_ij}
    \end{align}
    Then we can use the decomposition
    \reqnomode
    \begin{align}
        &\mathbb{E}_z [\lVert Z_{t_{n+1}}-\Zbar_{t_{n+1}}\rVert \mathbbm{1}_{A_{ij}}] = \notag\\
        & = \mathbb{E}_z [\lVert \varphi_{\delta_{n+1}-\tau_{n+1}}(F_i(\varphi_{\tau_{n+1}}(Z_{t_n}),U_{n+1}))- \phibar_{\delta_{n+1}-\overline{\tau}_{n+1}}(\Fbar_j(\phibar_{\overline{\tau}_{n+1}}(\Zbar_{t_n}),U_{n+1}))) \rVert \mathbbm{1}_{A_{ij}}] \notag\\
        & \leq \mathbb{E}_z [\lVert \varphi_{\delta_{n+1}-\tau_{n+1}}(F_i(\varphi_{\tau_{n+1}}(Z_{t_n}),U_{n+1}))) - F_i(\varphi_{\delta_{n+1}}(Z_{t_n}),U_{n+1})))\rVert \mathbbm{1}_{A_{ij}}] \tag{$\dagger$}\label{eq:A_ij_1}\\
        & +\mathbb{E}_z [\lVert  F_i(\varphi_{\delta_{n+1}}(Z_{t_n}),U_{n+1}))) - F_j(\varphi_{\delta_{n+1}}(Z_{t_n}),U_{n+1})))\rVert \mathbbm{1}_{A_{ij}}] \tag{$\ddag$}\label{eq:A_ij_2}\\
        &+\mathbb{E}_z [\lVert  F_j(\varphi_{\delta_{n+1}}(Z_{t_n}),U_{n+1}))) - \varphi_{\delta_{n+1}-\taub}(F_j(\varphi_{\taub}(Z_{t_n}),U_{n+1})))\rVert \mathbbm{1}_{A_{ij}}] \tag{$\ddag\dagger$} \label{eq:A_ij_3}\\
        & +\mathbb{E}_z [\lVert  \varphi_{\delta_{n+1}-\taub}(F_j(\varphi_{\taub}(Z_{t_n}),U_{n+1})))-\phibar_{\delta_{n+1}-\taub}(\overline{F}_j(\phibar_{\taub}(\Zbar_{t_n}),U_{n+1})))\rVert \mathbbm{1}_{A_{ij}}] \tag{$\ddag\ddag$} \label{eq:A_ij_4}
    \end{align}
    To bound \eqref{eq:A_ij_1} and \eqref{eq:A_ij_3} we use Assumption~\ref{ass:F_new}(c), while for \eqref{eq:A_ij_2} we add and subtract $\varphi_{\delta_{n+1}}(Z_{t_n})$ and use Assumption~\ref{ass:F_new}(a), and for \eqref{eq:A_ij_4} we use a similar argument to the $A_{ii}$ case. Combining this with the bound in \eqref{eq:prob_A_ij} we obtain
    \begin{align*}
        \mathbb{E}_z [\lVert Z_{t_{n+1}}-\Zbar_{t_{n+1}}\rVert \mathbbm{1}_{A_{ij}}] &\leq ((L(t_{n+1},z))^2(2D_3+D_1)  + \Tilde{K_2}) \delta_{n+1}^2 \\
        & \quad + \delta_{n+1} \tilde{K}_1 \mathbb{E}_z\left[ \lVert Z_{t_{n}}-\Zbar_{t_{n}} \rVert \right]
    \end{align*}
    where $\tilde{K}_1, \tilde{K}_2$ are as in the statement of the lemma. 

	
	
	Let us finally consider $\overline{E}^c$, i.e. the case in which $Z_t$ has two or more jumps. The probability this event is given by
	\begin{equation}\label{eq:bound_2events}
	    \begin{aligned}
	        \mathbb{P}_z(\overline{E}^c) & = \mathbb{E}_z\Bigg[\int_0^{\delta_{n+1}} \left( 1 - \exp\left( - \int_0^{\delta_{n+1}-t} \lambda(\varphi_s(F_{I_{n+1}}(\varphi_t(Z_{t_n}),U_{n+1})))\dd s \right)\right)  \\
	        & \qquad \lambda(\varphi_t(Z_{t_n})) \exp\left(-\int_0^t \lambda(\varphi_r(Z_{t_n}))\dd r \right) \dd t\Bigg] \\
	        & \leq \mathbb{E}_z\Bigg[\int_0^{\delta_{n+1}} \left( \int_0^{\delta_{n+1}-t} \lambda(\varphi_s(F_{I_{n+1}}(\varphi_t(z),U_{n+1})))\dd s \right) \lambda(\varphi_t(z)) \dd t \Bigg]\\
	        & \leq \delta_{n+1}^2 (L(t_{n+1},z))^2. 
	    \end{aligned}
	\end{equation}
	Then we can bound the norms $\lVert Z _{t_{n+1}}\rVert$ and $\lVert \Zbar_{t_{n+1}}\rVert$ by Assumption~\ref{ass:boundedPDMP} to obtain
	\begin{equation}\label{eq:bound_twoormoreevents}
	\begin{aligned}
	    \mathbb{E}_z\left[\lVert Z_{t_{n+1}}-\Zbar_{t_{n+1}} \rVert \mathbbm{1}_{E_{11}\cap\overline{E}^c} \right] & \leq 2 B(t_{n+1},z) \mathbb{E}_z\left[\mathbbm{1}_{\overline{E}^c}\right] \\
	    & \leq 2\delta_{n+1}^2 B(t_{n+1},z) (L(t_{n+1},z))^2 .
	\end{aligned}
	\end{equation}
	Combining the bounds on $\overline{E}$ and $\overline{E}^c$ we obtain the statement of the lemma.
\end{proof}

\begin{lemma}\label{lemma:strong_error_E10}
Under Assumptions \ref{ass:lipschitz_phi}, \ref{ass:F_new}(a), \ref{ass:lambda_lipschitz}-\ref{ass:boundedPDMP}, it holds that
\begin{equation}\notag
    \begin{aligned}
        \mathbb{E}_z\big[\lVert  Z_{t_{n+1}}-\Zbar_{t_{n+1}}\rVert &  \mathbbm{1}_{E_{10}} \big] \leq \delta_{n+1}\, m (C')^2 ( D_1 D_4 + 2D_2L(t_{n+1},z))\, \mathbb{E}_z \left[  \lVert Z_{t_{n}}-\Zbar_{t_{n}}\rVert \right] \\
        & \quad+ \delta_{n+1}^2 (\tilde{C}+  C'(D_2\tilde{C}+mD_1M_2(t_n,z) + 2mM_1 L(t_{n+1},z) )).
    \end{aligned}
\end{equation}
\end{lemma}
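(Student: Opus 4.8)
The plan is to prove Lemma~\ref{lemma:strong_error_E10} by combining two ingredients. First, under Coupling~\ref{coup:wass_gen_pdmp} the event $E_{10}$ — exactly one jump for the approximation and none for the exact PDMP over $(t_n,t_{n+1}]$ — forces a discrepancy between the true rate $\lambda_i(\varphi_\cdot(Z_{t_n}))$ and the approximate rate $\lambdabar_i(\Zbar_{t_n},\cdot;\delta_{n+1})$, so its conditional probability is small. Second, on $E_{10}$ the one-step error $\lVert Z_{t_{n+1}}-\Zbar_{t_{n+1}}\rVert$ is controlled by the usual chain of triangle inequalities, so it is at most a constant (coming from one jump, hence $D_1$) plus a Lipschitz multiple of $\lVert Z_{t_n}-\Zbar_{t_n}\rVert$. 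The lemma then follows by multiplying the two estimates and taking expectations.

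For the probability bound I would use that in \eqref{eq:tau_wass}--\eqref{eq:tautilde_wass} the exact and approximate switching times are built from the common uniforms $\tilde U^1_{n+1},\dots,\tilde U^m_{n+1}$, so that
\[ E_{10}\subseteq\bigcup_{i=1}^m\Big\{\,1-e^{-\int_0^{\delta_{n+1}}\lambda_i(\varphi_s(Z_{t_n}))\dd s}<\tilde U^i_{n+1}\le 1-e^{-\int_0^{\delta_{n+1}}\lambdabar_i(\Zbar_{t_n},s;\delta_{n+1})\dd s}\Big\}. \]
Conditionally on $Z_{t_n},\Zbar_{t_n}$ the probability of the $i$-th set is at most $\bigl|e^{-\int\lambda_i}-e^{-\int\lambdabar_i}\bigr|\le\int_0^{\delta_{n+1}}|\lambda_i(\varphi_s(Z_{t_n}))-\lambdabar_i(\Zbar_{t_n},s;\delta_{n+1})|\dd s$ (using $|e^{-a}-e^{-b}|\le|a-b|$ for $a,b\ge0$), and splitting the integrand through $\lambda_i(\varphi_s(\Zbar_{t_n}))$ and applying Assumption~\ref{ass:lambda_lipschitz} with the flow-Lipschitz estimate of Lemma~\ref{lemma:ineq_flow_strong_error}, and Assumption~\ref{ass:lambda_wasserstein}(a) with $p=1$, gives $\mathbb{P}_z(E_{10}\mid Z_{t_n},\Zbar_{t_n})\le m\,\delta_{n+1}\bigl(D_4C'\lVert Z_{t_n}-\Zbar_{t_n}\rVert+\delta_{n+1}\overline{M}_2(\Zbar_{t_n})\bigr)$. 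I would also record the cruder bound $\mathbb{P}_z(E_{10}\mid Z_{t_n},\Zbar_{t_n})\le\mathbb{P}_z(\taub\le\delta_{n+1}\mid Z_{t_n},\Zbar_{t_n})\le\delta_{n+1}L(t_{n+1},z)$ obtained from $1-e^{-x}\le x$ and Lemma~\ref{lem:bounded_lambdas}.

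For the one-step error, on $E_{10}$ one has $Z_{t_{n+1}}=\varphi_{\delta_{n+1}}(Z_{t_n})=\varphi_{\delta_{n+1}-\taub}(\varphi_{\taub}(Z_{t_n}))$ and $\Zbar_{t_{n+1}}=\phibar_{\delta_{n+1}-\taub}(\Fbar_{\bar I_{n+1}}(\phibar_{\taub}(\Zbar_{t_n}),U_{n+1};\delta_{n+1});\delta_{n+1})$. Writing $i=\bar I_{n+1}$ and interpolating through the states $\varphi_{\delta_{n+1}-\taub}(F_i(\varphi_{\taub}(Z_{t_n}),U_{n+1}))$, $\varphi_{\delta_{n+1}-\taub}(F_i(\varphi_{\taub}(\Zbar_{t_n}),U_{n+1}))$, $\varphi_{\delta_{n+1}-\taub}(F_i(\phibar_{\taub}(\Zbar_{t_n}),U_{n+1}))$, $\varphi_{\delta_{n+1}-\taub}(\Fbar_i(\phibar_{\taub}(\Zbar_{t_n}),U_{n+1}))$, I would bound the consecutive differences using Lemma~\ref{lemma:ineq_flow_strong_error} (Lipschitz flow) together with, respectively, Assumption~\ref{ass:F_new}(a) (a jump is inserted for the first time: contribution $C'D_1$), Assumption~\ref{ass:F_new}(b) (contribution $(C')^2D_2\lVert Z_{t_n}-\Zbar_{t_n}\rVert$), Assumption~\ref{ass:F_new}(b) with Assumption~\ref{ass:integrator} (contribution $\le C'D_2\tilde C\delta_{n+1}^2$), Assumption~\ref{ass:approx_jump_kernel} (contribution $\le C'M_1\delta_{n+1}$), and finally Assumption~\ref{ass:integrator} for the outer $\phibar$ versus $\varphi$ (contribution $\le\tilde C\delta_{n+1}^2$); since $U_{n+1}\sim\nuU$ is independent of $E_{10}$, $\taub$ and $\bar I_{n+1}$, all of these expectation-form assumptions apply after conditioning on $Z_{t_n},\Zbar_{t_n},\taub,\bar I_{n+1}$. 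This yields, on $E_{10}$, $\mathbb{E}_z[\lVert Z_{t_{n+1}}-\Zbar_{t_{n+1}}\rVert\mid Z_{t_n},\Zbar_{t_n},\taub,\bar I_{n+1}]\le C'D_1+(C')^2D_2\lVert Z_{t_n}-\Zbar_{t_n}\rVert+(\tilde C+C'D_2\tilde C)\delta_{n+1}^2+C'M_1\delta_{n+1}$, a bound that does not depend on $\taub$ or $\bar I_{n+1}$.

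Finally I would multiply the last two displays, take $\mathbb{E}_z[\cdot\mid Z_{t_n},\Zbar_{t_n}]$ and then $\mathbb{E}_z$, using Assumption~\ref{ass:lambda_wasserstein}(b) to replace $\mathbb{E}_z[\overline{M}_2(\Zbar_{t_n})]$ by $M_2(t_n,z)$. The delicate point — and the step I expect to be the main obstacle — is that one must be careful which of the two probability bounds is paired with which part of the error: the genuinely constant contribution $C'D_1$ must be paired with the discrepancy bound (producing the $\delta_{n+1}m(C')^2D_1D_4\,\mathbb{E}_z[\lVert Z_{t_n}-\Zbar_{t_n}\rVert]$ and $\delta_{n+1}^2C'mD_1M_2(t_n,z)$ terms), whereas the contribution $(C')^2D_2\lVert Z_{t_n}-\Zbar_{t_n}\rVert$, which is already linear in $\lVert Z_{t_n}-\Zbar_{t_n}\rVert$, must instead be paired with the crude bound $\delta_{n+1}L(t_{n+1},z)$ — otherwise one creates an uncontrolled $\lVert Z_{t_n}-\Zbar_{t_n}\rVert^2$ term — giving the $D_2L$ contribution to the linear coefficient; the remaining $\delta_{n+1}$- and $\delta_{n+1}^2$-order pieces $C'M_1\delta_{n+1}$ and $(\tilde C+C'D_2\tilde C)\delta_{n+1}^2$ are handled with the crude bound and with $\mathbb{P}_z(E_{10}\mid\cdot)\le1$ respectively. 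Absorbing the lower-order $\delta_{n+1}^2\lVert Z_{t_n}-\Zbar_{t_n}\rVert$ leftovers into the $O(\delta_{n+1})$ coefficient via $\delta_{n+1}\le\delta_0$ then produces exactly the stated inequality.
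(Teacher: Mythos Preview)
Your proposal is correct and follows essentially the same approach as the paper's proof: the same two probability estimates (the discrepancy bound via Assumption~\ref{ass:lambda_lipschitz} and Assumption~\ref{ass:lambda_wasserstein}(a), and a crude bound from Lemma~\ref{lem:bounded_lambdas}), the same triangle-inequality decomposition of the one-step error, and crucially the same pairing strategy you describe as the ``delicate point''. The only cosmetic differences are that the paper collapses your five interpolation steps into three by invoking Lemma~\ref{lem:flow_to_integrator}, and that the paper's crude bound is $2m\delta_{n+1}L(t_{n+1},z)$ rather than your tighter $\delta_{n+1}L(t_{n+1},z)$; your version therefore yields slightly smaller constants, which still implies the stated inequality. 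Note also that both your argument and the paper's proof actually use Assumption~\ref{ass:F_new}(b) in the $(C')^2D_2$ step, even though only part (a) is listed in the lemma's hypotheses---this is a minor omission in the statement, not a flaw in your reasoning.
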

\begin{proof}
    Recall that $E_{10}$ is the event in which there are no switches for $Z_s$ for $s\in(t_{n},t_{n+1}]$ and there is one event for the approximation.
    Taking advantage of the coupling of the two processes as described in Coupling \ref{coup:wass_gen_pdmp} we find that $E_{10}$ takes place as long as for some $i$ 
	\begin{equation}\notag
	    \tilde{U}^i_{n+1} \in \Bigg(1-\exp\left(-\int_{0}^{\delta_{n+1}} \lambda_i(\varphi_s(Z_{t_{n}}))\dd s\right),1-\exp\left(-\int_0^{\delta_{n+1}} \lambdabar_i(\Zbar_{t_{n}},s)\dd s\right) \Bigg].
	\end{equation}
	Then we can estimate the probability of this event as follows:
	\begin{align*}
	    \mathbb{E}_z[\mathbbm{1}_{E_{10}}\rvert Z_{t_n},\Zbar_{t_n} ] 
	    & \leq \sum_{i=1}^m \left\lvert \exp\left(-\int_{0}^{\delta_{n+1}} \lambda_i(\varphi_s(Z_{t_{n}}))\dd s\right) - \exp\left(-\int_0^{\delta_{n+1}} \lambdabar_i(\Zbar_{t_{n}},s)\dd s\right)   \right\rvert  \\
	    & \leq \sum_{i=1}^m  \int_{0}^{\delta_{n+1}}\left\lvert  \lambda_i(\varphi_s(Z_{t_{n}})) - \lambdabar_i(\Zbar_{t_{n}},s)  \right\rvert\dd s  
	\end{align*}
	where we used that $\exp(-z)$ is $1$-Lipschitz for $z\geq 0$. Then we find bounds for $\mathbb{E}_z[\mathbbm{1}_{E_{10}} ] $ and $\mathbb{E}_z[\mathbbm{1}_{E_{10}}\rvert Z_{t_n},\Zbar_{t_n} ] $ respectively. For the first case we use the triangle inequality, followed by observing that $\lambda$ and $\varphi_s$ are Lipschitz by the inequality shown in \eqref{eq:varphi_lipschitz}  and then Assumption~\ref{ass:lambda_wasserstein}:
	\begin{equation}\label{eq:E_10_prob}
	  \begin{aligned}
        \mathbb{E}_z[\mathbbm{1}_{E_{10}}] &\leq \sum_{i=1}^m \mathbb{E}_z \Bigg[   \int_{0}^{\delta_{n+1}}\left\lvert  \lambda_i(\varphi_s(Z_{t_{n}})) - \lambda_i(\varphi_s(\Zbar_{t_{n}}))  \right\rvert\dd s \\
        & \quad + \int_{0}^{\delta_{n+1}}\left\lvert  \lambda_i(\varphi_s(\Zbar_{t_{n}})) - \lambdabar_i(\Zbar_{t_{n}},s)  \right\rvert\dd s  \Bigg]\\ 
        & \leq \sum_{i=1}^m \mathbb{E}_z \left[   \int_{0}^{\delta_{n+1}} D_4 C' \lVert  Z_{t_{n}} - \Zbar_{t_{n}} \rVert\dd s + \int_{0}^{\delta_{n+1}} \delta_{n+1} \overline{M}_2(\Zbar_{t_n})\dd s  \right] \\
	    & \leq \delta_{n+1} m  D_4 C' \mathbb{E}_z[\lVert  Z_{t_{n}} - \Zbar_{t_{n}} \rVert] + m \delta^2_{n+1}  \mathbb{E}_z [\overline{M}_2(\Zbar_{t_n})] \\
	    & \leq \delta_{n+1}  m D_4 C' \mathbb{E}_z[\lVert  Z_{t_{n}} - \Zbar_{t_{n}} \rVert] + \delta^2_{n+1} m M_2(t_n,z),
	  \end{aligned}
	\end{equation}
	where in the last inequality we used again Assumption~\ref{ass:lambda_wasserstein}. Alternatively, we can bound the switching rates by Lemma \ref{lem:bounded_lambdas}:
	\begin{equation}\label{eq:E_10_prob_bis}
	  \begin{aligned}
        \mathbb{E}_z[\mathbbm{1}_{E_{10}}\rvert Z_{t_n},\Zbar_{t_n}] &\leq  \sum_{i=1}^m   \int_{0}^{\delta_{n+1}} \big( \lvert  \lambda_i(\varphi_s(Z_{t_{n}})) \rvert + \lvert \lambdabar_i(\Zbar_{t_{n}},s)  \rvert )\dd s \\
	    & \leq 2 m\delta_{n+1}  L(t_{n+1},z) ,
	  \end{aligned}
	\end{equation}
    
    Let us now focus on bounding the distance between the two processes. On event $E_{10}$ we have $Z_{t_{n+1}}=\varphi_{\delta_{n+1}}(Z_{t_n})$, while $\Zbar_{t_{n+1}} = \phibar_{\delta_{n+1}-\taub} (\Fbar(\phibar_{\taub}(\Zbar_{t_{n}}),U_{n+1}))$ where $\taub$ is the time of the event for the approximation. By triangle inequality we can decompose the distance in the following terms
    \reqnomode
	\begin{align*}
	    & \mathbb{E}_z [\lVert Z_{t_{n+1}}-\Zbar_{t_{n+1}}\rVert \mathbbm{1}_{E_{10}}] 
	     =\\
	     & =\mathbb{E}_z \left[\lVert \varphi_{\delta_{n+1}}(Z_{t_{n}})-\phibar_{\delta_{n+1}-\taub} (\Fbar_{\bar{I}_{n+1}}(\phibar_{\taub}(\Zbar_{t_{n}}),U_{n+1})) \rVert \mathbbm{1}_{E_{10}} \right]\\
	    & \leq \mathbb{E}_z \left[  \lVert \varphi_{\delta_{n+1}}(Z_{t_{n}})-  \varphi_{\delta_{n+1}-\taub} (F_{\bar{I}_{n+1}}(\varphi_\taub(Z_{t_n}),U_{n+1}))
	    \rVert \mathbbm{1}_{E_{10}}\right] \label{eq:E_10_term1}\tag{*} \\
	    & + \mathbb{E}_z \Big[ \lVert \varphi_{\delta_{n+1}-\taub} (F_{\bar{I}_{n+1}}(\varphi_\taub(Z_{t_n}),U_{n+1})) \label{eq:E_10_term2}\tag{**} \\
	    & \quad - \varphi_{\delta_{n+1}-\taub}(F_{\bar{I}_{n+1}}(\phibar_\taub(\Zbar_{t_n}),U_{n+1}))
	    \rVert  \mathbbm{1}_{E_{10}} \Big]  \\
	    & + \mathbb{E}_z \Big[ \lVert \varphi_{\delta_{n+1}-\taub}(F_{\bar{I}_{n+1}}(\phibar_\taub(\Zbar_{t_n}),U_{n+1})) \label{eq:E_10_term3}\tag{***} \\
	    & \quad - \phibar_{\delta_{n+1}-\taub} (\Fbar_{\bar{I}_{n+1}}(\phibar_{\taub}(\Zbar_{t_{n}}),U_{n+1}))
	    \rVert  \mathbbm{1}_{E_{10}} \Big]\\
	    & = \eqref{eq:E_10_term1}+\eqref{eq:E_10_term2}+\eqref{eq:E_10_term3}. 
	\end{align*}
	In order to estimate term \eqref{eq:E_10_term1} we apply inequality \eqref{eq:varphi_lipschitz}, then Assumption~\ref{ass:F_new}(a) by conditioning on $Z_{t_n},\taub$, and then we apply \eqref{eq:E_10_prob}:
	\begin{equation}\label{eq:aux_star}
	\begin{aligned}
	    \eqref{eq:E_10_term1} &\leq C' \mathbb{E}_z [\lVert \varphi_\taub(Z_{t_n})-F_{\bar{I}_{n+1}}(\varphi_\taub(Z_{t_n}),U_{n+1}) \rVert\mathbbm{1}_{E_{10}}] \\
	    & \leq  C' D_1 \mathbb{E}_z[\mathbbm{1}_{E_{10}}] \\
	    & \leq \delta_{n+1} m  D_4 (C')^2 D_1 \mathbb{E}_z[\lVert  Z_{t_{n}} - \Zbar_{t_{n}} \rVert] + \delta^2_{n+1} m  C' D_1 M_2(t_n,z).
	\end{aligned}
	\end{equation}
	For term \eqref{eq:E_10_term2} we use the same reasoning of \eqref{eq:aux_dblstar} together with the estimate \eqref{eq:E_10_prob_bis}:
	\begin{align*}
	    \eqref{eq:E_10_term2} &
	    \leq   \delta_{n+1} m 2 L(t_{n+1},z)(C')^2 D_2 \mathbb{E}_z[\lVert Z_{t_n}-\Zbar_{t_n}\rVert ] +   C' D_2 \tilde{C} \delta_{n+1}^2.
	\end{align*}
	Then for term \eqref{eq:E_10_term3} we follow the reasoning in \eqref{eq:aux_trplstar} and apply estimate \eqref{eq:E_10_prob_bis} to obtain
	\begin{align}
	    \eqref{eq:E_10_term3} &
	    \leq \delta_{n+1}^2 \big(2 m C'M_1 L(t_{n+1},z) + \Tilde{C} \big).\nonumber
	\end{align}
	
	The statement of the lemma follows then by combining estimates \eqref{eq:E_10_term1}, \eqref{eq:E_10_term2}, \eqref{eq:E_10_term3}.
\end{proof}

\begin{lemma}\label{lemma:strong_error_E01}
Under Assumptions \ref{ass:lipschitz_phi}, \ref{ass:F_new}(a), \ref{ass:lambda_lipschitz}, \ref{ass:integrator}, \ref{ass:lambda_wasserstein}, \ref{ass:boundedPDMP}, it holds that 
\begin{equation}\notag
    \begin{aligned}
        \mathbb{E}_z\left[\lVert Z_{t_{n+1}}-\Zbar_{t_{n+1}}\rVert \mathbbm{1}_{E_{01}} \right] &\leq  \delta_{n+1}\,m C'\big(L(t_{n+1},z)+ C' D_1 D_4\big) \,\mathbb{E}_z \left[ \lVert Z_{t_{n}}-\Zbar_{t_{n}}\rVert\right] \\
        & \quad +\delta_{n+1}^2 \big(2B(t_{n+1},z) (L(t_{n+1},z))^2+ m C' D_1  M_2(t_n,z)+ \Tilde{C}\big).
    \end{aligned}
\end{equation}
\end{lemma}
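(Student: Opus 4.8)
The plan is to mirror the proof of Lemma~\ref{lemma:strong_error_E10}, interchanging the roles of the PDMP and its approximation. Recall that $E_{01}$ is the event that no switch occurs for $\Zbar_s$ with $s\in(t_n,t_{n+1}]$ while $Z_s$ has at least one switch. Under Coupling~\ref{coup:wass_gen_pdmp}, since $\tau_{n+1}^i$ and $\tilde\tau_{n+1}^i$ are driven by the same uniforms $\tilde{U}_{n+1}^i$, the event $E_{01}$ occurs precisely when, for some $i$, $\tilde{U}_{n+1}^i$ lies between $1-\exp(-\int_0^{\delta_{n+1}}\lambdabar_i(\Zbar_{t_n},s)\dd s)$ and $1-\exp(-\int_0^{\delta_{n+1}}\lambda_i(\varphi_s(Z_{t_n}))\dd s)$. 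Exactly as in \eqref{eq:E_10_prob} and \eqref{eq:E_10_prob_bis} --- using that $x\mapsto e^{-x}$ is $1$-Lipschitz on $[0,\infty)$, the triangle inequality, Assumption~\ref{ass:lambda_lipschitz} together with \eqref{eq:varphi_lipschitz}, Assumption~\ref{ass:lambda_wasserstein}, and Lemma~\ref{lem:bounded_lambdas} --- one obtains the two probability estimates
\begin{align*}
  \mathbb{E}_z[\mathbbm{1}_{E_{01}}] &\leq \delta_{n+1}\,m D_4 C'\,\mathbb{E}_z[\lVert Z_{t_n}-\Zbar_{t_n}\rVert] + \delta_{n+1}^2\,m M_2(t_n,z),\\
  \mathbb{E}_z[\mathbbm{1}_{E_{01}}\mid Z_{t_n},\Zbar_{t_n}] &\leq m\,\delta_{n+1}\,L(t_{n+1},z),
\end{align*}
where for the second one it suffices to bound $\mathbb{P}_z(E_{01}\mid Z_{t_n},\Zbar_{t_n})$ by the probability that $Z_s$ switches in the step, namely $\sum_{i=1}^m\int_0^{\delta_{n+1}}\lambda_i(\varphi_s(Z_{t_n}))\dd s$.

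Next I would split $E_{01}=(E_{01}\cap\overline{E})\cup(E_{01}\cap\overline{E}^c)$, where $\overline{E}$ denotes the event that $Z_s$ has exactly one switch in $(t_n,t_{n+1}]$. On $\overline{E}^c$, Assumption~\ref{ass:boundedPDMP} gives $\lVert Z_{t_{n+1}}-\Zbar_{t_{n+1}}\rVert\leq 2B(t_{n+1},z)$ almost surely, and the estimate \eqref{eq:bound_2events} gives $\mathbb{P}_z(\overline{E}^c)\leq\delta_{n+1}^2(L(t_{n+1},z))^2$; arguing as in \eqref{eq:bound_twoormoreevents} this accounts for the term $2\delta_{n+1}^2 B(t_{n+1},z)(L(t_{n+1},z))^2$.

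On $E_{01}\cap\overline{E}$ we have $Z_{t_{n+1}}=\varphi_{\delta_{n+1}-\tau_{n+1}}(F_{I_{n+1}}(\varphi_{\tau_{n+1}}(Z_{t_n}),U_{n+1}))$ and $\Zbar_{t_{n+1}}=\phibar_{\delta_{n+1}}(\Zbar_{t_n})$. Since $\varphi_{\delta_{n+1}-\tau_{n+1}}(\varphi_{\tau_{n+1}}(Z_{t_n}))=\varphi_{\delta_{n+1}}(Z_{t_n})$, inserting $\varphi_{\delta_{n+1}}(Z_{t_n})$ and using the triangle inequality gives
\begin{equation*}
  \lVert Z_{t_{n+1}}-\Zbar_{t_{n+1}}\rVert \leq \lVert \varphi_{\delta_{n+1}-\tau_{n+1}}(F_{I_{n+1}}(\varphi_{\tau_{n+1}}(Z_{t_n}),U_{n+1}))-\varphi_{\delta_{n+1}}(Z_{t_n})\rVert + \lVert \varphi_{\delta_{n+1}}(Z_{t_n})-\phibar_{\delta_{n+1}}(\Zbar_{t_n})\rVert.
\end{equation*}
For the first term I would apply \eqref{eq:varphi_lipschitz} to extract a factor $C'$, then condition on $Z_{t_n}$, $\Zbar_{t_n}$, the uniforms $\tilde{U}_{n+1}^1,\dots,\tilde{U}_{n+1}^m$ and the randomness used after $\tau_{n+1}$ --- all of which fix $\mathbbm{1}_{E_{01}\cap\overline{E}}$, $I_{n+1}$ and $\tau_{n+1}$ and are independent of $U_{n+1}$ --- and apply Assumption~\ref{ass:F_new}(a) to bound $\mathbb{E}[\lVert F_{I_{n+1}}(\varphi_{\tau_{n+1}}(Z_{t_n}),U_{n+1})-\varphi_{\tau_{n+1}}(Z_{t_n})\rVert\mid\cdot]\leq D_1$; multiplying by $\mathbbm{1}_{E_{01}\cap\overline{E}}\leq\mathbbm{1}_{E_{01}}$ and using the first probability estimate above contributes $\delta_{n+1}\,m(C')^2 D_1 D_4\,\mathbb{E}_z[\lVert Z_{t_n}-\Zbar_{t_n}\rVert]+\delta_{n+1}^2\,m C' D_1 M_2(t_n,z)$. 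For the second term, Lemma~\ref{lem:flow_to_integrator} gives $\lVert\varphi_{\delta_{n+1}}(Z_{t_n})-\phibar_{\delta_{n+1}}(\Zbar_{t_n})\rVert\leq C'\lVert Z_{t_n}-\Zbar_{t_n}\rVert+\tilde{C}\delta_{n+1}^{p+1}$; multiplying by $\mathbbm{1}_{E_{01}}$, bounding the term carrying $\lVert Z_{t_n}-\Zbar_{t_n}\rVert$ with the conditional probability estimate and the remaining term with $\mathbbm{1}_{E_{01}}\leq 1$, contributes $\delta_{n+1}\,m C' L(t_{n+1},z)\,\mathbb{E}_z[\lVert Z_{t_n}-\Zbar_{t_n}\rVert]+\tilde{C}\delta_{n+1}^{p+1}$. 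Adding the three contributions and using $p\geq 1$ and $\delta_{n+1}\leq\delta_0$ to absorb higher powers of $\delta_{n+1}$ gives the claimed inequality. The only delicate point is the conditioning in the first term of the last display: one must verify that $U_{n+1}$ stays independent of the conditioning $\sigma$-algebra so that Assumption~\ref{ass:F_new}(a) applies uniformly over the random index $I_{n+1}$ and switching time $\tau_{n+1}$; everything else is a direct repetition of the computations already performed for $E_{10}$.
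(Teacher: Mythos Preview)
Your proposal is correct and follows essentially the same approach as the paper: split $E_{01}$ into $\overline{E}$ and $\overline{E}^c$, treat $\overline{E}^c$ via \eqref{eq:bound_twoormoreevents}, and on $\overline{E}$ insert the pivot $\varphi_{\delta_{n+1}}(Z_{t_n})$, bounding the two resulting terms exactly as you describe using Assumption~\ref{ass:F_new}(a) with the probability estimate \eqref{eq:E_10_prob} and Lemma~\ref{lem:flow_to_integrator} with the conditional bound, respectively. The conditioning point you flag is handled in the paper in the same way and is justified by the independence built into Coupling~\ref{coup:wass_gen_pdmp}.
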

\begin{proof}
    Recall that $E_{01}$ is the event in which for $s\in [t_{n},t_{n+1})$ there are no switches for $\Zbar_s$, while there is at least one event for $Z_s$. Similarly to the proof of Lemma \ref{lemma:strong_error_E11}, let us denote as $\overline{E}$ the event in which there is exactly one event for $Z_s$ in the current time interval. On $\overline{E}^c$ we can use the bound \eqref{eq:bound_twoormoreevents}. On $\overline{E}$
    \reqnomode
	\begin{align*}
	    & \mathbb{E}_z [\lVert Z_{t_{n+1}}-\Zbar_{t_{n+1}}\rVert \mathbbm{1}_{E_{01}\cap\overline{E}}] 
	     =\\
	     &=\mathbb{E}_z \left[ \left\lVert \varphi_{\delta_{n+1}-\tau_{n+1}}(F_{I_{n+1}}(\varphi_{\tau_{n+1}}(Z_{t_{n}}),U_{n+1}))-\phibar_{\delta_{n+1}} (\Zbar_{t_{n}})  \right\rVert \mathbbm{1}_{E_{01}\cap\overline{E}} \right] \\
	    & \leq \mathbb{E}_z \left[ \left\lVert \varphi_{\delta_{n+1}-\tau_{n+1}}(F_{I_{n+1}}(\varphi_{\tau_{n+1}}(Z_{t_{n}}),U_{n+1}))-\varphi_{\delta_{n+1}}(Z_{t_{n}})\right\rVert \mathbbm{1}_{E_{01}\cap\overline{E}} \right] \label{eq:E_01_term1}\tag{*}\\
	    & \quad + \mathbb{E}_z \left[ \rVert \varphi_{\delta_{n+1}}(Z_{t_{n}})-
	    \phibar_{\delta_{n+1}} (\Zbar_{t_{n}})  \rVert \mathbbm{1}_{E_{01}\cap\overline{E}} \right] \label{eq:E_01_term2}\tag{**}.
	\end{align*}
	In order to find an estimate for term \eqref{eq:E_01_term1} observe that the probability of  $E_{01}$ can be estimated similarly to what done for the probability of $E_{10}$. Then following the reasoning in \eqref{eq:aux_star} we obtain
	\begin{align*}
	    \eqref{eq:E_01_term1} & \leq \delta_{n+1} m (C')^2 D_1 D_4 \mathbb{E}_z[\lVert  Z_{t_{n}} - \Zbar_{t_{n}} \rVert] + \delta^2_{n+1} m C' D_1 M_2(t_n,z).
	\end{align*}
	Similarly, for term \eqref{eq:E_01_term2} it is sufficient to apply Lemma \ref{lem:flow_to_integrator} and then to bound $\mathbb{E}[\mathbbm{1}_{E_{01}}]$ by the probability that the continuous process has a random event:
	\begin{equation*}
	    \eqref{eq:E_01_term2}  \leq \delta_{n+1} m C' L(t_{n+1},z) \mathbb{E}_z \left[ \lVert  Z_{t_{n}}- \Zbar_{t_{n}} \rVert \right] + \Tilde{C}\delta_{n+1}^2.
	\end{equation*}
\end{proof}

\subsection{Proof of Corollary~\ref{cor:strong_error_pdmp}}\label{sec:proof_corollary_wass}

\begin{proof}
    We only need to show that Assumptions \ref{ass:F_new} and \ref{ass:boundedPDMP} are verified in this setting under Assumption~\ref{ass:discr_velocities_wass}. Clearly the process moves with bounded velocity, and thus \ref{ass:boundedPDMP} holds. Then let us focus on verifying Assumption~\ref{ass:F_new} and consider the $\ell^1$-norm. For part (a) it is clear that for $z=(x,v)$
    \begin{equation*}
        \mathbb{E} [\lVert z - F_i(z,\tilde{U})\rVert] =\mathbb{E} [\lVert v - F_i^v((x,v),\tilde{U})\rVert] \leq V_{max}.
    \end{equation*}
    Then consider part (b). For $z'=(y,w)$
    \begin{align*}
        &\mathbb{E} [\lVert F_i(z,\tilde{U})-F_i(z',\tilde{U})\rVert]  \leq \lVert x-y \rVert + \mathbb{E} [\lVert F_i^v((x,v),\tilde{U})-F_i^v((y,w),\tilde{U})\rVert]  \\
        & \quad\leq \lVert x-y \rVert + \frac{\lVert v-w\rVert}{V_{min}} V_{max} + \mathbb{E} [\lVert F_i^v((x,w),\tilde{U})-F_i^v((y,w),\tilde{U})\rVert]   \\
        & \quad\leq \max\left\{\frac{V_{max}}{V_{min}},1+D\right\} \lVert z-z'\rVert.
    \end{align*}
    In the second inequality we used the triangle inequality and that $\lVert v-w\rVert\geq V_{min}$, while in the last inequality we bounded the rightmost term by Assumption~\ref{ass:discr_velocities_wass}. 
    Let us focus on part (c). For the position part we have for $s\in[0,\delta]$ and $z=(x,v)$
    \begin{align*}
        \mathbb{E} &[\lVert \varphi_{\delta-s}(\varphi_s(z),F_i^v((\varphi_s(z),v),\tilde{U})) - \varphi_\delta(z) \rVert ]  = \\
        & = \mathbb{E}\Big[\Big\lVert x+\int_0^s \vf(v)\dd s + \int_0^{\delta-s}\vf(F_i^v((\varphi_s(z),v),U))\dd r - x - \int_0^\delta \vf(v)\dd s \Big\rVert \Big] \\
        & \leq \mathbb{E}[ \lVert  s\vf(v) + (\delta-s) \vf(F_i^v((\varphi_s(z),v),U)) -\delta \vf(v)\rVert ] \\
        & \leq \mathbb{E}[ \lVert (\delta-s) (\vf(F_i^v((\varphi_s(z),v),U)) - \vf(v) ) \rVert ] \\
        & \leq \delta C \mathbb{E} [\lVert F_i^v((\varphi_s(z),v),U)-v\rVert ] \\
        & \leq \delta C V_{max}.
    \end{align*}
    On the other hand, for the velocity part we obtain using Assumptions \ref{ass:discr_velocities_wass} and \ref{ass:lipschitz_phi}
    \begin{align*}
        \mathbb{E}[\lVert F_i^v((\varphi_s(z),v),\tilde{U}) - F_i^v((\varphi_\delta(z),v),\tilde{U})\rVert] & \leq D\lVert \varphi_s(z)-\varphi_\delta(z)\rVert \\
        & \leq D C' \lVert z-\varphi_{\delta-s}(z)\rVert \\
        & \leq \delta D (C')^2.
    \end{align*}
    Therefore part (c) of Assumption~\ref{ass:F_new} holds with $D_3 = D (C')^2 + C V_{max}$.
\end{proof}

\subsection{Proof of Proposition~\ref{prop:wass_rhmc}}\label{sec:proof_wass_rhmc}
\begin{proof}
In the proof of Theorem~\ref{thm:strong_error_pdmp} boundedness of the PDMP is used only in the case $p=1$ to deal with the event in which the PDMP has two or more jumps in the same time step (see Lemmas \ref{lemma:strong_error_E11} and \ref{lemma:strong_error_E10}, in particular Equation~\eqref{eq:bound_twoormoreevents}). Then it is sufficient to show that a similar bound holds also under Assumption~\ref{ass:bdd_lambda} instead of Assumption~\ref{ass:boundedPDMP}. Let $p=1$ and consider the case of Lemma \ref{lemma:strong_error_E10}, i.e. restricting to event $E_{10} \cap \overline{E}$, which is the event in which the continuous time process has two or more jumps, while the approximation has zero jumps. Then we want to bound
\begin{align*}
    &\mathbb{E}_z[\lVert Z_{t_{n+1}}-\Zbar_{t_{n+1}}\rVert \mathbbm{1}_{E_{10} \cap \overline{E}}]= \\
    &  =\! \sum_{\ell\geq 2} \!\int_{A_\ell} \!\!\!\!\mathbb{E}_z \big[\lVert \varphi_{s_\ell}\!\!\circ\! F_{I_{n+1}^{\ell}}\!(\cdot,U_{\ell-1})\!\circ\! \varphi_{s_{\ell-1}}\!\!\circ\! \dots\!\circ\! F_{I_{n+1}^{1}}(\cdot,U_{0}) \!\circ\!\varphi_{s_0}\!(Z_{t_n})\!-\!  \phibar_{\delta_{n+1}}\!(\Zbar_{t_n})\rVert p_{Z_{t_n}}(\dd s)\big],
\end{align*}
where $s_0,\dots,s_{\ell-1}$ are the interarrival times of the random jumps of $Z_t$, $I_{n+1}^\ell$ denote the index of the $\ell$-th jump to occur between time $t_n$ and $t_{n+1}$, $s_\ell=\delta_{n+1}-\sum_{i=1}^{\ell-1} s_i$, $U_0,\dots,U_{\ell-1}\stackrel{iid}{\sim}\nuU$,
$$A_\ell=\left\{s=(s_0,\dots,s_\ell): \sum_{i=1}^\ell s_i =\delta_{n+1}, s_i>0\right\},$$
and $p_{Z_{t_n}}(\dd s)$ is the law of the interarrival times,.
Then we have
\begin{align*}
    \mathbb{E}_z[\lVert Z_{t_{n+1}}-\Zbar_{t_{n+1}}\rVert \mathbbm{1}_{E_{10} \cap \overline{E}}] & \leq \mathbb{E} \Bigg[\sum_{\ell\geq 2} \int_{A_\ell} \mathbb{E}_z \Big[\Big(\lVert \varphi_{s_\ell}\circ F_{I_{n+1}^\ell}(\cdot,U_{\ell-1})\circ\dots\circ \varphi_{s_0}(Z_{t_n}) \rVert\\
    & \qquad +\lVert \phibar_{\delta_{n+1}}(\Zbar_{t_n})\rVert) \Big\rvert Z_{t_n}\Big] p_{Z_{t_n}}(\dd s) \Bigg]
\end{align*}
Now we use \eqref{eq:varphi_lipschitz} to conclude that $\varphi_s$ has linear growth for some constant $L$, and so $\lVert \varphi_s(z)\rVert \leq L(\lVert z\rVert +1)$. It follows that 
\begin{align*}
    & \mathbb{E}_z \left[\lVert \varphi_{s_\ell}\circ F_{I_{n+1}^\ell}(\cdot,U_{\ell-1})\circ \varphi_{s_{\ell-1}}\circ F_{I_{n+1}^{\ell-1}}(\cdot,U_{\ell-2}) \circ\dots\circ \varphi_{s_0}(Z_{t_n}) \rVert \rvert Z_{t_n} \right]\leq \\
    & \qquad \leq L \mathbb{E}_z \left[\left(\lVert F_{I_{n+1}^\ell}(\cdot,U_{\ell-1})\circ \varphi_{s_{m-1}}\circ F_{I_{n+1}^{\ell-1}}(\cdot,U_{\ell-2}) \circ\dots\circ \varphi_{s_0}(Z_{t_n}) \rVert +1 \right)\rvert Z_{t_n} \right] \\
    & \qquad \leq L (1+D_1) + L\mathbb{E}_z \left[\left(\lVert \varphi_{s_{\ell-1}}\circ F_{I_{n+1}^{\ell-1}}(\cdot,U_{\ell-2}) \circ\dots\circ \varphi_{s_0}(Z_{t_n}) \rVert  \right)\rvert Z_{t_n} \right].
\end{align*}
In the last inequality we used that $\mathbb{E}[\lVert F_i(z,U)\rVert]\leq \lVert z\rVert + D_1$ for any $i$, which is implied by Assumption~\ref{ass:F_new}(a). Therefore by recursion we have 
\begin{align*}
    & \mathbb{E}_z \left[\lVert \varphi_{s_\ell}\circ F_{I_{n+1}^\ell}(\cdot,U_{\ell-1})\circ \dots\circ \varphi_{s_0}(Z_{t_n}) \rVert \rvert Z_{t_n} \right]\leq \sum_{i=1}^\ell (1+D_1)L^i + L^{\ell+1}(1+ \lVert Z_{t_n}\rVert).
\end{align*}
Moreover we also have that $\lVert \phibar_s(z)\rVert \leq \delta_{n+1}^2 + L(\lVert z\rVert +1)$ by Assumption~\ref{ass:integrator}. It follows that
\begin{align*}
    &\mathbb{E}_z[\lVert Z_{t_{n+1}}-\Zbar_{t_{n+1}}\rVert \mathbbm{1}_{E_{10} \cap \overline{E}}]= \\
    & \leq \sum_{\ell\geq 2}\mathbb{E}_z \left[ \int_{A_\ell} \left(\sum_{i=1}^\ell (1+D_1)L^i + L^{\ell+1}(1+ \lVert Z_{t_n}\rVert) +\delta_{n+1}^2 + L(\lVert \Zbar_{t_n}\rVert+1)\right) p_{Z_{t_n}}(\dd s)\right]\\
    & \leq \mathbb{E}_z \left[ \sum_{\ell\geq 2} \left(\sum_{i=1}^\ell (1+D_1)L^i + L^{\ell+1}(1+ \lVert Z_{t_n}\rVert) +\delta_{n+1}^2 + L(\lVert \Zbar_{t_n}\rVert+1)\right) p_{Z_{t_n}}(A_\ell)\right]\\
      & \leq \tilde{L}\mathbb{E}_z \left[ (1+ \lVert Z_{t_n}\rVert+\lVert \Zbar_{t_n}\rVert)\sum_{\ell\geq 2} \ell L^\ell  p_{Z_{t_n}}(A_\ell)\right]
\end{align*}
for some constant $\tilde{L}$ which depends only on $D_1,L,\delta_0$. The function $f(\ell) = \ell L^\ell $ is increasing in the number of jumps and therefore because the switching rates have a global upper bound $\lambda_{max}$ we obtain 
\begin{align*}
    & \mathbb{E}_z[\lVert Z_{t_{n+1}}-\Zbar_{t_{n+1}}\rVert \mathbbm{1}_{E_{10} \cap \overline{E}}]  \leq \\
    & \quad \leq\tilde{L}\mathbb{E}_z \left[ (1+ \lVert Z_{t_n}\rVert+\lVert \Zbar_{t_n}\rVert) \sum_{\ell\geq 2} \ell L^\ell e^{-\delta_{n+1} \lambda_{max}} \frac{(\delta_{n+1}\lambda_{max})^\ell}{\ell!} \right] \\
    & \quad \leq \Tilde{L}(1+2B(t_{n},z)) \sum_{\ell\geq 2} \ell L^\ell e^{-\delta_{n+1} \lambda_{max}} \frac{(\delta_{n+1}\lambda_{max})^\ell}{\ell !},
\end{align*}
where in the last inequality we used Assumption~\ref{ass:bdd_lambda}. It remains to show that the sum is of order $\delta_{n+1}^2$. This can be proved as follows
\begin{align*}
    & \sum_{\ell\geq 2} \ell L^\ell e^{-\delta_{n+1} \lambda_{max}} \frac{(\delta_{n+1}\lambda_{max})^\ell}{\ell!} = e^{(L-1)\delta_{n+1} \lambda_{max}} \sum_{\ell\geq 2}  e^{-L\delta_{n+1} \lambda_{max}} \frac{(L \delta_{n+1}\lambda_{max})^\ell}{(\ell-1)!}\\
    & \quad= e^{(L-1)\delta_{n+1} \lambda_{max}} L \delta_{n+1}\lambda_{max} \sum_{\ell\geq 1} e^{-L\delta_{n+1} \lambda_{max}} \frac{(L \delta_{n+1}\lambda_{max})^\ell}{\ell!} \\
    & \quad= e^{(L-1)\delta_{n+1} \lambda_{max}} L \delta_{n+1}\lambda_{max} (1-e^{-L\delta_{n+1} \lambda_{max}}) \\
    & \quad \leq \delta_{n+1}^2  e^{(L-1)\delta_{0} \lambda_{max}} L \lambda_{max}.
\end{align*}
In particular we used that $\delta_n\leq\delta_0$ for all $n\in\N$.

The same proof holds on the event $E_{10} \cap \overline{E}$, and thus we have proved the wanted  result.
\end{proof}

\section{Proofs of Section \ref{sec:tv_distance}}\label{sec:proofs_tv_appendix}

\subsection{Proof of Theorem~\ref{thm:tv_distance}: the case of $p=1$}\label{sec:proofs_tv_p=1_appendix}

\begin{proof}[Proof of Lemma \ref{lem:tv_dist_aux}]
    Let us take advantage of the construction in Coupling \ref{coup:error_tv_distance}. First consider the case in which $T_{i^*}(Z_{t_{n-1}}) > \delta_{n}$. In this case there are no random events for either process in the time interval $(t_{n-1},t_{n}]$ and therefore  $Z_{t_{n}}=\Zbar_{t_{n}}=\varphi_{\delta_{n}}(Z_{t_{n-1}})$. Now, consider the case where $T_{i^*} \leq \delta_{n}$. In this scenario, there are three disjoint events:
    \begin{itemize}
        \item The proposed switching time is accepted by both processes. Denote this event as $E_1$.
        \item The proposed switching time is accepted by one process, and rejected by the other. Denote this event as $E_2$.
        \item The proposed switching time is rejected for both processes. Denote this event as $E_3$.
    \end{itemize}
    Therefore we have
    \begin{equation*}
        \mathbb{P}_z(Z_{t_{n}} \neq \Zbar_{t_{n}}\rvert Z_{t_{n-1}}=\Zbar_{t_{n-1}}) = \sum_{i=1}^3 \mathbb{P}_z(Z_{t_{n}} \neq \Zbar_{t_{n}}, E_i\rvert Z_{t_{n-1}}=\Zbar_{t_{n-1}}).
    \end{equation*}
    
    We start with event $E_1$. In this case we have that $\Zbar_{t_n}\neq Z_{t_n}$ if the continuous time process has at least one more jump in time interval $(t_{n-1} + T_{i^*},t_{n}]$. Now let $\lambda(z)= \sum_{i=1}^m \lambda_i(z) $ and $\lambda_{tot}(z,t;\delta_n) = \sum_{i=1}^m \lambda_{tot}^i(z,t;\delta_n)$. Observe that, conditional on $Z_{t_{n-1}}$, the minimum of the $m$ proposed random times is distributed as $\mathbb{P}(T_{i^*} \leq t) = 1-\exp(-\int_0^t\lambda_{tot}(Z_{t_{n-1}},s;\delta_n)\dd s)$. Then bounding by $1$ the probability that both proposals are accepted, and conditioning on $Z_{t_{n-1}}$ we obtain
    \begin{align*}
        \mathbb{P}_z(Z_{t_{n}} \neq \Zbar_{t_{n}}, E_1\rvert Z_{t_{n-1}}=\Zbar_{t_{n-1}}) & \leq \mathbb{E}_z\Bigg[ \int_{0}^{\delta_{n}} \lambda_{tot}(Z_{t_{n-1}},t;\delta_n) e^{-\int_0^t \lambda_{tot}(Z_{t_{n-1}},s;\delta_n)\dd s} \\
        & \quad \left(\!1\!-\!\exp\left(\!-\!\int_{t}^{\delta_{n}} \lambda(\varphi_s(F_{i^*}(\varphi_t(Z_{t_{n-1}}),U_n)))\dd s \right)\right) \!\dd t \Bigg] .
    \end{align*}
    Then using that $1-\exp(-z)\leq z$, that $\exp(-z)\leq 1$ for $z\geq 0$ and by Fubini's theorem we obtain the following bound:
    \begin{align*}
        &\mathbb{P}_z(Z_{t_{n}} \neq \Zbar_{t_{n}}, E_1\rvert Z_{t_{n-1}}=\Zbar_{t_{n-1}})  \leq \\
        &\quad\leq\mathbb{E}_z\Bigg[ \int_{0}^{\delta_{n}} \int_{t}^{\delta_{n}} \lambda_{tot}(Z_{t_{n-1}},t;\delta_n)  \lambda(\varphi_s(F_{i^*}(\varphi_t(Z_{t_{n-1}}),U_n)))\dd s \dd t \Bigg] \\
        & \quad\leq \int_{0}^{\delta_{n}} \int_{t}^{\delta_{n}} \mathbb{E}_z\left[  \lambda_{tot}(Z_{t_{n-1}},t;\delta_n)  \lambda(\varphi_s(F_{i^*}(\varphi_t(Z_{t_{n-1}}),U_n))  \right]\dd s \dd t \\
        & \quad\leq \delta_{n}^2 L_1(t_{n},z)/2.
    \end{align*}
    Note that in the last inequality the bound $L_1(t_n,z)$ follows from part (a) of Assumption~\ref{ass:lambda_tv}.
    
    Let us now consider event $E_2$. As the proposal $T_{i^*}(Z_{t_{n-1}})$ is accepted for one process only, it must be that 
    \begin{align*}
    \overline{U}\in \Bigg(&\min\left\{\frac{\lambda_{i^*}(\varphi_{T_{i^*}}(Z_{t_{n-1}}))}{\lambda_{tot}^{i^*} (Z_{t_{n-1}},T_{i^*}(Z_{t_{n-1}});\delta_n)},\frac{\overline{\lambda}_{i^*}(Z_{t_{n-1}},T_{i^*}(Z_{t_{n-1}}))}{\lambda_{tot}^{i^*} (Z_{t_{n-1}},T_{i^*}(Z_{t_{n-1}});\delta_n)} \right\}, \\
    & \max\left\{\frac{\lambda_{i^*}(\varphi_{T_{i^*}}(Z_{t_{n-1}}))}{\lambda_{tot}^{i^*} (Z_{t_{n-1}},T_{i^*}(Z_{t_{n-1}});\delta_n)},\frac{\overline{\lambda}_{i^*}(Z_{t_{n-1}},T_{i^*}(Z_{t_{n-1}});\delta_n)}{\lambda_{tot}^{i^*} (Z_{t_{n-1}},T_{i^*}(Z_{t_{n-1}});\delta_n)} \right\}  \Bigg].
    \end{align*}
    Therefore using that $\overline{U}$ and $T_{i^*}$ are independent we obtain
    \begin{align*}
        & \mathbb{P}(Z_{t_{n}} \neq \Zbar_{t_{n}}, E_2\rvert Z_{t_{n-1}}=\Zbar_{t_{n-1}})  =\\
        & \quad = \mathbb{E}\left[\mathbbm{1}_{\{T_{i^*}(Z_{t_{n-1}}) \leq \delta_{n}\}} \left\lvert \frac{\overline{\lambda}_{i^*}(Z_{t_{n-1}},T_{i^*}(Z_{t_{n-1}});\delta_n)-\lambda_{i^*}(\varphi_{T_{i^*}}(Z_{t_{n-1}}))}{\lambda^{i^*}_{tot}(Z_{t_{n-1}},T_{i^*}(Z_{t_{n-1}});\delta_n)} \right\rvert \right].
    \end{align*}
    By the definition given in Coupling \ref{coup:error_tv_distance} we have $\lambda_{tot}^{i^*}(z,t;\delta_n)\geq 1$. Using part (b) of Assumption~\ref{ass:lambda_tv} and Fubini's theorem:
    \begin{align*}
        &\mathbb{P}(Z_{t_{n}} \neq \Zbar_{t_{n}}, E_2\rvert Z_{t_{n-1}}=\Zbar_{t_{n-1}})  \leq \delta_n \mathbb{E}_z\left[\overline{M}_2(Z_{t_{n-1}})\mathbbm{1}_{\{T_{i^*}(Z_{t_{n-1}}) \leq \delta_{n}\}} \right]\\
        & \quad\leq \delta_{n} \mathbb{E}_z \left[\overline{M}_2(Z_{t_{n-1}}) \int_0^{\delta_{n}} \lambda_{tot}(Z_{t_{n-1}},t;\delta_n) e^{-\int_0^t \lambda_{tot}(Z_{t_{n-1}},s;\delta_n)\dd s} \dd t \right]\\
        & \quad \leq \delta_{n}\int_0^{\delta_{n}} \mathbb{E}_z\left[\lambda_{tot}(Z_{t_{n-1}},t;\delta_n) \overline{M}_2(Z_{t_{n-1}}) \right]\dd t \\
        & \quad \leq \delta_{n}^2 L_2(t_{n},z).
    \end{align*}
    
    Finally, we focus on $E_3$. On this event, the processes remain equal unless there is (at least) a switch for either process for $t\in (t_{n-1}+T_{i^*}(Z_{t_n}),t_{n})$. Recall $\overline{\lambda}(z,s;\delta_n)= \sum_{i=1}^m \overline{\lambda}_i(z,s;\delta_n) $. Using this observation together with Assumption~\ref{ass:lambda_tv} and the facts that on this event $T_{i^*}(Z_{t_n})\leq \delta_{n}$ and that $1-\exp(-z)\leq z$ we obtain
    \begin{align*}
        \mathbb{P}(Z_{t_{n}} & \neq \Zbar_{t_{n}}, E_3\rvert Z_{t_{n-1}}=\Zbar_{t_{n-1}})  \leq \mathbb{E}_z \Bigg[ \int_0^{\delta_{n}} \Bigg( \left(1-\exp \left(-\int_{t}^{\delta_{n}} \lambda(\varphi_r(Z_{t_{n-1}}))\dd r \right)\right) \\
        & \qquad + \left(1-\exp\left(-\int_t^{\delta_n} \overline{\lambda}(Z_{t_{n-1}},r;\delta_n)\dd r \right) \right)  \Bigg) \\
        & \qquad \lambda_{tot}(Z_{t_{n-1}},t;\delta_n) \exp{\left(-\int_0^t \lambda_{tot}(Z_{t_{n-1}},s;\delta_n)\dd s\right)
        } \dd t \Bigg] \\
        & \leq \mathbb{E}_z \Bigg[ \int_0^{\delta_{n}}  \int_{t}^{\delta_{n}}\lambda_{tot}(Z_{t_{n-1}},t;\delta_n) \left(\lambda(\varphi_r(Z_{t_{n-1}}))+ \overline{\lambda}(Z_{t_{n-1}},r;\delta_n)\right) \dd r \dd t \Bigg] \\
        & =\int_0^{\delta_{n}}  \int_{t}^{\delta_{n}} \mathbb{E}_z \Big[ \lambda_{tot}(Z_{t_{n-1}},t;\delta_n) \left(\lambda(\varphi_r(Z_{t_{n-1}}))+ \overline{\lambda}(Z_{t_{n-1}},r;\delta_n)\right)  \Big] \dd r \dd t\\
        & \leq \delta_{n}^2 L_3(t_n,z)/2.
    \end{align*}
    Combining the three bounds on events $E_1,E_2,E_3$ we obtain the statement.
\end{proof}

\subsection{Proof of Theorem~\ref{thm:tv_distance}: the case of $p>1$}\label{sec:proof_higher_order_tv_appendix}

\begin{proof}[Proof of Theorem~\ref{thm:tv_distance}]
Observe that if $T_{i^*}>\delta$ the two processes are equal at time $\delta$ and thus the probability that $Z_{t_n} \neq \Zbar_{t_n}$ is $0$.
We analyse in turn the three events $E_1,E_2,E_3$ which were defined in Section \ref{sec:proofs_tv_p=1} in the proof of Lemma \ref{lem:tv_dist_aux}. Define the event $\Eeq=\{Z_{t_{n-1}}=\Zbar_{t_{n-1}}\}$.

On event $E_1$, the proposal $T_{i^*}$ is accepted by both processes. 
Then we reformulate $\mathbb{P}_z(Z_{t_n} \neq \Zbar_{t_n}, E_1 \rvert \Eeq)$ in terms of the conditional probability
\begin{align*}
    \mathbb{P}_z(Z_{t_n} \neq \Zbar_{t_n}, E_1\rvert \Eeq) & = \mathbb{P}_z(Z_{t_n} \neq \Zbar_{t_n}\rvert Z_{t_{n-1}+T_{i^*}} = \Zbar_{t_{n-1}+T_{i^*}}, T_{i^*}<\delta) \mathbb{P}_z(E_1\rvert \Eeq). 
\end{align*}
The first term on the right hand side can be bounded by applying Inductive Hypothesis \ref{hyp:induction_higherorder_tv}. Moreover we can use the bound $\mathbb{P}_z(E_1\rvert \Eeq)\leq \mathbb{P}_z(T_{i^*}<\delta\rvert \Eeq)$ for the rightmost term to obtain
\begin{align*}
    \mathbb{P}_z(Z_{t_n} \neq \Zbar_{t_n}, E_1\rvert \Eeq) & \leq A \delta^{p+1}\, \mathbb{E}_z\left[\left(1-\exp\left(-\int_0^\delta \lambda_{tot}(Z_{t_{n-1}},t;\delta,p+1)\dd t\right)\right) \right]\\
    &  \leq A \delta^{p+2} \sup_{s\in[0,\delta]} \mathbb{E}_z\left[ \lambda_{tot}(Z_{t_n},s;\delta,p+1)\right] \leq A \delta^{p+2} L_4(t_n,z).
\end{align*}
In the last inequality we took advantage of the bound $1-\exp(-z)\leq x$ which is true for $z>0$.

On event $E_2$ the proposal $T_{i^*}$ is accepted for one process, and rejected for the other. This happens when
\begin{align*}
\overline{U}&\in \Bigg(\min\left\{\frac{\lambda_{i^*}(\varphi_{T_{i^*}}(Z_{t_{n-1}}))}{\lambda_{tot}^{i^*} (Z_{t_{n-1}},T_{i^*};\delta,p+1)},\frac{\overline{\lambda}_{i^*}(Z_{t_{n-1}},T_{i^*};\delta,p+1)}{\lambda_{tot}^{i^*} (Z_{t_{n-1}},T_{i^*};\delta,p+1)} \right\},\\
&\quad \max\left\{\frac{\lambda_{i^*}(\varphi_{T_{i^*}}(Z_{t_{n-1}})))}{\lambda_{tot}^{i^*} (Z_{t_{n-1}},T_{i^*};\delta,p+1)},\frac{\overline{\lambda}_{i^*}(Z_{t_{n-1}},T_{i^*};\delta,p+1)}{\lambda_{tot}^{i^*} (Z_{t_{n-1}},T_{i^*};\delta,p+1)} \right\}  \Bigg],
\end{align*}
and therefore with probability
\begin{align*}
    \left\lvert \frac{\lambdabar_{i^*}(Z_{t_{n-1}},T_{i^*};\delta,p+1)\!-\!\lambda_{i^*}(\varphi_{T_{i^*}}(Z_{t_{n-1}}))}{\lambda_{tot}^{i^*} (z,T_{i^*}(z);\delta,p+1)} \right\rvert \!&\leq\! \left\lvert \lambdabar_{i^*}(Z_{t_{n-1}},T_{i^*};\delta,p+1) \!-\! \lambda_{i^*}(\varphi_{T_{i^*}}(Z_{t_{n-1}})) \right\rvert\\
    & \leq \delta^{p+1} \overline{M}_2(Z_{t_{n-1}})
\end{align*}
where we used that by definition $\lambda_{tot}^{i^*}\geq 1$ and then that $\overline{\lambda}_{i^*}(\cdot,\cdot;\delta,p+1)$ is an approximation of $p+1$ order. Thus we have
\begin{align*}
     \mathbb{P}_z(Z_{t_n} \neq \Zbar_{t_n}, E_2\rvert \Eeq) & \leq \delta^{p+1} \mathbb{E}_z\left[\overline{M}_2(Z_{t_{n-1}})\mathbb{P}_z(T_{i^*}<\delta\rvert Z_{t_{n-1}},\Eeq)\right]\\
     & \leq \delta^{p+2} \sup_{s\in[0,\delta]} \mathbb{E}_z\left[\overline{M}_2(Z_{t_{n-1}}) \lambda_{tot}(Z_{t_n},s;\delta,p+1)\right]\\
     & \leq \delta^{p+2} L_4(t_n,z)
\end{align*}

Finally consider event $E_3$. Similarly to the proof of Theorem~\ref{thm:strong_error_pdmp} it is sufficient to bound the event that $p+2$ proposal times occur before the end of the time interval, which is bounded by Assumption~\ref{ass:lambda_tv}.

\end{proof}

\section{Proofs of Section \ref{sec:weakerror}}\label{sec:proofs_weakerror_appendix}

\subsection{Proofs of Theorem~\ref{thm:weakerror} and its corollaries}\label{sec:proofs_weak_error_theorem_appendix}
\begin{lemma}\label{lem:estlawforjump}
Suppose $\lambda$ and $\overline{\lambda}$ satisfy Assumption~\ref{ass:lambda_wasserstein} (a). We will consider the two algorithms separately.
For Algorithm \ref{alg:basic_EPDMP}, let $\ptau{\delta,\ref{alg:basic_EPDMP}}$ be given by \eqref{eq:basicjumplaw} then for any $h\in \C_b^1([0,\delta])$ we have
\begin{align*}
    \left\lvert\int_0^{\delta}  h_s\ptau{\delta,\ref{alg:basic_EPDMP}}(\dd s) -  h_s\lambda(\varphi_s(z))\dd s\right\rvert &\leq \delta^2\sup_{s,r\in [0,\delta]} \big(\lvert \partial_rh_r\rvert  \lambdabar(z,s;\delta)  \\
    & \quad +   \lvert h_s\rvert \big(   \lambdabar(z,s;\delta)\lambdabar(z,r;\delta)  + \overline{M}_2(z)\big)\big).
\end{align*}

For Algorithm \ref{alg:advanced_EPDMP}, let $\ptau{\delta,\ref{alg:advanced_EPDMP}}$ be given by \eqref{eq:advancedjumplaw} then for any $h\in \C_b([0,\delta])$
\begin{equation*}
    \left\lvert \int_0^\delta h_s \ptau{\delta,\ref{alg:advanced_EPDMP}}(\dd s) - \int_0^\delta \lambda(\varphi_s(z))h_s\dd s \right\rvert \leq \delta^2 \sup_{s,r\in[0,\delta]}\left(\lvert h_s\rvert ( \lambdabar(z,s;\delta)\lambdabar(z,r;\delta)  +\overline{M}_2(z))\right)
\end{equation*}
\end{lemma}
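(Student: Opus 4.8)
\textbf{Plan of proof for Lemma \ref{lem:estlawforjump}.}

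The core idea is to compare the two measures on $[0,\delta]$ by first replacing the approximate rate $\lambdabar$ with the exact rate $\lambda(\varphi_\cdot(z))$ inside the exponential/density expressions, quantifying the error via Assumption~\ref{ass:lambda_wasserstein}(a), and then handling the remaining difference by an integration-by-parts (for Algorithm~\ref{alg:basic_EPDMP}) or a direct bound (for Algorithm~\ref{alg:advanced_EPDMP}). Throughout I would use the elementary estimates $|e^{-a}-e^{-b}|\le|a-b|$ for $a,b\ge0$, $1-e^{-a}\le a$ for $a\ge0$, and $\int_0^\delta\lambdabar(z,s;\delta)\,\dd s\le \delta\sup_s\lambdabar(z,s;\delta)$ (and likewise for $\lambda(\varphi_\cdot(z))$, which is bounded on $[0,\delta]$ since it equals $\lambdabar$ up to an $O(\delta^p)\le O(1)$ term).

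\emph{Algorithm~\ref{alg:advanced_EPDMP} (the easier case).} Here I would write, using \eqref{eq:advancedjumplaw},
$$
\int_0^\delta h_s\,\ptau{\delta,\ref{alg:advanced_EPDMP}}(\dd s)-\int_0^\delta \lambda(\varphi_s(z))h_s\,\dd s
=\int_0^\delta h_s\Big(\lambdabar(z,s;\delta)e^{-\int_0^s\lambdabar(z,r;\delta)\dd r}-\lambda(\varphi_s(z))\Big)\dd s.
$$
I would split the integrand inside the parentheses as
$\lambdabar(z,s;\delta)\big(e^{-\int_0^s\lambdabar\,\dd r}-1\big)+\big(\lambdabar(z,s;\delta)-\lambda(\varphi_s(z))\big)$.
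The first piece is bounded in absolute value by $\lambdabar(z,s;\delta)\int_0^s\lambdabar(z,r;\delta)\,\dd r\le \delta\,\lambdabar(z,s;\delta)\lambdabar(z,r;\delta)$ after taking suprema, contributing (after the outer $\int_0^\delta\dd s$) a term $\delta^2\sup_{s,r}|h_s|\lambdabar(z,s;\delta)\lambdabar(z,r;\delta)$. The second piece is bounded by $\delta^p\overline{M}_2(z)\le\delta\,\overline{M}_2(z)$ (using $p\ge1$ and $\delta\le\delta_0$... actually one simply keeps $\delta^p$; but to match the stated $\delta^2$ bound one uses $p\ge1$ so $\delta^p\le\delta$ — here I would note that the statement is written for $p=1$ approximations, cf.\ the notational convention recalled before Assumption~\ref{ass:lambda_tv}), contributing $\delta^2\sup_s|h_s|\overline{M}_2(z)$. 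Summing gives the claimed bound.

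\emph{Algorithm~\ref{alg:basic_EPDMP} (the main obstacle).} Now $\ptau{\delta,\ref{alg:basic_EPDMP}}$ is the point mass at $\delta$ with weight $1-e^{-\int_0^\delta\lambdabar(z,s;\delta)\dd s}$, so the left-hand side is
$$
h_\delta\Big(1-e^{-\int_0^\delta\lambdabar(z,s;\delta)\dd s}\Big)-\int_0^\delta h_s\lambda(\varphi_s(z))\,\dd s.
$$
The difficulty is that here $h$ is evaluated at the single point $\delta$ on one side but integrated against $\lambda(\varphi_\cdot(z))$ on the other, so a pointwise rate comparison is not enough — this is exactly why $h\in\C_b^1$ is required. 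The plan is: (i) replace $\lambda(\varphi_s(z))$ by $\lambdabar(z,s;\delta)$ in the integral, at cost $\le\delta^{p+1}\overline{M}_2(z)\sup_s|h_s|$ via Assumption~\ref{ass:lambda_wasserstein}(a); (ii) write $h_\delta-h_s=\int_s^\delta\partial_r h_r\,\dd r$ so that $h_\delta\int_0^\delta\lambdabar(z,s;\delta)\dd s-\int_0^\delta h_s\lambdabar(z,s;\delta)\dd s=\int_0^\delta\lambdabar(z,s;\delta)\int_s^\delta\partial_r h_r\,\dd r\,\dd s$, bounded by $\delta^2\sup_{s,r}|\partial_r h_r|\lambdabar(z,s;\delta)$; and (iii) bound $\big(1-e^{-\int_0^\delta\lambdabar\,\dd s}\big)-\int_0^\delta\lambdabar(z,s;\delta)\dd s$ in absolute value by $\tfrac12\big(\int_0^\delta\lambdabar(z,s;\delta)\dd s\big)^2\le\delta^2\sup_{s,r}\lambdabar(z,s;\delta)\lambdabar(z,r;\delta)$ (from $0\le a-(1-e^{-a})\le a^2/2$), which when multiplied by $|h_\delta|$ gives the $\delta^2\sup|h_s|\lambdabar\lambdabar$ term. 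Collecting (i)–(iii) and again absorbing $\delta^p\le\delta$ yields precisely the stated inequality. The bookkeeping in step (ii)–(iii), keeping all error terms of the correct $\delta^2$ order and correctly attributing them to $\sup|\partial_r h_r|$ versus $\sup|h_s|$, is the only genuinely delicate part; everything else is routine.
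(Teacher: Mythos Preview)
Your proposal is correct and follows essentially the same approach as the paper. For Algorithm~\ref{alg:advanced_EPDMP} your argument is identical to the paper's; for Algorithm~\ref{alg:basic_EPDMP} the only cosmetic difference is that the paper rewrites $1-e^{-\int_0^\delta\lambdabar}$ as the integral $\int_0^\delta\lambdabar(z,s;\delta)e^{-\int_0^s\lambdabar\,\dd r}\dd s$ and then splits into a $(h_\delta-h_s)$ term and a density-difference term, whereas you linearise the exponential directly via $|1-e^{-a}-a|\le a^2/2$ --- both decompositions yield the same three error contributions and the same final bound.
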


\begin{proof}[Proof of Lemma \ref{lem:estlawforjump}]
First consider the case where $\ptau{\delta,\ref{alg:basic_EPDMP}}$ is given by \eqref{eq:basicjumplaw}, and fix $h\in C_b^1([0,\delta])$.
Then
\begin{align*}
    \left\lvert\int_0^{\delta}  h_s\ptau{\delta,\ref{alg:basic_EPDMP}}(\dd s) -  h_s\lambda(\varphi_s(z))\dd s\right\rvert= &\left\lvert  h_\delta \left(1-e^{-\int_0^{\delta}\overline{\lambda}(z,s;\delta)\dd s}\right) -  \int_0^\delta h_s\lambda(\varphi_s(z))\dd s\right\rvert.
\end{align*}
We can rewrite
\begin{equation*}
    1-e^{-\int_0^{\delta}\overline{\lambda}(z,s;\delta)\dd s} = \int_0^\delta \overline{\lambda}(z,s;\delta)e^{-\int_0^{\delta}\overline{\lambda}(z,r;\delta)\dd r}\dd s.
\end{equation*}
Therefore we have
\begin{align}
    \left\lvert\int_0^{\delta}  h_s\ptau{\delta,\ref{alg:basic_EPDMP}}(\dd s) -  h_s\lambda(\varphi_s(z))\dd s\right\rvert &= \left\lvert  h_\delta \int_0^\delta \overline{\lambda}(z,s;\delta)e^{-\int_0^{\delta}\overline{\lambda}(z,r;\delta)\dd r}\dd s -  \int_0^\delta h_s\lambda(\varphi_s(z))\dd s\right\rvert \notag\\
    \begin{split}
        &\leq \left\lvert  \int_0^\delta (h_\delta-h_s)  \overline{\lambda}(z,s;\delta)e^{-\int_0^{\delta}\overline{\lambda}(z,r;\delta)\dd r}\dd s \right\rvert \\
        & \quad + \left\lvert  \int_0^\delta h_s\left(  \overline{\lambda}(z,s;\delta)e^{-\int_0^{\delta}\overline{\lambda}(z,r;\delta)\dd r} - \lambda(\varphi_s(z))\right)\dd s\right\rvert.    \label{eq:basiclawerrorterms}
    \end{split}
\end{align}
We can use Assumption~\ref{ass:lambda_wasserstein} (a)  and that that $1-e^{-y}\leq y$ for $y>0$ to bound the integrand of the second term on the right of \eqref{eq:basiclawerrorterms}, 
\begin{align}\label{eq:lambdaestforlaw}
    \left\lvert \overline{\lambda}(z,s;\delta)e^{-\int_0^{\delta}\overline{\lambda}(z,r;\delta)\dd r} - \lambda(\varphi_s(z))\right\rvert  &\leq \left\lvert \overline{\lambda}(z,s;\delta)(1-e^{-\int_0^{\delta}\overline{\lambda}(z,r;\delta)\dd r}) \right\rvert \notag \\
    & \quad +\left\lvert \overline{\lambda}(z,s;\delta) - \lambda(\varphi_s(z))\right\rvert \nonumber\\
    & \leq  \overline{\lambda}(z,s;\delta)\int_0^{\delta}\overline{\lambda}(z,r;\delta)\dd r  +\delta \overline{M}_2(z).
\end{align}
For the first term on the right hand side of \eqref{eq:basiclawerrorterms} we use that
\begin{equation}\label{eq:MVT}
    \lvert h_\delta-h_s\rvert \leq (\delta-s)\sup_{r\in [0,\delta]} \lvert \partial_rh_r\rvert.
\end{equation}
Applying \eqref{eq:lambdaestforlaw} and \eqref{eq:MVT} to \eqref{eq:basiclawerrorterms} we have
\begin{align*}
    &\left\lvert\int_0^{\delta}  h_s\ptau{\delta,\ref{alg:basic_EPDMP}}(\dd s) -  h_s\lambda(\varphi_s(z))\dd s\right\rvert \leq \\
    &\quad \leq \left\lvert \sup_{r\in [0,\delta]} \lvert \partial_rh_r\rvert \int_0^\delta (\delta-s)  \overline{\lambda}(z,s;\delta)e^{-\int_0^{\delta}\overline{\lambda}(z,r;\delta)\dd r}\dd s \right\rvert \\
    & \qquad + \left\lvert  \int_0^\delta \lvert h_s\rvert \left(   \overline{\lambda}(z,s;\delta)\int_0^{\delta}\overline{\lambda}(z,r;\delta)\dd r  +\delta \overline{M}_2(z)\right)\dd s\right\rvert\\
    & \quad \leq \delta^2\sup_{s,r\in [0,\delta]} \left(\lvert \partial_rh_r\rvert  \overline{\lambda}(z,s;\delta)  +   \lvert h_s\rvert \left(   \overline{\lambda}(z,s;\delta)\overline{\lambda}(z,r)  + \overline{M}_2(z)\right)\right).
\end{align*}

Let us consider the case where $\ptau{\delta,\ref{alg:advanced_EPDMP}}$ is given by \eqref{eq:advancedjumplaw}. We use \eqref{eq:lambdaestforlaw} to bound
\begin{align*}
     & \left\lvert\int_0^{\delta}  h_s\ptau{\delta,\ref{alg:advanced_EPDMP}}(\dd s) -  h_s\lambda(\varphi_s(z))\dd s\right\rvert \leq\\
     & \quad \leq \int_0^{\delta} \lvert h_s\rvert \left\lvert \overline{\lambda}(z,s;\delta) \exp\left(-\int_0^s \overline{\lambda}(z,r;\delta)\dd r\right) -  \lambda(\varphi_s(z))\right\rvert\dd s\\
     &\quad \leq \int_0^{\delta} \lvert h_s\rvert \left( \overline{\lambda}(z,s;\delta)\int_0^{\delta}\overline{\lambda}(z,r;\delta)\dd r  +\delta \overline{M}_2(z)\right)\dd s\\
    &\quad \leq \delta^2 \sup_{s,r\in[0,\delta]}\left(\lvert h_s\rvert ( \overline{\lambda}(z,s;\delta)\overline{\lambda}(z,r;\delta)  +\overline{M}_2(z))\right).
\end{align*}
\end{proof}

\begin{proof}[Proof of Corollary~\ref{cor:convergence-to-stationary-measure}]
    First observe that \eqref{eq:WEfixeddelta} follows from \eqref{eq:WE}.
    Then  \eqref{eq:weakerrortotarget} is obtained by adding \eqref{eq:geoerg} and \eqref{eq:WE}. To obtain \eqref{eq:timeav} we use that
    \begin{align*}
        \left\lvert \frac{1}{N}\sum_{n=1}^N\mathbb{E}_{z}[g(\Zbar_{t_n})] -\mu(g)\right\rvert  &\leq \left\lvert \frac{1}{N}\sum_{n=1}^N(\mathbb{E}_{z}[g(\Zbar_{t_n})]-\mathbb{E}_{z}[g(Z_{t_n})])\right\rvert\\
        &+\left\lvert\frac{1}{N}\sum_{n=1}^N\mathbb{E}_{z}[g(Z_{t_n})] -\mu(g)\right\rvert .
    \end{align*}
    We bound this using \eqref{eq:geoerg} and \eqref{eq:WE}
    \begin{align*}
        \left\lvert \frac{1}{N}\sum_{n=1}^N\mathbb{E}_{z}[g(\Zbar_{t_n})] -\mu(g)\right\rvert  &\leq C\delta \lf_{2}(z)+C \lf_{2}(z)\frac{1}{N}\sum_{n=1}^Ne^{-\omega t_n} \\
        &\leq C \lf_2(z)\left(\delta+\frac{1}{t_N}\right) .
    \end{align*}
\end{proof}

\begin{proof}[Proof of Corollary~\ref{cor:convergence_variable_step_size}]
    It is sufficient to show for $S_n$ given by \eqref{eq:Sn} that $S_n\to 0$ as $n\to \infty$. Fix $\eta>0$. Then we have
    \begin{align*}
        S_n=\sum_{k=0}^{\eta-1}   \delta_{k+1}^2 e^{-\omega(t_n-t_{k+1})} + \sum_{k=\eta}^{n-1}   \delta_{k+1}^2 e^{-\omega(t_n-t_{k+1})}.
    \end{align*}
    Consider the first term:
    \begin{align*}
        \sum_{k=0}^{\eta-1}   \delta_{k+1}^2 e^{-\omega(t_n-t_{k+1})}  &\leq \sup_k\delta_k \int_0^{t_n-\eta}e^{-\omega (t_n-s)}\dd s=\sup_k\delta_k \frac{e^{-\omega \eta} - e^{-\omega t_n}}{\omega}.
    \end{align*}
    Consider the second term:
    \begin{align*}
        \sum_{k=\eta}^{n-1}   \delta_{k+1}^2 e^{-\omega(t_n-t_{k+1})}\leq \left(\sup_{k\in \{\eta,...,n\}} \delta_k\right)\int_{t_\eta}^{t_n}e^{-\omega (t_n-s)}\dd s  = \frac{1-e^{-\omega(t_n-t_\eta)}}{\omega}\sup_{k\in \{\eta,...,n\}} \delta_k.
    \end{align*}
Therefore
    \begin{equation*}
        \limsup_{n\to\infty} S_n \leq \left(\sup_{k\geq 0}\delta_k\right) \frac{e^{-\omega \eta}}{\omega}+\frac{1}{\omega}\sup_{k\geq \eta} \delta_k.
    \end{equation*}
    Since $\eta$ is arbitrary we let $\eta$ tend to $\infty$ which gives that $S_n\to 0$ as $n\to\infty$.
\end{proof}

\subsection{Proofs of Example \ref{ex:rhmc_weakerror}}\label{sec:proofs_rhmc_weakerror}

\begin{proof}[Proof of Proposition~\ref{prop:rHMC_derivativeest}]
    Fix $f\in C_b^1(\R^d\times\R^d)$. Then by the chain rule
    \begin{equation*}
        \lVert\nabla_{q,p}\cP_tf(q,p)\rVert = \lVert\mathbb{E}[\nabla_{q,p}(Q_t,P_t)(\nabla_{q,p}f)(Q_t,P_t)]\rVert \leq \lVert f\rVert_{C_b^1} \mathbb{E}[\lVert\nabla_{q,p}(Q_t,P_t)\rVert].
    \end{equation*}
    Notice that there is a version of $(Q_t,P_t)$ which is differentiable with respect to the initial conditions since we can write $(Q_t,P_t)$ as the composition of smooth operators. 
    Let $T_i$ denote the $i$-th refreshment time and $\xi_i\sim N(0_d,I_d)$ the corresponding refreshed velocity. Set $T_0=0$. We shall track for which refreshment times we have that $\nu\leq T_i-T_{i-1}\leq K$. Let $M_t$ denote the number of refreshment times before time $t$ which have this property and let $N_t$ denote the total number of refreshment times before time $t$. Note that conditional on $N_t$, $M_t$ is distributed according to a Binomial distribution with $N_t$ trials and success rate $e^{-\lambda\nu}-e^{-\lambda K_1}$.
    
    To stress the dependence on the initial condition for the remainder of the proof we shall write $(Q_t^{q,p},P_t^{q,p})$ to denote the process at time $t$ with initial condition $(q,p)$.
    Then by \eqref{eq:Lipschitzhamflow} we have
    \begin{align*}
        \lVert (Q_t^{q,p},P_t^{q,p})-(Q_t^{\bar{q},\bar{p}},P_t^{\bar{q},\bar{p}}) \rVert&= \lVert \varphi_{t-T_{N_t}}(Q_{T_{N_t}}^{q,p},\xi_{N_t})-\varphi_{t-T_{N_t}}(Q_{T_{N_t}}^{\bar{q},\bar{p}},\xi_{N_t})\rVert\\
        &\leq C\lVert Q_{T_{N_t}}^{q,p}-Q_{T_{N_t}}^{\bar{q},\bar{p}}\rVert.
    \end{align*}
    There are now three possible events either $N_t=0$, $\nu \leq T_{N_t}-T_{N_t-1}\leq K$ or $T_{N_t}-T_{N_t-1}\geq K$. If $\nu \leq T_{N_t}-T_{N_t-1}\leq K$ then we use \eqref{eq:contractionhamflow}, however if $T_{N_t}-T_{N_t-1}\geq K$ then we use \eqref{eq:Lipschitzhamflow}. By doing this for each refreshment we have
    \begin{align*}
        \lVert (Q_t^{q,p},P_t^{q,p})-(Q_t^{\bar{q},\bar{p}},P_t^{\bar{q},\bar{p}}) \rVert&\leq C^{1+N_t-M_t} \gamma^{M_t}\lVert Q_{T_{1}}^{q,p}-Q_{T_{1}}^{\bar{q},\bar{p}}\rVert.
    \end{align*}
    Then by applying \eqref{eq:Lipschitzhamflow} once more we have
     \begin{align*}
        \lVert (Q_t^{q,p},P_t^{q,p})-(Q_t^{\bar{q},\bar{p}},P_t^{\bar{q},\bar{p}}) \rVert&\leq C^{2+N_t-M_t} \gamma^{M_t}\lVert (q,p)-(\bar{q},\bar{p})\rVert.
    \end{align*}
    Dividing by $\lVert (q,p)-(\bar{q},\bar{p})\rVert$ and taking the limit as $\lVert (q,p)-(\bar{q},\bar{p})\rVert\to 0$ we have that
    \begin{align*}
        \lVert \nabla_{q,p} (Q_t^{q,p},P_t^{q,p}) \rVert&\leq C^{2+N_t-M_t} \gamma^{M_t}.
    \end{align*}
    It remains to bound $\mathbb{E}[C^{N_t-M_t} \gamma^{M_t}]$.
    By conditioning on $N_t$ we can use the moment generating function of a Binomial distribution to find
    \begin{equation*}
        \mathbb{E}[C^{N_t-M_t} \gamma^{M_t}\rvert N_t] = C^{N_t} (1-(e^{-\lambda\nu}-e^{-\lambda K})(1-\gamma C^{-1}))^{N_t}.
    \end{equation*}
    Now $N_t$ is a Poisson process with rate $\lambda$ so we have
    \begin{equation*}
        \mathbb{E}[C^{N_t-M_t} \gamma^{M_t}] =\exp\left(\lambda\left( C (1-(e^{-\lambda\nu}-e^{-\lambda K})(1-\gamma C^{-1}))-1\right)\right).
    \end{equation*}
    This is decays exponentially provided
    \begin{equation*}
      C (1-(e^{-\lambda\nu}-e^{-\lambda K_1})(1-\gamma C^{-1}))<1.  
    \end{equation*}
      
\end{proof}


\subsection{Proofs of Example \ref{ex:zzs_weakerror}}\label{sec:proofs_zzs_weakerror}

\begin{proof}[Proof of Lemma \ref{lem:ZZS_commutator}]

Note that for the ZZS
\begin{align*}
    [\vfd,Q]f(x,v)&=\sum_{i=1}^d\left\langle v,\nabla_x \left(\frac{\lambda_i(x,v)}{\lambda(x,v)}f(x,F_iv)\right)\right\rangle - \sum_{i=1}^d\frac{\lambda_i(x,v)}{\lambda(x,v)}\langle F_iv,\nabla_x (f(x,F_iv))\rangle\\
    &=\sum_{i=1}^d\left\langle v,\nabla_x \left(\frac{\lambda_i(x,v)}{\lambda(x,v)}\right)\right\rangle f(x,F_iv)+2\sum_{i=1}^d \left(\frac{\lambda_i(x,v)}{\lambda(x,v)}\right) v_i\partial_{x_i}f(x,F_iv).
\end{align*}
When we apply this with $f=\cP_tg\circ \varphi_{\delta-s}$ and $(x,v)$ replaced by $(x+vs,v)$ we have
\begin{align*}
    [\vfd,Q]f(x,v)&=\sum_{i=1}^d\left\langle v,\nabla_x \left(\frac{\lambda_i(x,v)}{\lambda(x,v)}\right)\right\rangle \cP_tg(x+sv+(\delta-s)F_iv,F_iv)\\
    &+2\sum_{i=1}^d \left(\frac{\lambda_i(x,v)}{\lambda(x,v)}\right) v_i\partial_{x_i}(\cP_tg)(x+sv+(\delta-s)F_iv,F_iv)
    \\
    &\leq \sum_{i=1}^d\left\langle v,\nabla_x \left(\frac{\lambda_i(x,v)}{\lambda(x,v)}\right)\right\rangle \lvert\cP_tg(x+sv+(\delta-s)F_iv,F_iv)\rvert\\
    &+2 \lVert \nabla_x(\cP_tg)(x+sv+(\delta-s)F_iv,F_iv)\rVert.
\end{align*}
Observe that
\begin{equation*}
    \partial_r\left(-\log\left(\phi(\exp(-r)\right)\right) = \frac{1}{1+e^{-r}}
\end{equation*}
then we have
\begin{align*}
    \nabla_x \left(\frac{\lambda_i(x,v)}{\lambda(x,v)}\right) &=  \left(\frac{\nabla_x\lambda_i(x,v)}{\lambda(x,v)}\right)-(\nabla_x\lambda(x,v)) \left(\frac{\lambda_i(x,v)}{\lambda(x,v)^2}\right)\\
    &=\left(\frac{v_i\nabla_x\partial_{x_i}\pot(x,v)}{(1+e^{-v_i\partial_{x_i}\pot(x)})\lambda(x,v)}\right)-\sum_{j=1}^d\left(\frac{v_j\nabla_x\partial_{x_j}\pot(x,v)\lambda_i(x,v)}{(1+e^{-v_i\partial_{x_i}\pot(x)})\lambda(x,v)^2}\right)
\end{align*}
Under our assumptions this is bounded.
\end{proof}

\begin{proof}[Proof of Theorem~\ref{thm:exptimederivativebound}]
    Fix $f\in \mathcal{G}_1$. We observe that $\cP_t f$ satisfies
    \begin{equation*}
        \partial_t\cP_tf(x,v) = \cL \cP_t f(x,v).
    \end{equation*}
    We can differentiate this with respect to the $i$-th component of $x$, denoted as $x^i$, to obtain an equation for $\partial_{x^i}\cP_t$.
    \begin{equation*}
        \partial_t\partial_{x^i}\cP_tf(x,v) = \cL \partial_{x^i}\cP_tf+\sum_{j=1}^d \partial_{x^i}\lambda_j(x,v)[\cP_tf(x,R_jv)-\cP_tf(x,v)].
    \end{equation*}
    We can solve this equation using the variation of constants formula
    \begin{align*}
        \partial_{x^i}\cP_tf(x,v) = \cP_t\partial_{x^i} f(x,v) +\sum_{j=1}^d\int_0^t \cP_{t-s}\left( g_s^{i,j}\right)(x,v)\dd s
    \end{align*}
    where
    \begin{equation*}
        g_s^{i,j}(x,v) = \partial_{x^i}\lambda_j(x,v)[\cP_sf(x,R_jv)-\cP_sf(x,v)].
    \end{equation*}
    We can integrate this with respect to $\mu$ to obtain an expression for $\mu(\partial_{x^i}\cP_tf)$
    \begin{align*}
        \mu(\partial_{x^i}\cP_tf) = \mu(\partial_{x^i} f) +\sum_{j=1}^d\int_0^t \mu\left( g_s^{i,j}\right)\dd s.
    \end{align*}
    Here we have used that $\mu$ is an invariant measure for $\cP_t$ to remove $\cP_{t-s}$ terms.
    We shall bound this by comparing to $\mu(\partial_{x^i}\cP_tf)$, 
    \begin{equation}\label{eq:derivativeofsemigpminusmu}
     \begin{aligned}
        \lvert \partial_{x^i}\cP_tf(x,v)-\mu(\partial_{x^i}\cP_tf)\rvert  &\leq \lvert  \cP_t\partial_{x^i} f(x,v) - \mu(\partial_{x^i}f)\rvert \\
        & \quad +\lvert\sum_{j=1}^d\int_0^t \cP_{t-s}\left( g_s^{i,j}\right)(x,v)- \sum_{j=1}^d\int_0^t\mu\left(  g_s^{i,j}\right)\dd s\rvert. 
    \end{aligned}
    \end{equation}
    
    Observe that since $\lambda_i$ is globally Lipschitz there exists a constant $C_\lambda$ such that $\lvert\partial_{x^i}\lambda\rvert \leq C_\lambda$ for any $i\in \{1,\ldots,d\}, x\in \R^d, v\in \{\pm 1\}^d$. Therefore we can bound $g_s^{i,j}$ by
    \begin{equation*}
        \lvert g_s^{i,j}(x,v)\rvert \leq C_\lambda (\lvert \cP_sf(x,R_jv)-\mu(f)\rvert +\lvert\cP_sf(x,v)-\mu(f)\rvert).
    \end{equation*}
    Now we can bound $g_s^{i,j}$ using geometric ergodicity, \eqref{eq:geoerg}, and that $f\in \mathcal{G}_{1}$
    \begin{equation*}
        \lvert g_s^{i,j}(x,v)\rvert \leq CC_\lambda e^{-\kappa t}( \lf_{\overline{\alpha},\epsilon}(x,R_jv) + \lf_{\overline{\alpha},\epsilon}(x,v)).
    \end{equation*}
    By \eqref{eq:growthbounds} there exists a constant $K$ such that
    \begin{equation*}
        \lvert g_s^{i,j}(x,v)\rvert \leq K e^{-\kappa s}\lf_{\alpha,\epsilon}(x,v).
    \end{equation*}
    
   By applying \eqref{eq:geoerg} we have
    \begin{align}
       \lvert\sum_{j=1}^d\int_0^t \cP_{t-s}\left(g_s^{i,j}\right)(x,v)- \sum_{j=1}^d\int_0^t\mu\left( g_s^{i,j}\right)\dd s\rvert &\leq \sum_{j=1}^d\int_0^t \lvert\cP_{t-s}\left(g_s^{i,j}\right)(x,v)-\mu\left( g_s^{i,j}\right) \rvert\dd s \nonumber\\
       &\leq CK\lf_{\alpha,\epsilon}(x,v)\int_0^t   e^{-\kappa t}\dd s\nonumber\\
       &\leq CK\lf_{\alpha,\epsilon}(x,v) t e^{-\kappa t}.\label{eq:boundings}
    \end{align}

    Since $f\in \mathcal{G}_{1}$ we can also apply \eqref{eq:geoerg} for the function $\partial_i f$ 
    \begin{equation}
        \lvert \cP_t \partial_{x^i}f(x,v) -\mu(\partial_{x^i} f)\rvert \leq C \lf_{\alpha,\epsilon}(x,v) e^{-\kappa t}.\label{eq:boundofderivativeoff}
    \end{equation}
    Using \eqref{eq:boundings}, \eqref{eq:boundofderivativeoff} to bound the right hand side of \eqref{eq:derivativeofsemigpminusmu} we obtain
    \begin{equation}\label{eq:errorboundderivativesemigp}
        \lvert \partial_{x^i}\cP_tf(x,v)-\mu(\partial_{x^i}\cP_tf)\rvert  \leq  C (1+Kt)e^{-\kappa t}\lf_{\alpha,\epsilon}(x,v) . 
    \end{equation}
    It remains to consider $\mu(\partial_{x^i}\cP_t f)$. 
    Note that we have following integration by parts formula for $\mu$
    \begin{equation}\label{eq:IBP}
        \int \partial_{x_k}g h \dd \mu = - \int g \partial_{x_k}h \dd \mu + \int \partial_{x_k}\pot g h \dd \mu.
    \end{equation}
    Setting $k=i$, $g=\cP_t f$ and $h=1$ we have
    \begin{equation*}
        \mu(\partial_{x^i}\cP_t f) = \mu(\cP_t f\partial_{x^i}\pot)
    \end{equation*}
    Observe that $\mu(\partial_{x^i}\pot)=0$ (this follows from \eqref{eq:IBP} with $g=h=1$ and $k=i$) then subtracting $\mu(\partial_{x^i}\pot)\mu(f)$ we have
    \begin{equation}
    \begin{aligned}
        \lvert \mu(\partial_{x^i}\cP_t f)\rvert & =\lvert \mu(\partial_{x^i}(\cP_t f-\mu(f))\rvert \\ 
        & \leq \mu(\lvert\partial_{x^i}\pot(x)\rvert \lvert\cP_tf-\mu(f)\rvert)\\
        & \leq C e^{-\kappa t} \mu(\lvert\partial_{x^i}\pot(x)\rvert \lf_{\overline{\alpha},\epsilon}(x,v)).\label{eq:limitofmupartialf}
    \end{aligned}
    \end{equation}
    By \eqref{eq:growthbounds} we have $\lvert\partial_{x^i}\pot(x)\rvert \lf_{\overline{\alpha},\epsilon}(x,v) \leq C\lf_{\alpha,\epsilon}$ and combining \eqref{eq:errorboundderivativesemigp} and \eqref{eq:limitofmupartialf} we obtain a constant $C'$ such that
    \begin{equation*}
        \lvert \partial_{x^i} \cP_tf(x,v)\rvert \leq C'(1+Kt)e^{-\kappa t}\lf_{\alpha,\epsilon}(x,v).
    \end{equation*}
\end{proof}

\begin{lemma}\label{lem:1dLyapunovfunction}
Let $\{(\Xbar_{t_n},\Xbar_{t_n})\}_{n\in \N}$ denote the Euler Zig Zag algorithm in 1-d using Algorithm \ref{alg:basic_EPDMP} or \ref{alg:advanced_EPDMP}. Let $\lambdabar(x,v;\delta) =\lambda(x,v)$ and $\lambda(x,v)=(\pot'(x)v)_++\gamma(x)$ for $\gamma:\mathbb{R}\to[0,\overline{\gamma}]$ with $\overline{\gamma}<\infty$. 
Assume that $\pot\in \C^2$ is such that \eqref{eq:growthbounds} is satisfied.
Let $\alpha\in(0,1)$, $\beta>0$ be such that $\alpha<2\beta$ and define
\begin{equation}\label{eq:approximatelf}
    \lf_{\alpha,\beta}(x,v;\delta) =\begin{cases}
        \exp\left(\alpha \pot(x)+\beta \delta \pot'(x)v\right), \quad \textnormal{ if } v\pot'(x)\geq 0,\\
        \exp\left(\alpha \pot(x)-\beta \delta \pot'(x)v\right), \quad \textnormal{ if } v\pot'(x)<0.\\
    \end{cases} 
\end{equation}
Then there exists a compact set $C$ and $\kappa\in(0,1)$ such that
\begin{equation*}
    \mathbb{E}_{x,v} \lf_{\alpha,\beta}(\Xbar_{t_n},\Vbar_{t_n};\delta) \leq \kappa^n \lf_{\alpha,\beta}(x,v;\delta) \qquad \textnormal{for all } x\notin C.
\end{equation*}
\end{lemma}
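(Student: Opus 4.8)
The plan is to derive the stated bound from a one-step geometric drift inequality for the transition kernel of the Euler Zig--Zag chain: I would show that there are a compact set $C\subseteq\mathbb R$ and $\kappa\in(0,1)$ such that $\mathbb E_{x,v}[\lf_{\alpha,\beta}(\Xbar_{\delta},\Vbar_{\delta};\delta)]\le\kappa\,\lf_{\alpha,\beta}(x,v;\delta)$ for every $(x,v)$ with $x\notin C$, where $(\Xbar_\delta,\Vbar_\delta)$ denotes one step of the chain from $(x,v)$. Iterating this over the $n$ consecutive steps of length $\delta$, and using that $\lf_{\alpha,\beta}$ is only ever multiplied by a factor $\le\kappa$ as long as the chain stays outside $C$, then yields $\mathbb E_{x,v}[\lf_{\alpha,\beta}(\Xbar_{t_n},\Vbar_{t_n};\delta)]\le\kappa^n\lf_{\alpha,\beta}(x,v;\delta)$ for $x\notin C$. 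So everything reduces to the one-step estimate.

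To control $\mathbb E_{x,v}[\lf_{\alpha,\beta}(\Xbar_{\delta},\Vbar_{\delta};\delta)]$ I would condition on the first event time $\bar\tau$, which is exponential with the frozen rate $\lambda(x,v)=(\pot'(x)v)_++\gamma(x)$, and split into the no-event case $\{\bar\tau\ge\delta\}$, where the chain moves to $x+\delta v$ with unchanged velocity, and the one-event case $\{\bar\tau<\delta\}$, where for Algorithm~\ref{alg:advanced_EPDMP} it lands at $x+(2\bar\tau-\delta)v$ with flipped velocity $-v$ and for Algorithm~\ref{alg:basic_EPDMP} at $x+\delta v$ with velocity $-v$. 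Each case I would further subdivide by the sign of $v\pot'(x)$: when $v\pot'(x)\ge0$ (``moving towards larger $\pot$'') the rate $\lambda(x,v)$ is of order $|\pot'(x)|$, whereas when $v\pot'(x)<0$ it is bounded by $\overline\gamma$. In every resulting piece I would Taylor-expand $\pot(x+sv)=\pot(x)+sv\,\pot'(x)+\tfrac12 s^2\pot''(\xi)$ and $\pot'(x+sv)=\pot'(x)+sv\,\pot''(\xi')$ for $|s|\le\delta$, using $\pot\in\C^2$, and absorb the remainders via the growth assumption~\eqref{eq:growthbounds}: it gives $|\pot''(\xi)|=o(|\pot'(\xi)|)$ and $|\pot'(\xi)|\le 2|\pot'(x)|$ on a short segment around $x$ once $|x|$ is large, so the quadratic terms are $o(\delta^2|\pot'(x)|)$ and negligible against the linear ones, while $|\pot'|/\pot\to0$ makes $\lf_{\alpha,\beta}$ coercive and lets factors like $|\pot'(x)|\,\lf_{\alpha,\beta}(x,v;\delta)$ be dominated by a slightly larger function of the same exponential type.

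Collecting the pieces, the favourable regimes are ``moving towards smaller $\pot$, no event'', which occurs with probability near one and multiplies $\lf_{\alpha,\beta}$ by roughly $e^{-\alpha\delta|\pot'(x)|}$, and ``moving towards larger $\pot$, one event'', where $\lambda(x,v)\asymp|\pot'(x)|$ forces the flip to occur early so the chain turns back towards smaller $\pot$, again giving a factor of the form $e^{-c\delta|\pot'(x)|}$ with $c>0$ depending on $\alpha,\beta$; here the hypothesis $\alpha<2\beta$ is used. The regime ``moving towards larger $\pot$, no event'' is harmless because its probability $e^{-\delta\lambda(x,v)}\le e^{-\delta|\pot'(x)|}$ is exponentially small in $|\pot'(x)|$, which beats the at most $e^{\alpha\delta|\pot'(x)|}$ growth of $\lf_{\alpha,\beta}$ over a single step precisely because $\alpha<1$. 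The remaining regime, ``moving towards smaller $\pot$, one event'', is the main obstacle: the flip occurs only with bounded probability $1-e^{-\delta\gamma(x)}\le\delta\overline\gamma$, yet the velocity reversal can carry the chain to a point of larger $\pot$ and can, a priori, multiply $\lf_{\alpha,\beta}$ by as much as $e^{(2\beta-\alpha)\delta|\pot'(x)|}$. The delicate part of the proof is to show this contribution still stays below $\kappa\,\lf_{\alpha,\beta}(x,v;\delta)$: for Algorithm~\ref{alg:advanced_EPDMP} one exploits that, conditionally on $\bar\tau<\delta$, $\bar\tau$ is $\mathrm{Exp}(\gamma(x))$, so the net displacement $2\bar\tau-\delta$ is concentrated and, after integrating the exponential-in-$\pot$ weight against this density, picks up an extra $1/|\pot'(x)|$ factor; combined with $\overline\gamma<\infty$ and $\delta\le\delta_0$ small this makes the term subdominant, and if needed one passes to a two-step drift estimate, noting that an ``uphill'' velocity produced by such a rare flip is corrected at the very next step since the switching rate is then of order $|\pot'(x)|$. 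For Algorithm~\ref{alg:basic_EPDMP} the analogous bookkeeping is simpler because the displacement is always exactly $\delta v$.

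Finally, having bounded each regime by $\kappa_j(x)\,\lf_{\alpha,\beta}(x,v;\delta)$ with $\limsup_{|x|\to\infty}\sum_j\kappa_j(x)<1$, one takes $C=\{x:|x|\le\rho\}$ with $\rho$ large enough that $\sum_j\kappa_j(x)\le\kappa<1$ for all $x\notin C$; this gives the one-step drift and hence the lemma. The crux, to reiterate, is the ``towards smaller $\pot$ with a flip'' case, where the potentially exponentially large worst-case increase of $\lf_{\alpha,\beta}$ over one step must be shown to be outweighed by the smallness of the flip probability and the concentration of the flip time near zero.
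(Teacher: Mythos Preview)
Your strategy—prove a one-step drift by splitting according to the sign of $v\pot'(x)$ and whether an event occurs, Taylor-expand $\pot$ and $\pot'$, and absorb the second-order remainders via the growth condition~\eqref{eq:growthbounds}—is exactly the paper's approach. Two differences are worth flagging.

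First, the paper adds a preliminary reduction you do not mention: it shows that for $|x|$ large one has $\mathbb{E}_{x,v}[\lf_{\alpha,\beta}(\Xbar_\delta^{\ref{alg:advanced_EPDMP}},\Vbar_\delta^{\ref{alg:advanced_EPDMP}})]\le\mathbb{E}_{x,v}[\lf_{\alpha,\beta}(\Xbar_\delta^{\ref{alg:basic_EPDMP}},\Vbar_\delta^{\ref{alg:basic_EPDMP}})]$ when $v\pot'(x)>0$, and the reverse inequality when $v\pot'(x)<0$. Thus only Algorithm~\ref{alg:basic_EPDMP} needs to be treated in the uphill case and only Algorithm~\ref{alg:advanced_EPDMP} in the downhill case, halving the bookkeeping.

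Second, and more importantly, the case you single out as the ``main obstacle''---$v\pot'(x)<0$ with a flip---is handled by the paper \emph{directly}, without concentration of $\overline{\tau}$ or a two-step argument. In the paper's calculation the integrand in that case carries an exponent $\alpha(2s-\delta)v\pot'(x)+2\beta\delta v\pot'(x)+O(\delta^2\pot'')$, whose maximum over $s\in[0,\delta]$ is $(\alpha-2\beta)\delta|\pot'(x)|$; hence the ratio is bounded by $\exp\bigl((\alpha-2\beta)\delta|\pot'(x)|\bigr)$ and tends to zero precisely because $\alpha<2\beta$. In other words the $\beta$-term is engineered so that a velocity flip contributes $-2\beta\delta|\pot'(x)|$ to the log-ratio in \emph{both} sign cases, and the hypothesis $\alpha<2\beta$ is used in both flip cases, not only the uphill one. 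Your worst-case factor $e^{(2\beta-\alpha)\delta|\pot'(x)|}$ therefore has the opposite sign from the paper's bound, and your proposed rescue---extracting a $1/|\pot'(x)|$ factor from the $\overline{\tau}$-integration---could not defeat an exponential-in-$|\pot'(x)|$ term anyway. (A likely source of your confusion: as literally written, both branches of~\eqref{eq:approximatelf} collapse to $\exp(\alpha\pot(x)+\beta\delta|\pot'(x)|)$, which is $v$-independent and would indeed make your case problematic; the ratios actually computed in the paper's proof, however, correspond to a genuinely $v$-dependent function in which flipping reverses the sign of the $\beta$-contribution, so the second branch of~\eqref{eq:approximatelf} appears to contain a sign typo.)
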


\begin{proof}
Let $\{(\Xbar^{\ref{alg:basic_EPDMP}}_\delta,\Vbar_\delta^{\ref{alg:basic_EPDMP}})\}$ ($\{(\Xbar^{\ref{alg:advanced_EPDMP}}_\delta,\Vbar_\delta^{\ref{alg:advanced_EPDMP}})\}$ respectively) be given by Algorithm \ref{alg:basic_EPDMP} (Algorithm \ref{alg:advanced_EPDMP} resp.). To simplify the notation in this proof suppress the $\delta$ dependence of $\lf_{\alpha,\beta}$. 
Set $\beta_\pm=\beta$ if $v\pot'(x)\geq 0$ and $\beta_\pm=-\beta$ otherwise.
Observe that
\begin{align*}
    &\mathbb{E}_{x,v}[\lf_{\alpha,\beta}(\Xbar^{\ref{alg:advanced_EPDMP}}_\delta,\Vbar_\delta^{\ref{alg:advanced_EPDMP}})] - \mathbb{E}_{x,v}[\lf_{\alpha,\beta}(\Xbar^{\ref{alg:basic_EPDMP}}_\delta,\Vbar_\delta^{\ref{alg:basic_EPDMP}})] =\\ &=\int_0^\delta\!\! \lambda(x,v)e^{-\lambda(x,v)s}\!\left(e^{\alpha \pot(x+vs-(\delta-s)v)-\delta\beta_\pm v \pot'(x+sv-(\delta-s)v)} \!-\! e^{\alpha \pot(x+\delta v)-\delta\beta_\pm v \pot'(x+\delta v)}\right) \!ds\\
    &=\int_0^\delta \lambda(x,v)\exp\left(-\lambda(x,v)s+\alpha \pot(x+\delta v)-\delta\beta_\pm v \pot'(x+\delta v)\right)\left(e^{I(x,v,s;\delta)} - 1\right)\dd s,
\end{align*}
where
\begin{align*}
    I(x,v,s;\delta)& \coloneqq\alpha \pot(x+vs-(\delta-s)v)- \alpha \pot(x+\delta v)\\
    & \quad -\delta\beta_\pm v \pot'(x+sv-(\delta-s)v)+\delta\beta_\pm v \pot'(x+\delta v).
\end{align*}
By Taylor's theorem we can find $\xi_1,\xi_2$
\begin{equation*}
    I(x,v,s;\delta)=\alpha 2(s-\delta)v\pot'(x+\delta v)+2\alpha (\delta-s)^2 \pot''(\xi_1)+2\beta_{\pm} \delta (s-\delta) v \pot''(\xi_2).
\end{equation*}
By taking $x$ sufficiently large we can ensure that the sign of $I(x,v,s;\delta)$ is equal to the sign of $-v\pot'(x)$. Therefore,
\begin{align*}
    &\mathbb{E}_{x,v}[\lf_{\alpha,\beta}(\Xbar^{\ref{alg:advanced_EPDMP}}_\delta,\Vbar_\delta^{\ref{alg:advanced_EPDMP}})]\leq  \mathbb{E}_{x,v}[\lf_{\alpha,\beta}(\Xbar^{\ref{alg:basic_EPDMP}}_\delta,\Vbar_\delta^{\ref{alg:basic_EPDMP}})] \quad  \textnormal{if } v\pot'(x)>0,\\
    &\mathbb{E}_{x,v}[\lf_{\alpha,\beta}(\Xbar^{\ref{alg:advanced_EPDMP}}_\delta,\Vbar_\delta^{\ref{alg:advanced_EPDMP}})]\geq  \mathbb{E}_{x,v}[\lf_{\alpha,\beta}(\Xbar^{\ref{alg:basic_EPDMP}}_\delta,\Vbar_\delta^{\ref{alg:basic_EPDMP}})] \quad  \textnormal{if } v\pot'(x)<0.
\end{align*}
In the first case it is sufficient to consider Algorithm \ref{alg:basic_EPDMP}, while in the latter it is sufficient to consider Algorithm \ref{alg:advanced_EPDMP}. We shall consider these two cases separately.

\textbf{Case $v\pot'(x)>0$:} Note that it is sufficient to show that outside of a sufficiently large compact set
\begin{equation*}
    \frac{\mathbb{E}_{x,v} \lf_{\alpha,\beta}(\Xbar^{\ref{alg:basic_EPDMP}}_\delta,\Vbar^{\ref{alg:basic_EPDMP}}_\delta)}{ \lf_{\alpha,\beta}(x,v)} < 1.
\end{equation*}
We can expand $\mathbb{E}_{x,v} \lf_{\alpha,\beta}(\Xbar^{\ref{alg:basic_EPDMP}}_\delta,\Vbar^{\ref{alg:basic_EPDMP}}_\delta)$ as
\begin{equation*}
    \mathbb{E}_{x,v} \lf_{\alpha,\beta}(\Xbar_\delta^{\ref{alg:basic_EPDMP}},\Vbar^{\ref{alg:basic_EPDMP}}_\delta)=   e^{-\delta\lambda(x,v)}\lf_{\alpha,\beta}(x+v\delta,v)+(1-e^{-\delta\lambda(x,v)})\lf_{\alpha,\beta}(x+v\delta,-v).
\end{equation*}
Using the definition of $\lf_{\alpha,\beta}$ we can write
\begin{align*}
    \frac{\lf_{\alpha,\beta}(x+v\delta,v)}{\lf_{\alpha,\beta}(x,v)} &= \exp\left(\alpha (\pot(x+v\delta)-\pot(x)) +\beta \delta  v( \pot'(x+v\delta)-\pot'(x)) \right),\\
    \frac{\lf_{\alpha,\beta}(x+v\delta,-v)}{\lf_{\alpha,\beta}(x,v)} &= \exp\left(\alpha (\pot(x+v\delta)-\pot(x)) -\beta \delta  v( \pot'(x+v\delta)+\pot'(x)) \right).
\end{align*}
We can Taylor expand $U$ to find some $z_1,z_2,z_3$ such that
\begin{align*}
    \frac{\lf_{\alpha,\beta}(x+v\delta,v)}{\lf_{\alpha,\beta}(x,v)} &= \exp\left(\alpha ( \pot'(x) v \delta+\frac{1}{2} \pot''(z_1)\delta^2) +\beta \delta^2  \pot''(z_2) \right),\\
    \frac{\lf_{\alpha,\beta}(x+v\delta,-v)}{\lf_{\alpha,\beta}(x,v)} &= \exp\left(\alpha (\pot'(x) v \delta+\frac{1}{2}\pot''(z_1)\delta^2) -\beta \delta  (2\pot'(x) v+\pot''(z_3)\delta) \right).
\end{align*}
Thus we have
\begin{align*}
    &\frac{\mathbb{E}_{x,v} \lf_{\alpha,\beta}(\Xbar^{\ref{alg:basic_EPDMP}}_\delta,\Vbar^{\ref{alg:basic_EPDMP}}_\delta)}{ \lf_{\alpha,\beta}(x,v)} = e^{-\delta\lambda(x,v)}\exp{\left(\alpha (\pot'(x) v \delta+\frac{1}{2} \pot''(z_1)\delta^2) +\beta \delta^2  \pot''(z_2) \right)}\\
    &\qquad+(1-e^{-\delta\lambda(x,v)})\exp\left(\alpha (\pot'(x) v \delta+\frac{1}{2}\pot''(z_1)\delta^2) -\beta \delta  (2\pot'(x) v+\pot''(z_3)\delta) \right).
\end{align*}
Rearranging we can rewrite this as
\begin{align}
    & \frac{\mathbb{E}_{x,v} \lf_{\alpha,\beta}(\Xbar^{\ref{alg:basic_EPDMP}}_\delta,\Vbar^{\ref{alg:basic_EPDMP}}_\delta)}{ \lf_{\alpha,\beta}(x,v)} = \nonumber\\
    & = \exp{\left(\!\!-\delta\lambda(x,v)\!+\!\alpha \pot'(x)v\delta+ \frac{\alpha}{2}\pot''(z_1)\delta^2\right)}\!\!\left(  e^{\left( \beta \delta^2  \pot''(z_2) \right)} \!-\!e^{\left( -\beta \delta  (2\pot'(x) v+\pot''(z_3)\delta)  \right)}\right)\label{eq:lfjump}\\
    & \quad + \exp{\left((\alpha-2\beta) \pot'(x) v \delta+\frac{1}{2}\alpha\pot''(z_1)\delta^2 -\beta \delta^2  \pot''(z_3) \right)}.\label{eq:lfnojump}
\end{align}
Recall that in this case $\lambda(x,v) \geq v\pot'(x)>0$. Thus for the first term \eqref{eq:lfjump}
\begin{align*}
    &\exp{\left(-\delta\lambda(x,v)+\alpha \pot'(x)v\delta+ \frac{\alpha}{2}\pot''(z_1)\delta^2\right)}\left(  e^{\left( \beta \delta^2  \pot''(z_2) \right)} -e^{\left( -\beta \delta  (2\pot'(x) v+\pot''(z_3)\delta)  \right)}\right)\leq\\
    &\quad\leq\exp{\left(-\delta\lambda(x,v)+\alpha \pot'(x)v\delta+ \frac{\alpha}{2}\pot''(z_1)\delta^2\right)}e^{\left( \beta \delta^2  \pot''(z_2) \right)} \\
    & \quad\leq\exp{\left(-(1-\alpha)\delta v\pot'(x)+ \frac{\alpha}{2}\pot''(z_1)\delta^2\right)}e^{\left( \beta \delta^2  \pot''(z_2) \right)}.
\end{align*}
Now choose $0<\alpha <\min\{1,2\beta\}$ and recall that by assumption $\pot'$ diverges to infinity faster than $\pot''$. It follows that, outside of a large enough compact set, both \eqref{eq:lfjump} and \eqref{eq:lfnojump} can be made arbitrarily small. 



\textbf{Case $v\pot'(x)<0$:} In this case $\lambda(x,v)=\gamma(x)$. We expand $\mathbb{E}_{x,v} \lf_{\alpha,\beta}(\Xbar^{\ref{alg:advanced_EPDMP}}_\delta,\Vbar^{\ref{alg:advanced_EPDMP}}_\delta)$ as
\begin{align*}
    \frac{\mathbb{E}_{x,v} \lf_{\alpha,\beta}(\Xbar_\delta^{\ref{alg:advanced_EPDMP}},\Vbar^{\ref{alg:advanced_EPDMP}}_\delta)}{\lf_{\alpha,\beta}(x,v)}& =   e^{-\delta\lambda(x,v)}\frac{\lf_{\alpha,\beta}(x+v\delta,v)}{\lf_{\alpha,\beta}(x,v)}\\
    & \quad+\int_0^\delta \lambda(x,v)e^{-s\lambda(x,v)}\frac{\lf_{\alpha,\beta}(x+v(2s-\delta),-v)}{\lf_{\alpha,\beta}(x,v)}\dd s.
\end{align*}
Similarly to above, we can use Taylor's theorem to find $z_1,z_2,z_3,z_4$ with
\begin{equation}\label{eq:lyap_negcase}
\begin{aligned}
    &\frac{\mathbb{E}_{x,v} \lf_{\alpha,\beta}(\Xbar_\delta^{\ref{alg:advanced_EPDMP}},\Vbar^{\ref{alg:advanced_EPDMP}}_\delta)}{\lf_{\alpha,\beta}(x,v)}=   e^{-\delta\lambda(x,v)}\exp\left(\alpha ( \pot'(x) v \delta+\frac{1}{2} \pot''(z_1)\delta^2) -\beta \delta^2  \pot''(z_2) \right)\\
    &\quad+\int_0^\delta \lambda(x,v)e^{-s\lambda(x,v)}\exp\Bigg(\alpha (\pot'(x) v (2s-\delta)+\frac{1}{2}\pot''(z_3)(2s-\delta)^2) \\
    & \qquad+\beta \delta  (2\pot'(x) v+\pot''(z_4)(2s-\delta)) \Bigg)\dd s.
\end{aligned}
\end{equation}
Taking advantage of $-\delta\leq 2s-\delta\leq \delta$ we obtain the bound 
\begin{align*}
    \exp\Bigg(\alpha (\pot'(x) v (2s-\delta) +\beta \delta  (2\pot'(x) v) \Bigg) \leq \exp\Bigg((-\pot'(x) v)(\alpha -2\beta) \delta  \Bigg) .
\end{align*}
Using this bound together with the assumption that $-v\pot'(x)$ diverges to $+\infty$ faster than $\pot''$, we obtain that for $0<\alpha<2\beta$ the right hand side of \eqref{eq:lyap_negcase} can be made arbitrarily small for sufficiently large values of $x$.

Combining the two cases above we obtain the statement of the lemma.
\end{proof}

\begin{lemma}\label{lem:lfordering}
Assume that $\psi\in \C^2$ satisfies \eqref{eq:growthbounds}. Let $\lf_{\alpha,\beta}(x,v;\delta)$ be given by \eqref{eq:approximatelf} and $\lf_{\alpha,\epsilon}(x,v)$ be given by \eqref{eq:1dlfZZS}. Then for any $0<\alpha_1<\overline{\alpha}<\alpha_2<1$ there exist positive constants $C,C'>0$ with
\begin{equation}\label{eq:lfordering}
    \lf_{\alpha_1,\epsilon}(x,v)\leq C'e^{\overline{\alpha}\pot(x)} \leq C\lf_{\alpha_2,\beta}(x,v;\delta).
\end{equation}
\end{lemma}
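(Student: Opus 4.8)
The plan is to establish the two inequalities in \eqref{eq:lfordering} separately, in each case peeling off the factor $e^{\overline{\alpha}\pot(x)}$ and bounding what remains by a constant uniform in $x$, $v$ and $\delta$. First I would record two elementary consequences of \eqref{eq:growthbounds}: since the first limit there forces $\lVert\nabla\pot(x)\rVert\to\infty$ as $\lVert x\rVert\to\infty$, the second forces $\pot(x)\to\infty$ as $\lVert x\rVert\to\infty$; as $\pot\in\C^2$ is continuous and coercive, $m:=\inf_{x}\pot(x)>-\infty$. Moreover $\lvert\pot'(x)\rvert=o(\pot(x))$, hence also $\log(1+\epsilon\lvert\pot'(x)\rvert)=o(\pot(x))$, as $\lVert x\rVert\to\infty$. (I work in the one-dimensional setting of Lemma \ref{lem:1dLyapunovfunction}, so $\lf_{\alpha,\epsilon}(x,v)=\exp(\alpha\pot(x)+\phi_\epsilon(v\pot'(x)))$ and $\lf_{\alpha,\beta}(x,v;\delta)=\exp(\alpha\pot(x)+\beta\delta\lvert\pot'(x)\rvert)$, using $\lvert v\rvert=1$.)

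For the left inequality I would use $\phi_\epsilon(s)=\mathrm{sign}(s)\tfrac12\log(1+\epsilon\lvert s\rvert)\le\tfrac12\log(1+\epsilon\lvert s\rvert)$ to write
\[
\lf_{\alpha_1,\epsilon}(x,v)=e^{\overline{\alpha}\pot(x)}\exp\!\big((\alpha_1-\overline{\alpha})\pot(x)+\phi_\epsilon(v\pot'(x))\big)\le e^{\overline{\alpha}\pot(x)}e^{g(x)},
\]
where $g(x):=(\alpha_1-\overline{\alpha})\pot(x)+\tfrac12\log(1+\epsilon\lvert\pot'(x)\rvert)$. The function $g$ is continuous; since $\alpha_1-\overline{\alpha}<0$ while $\log(1+\epsilon\lvert\pot'(x)\rvert)=o(\pot(x))$ and $\pot(x)\to\infty$, one has $g(x)\to-\infty$ as $\lVert x\rVert\to\infty$, so $g$ is bounded above, say $g\le\log C'$. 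This gives $\lf_{\alpha_1,\epsilon}(x,v)\le C'e^{\overline{\alpha}\pot(x)}$ for all $x,v$.

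For the right inequality I would note that $\beta\delta\lvert\pot'(x)\rvert\ge0$ gives $\lf_{\alpha_2,\beta}(x,v;\delta)\ge e^{\alpha_2\pot(x)}$, and combine this with $e^{\overline{\alpha}\pot(x)}=e^{(\overline{\alpha}-\alpha_2)\pot(x)}e^{\alpha_2\pot(x)}\le e^{(\overline{\alpha}-\alpha_2)m}e^{\alpha_2\pot(x)}$, valid since $\overline{\alpha}-\alpha_2<0$ and $\pot\ge m$. Hence $C'e^{\overline{\alpha}\pot(x)}\le C'e^{(\overline{\alpha}-\alpha_2)m}\lf_{\alpha_2,\beta}(x,v;\delta)$, so \eqref{eq:lfordering} holds with $C:=C'e^{(\overline{\alpha}-\alpha_2)m}$; note that $C$ and $C'$ are independent of $v$ and of $\delta$.

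The only real obstacle is making the constant $C'$ genuinely uniform in $x$: this combines continuity of $g$ on compacta with its divergence to $-\infty$ at infinity, and it is precisely here that the coercivity of $\pot$ together with the sub-$\pot$ growth of $\lvert\pot'\rvert$ from \eqref{eq:growthbounds} is used. The remaining steps are just monotonicity of the exponential and the elementary bound on $\phi_\epsilon$.
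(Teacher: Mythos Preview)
Your proof is correct and follows essentially the same strategy as the paper: peel off $e^{\overline{\alpha}\pot(x)}$ and show the residual exponent is bounded above (left inequality) or below (right inequality), using that $\lvert\pot'(x)\rvert=o(\pot(x))$ from \eqref{eq:growthbounds} together with continuity on compacta. Your right-hand inequality is in fact slightly cleaner than the paper's: since $\lf_{\alpha_2,\beta}(x,v;\delta)=\exp(\alpha_2\pot(x)+\beta\delta\lvert\pot'(x)\rvert)\ge e^{\alpha_2\pot(x)}$, you avoid the detour through the lower bound $\exp(\alpha_2\pot(x)-\beta\delta_0\lvert\pot'(x)\rvert)$ and a second appeal to \eqref{eq:growthbounds}, needing only the coercivity $\pot\ge m$.
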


\begin{proof}[Proof of Lemma \ref{lem:lfordering}]
Let us first consider $\lf_{\alpha_1,\epsilon}(x,v)$, since $\lvert\phi_\epsilon(s)\rvert \leq  \epsilon\lvert s\rvert/2$ we have
\begin{equation*}
    \lf_{\alpha_1,\epsilon}(x,v) \leq \exp\left(\alpha_1\pot(x)+\frac{\epsilon}{2}\lvert \pot'(x)\rvert\right).
\end{equation*}
By \eqref{eq:growthbounds} there exists $R>0$ such that for any $\lvert x\rvert >R$ we have $\lvert\pot'(x)\rvert \leq 2\epsilon^{-1}(\overline{\alpha}-\alpha_1) \pot(x)$. Therefore for $\lvert x\rvert >R$ we have
\begin{equation*}
    \lf_{\alpha_1,\epsilon}(x,v) \leq \exp\left(\overline{\alpha}\pot(x)
    \right).
\end{equation*}
Setting $C'=\exp(\sup_{\lvert x\rvert \leq R}\lvert\pot'(x)\rvert)$ we have the left hand side of \eqref{eq:lfordering}.

Similarly, we have
\begin{equation*}
    \lf_{\alpha_2,\beta}(x,v;\delta) \geq \exp\left(\alpha_2\pot(x) - \beta\delta_0\lvert\pot'(x)\rvert\right).
\end{equation*}
Using \eqref{eq:growthbounds} for $x$ sufficiently large we have that $\beta\delta_0\lvert\pot'(x)\rvert \leq (\alpha_2-\overline{\alpha})\pot(x)$ and hence the right hand side of \eqref{eq:lfordering} follows.
\end{proof}

\end{appendix}



%
%

\begin{funding}
This work is part of the research programme `Zigzagging through computational barriers' with project number 016.Vidi.189.043, which is financed by the Dutch Research Council (NWO).
\end{funding}



\bibliographystyle{imsart-number} 
\bibliography{bibliography.bib}       


\end{document}